\documentclass[12pt]{amsart}
\usepackage[utf8]{inputenc}
\usepackage[english]{babel}
\usepackage{graphicx}
\usepackage{enumitem}
\graphicspath{{images/}{../images/}}
\usepackage{blindtext}
\usepackage{listings}
\usepackage{caption}
\usepackage{subcaption}
\usepackage{amsmath}
\usepackage{amsthm}
\usepackage{hyperref}
\usepackage{amsfonts}
\usepackage{tikz-cd}
\usepackage{amssymb}
\usepackage[colorinlistoftodos]{todonotes}
\usepackage[margin=1in]{geometry}

\usepackage{subfiles}

\def\wt{\operatorname{wt}}

\newcommand{\rr}{\mathbb{R}}
\newcommand{\zz}{\mathbb{Z}}
\newcommand{\mca}{\mathcal{A}}

\newcommand{\apoly}{\textbf{P}}
\newcommand{\dpoly}{\textbf{P}^\bullet}
\newcommand{\owt}{\overline{\wt}}

\newtheorem{theorem}{Theorem}[section]

\newtheorem{proposition}[theorem]{Proposition}
\newtheorem{lemma}[theorem]{Lemma}

\newtheorem{prop}[theorem]{Proposition} 
\newtheorem{cor}[theorem]{Corollary} 

\newtheorem{conj}[theorem]{Conjecture}
\newtheorem{thm}[theorem]{Theorem}

\theoremstyle{definition}
\newtheorem{definition}[theorem]{Definition}
\newtheorem{defn}[theorem]{Definition}

\newtheorem{observation}[theorem]{Observation}
\newtheorem{example}[theorem]{Example}
\newtheorem{remark}[theorem]{Remark}
\newtheorem{rmk}[theorem]{Remark}

\title{Saturation of Newton polytopes of type A and D cluster variables}
\author{Amal Mattoo}
\email{amal.mattoo@columbia.edu}

\author{Melissa Sherman-Bennett}
\email{msherben@umich.edu}

\begin{document}
	\begin{abstract}
		We study Newton polytopes for cluster variables in cluster algebras $\mca(\Sigma)$ of types A and D. A famous property of cluster algebras is the Laurent phenomenon: each cluster variable can be written as a Laurent polynomial in the cluster variables of the initial seed $\Sigma$. The cluster variable Newton polytopes are the Newton polytopes of these Laurent polynomials. We show that if $\Sigma$ has principal coefficients or boundary frozen variables, then all cluster variable Newton polytopes are saturated. We also characterize when these Newton polytopes are \emph{empty}; that is, when they have no non-vertex lattice points. 
	\end{abstract}
	\maketitle 
	\tableofcontents
	\section{Introduction}
		Cluster algebras are a class of commutative rings with a rich combinatorial structure, introduced by Fomin and Zelevinsky \cite{CA1}. A cluster algebra of rank $n$ includes two sets of distinguished elements which together generate the algebra: a finite set of \emph{frozen variables} and a (usually infinite) set of \emph{cluster variables}. Cluster variables are grouped together in overlapping $n$-subsets, called \emph{clusters}. A cluster together with the frozen variables is a \emph{seed}\footnote{technically, a seed additionally includes a rectangular extended exchange matrix; this will not matter for our purposes, as the cluster determines the seed for these cluster algebras \cite{CA2}}. The frozen variables, their inverses, and the cluster variables generate $\mca$ as an algebra. A key feature of cluster algebras is the \emph{Laurent phenomenon}: each cluster variable can be written as a Laurent polynomial in the variables of any cluster \cite{CA1}. 
		
		Here, we study Newton polytopes of Laurent polynomial expressions for cluster variables (or, briefly, ``cluster variable Newton polytopes").
		
		\begin{defn}
			Given a Laurent polynomial 
			\[f(x_1, \dots, x_d)= \sum_{\textbf{a} \in \zz^d} c_\textbf{a} x_1^{a_1} \cdots x_d^{a_d}\]
			the \emph{support} of $f$ is $\{\textbf{a} \in \zz^d:c_\textbf{a} \neq 0 \}$. The Newton polytope of $f$, denoted $N(f)$, is the convex hull of its support. A Newton polytope $N(f)$ is \emph{saturated} if every lattice point of $N(f)$ is in the support of $f$, and $N(f)$ is \emph{empty} if every lattice point of $N(f)$ is a vertex\footnote{A polytope which intersects a lattice exactly in its vertex set is sometimes also called ``lattice-free."}.
		\end{defn}
		Note that emptiness is strictly stronger than saturation. Both conditions, roughly, ensure that not too much information is lost in passing from $f$ to $N(f)$. See \cite{SatNewtonPoly} for a survey of saturated Newton polytopes in algebraic combinatorics. Empty polytopes have been studied in e.g. \cite{EmptyBK,EmptyDO,EmptyK}; examples include matroid polytopes (more generally, any polytope whose vertices are $0-1$ vectors) and Delauney polytopes.
		
		We will focus particularly on cluster variable Newton polytopes for type $A$ and $D$ cluster algebras. Such cluster algebras are \emph{finite type}, meaning that they have only finitely many cluster variables \cite{CA2}; they are also examples of cluster algebras from surfaces \cite{FST}. We study three different choices of frozen variables: no frozen variables, \emph{principal coefficients} (see \cite{CA4}), and \emph{boundary frozens} (where frozen variables correspond to the boundary segments of the relevant surface).
		
		
		Newton polytopes of cluster variables, and more generally the support of cluster variables, have been used to construct bases for cluster algebras in the rank 2 case. Sherman and Zelevinsky gave an explicit description of all Newton polytopes in rank 2 cluster algebras of finite and affine type, and used the Newton polytopes to construct canonical $\mathbb{Z}$-bases \cite{rank2}. Their approach inspired the definition of greedy bases for arbitrary rank 2 cluster algebras in \cite{greedyRank2}. Greedy basis elements are uniquely characterized (up to scalar multiples) by a condition on their supports. 
		
		While the greedy basis has not been defined for higher rank, one might hope to find similar support characterizations for other bases for cluster algebras. This is one motivation for studying Newton polytopes of cluster variables. Many known bases of cluster algebras include the cluster variables, so understanding the support of cluster variables is a first step toward understanding supports of arbitrary basis elements. Newton polytopes of cluster variables also feature centrally in positive tropicalizations of finite type cluster algebras \cite{AHHL,JLS,SW}: the positive tropicalization is the normal fan of the Minkowski sum of all cluster variable Newton polytopes. 
		
		Other work on cluster variable Newton polytopes includes \cite{rank3}, where Newton polytopes for rank 3 cluster algebras with no frozen variables were computed. They are weakly convex quadrilaterals that may or may not be saturated. Cluster variable Newton polytopes for cluster algebras with principal coefficients were studied by Fei in \cite{F-poly}, under the guise of $F$-polynomial Newton polytopes. Fei used representation theoretic methods to show principal coefficient cluster variable Newton polytopes are saturated when the initial seed is acyclic\footnote{Cluster variable Newton polytopes for principal coefficient cluster algebras are related to those for cluster algebras with arbitrary frozen variables by a linear map given by the columns of the extended exchange matrix. However, this map is not usually unimodular, so may not preserve saturation.}. He further conjectured that saturation holds in general. 
		
		\begin{conj}[\protect{\cite[Conjecture 5.3]{F-poly}}] \label{conj:saturation}
			Let $\mca(\Sigma)$ be a cluster algebra with principal coefficients at the seed $\Sigma$. Then the Newton polytopes of cluster variables, written as Laurent polynomials in $\Sigma$, are saturated.
		\end{conj}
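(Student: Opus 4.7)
My plan is to work at the level of $F$-polynomials. For a cluster algebra with principal coefficients, a cluster variable expanded in the initial seed has the form $x^{g}F(\hat y_1,\dots,\hat y_n)$, where $g$ is the $g$-vector, $\hat y_i = y_i\prod_j x_j^{b_{ji}}$, and $F$ is the associated $F$-polynomial. The linear map $v\mapsto(Bv,v)$ from $y$-exponent vectors to $(x,y)$-exponent vectors is injective and identifies $\zz^n$ with the intersection of its image with $\zz^{2n}$, so saturation of the cluster variable's Newton polytope in $\rr^{2n}$ is equivalent to saturation of $N(F)\subset\rr^n$. It therefore suffices to prove that every $F$-polynomial $F$ attached to a cluster variable has saturated Newton polytope.

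Next I would invoke the Derksen--Weyman--Zelevinsky formula
\[
F(y)=\sum_{e\in\zz_{\ge 0}^n}\chi\bigl(\mathrm{Gr}_e(M)\bigr)\,y^{e},
\]
where $M$ is the rigid indecomposable decorated representation of the Jacobian algebra of $(Q,W)$ associated to the cluster variable, and $\mathrm{Gr}_e(M)$ is the quiver Grassmannian of subrepresentations of dimension vector $e$. By positivity (Gross--Hacking--Keel--Kontsevich), $\chi(\mathrm{Gr}_e(M))\ge 0$ for every $e$, so the support of $F$ is precisely $\{e:\chi(\mathrm{Gr}_e(M))>0\}$. Conjecture~\ref{conj:saturation} then reduces to the representation-theoretic claim: \emph{for every rigid indecomposable decorated representation $M$ of a Jacobian algebra, the set $\{e:\chi(\mathrm{Gr}_e(M))>0\}$ is saturated in its convex hull.}

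The main obstacle is this reduced claim outside the acyclic setting. In the acyclic case, Fei exploits that the set of sub-dimension vectors of a rigid module over a hereditary algebra is closed under componentwise $\min$ and $\max$, forcing saturation of its convex hull. To extend this to Jacobian algebras, I would first try a scattering-diagram approach: realise each coefficient of $F$ as a positively weighted count of broken lines in the consistent cluster scattering diagram of $(Q,W)$ and argue that for every lattice point $e\in N(F)$ at least one broken line terminates at $e$, using consistency and the convex structure of the chamber decomposition. A complementary route is to upgrade Fei's min/max closure via canonical filtrations of $M$ built from Keller--Yang mutation sequences back to an acyclic model. The principal difficulty is that relations in the Jacobian algebra can, a priori, create missing dimension vectors inside the convex hull, and the rigidity of $M$ must be used essentially to rule this out; current techniques (semicontinuity of $\dim\mathrm{Hom}(-,M)$, generic subrepresentations) suggest a plausible path but do not seem to suffice without a new geometric input tying the scattering-diagram and representation-theoretic pictures together.
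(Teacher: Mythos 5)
There is a genuine gap, and you acknowledge it yourself: after reducing the statement to the claim that for every rigid indecomposable decorated representation $M$ the set $\{e:\chi(\mathrm{Gr}_e(M))>0\}$ is saturated in its convex hull, you offer only candidate strategies (broken lines in scattering diagrams, an upgrade of Fei's min/max argument via mutation back to an acyclic model) and concede that ``current techniques \dots do not seem to suffice.'' That reduced claim is not a simplification of the problem --- it \emph{is} Fei's conjecture, which is already phrased in terms of $F$-polynomial Newton polytopes; your first two paragraphs (the $x^{g}F(\hat y)$ expansion, the unimodular identification of lattice points via the second component, and the Derksen--Weyman--Zelevinsky/positivity description of the support) are correct but only restate the conjecture in its original form. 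So the proposal contains no proof of the crux, and the rigidity of $M$, which you correctly identify as the hypothesis that must be used essentially, is never actually brought to bear.

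You should also be aware that the statement is labeled a conjecture in this paper precisely because no general proof is known: the paper's contribution is to prove it (together with the boundary-frozen and coefficient-free variants) only in types $A$ and $D$, and it does so by an entirely combinatorial route with no representation theory --- the Musiker--Schiffler--Williams snake-graph expansion writes each cluster variable as a sum over perfect matchings, and saturation is established by analyzing the projection $\pi^{\mathrm{pc}}$ from the lifted perfect matching polytope $\overline{P}^{\mathrm{pc}}(G_{T,\gamma})$ (which is affinely isomorphic to the ordinary matching polytope, hence empty and saturated) down to $P^{\mathrm{pc}}(G_{T,\gamma})$, using the fact that in finite type each arc label occurs at most twice in the snake graph. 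If your aim is the general conjecture, you need the missing geometric input you allude to; if your aim is the cases actually proved here, the surface/snake-graph structure is what makes the argument go through, and your proposal does not engage with it.
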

		
		Here, we prove Fei's conjecture in types $A$ and $D$ for an arbitrary choice of initial seed $\Sigma$. Note that, as mentioned above, saturation for acyclic $\Sigma$ was previously proved by Fei; we give independent, non-representation theoretic proofs for this case. We also prove saturation for other choices of frozen variables.
		
		\begin{theorem}\label{sat-results}
			The Newton polytope of an arbitrary cluster variable in $\mca(\Sigma)$ (written as a Laurent polynomial in $\Sigma$) is saturated when $\mca(\Sigma)$ is one of the following cluster algebras:
			\begin{enumerate}
				\item type $A$ with boundary frozen variables at $\Sigma$ (Theorem \ref{AB-saturated})
				\item type $A$ with principal coefficients at $\Sigma$ (Theorem \ref{AP-saturated})
				\item type $A$ with no frozen variables (Theorem \ref{NFASat})
				\item type $D$ with boundary frozen variables at $\Sigma$ (Theorem \ref{DB-always})
				\item type $D$ with principal coefficients at $\Sigma$ (Theorem \ref{DP-saturated}).
			\end{enumerate}
			
		\end{theorem}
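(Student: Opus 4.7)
The natural strategy is to exploit the fact that types $A$ and $D$ arise from triangulated surfaces: a disk with marked boundary points for type $A$, and a once-punctured disk for type $D$. Here cluster variables correspond to (tagged) arcs, seeds correspond to (tagged) triangulations, and the Laurent expansion of a cluster variable $x_\gamma$ in a seed $\Sigma$ attached to triangulation $T$ is given by the Musiker--Schiffler--Williams perfect matching formula on the snake graph $G_{T,\gamma}$. Each perfect matching $M$ contributes a Laurent monomial of weight $\wt(M)$; the support of $x_\gamma$ is exactly $\{\wt(M): M \text{ a perfect matching of } G_{T,\gamma}\}$. Saturation of $N(x_\gamma)$ is therefore the statement that every lattice point in the convex hull of matching weights is itself a matching weight. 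This combinatorial reformulation is what I will work with.

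First I would handle type $A$. The plan is: (a) describe the vertices of $N(x_\gamma)$ explicitly as weights of distinguished ``extremal'' matchings (e.g.\ the minimal and maximal matchings, plus matchings associated to certain contiguous runs of twists in the snake graph); (b) recall that the set of perfect matchings of $G_{T,\gamma}$ forms a distributive lattice under twists at internal squares, and that successive twists modify $\wt(M)$ by a controlled set of integer vectors (the $y$-variables in the principal case, and the corresponding boundary/quiver vectors otherwise); (c) given any lattice point $p$ of $N(x_\gamma)$, produce a matching of weight $p$ by following a sequence of twists from an extremal matching guided by the coordinates of $p$. For the boundary frozen and no-frozen cases (parts (1) and (3)), the twist vectors are cleaner and this reconstruction argument is most transparent; for principal coefficients (part (2)) I would either argue directly using the same twist lattice, or reduce to the boundary case by tracking how the linear map induced by the extended exchange matrix interacts with the $\wt$-lattice (noting carefully where unimodularity, hence preservation of saturation, does and does not hold).

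For type $D$, parts (4) and (5), I would split on whether $\gamma$ is an ordinary arc or meets the puncture. Arcs avoiding the puncture behave essentially as in type $A$, and the same twist-lattice argument applies. Arcs incident to the puncture (or tagged at the puncture) are expanded using the ``doubled'' snake graph construction of Musiker--Schiffler--Williams, where matchings come in two parallel families glued along the puncture vertex; here I would show that the Newton polytope decomposes as the convex hull of two type-$A$-like pieces plus a controlled ``bridge'' of matchings crossing between them, and lift the reconstruction argument through this decomposition. The main obstacle I expect is precisely this puncture interaction in type $D$: the twist lattice on the doubled snake graph is not a simple product, and the key step is to verify that for every lattice point $p$ in the convex hull, one can choose consistently on each side of the puncture to realize $p$ as a genuine matching weight rather than only as a formal convex combination. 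A subsidiary obstacle is the non-acyclic (``cyclic'') initial seed case, already absent from Fei's prior representation-theoretic argument; the advantage of the snake-graph approach is that it treats cyclic and acyclic initial triangulations uniformly, so once the extremal-matching and twist-reconstruction lemmas are set up correctly, all five parts of Theorem~\ref{sat-results} follow by the same scheme.
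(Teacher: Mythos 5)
Your reduction to snake graphs is the same as the paper's starting point: by the Musiker--Schiffler--Williams formula, the support of $L_{T,\gamma}$ is (up to a common denominator) the set of weight vectors of perfect matchings of $G_{T,\gamma}$, so saturation becomes the statement that every lattice point of the convex hull of weight vectors is itself a weight vector. From there, however, your mechanism diverges from the paper's and, as written, has a genuine gap. The paper works in the \emph{lifted} perfect matching polytope $\overline{P}(G)\subset\rr^{E(G)}$ (all edges distinctly labeled), where the classical $H$-description shows every lattice point is a matching, and then proves saturation by showing that each lattice point of the weight polytope has an integral preimage under the label-summing projection $\pi$. All of the work is in controlling $\pi$, via induction on the snake graph (Lemma~\ref{map-pi} in type $A$; Lemmas~\ref{chip1}, \ref{chip2}, \ref{pseudo-loop}, \ref{loop1}, \ref{strong-induction} in type $D$), and the arguments lean heavily on the fact that in types $A$ and $D$ each label occurs at most once or twice in $G_{T,\gamma}$.

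Your step (c) --- ``produce a matching of weight $p$ by following a sequence of twists from an extremal matching guided by the coordinates of $p$'' --- is exactly where the content of the theorem lives, and it is asserted rather than argued. After projection, the twist vectors of distinct squares can coincide or be linearly dependent (the same arc of $T$ labels several edges and, in type $D$, several tiles), so the coordinates of $p$ do not determine which squares to twist, and a lattice point of the convex hull need not be reachable. The paper's Section~6 counterexamples (annulus, punctured torus) are decisive here: those snake graphs carry the identical distributive twist lattice, yet the midpoint of two weight vectors fails to be a weight vector. So any correct argument must use type-$A$/$D$-specific bounds on label multiplicities, which your sketch never invokes. Two smaller issues: reducing principal coefficients to boundary coefficients via the extended exchange matrix is hazardous because that linear map is generally not unimodular (the paper avoids it by working with the lifted principal polytope $\overline{P}^{\text{pc}}$ directly, Corollary~\ref{PMPC}); and in type $A$ with no frozen variables the projection is not a combinatorial equivalence, so the paper needs a separate analysis (Lemmas~\ref{sym-dif}, \ref{alternating}) showing that the unique candidate non-vertex lattice point is realized --- your uniform ``same scheme for all five parts'' would not go through there.
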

		
		We also investigate the emptiness of cluster variable Newton polytopes. One inspiration for this is work of Kalman \cite{Kalman}, who studied cluster variable Newton polytopes for type $A$ cluster algebras with boundary frozen variables. Among other things, he showed that each vector in the support of the cluster variable is a vertex of the Newton polytope. He further conjectured that the Newton polytope has no lattice points in its relative interior.

		We prove something stronger than Kalman's conjecture: we show that the Newton polytope is in fact empty. More generally, we characterize when cluster variable Newton polytopes are empty for the cluster algebras listed in Theorem~\ref{sat-results}. This characterization is easiest to state in type A.
		
		\begin{theorem}\label{empt-results}
			Let $\mca(\Sigma)$ be a type A cluster algebra with boundary frozens or principal coefficients at $\Sigma$. Then all cluster variable Newton polytopes are empty (Theorem \ref{AB-saturated} and Theorem \ref{AP-saturated}).
		\end{theorem}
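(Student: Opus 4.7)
The plan is to leverage the snake graph model for cluster variable expansions in type A. For a triangulated polygon with fixed initial triangulation $\Sigma$ and a diagonal $\gamma$, the cluster variable $x_\gamma$ expands in $\Sigma$ as a sum over perfect matchings of an associated snake graph $G_\gamma$, with each matching $P$ contributing a monomial whose exponent vector $v_P$ can be written down explicitly. The set of matchings of $G_\gamma$ forms a distributive lattice under symmetric-difference cover relations, so by Birkhoff's theorem it is isomorphic to the order ideals of a poset $Q_\gamma$ on the tiles of $G_\gamma$. I would parameterize matchings by order ideals $I$ of $Q_\gamma$ and track the exponent vector $v_P$ as an affine function of the characteristic vector $\chi_I \in \{0,1\}^{|Q_\gamma|}$.

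The next step is to show that the map $I \mapsto v_P$ extends to a unimodular integer affine injection from $\rr^{|Q_\gamma|}$ to the ambient exponent lattice. In the principal coefficients case this is essentially immediate: the $y$-coordinates of $v_P$ equal $\chi_I$ because of the identity block in the extended exchange matrix, so $\chi_I$ is recovered by coordinate projection. With unimodularity in hand, emptiness follows at once, since the image of the order polytope $O(Q_\gamma)$ under a unimodular affine map is $N(x_\gamma)$, and the lattice points of $O(Q_\gamma)$ are exactly the characteristic vectors of order ideals, which are exactly its vertices. Under a unimodular map, lattice points biject to lattice points, so the lattice points of $N(x_\gamma)$ are exactly the images of vertices of $O(Q_\gamma)$, namely the $v_P$'s, each of which is a vertex of $N(x_\gamma)$.

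For the boundary frozens case the overall strategy is the same, but one must verify that a similar unimodularity statement holds; this is the main obstacle. The boundary frozen variables appear in each matching monomial in a more intricate pattern than principal coefficients, so one must combinatorially inspect the snake graph to exhibit a subset of the exponent coordinates, one per tile of $G_\gamma$, from which $\chi_I$ can be recovered by a unimodular transformation. I expect this to reduce to an explicit calculation on snake graph tiles and their boundary labels, possibly supported by an induction on the number of tiles as $G_\gamma$ is built up by attaching one tile at a time.
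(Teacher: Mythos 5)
Your principal-coefficients argument is essentially the paper's: the order polytope of the tile poset is unimodularly equivalent to the lifted matching polytope $\overline{P}^{\text{pc}}(G_{T,\gamma})$ used here, and your observation that $\chi_I$ sits inside the exponent vector as an identifiable block of coordinates (so that the map admits an integer affine left inverse) is exactly the content of Corollary~\ref{PMPC} and Proposition~\ref{lin-inv}. Since in type $A$ the arc $\gamma$ crosses each arc of $T$ at most once, the tiles carry distinct labels and the $y$-exponent vector really is $\chi_{C(M)}$, so that half of your proposal goes through. (You do still owe the reader the fact that the \emph{entire} weight vector, not just the $y$-part, is an affine function of $\chi_I$ --- this is the standard height-function description of snake-graph matchings and is needed to conclude that $N(x_\gamma)$ is the affine image of the order polytope.)

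The boundary-frozens case is where you have a genuine gap, and it is the harder half. Your plan is to exhibit, for each tile, an exponent coordinate from which the height $\chi_I(s)$ can be read off affinely; but in type $A$ with boundary frozens an arc of $T$ can label \emph{two} distinct boundary edges of $G_{T,\gamma}$, so the corresponding weight coordinate is a sum of two edge indicators and does not directly recover either one. Consequently there is no obvious global affine section, and the paper does not construct one: instead, Lemma~\ref{map-pi} proves by induction (peeling off the last tile, using the fact that the three labels around an endpoint of $\gamma$ occur exactly once) that the fiber of the projection over any lattice point of $P^{\text{bd}}(G_{T,\gamma})$ consists entirely of lattice points of $\overline{P}(G_{T,\gamma})$. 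Note also that even with saturation in hand, emptiness in the boundary case requires a separate input --- that the projection sends vertices of the lifted polytope to vertices of the Newton polytope --- which the paper imports from Kalman; the footnote to Theorem~\ref{AB-saturated} points out that neither of these two properties implies the other. So your proposal correctly identifies where the difficulty lies, but the step you defer (``an explicit calculation on snake graph tiles'') is the actual theorem in this case, and the specific mechanism you propose (one recoverable coordinate per tile) does not survive the repetition of edge labels without the kind of inductive argument the paper carries out.
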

		
		We characterize emptiness in the type $A$ no frozen variable case in Corollary~\ref{NFAEmpty}; in the type $D$ boundary frozen variable case in Theorem~\ref{DB-Emptiness}; and in the type $D$ principal coefficients case in Theorem~\ref{DP-emptiness}. In these cases, some cluster variable Newton polytopes are empty; roughly speaking, these cluster variables are far away from the initial seed.
		
		Our proofs of both saturation and emptiness results extensively use the combinatorics of snake graphs and their matchings, which give Laurent polynomial formulas for cluster variables \cite{MSW}. 
		
		The paper is organized as follows. Section 2 contains background information on cluster algebras of types $A$ and $D$. We also recall the results of \cite{MSW} on formulas for Laurent polynomial expressions for cluster variables using matchings of ``snake graphs'' (following \cite{MSW}). Section 3 introduces a number of polytopes of interest, and we prove some relevant lemmas. In Section 4, we prove saturation and emptiness results for type $A$ cluster algebras with boundary frozen variables, principal coefficients, and no frozen variables. In Section 5, we prove saturation and emptiness results for type $D$ cluster algebras with boundary frozen variables and principal coefficients. Finally, in Section 6 we present examples of un-saturated cluster variable Newton polytopes for cluster algebras from other surfaces, showing that many of our results are unlikely to extend to general surfaces. Nonetheless, we give a few conjectures about saturation for cluster algebras from surfaces. 
		
		\vspace{12pt}
		
		\noindent \textbf{Acknowledgements.} This work was carried out as part of the Herchel Smith-Harvard Undergraduate Science Research Program at Harvard University in Summer 2020. Both authors would like to thank Lauren Williams for helping to steer this research. M.S.B was supported by NSF Graduate Research Fellowship No. DGE-1752814.
	\section{Background}
		
		
		Cluster algebras of finite type (that is, with finitely many cluster variables) were classified in \cite{CA2}; their classification matches that of finite type Dynkin diagrams. We will focus exclusively on cluster algebras of types $A$ and $D$. 
		In general, a cluster algebra $\mca$ is defined recursively from an initial seed $\Sigma$ using a local move called \emph{mutation}. We will take a shortcut: for our cluster algebras of interest, rather than define mutation, we will provide explicit Laurent polynomial formulas for the cluster variables of $\mca$ in terms of $\Sigma$. Choosing a different initial seed will result in different Laurent polynomial formulas.
		
		As we will recall below, for type $A$ and $D$ cluster algebras, the combinatorics of clusters and cluster variables are encapsulated by triangulations of certain marked surfaces. We consider three different choices of frozen variables: no frozen variables; \emph{boundary frozen variables}, which correspond to the boundary segments of the surface; and \emph{principal coefficients} at the initial seed $\Sigma$ (see \cite{CA4}). We first review the combinatorial objects indexing seeds and cluster variables, and then, for each choice of frozen variables, give Laurent polynomial formulas for cluster variables.

		
		\subsection{Tagged arcs and triangulations of polygons and punctured polygons}
		
		For type $A$ and $D$ cluster algebras, cluster variables and extended clusters are in bijection with the tagged arcs and triangulations, respectively, of two particular marked surfaces. 
		
		\begin{definition}
			Let $S$ be a connected oriented surface with boundary (possibly empty), and let $M\subset S$ be a finite set of points with at least one on each boundary component. The elements of $M$ are \emph{marked points}, elements of $M$ in the interior of $S$ are \emph{punctures}, and $(S,M)$ is a \emph{marked surface}. 
		\end{definition}
		
		For a type $A$ cluster algebra, the relevant marked surface $\apoly$ is a polygon whose marked points are the vertices. For a type $D$ cluster algebra, the relevant marked surface $\dpoly$ is a polygon whose marked points are the vertices and a single puncture.
		
		\begin{definition}
			An \emph{(ordinary) arc} in a marked surface $(S,M)$ is a curve $\gamma\subset S$, considered up to isotopy, with endpoints in $M$ such that: other than its endpoints, $\gamma$ is disjoint from $M$ and $\partial S$; $\gamma$ does not intersect itself except possibly at endpoints; and $\gamma$ does not, by itself or together with a segment of the boundary, enclose an unpunctured monogon or unpunctured bigon.
		\end{definition}
		
		An arc of $\dpoly$ is a \emph{radius} if one endpoint is the puncture, and is a \emph{loop} if its endpoints coincide. For $\apoly$, arcs are just diagonals between two non-adjacent vertices.  
		
		\begin{defn}
			Two ordinary arcs in $(S,M)$ are \emph{compatible} if they (are isotopy equivalent to curves that) do not intersect except potentially at their endpoints. An \emph{ideal triangulation} of $(S,M)$ is a maximal collection of pairwise compatible arcs. The arcs of an ideal triangulation cut $(S,M)$ into \emph{ideal triangles}.
		\end{defn}
		
		An ideal triangulation of $\apoly$ is just a usual triangulation, and each ideal triangle has three distinct sides. An ideal triangulation of $\dpoly$ may include a \emph{self-folded triangle}, which is a loop enclosing a radius (see Figure~\ref{fig:triangulation-example} for an example). 
		
		\begin{figure}
			\centering
			\includegraphics[width=0.9\linewidth]{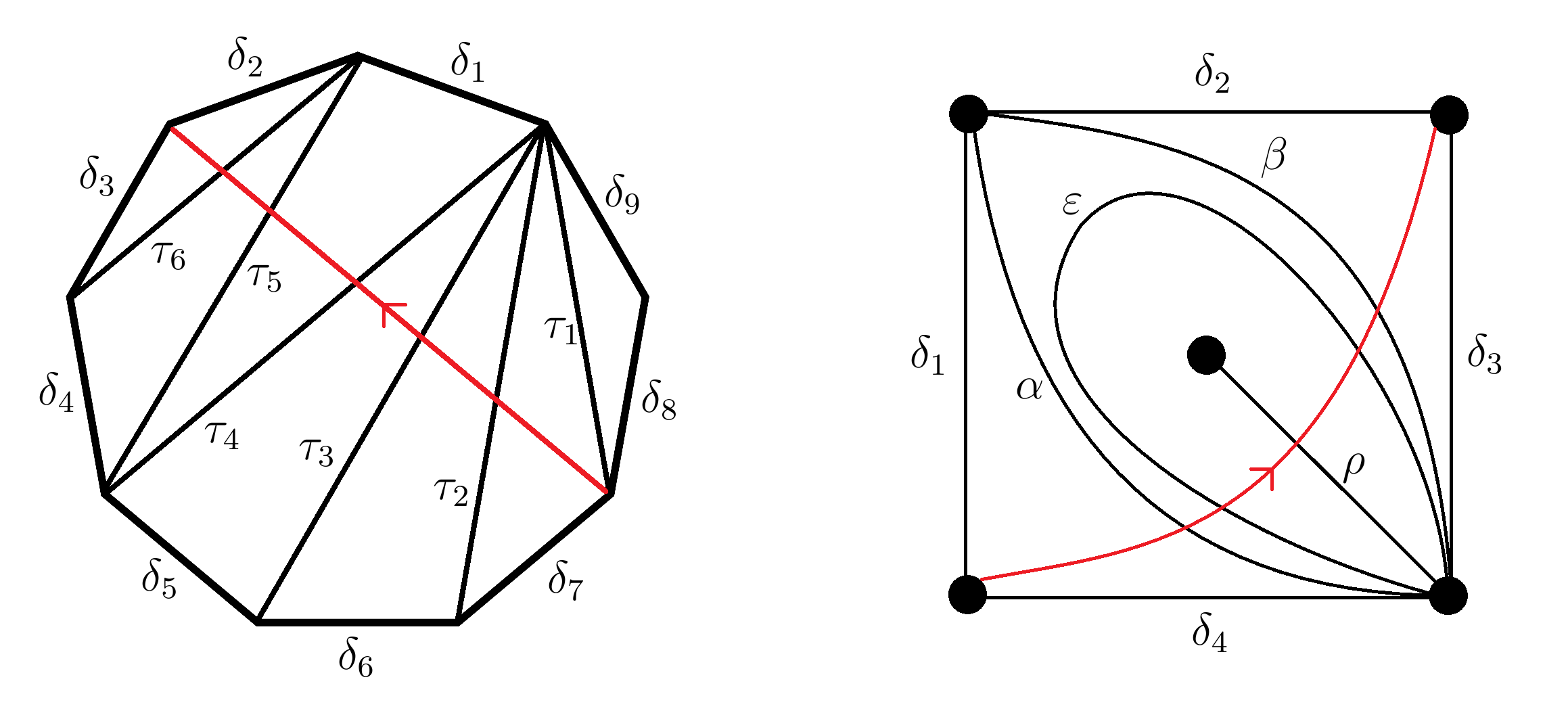}
			\caption{An ideal triangulation of a polygon (left) and a punctured polygon (right) and an arc, shown in red. Note that the triangulation of the punctured polygon includes a loop enclosing a radius.}
			\label{fig:triangulation-example}
		\end{figure}
		
		To obtain indexing sets for cluster variables and clusters, we need a generalization of arcs and ideal triangulations involving tagging.
		
		\begin{defn}
			Let $(S,M)$ be $\apoly$ or $\dpoly$. A \emph{tagged arc} $\gamma$ in $(S,M)$ is either an ordinary arc with distinct endpoints (which we say is tagged \emph{plain}) or a radius which is tagged \emph{notched}. We write $\rho^{\bowtie}$ for the notched version of the radius $\rho$. We denote by $\gamma^\circ$ the ordinary arc obtained by ignoring the tagging of $\gamma$.
		\end{defn}
		
		\begin{defn}
			Tagged arcs $\alpha$ and $\beta$ are \emph{compatible} if $\alpha^\circ$ and $\beta^\circ$ are compatible as ordinary arcs ($\alpha^\circ$ and $\beta^\circ$ may coincide) and if $\alpha^\circ, \beta^\circ$ are distinct radii, then $\alpha$ and $\beta$ have the same tagging. A \emph{tagged triangulation} is a maximal collection of pairwise compatible tagged arcs. 
		\end{defn}
		
		Note that for $\apoly$, tagged arcs and ordinary arcs coincide, so tagged triangulations are the same as usual triangulations. In a tagged triangulation of $\dpoly$, either all radii have the same tagging or there are exactly two tagged radii, which are $\rho$ and $\rho^{\bowtie}$.
		
		The following result shows how tagged arcs and triangulations relate to cluster variables and seeds.
		
		\begin{thm}\cite{FST} \label{thm:triangulationsToClusters}
			Let $\mca$ be a cluster algebra of type $A$ (resp. $D$) with frozen variables $F$. Then the cluster variables of $\mca$ are in bijection with tagged arcs of $\apoly$ (resp. $\dpoly$). Writing $x_\tau$ for the cluster variable corresponding to arc $\tau$, the map \[T \mapsto \Sigma_T:=\{x_\tau\}_{\tau \in T} \cup F\] is a bijection between tagged triangulations of $\apoly$ (resp. $\dpoly$) and seeds of $\mca$. 
		\end{thm}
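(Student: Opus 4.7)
The plan is to construct the bijection by realizing the cluster algebra structure directly from the combinatorics of flips of tagged triangulations. The first step is to associate to each tagged triangulation $T$ of $\apoly$ or $\dpoly$ a skew-symmetric exchange matrix $B(T)$ indexed by the tagged arcs of $T$: roughly, $b_{\tau,\tau'}$ records the signed count of ideal triangles containing both $\tau$ and $\tau'$ as sides, with the signs determined by a fixed orientation of the surface. Together with the (formal) variables $\{x_\tau\}_{\tau\in T}\cup F$, this produces a candidate seed $\Sigma_T$.

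The key step is to verify that the combinatorial \emph{flip} operation -- which replaces a tagged arc $\tau\in T$ with the unique other tagged arc $\tau'$ such that $(T\setminus\{\tau\})\cup\{\tau'\}$ is again a tagged triangulation -- corresponds on the algebraic side to Fomin--Zelevinsky mutation of $\Sigma_T$ at $x_\tau$. For ordinary arcs that are interior diagonals of a non-self-folded quadrilateral, this reduces to the classical diagonal flip. The delicate cases occur in $\dpoly$ when $\tau$ lies in or borders a self-folded triangle, where the flip is not defined at the level of ordinary arcs and one must pass to the tagged picture (toggling the tagging at the puncture) to obtain a well-defined replacement arc. Once this flip/mutation compatibility is established, and one checks connectivity of the flip graph on tagged triangulations, it follows that $T\mapsto \Sigma_T$ intertwines flips and mutations, so the collection $\{\Sigma_T\}$ forms (part of) the exchange graph of a cluster pattern.

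To conclude, I would identify this cluster pattern with $\mca$. Choosing a convenient triangulation of $\apoly$ (an $(n+3)$-gon, say a fan triangulation) and computing $B(T)$ shows its underlying valued quiver is a Dynkin quiver of type $A_n$; the analogous computation for $\dpoly$ with $n$ boundary marked points yields type $D_n$. The Fomin--Zelevinsky finite-type classification \cite{CA2} then forces $\mca(\Sigma_T)$ to agree with the type $A$ (resp.\ $D$) cluster algebra, and the injectivity of the map $T\mapsto \Sigma_T$ combined with matching finite counts of tagged triangulations and seeds upgrades the correspondence to a bijection (likewise for tagged arcs and cluster variables). The main obstacle is the tagged-flip analysis at the puncture of $\dpoly$: handling the dichotomy between plain and notched radii, showing self-folded triangles force a change in tagging rather than an ordinary flip, and verifying that the resulting $B$-matrix transforms exactly as in mutation is the technical heart of the argument; everything else is bookkeeping.
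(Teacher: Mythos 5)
This theorem is cited from Fomin--Shapiro--Thurston \cite{FST} and is not proved in the paper, so there is no in-paper argument to compare against. Your sketch accurately reproduces the structure of the FST proof: assign a signed adjacency matrix $B(T)$ to each tagged triangulation, show that tagged flips induce matrix mutation, verify flip-connectivity, identify the cluster type via a fan triangulation and the finite-type classification, and then upgrade to a bijection. One detail worth flagging: the naive ``signed count of shared triangles'' definition of $b_{\tau,\tau'}$ that you describe is not quite what FST use; their definition first applies the map $\pi_T$ replacing the radius of any self-folded triangle by its enclosing loop, and it is precisely this correction that makes flip/mutation compatibility go through in the self-folded cases you correctly identify as the technical heart. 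With that adjustment, your outline matches the argument of \cite{FST}.
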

		
		Given $T$ a tagged triangulation of $\apoly$ (resp. $\dpoly$), we will use the following notation:
		\begin{itemize}
			\item$\mca^{\text{bd}}(T)$ is the cluster algebra of type $A$ (resp. $D$) with boundary frozen variables at the seed $\Sigma_T$. The frozen variables are in bijection with the boundary arcs of the surface\footnote{More precisely, the exchange matrix of $\Sigma_T$ has columns indexed by $T$ and rows indexed by $T$ and the boundary arcs. The entries reflect adjacency of arcs, and are computed according to \cite[Definition 3.10]{FST}}.
			\item$\mca^{\text{pc}}(T)$ is the cluster algebra of type $A$ (resp. $D$) with principal coefficients at the seed $\Sigma_T$ \cite[Definition 3.1]{CA4}. There is one frozen variable for each arc of $T$.
			\item $\mca^{\text{nf}}(T)$, which has no frozen variables.
		\end{itemize}
		
		Next, we introduce notation for the Laurent polynomial expressions for cluster variables in these cluster algebras.
		\begin{defn}
			Let $(S, M)$ be $\apoly$ (resp. $\dpoly$), with boundary segments $Z=\{\zeta_1, \dots, \zeta_r\}$, let $T=\{\tau_1, \dots, \tau_n\}$ be a tagged triangulation, and let $\gamma$ be a tagged arc. 
			\begin{itemize}
				\item For $\mca^{\text{bd}}(T)$, denote by $L_{T, \gamma}^{\text{bd}}:=L_{T, \gamma}^{\text{bd}}(x_{\tau_1},...,x_{\tau_n},x_{\zeta_1},...,x_{\zeta_r})$ the Laurent polynomial expansion for cluster variable $x_\gamma$ in $\Sigma_T$.
				\item For $\mca^{\text{pc}}(T)$, denote by $L_{T, \gamma}^{\text{pc}}:=L_{T, \gamma}^{\text{pc}}(w_{\tau_1},...,w_{\tau_n},y_{\tau_1},...,y_{\tau_{n}})$ the Laurent polynomial expansion for cluster variable $w_\gamma$ in $\Sigma_T$. Note that $w_{\tau_i}$ is a cluster variable and $y_{\tau_i}$ is a frozen variable.
				\item For $\mca^{\text{nf}}(T)$, denote by $L_{T, \gamma}^{\text{nf}}:=L_{T, \gamma}^{\text{nf}}(z_{\tau_1},...,z_{\tau_n})$ the Laurent polynomial expansion for cluster variable $z_\gamma$ in $\Sigma_T$.
			\end{itemize}
			We use $L_{T,\gamma}$ to denote the Laurent polynomial without specifying the choice of frozen variables.
		\end{defn}
		
		For every tagged triangulation $T$ and tagged arc $\gamma$, \cite{MSW} gives a formula for $L_{T, \gamma}$ using matchings of snake graphs. We now review their constructions, beginning with snake graphs.
		
		\subsection{Snake graphs}
		
		Fix a surface $(S,M)$ with boundary segments $Z=\{\zeta_1, \dots, \zeta_r\}$, and an ideal triangulation $T= \{\tau_1, \dots, \tau_n\}$. For each choice of arc $\gamma$ in $(S, M)$, we will build a graph consisting of glued-together squares which ``snakes" weakly north-west in the plane. We first define the basic building blocks of this graph.
		
		For an ideal triangle $A$ of $S\setminus T$ which is not self-folded, define $\Delta$ to be the triangle graph with edges labeled by the three distinct arcs of $T$ bounding $A$. If $A$ is self-folded, then define $\Delta$ to be the triangle graph with one edge labeled by the loop $\lambda$ bounding $A$ and the other two edges labeled by the radius enclosed by $\lambda$.
		
		For $\tau \in T$, $\tau$ is in the boundary of two ideal triangles $A_1$ and $A_2$. The \emph{tile} $G_{\tau}$ is obtained by gluing together $\Delta_1$ and $\Delta_2$ along the edge labeled $\tau$ so that the orientations of $\Delta_1$ and $\Delta_2$ either both agree or both disagree with those of $A_1$ and $A_2$ (see Figure~\ref{fig:gluing-tiles}). This gives two possible planar embeddings of $G_\tau$. If $\tau$ is a radius enclosed in a loop $\lambda$, then $A_1=A_2$ and we glue $\Delta_1$ and $\Delta_2$ so that the edges labeled $\lambda$ are adjacent. 
		
		\begin{figure}
			\centering
			\includegraphics[width=0.4\linewidth]{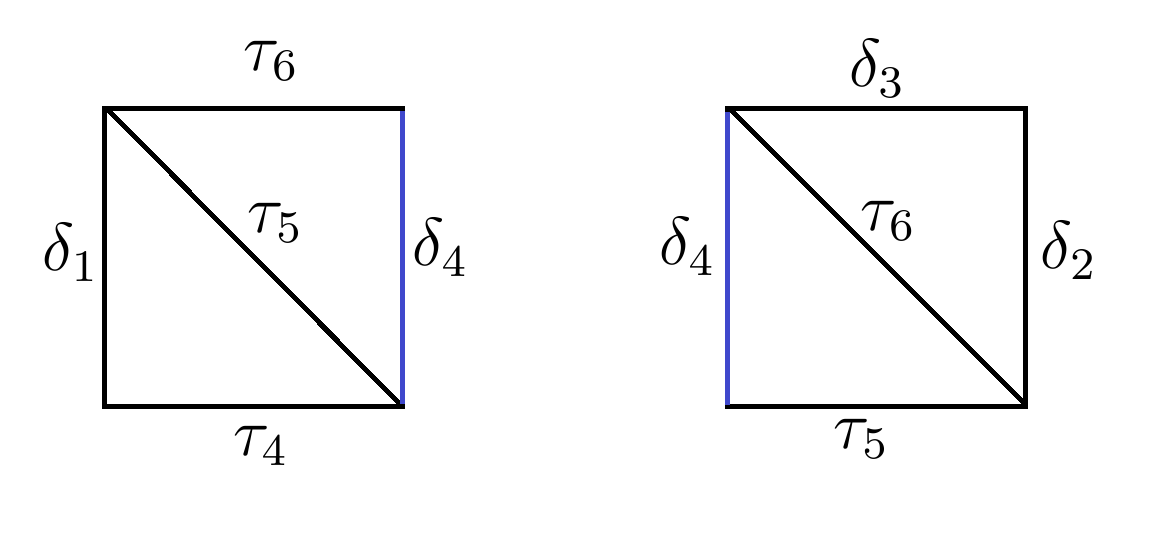}
			\caption{The tiles $G_{\tau_5}$ and $G_{\tau_6}$ for the triangulation $T$ on the left of Figure \ref{fig:triangulation-example}. They will be glued together along the blue arcs, both of which have label $\delta_4$, yielding the last two boxes of the snake graph in Figure \ref{fig:snake-graph-example} on the left.}
			\label{fig:gluing-tiles}
		\end{figure}
		
		Now, let $\gamma \notin T$ be an ordinary arc in $(S,M)$ and choose an orientation of $\gamma$. Let $\tau_{i_1},...,\tau_{i_k}$ (not necessarily distinct) be the sequence of arcs of $T$ that $\gamma$ intersects, and set $G_j:=G_{\tau_{i_j}}$. We construct the \emph{snake graph} $G_{T,\gamma}$ as follows. Choose the planar embedding of $G_1$ so that the orientation of its triangles agrees with those in $T$. For $j=2, \dots, k$, glue $G_j$ to $G_{j-1}$ along the edge with the shared label so that odd tiles have triangles oriented the same as in $T$, and even tiles have triangles oriented oppositely (see Figure~\ref{fig:gluing-tiles}). This gluing is unambiguous except when $\tau_{i_j}$ is a radius enclosed in a loop $\lambda$; in this case, glue $G_{j-1}, G_j, G_{j+1}$ as illustrated in Figure~\ref{fig:selfFoldedTile}. Then remove the diagonal from each tile to yield the snake graph $G_{T,\gamma}$ (see Figure~\ref{fig:snake-graph-example}). 
		
		\begin{figure}
			\centering
			\includegraphics[height=0.35\textheight]{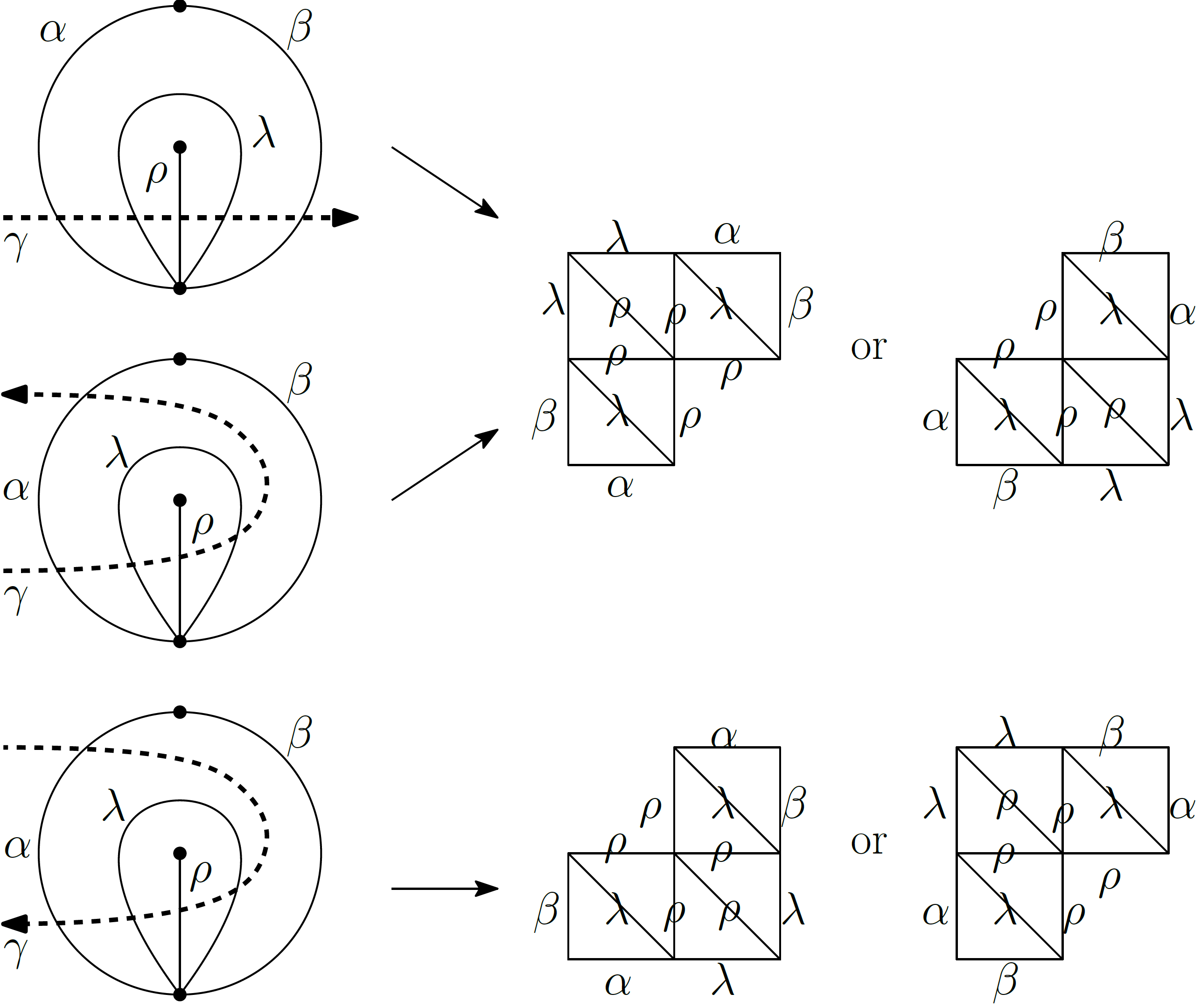}
			\caption{\label{fig:selfFoldedTile} How to glue tiles $G_\lambda, G_\rho, G_\lambda$ when $\gamma$ crosses a self-folded triangle. Note that if $T$ contains a loop $\lambda$, then $\lambda$ is enclosed in a bigon, as pictured here. The snake graphs on the left show the case when $G_\lambda$ is oriented as in $T$; on the right, the case when $G_\lambda$ has the opposite orientation.}
		\end{figure}

		\begin{figure}
			\centering
			\includegraphics[width=0.9\linewidth]{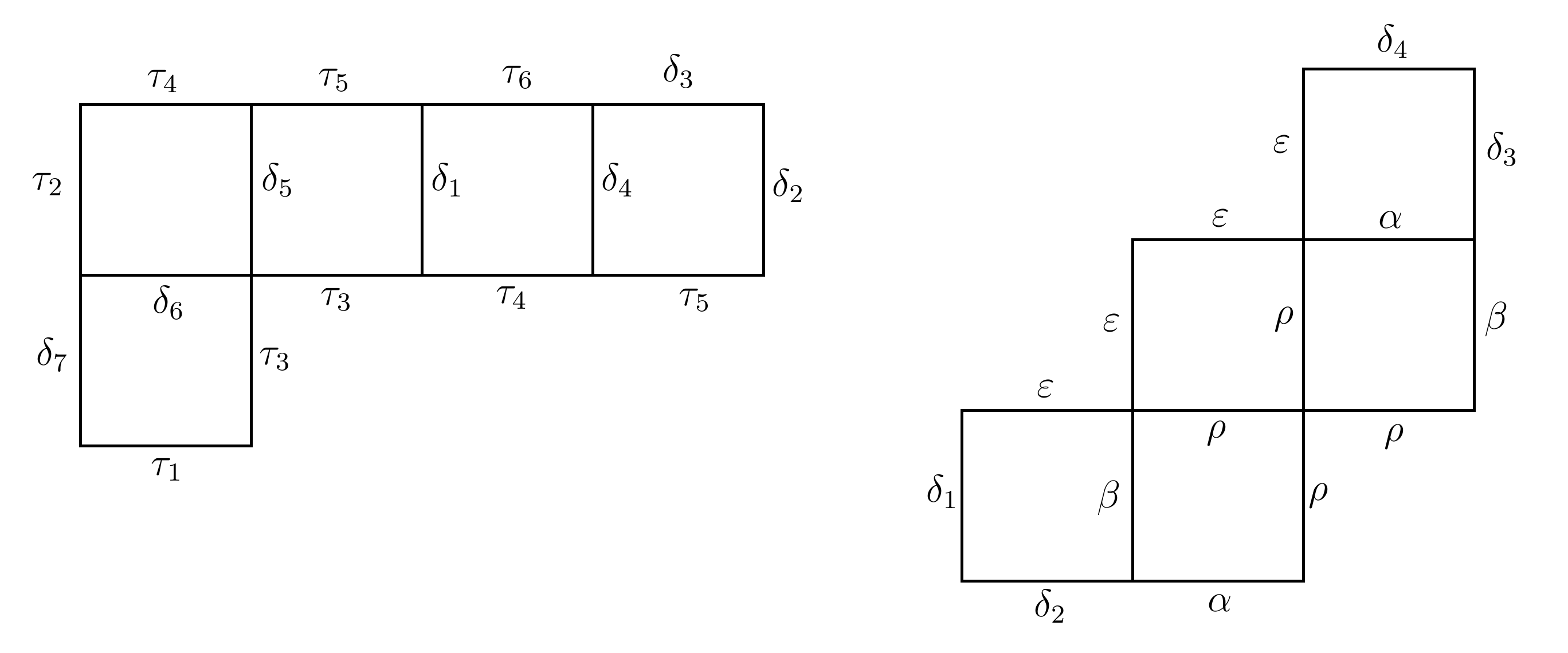}
			\caption{Snake graphs corresponding to the red arcs and triangulations in Figure \ref{fig:triangulation-example}, with the snake graph corresponding to the polygon on the left and the punctured polygon on the right.}
			\label{fig:snake-graph-example}
		\end{figure}

		We call $G_\tau$ with its diagonal removed a \emph{square} or a \emph{tile} of $G_{T, \gamma}$. For an edge $e$ of $G_{T, \gamma}$, we denote its label by $\ell(e)$, and for a tile $G_{\tau}$ let $\ell(G_{\tau}):=\tau$.

		\begin{rmk}
			Abusing notation, we will also use the term \emph{snake graph} to refer to an edge-labeled graph $G$ which is isomorphic to some $G_{T, \gamma}$ as an unlabeled graph. Again, for $e$ an edge of $G$, we use $\ell(e)$ to denote its label.
		\end{rmk}
		
		We are interested in weight vectors of perfect matchings of $G_{T, \gamma}$. There will be three different kinds of weight vectors, one for each choice of frozen variables. We first need some preliminary definitions.
		
		\begin{defn}
			A \emph{perfect matching} $M$ of a graph $G$ is a subset of edges such that each vertex of $G$ is in exactly one edge of $M$. The \emph{bottom matching} $M_0$ of a snake graph $G$ is the matching which only involves boundary edges of $G$ and contains the south edge of the first tile. 
		\end{defn}
		
		See Figure~\ref{fig:matching-ex} for an example.
		
		\begin{lemma}
			\cite[Lemma 4.7]{MSW} For any matching $M$ of $G_{T,\gamma}$, the symmetric difference $M\ominus M_0$ encloses a set of tiles of $G_{T,\gamma}$. 
		\end{lemma}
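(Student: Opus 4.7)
The plan is to prove the lemma in two short steps: first, identify the symmetric difference $M \ominus M_0$ as a disjoint union of simple cycles in $G_{T,\gamma}$; second, use planarity of the snake graph to conclude that each such cycle encloses a union of tiles.

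For the first step, I would observe that since both $M$ and $M_0$ are perfect matchings, each vertex $v$ of $G_{T,\gamma}$ is incident to exactly one edge of $M$ and exactly one edge of $M_0$. If these two edges coincide, $v$ lies in no edge of $M \ominus M_0$; otherwise $v$ lies in exactly two, one from $M$ and one from $M_0$. Hence $M \ominus M_0$ is a subgraph in which every vertex has degree $0$ or $2$, so each non-trivial connected component is a simple cycle whose edges alternate between $M$ and $M_0$.

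For the second step, I would invoke the planar structure of $G_{T,\gamma}$. The inductive construction---unit squares glued edge-to-edge in a north-west staircase pattern, with the special prescription of Figure~\ref{fig:selfFoldedTile} in the self-folded case---exhibits $G_{T,\gamma}$ as a planar embedded graph whose underlying region is simply connected and whose bounded faces are exactly the tiles $G_{\tau_{i_j}}$. By the Jordan curve theorem, any simple cycle in such a planar graph bounds a closed topological disk, and simple connectedness of the embedded region forces this disk to coincide with a union of bounded faces, that is, a union of tiles of $G_{T,\gamma}$. Applying this observation to each cycle of $M \ominus M_0$ yields the lemma.

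The only substantive point that I expect to require care is verifying that the staircase embedding remains simply connected in the presence of self-folded triangles. Since each such configuration is locally described by the pictures in Figure~\ref{fig:selfFoldedTile}, this reduces to a short case analysis on the orientation of $G_\lambda$ relative to $T$, with no new ideas needed; once dispatched, the two steps above combine immediately to give the stated conclusion.
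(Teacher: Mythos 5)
Your argument is correct, and it is the standard proof of this fact. Step 1 (that $M\ominus M_0$ is a disjoint union of simple alternating cycles, because every vertex has degree $0$ or $2$ in the symmetric difference of two perfect matchings) is the classical observation. Step 2 is also sound: the tiles of a snake graph form a simply connected polyomino in the plane, so if $C$ is a simple cycle in its $1$-skeleton, the bounded region enclosed by $C$ cannot meet the complement of the polyomino (any escape path from an interior point to infinity inside the complement would have to cross $C$, which lies in the polyomino), and hence the interior of $C$ is a union of open tiles. Applying this to each component cycle gives the lemma.

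One remark: the paper cites this as \cite[Lemma 4.7]{MSW} without reproducing an argument, so there is no in-text proof to compare against; your write-up matches the expected argument. Your flagged concern about the self-folded case is not really a gap: the prescription of Figure~\ref{fig:selfFoldedTile} still glues unit tiles edge-to-edge along the usual weak north-west staircase, so the underlying region remains a simply connected polyomino and Step 2 applies verbatim. If you want to make the write-up tighter, it would be cleaner to state explicitly the topological fact you are using -- that for a finite simply connected union of closed cells, any simple cycle in the boundary $1$-complex bounds a union of cells -- rather than leaving it implicit in the phrase ``forces this disk to coincide with a union of bounded faces.''
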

		
		For a matching $M$ of $G_{T,\gamma}$, let $C(M)$ denote the set of tiles enclosed by $M\ominus M_0$ (see Figure~\ref{fig:matching-ex}).
		
		\begin{defn}\label{def:matching}
			Let $G_{T, \gamma}$ be a snake graph and let $M$ be a perfect matching. We define the following \emph{weights} of $M$:
			\begin{align*}
				\wt^{\text{bd}}(M)&:=\Pi_{e \in M} x_{\ell(e)} \\
				\wt^{\text{nf}}(M)&:=\Pi_{e \in M} z_{\ell(e)}|_{z_{\zeta_i}=1}\\
				\wt^{\text{pc}}(M)&:=\Pi_{e \in M} w_{\ell(e)}|_{w_{\zeta_i}=1}\Pi_{s\in C(M)}y_{\ell(s)}
			\end{align*} 
			with $\wt(M)$ denoting the weight when the choice of frozen variables has not been specified.
			
			We denote the exponent vectors of these weights by 
			\begin{align*}
				w_{\text{bd}}^M &\in\rr^{\tau_1,...,\tau_n,\zeta_1,...,\zeta_r}\\
				w_{\text{nf}}^M &\in\rr^{\tau_1,...,\tau_n}\\
				w_{\text{pc}}^M &\in\rr^{\tau_1,...,\tau_n}\times\rr^{\tau_1,...,\tau_n}
			\end{align*}
			respectively, and call them
			the \emph{weight vectors} of $M$.
		\end{defn}
		We define weights and weight vectors of matchings analogously for snake graphs $H$ which are subgraphs of $G_{T, \gamma}$. See Figure~\ref{fig:matching-ex} for examples of weights and weight vectors.
		
		\begin{figure}
			\centering
			\begin{minipage}{0.3\linewidth}
				\includegraphics[width=\linewidth]{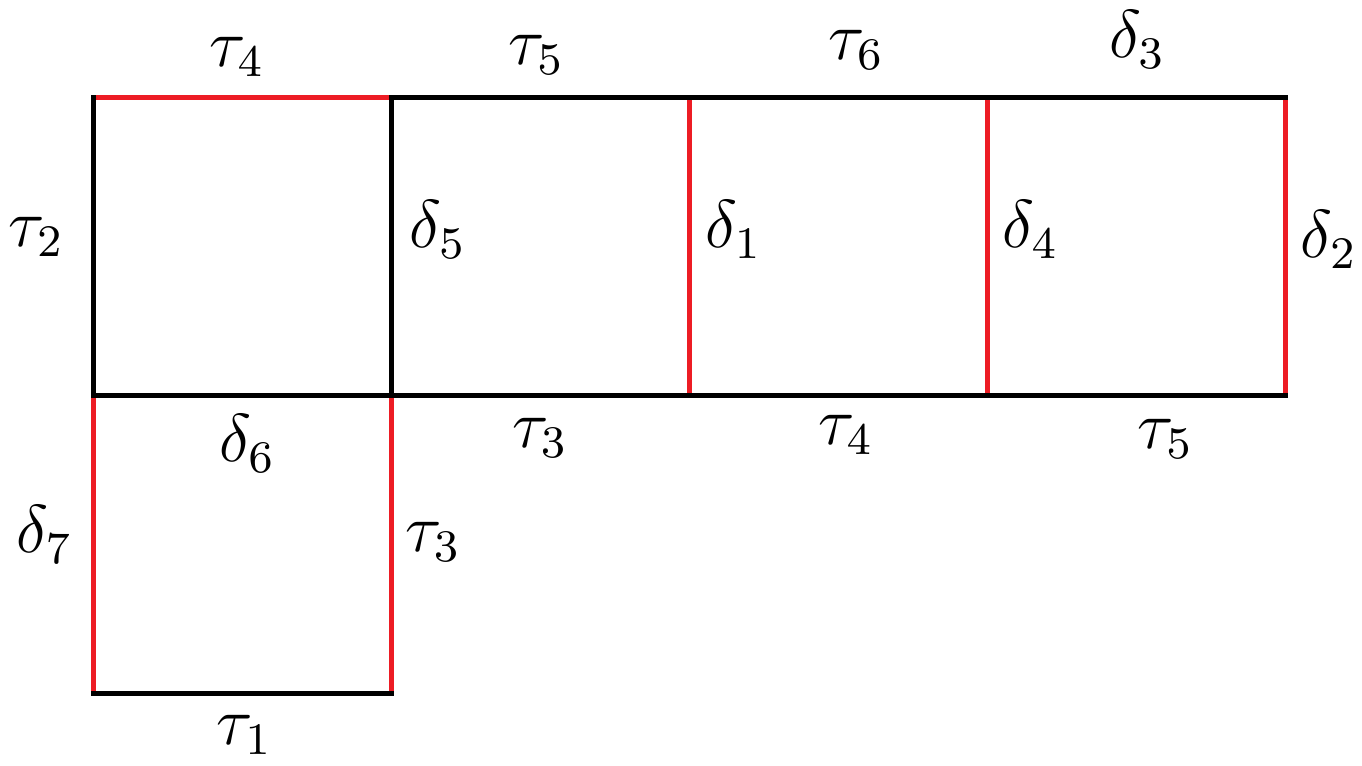}
				$$\mbox{\small $\text{wt}^{\text{bd}}(M)=
					x_{\tau_{3}}x_{\tau_{4}}x_{\delta_{1}}x_{\delta_{2}}x_{\delta_{4}}x_{\delta_{7}}$}$$
				$$\mbox{\small $w^M_{\text{bd}}=$ \tiny $(0,0,1,1,0,0;1,1,0,1,0,0,1,0,0)$} $$ 
			\end{minipage}
			\begin{minipage}{0.3\linewidth}
				\includegraphics[width=\linewidth]{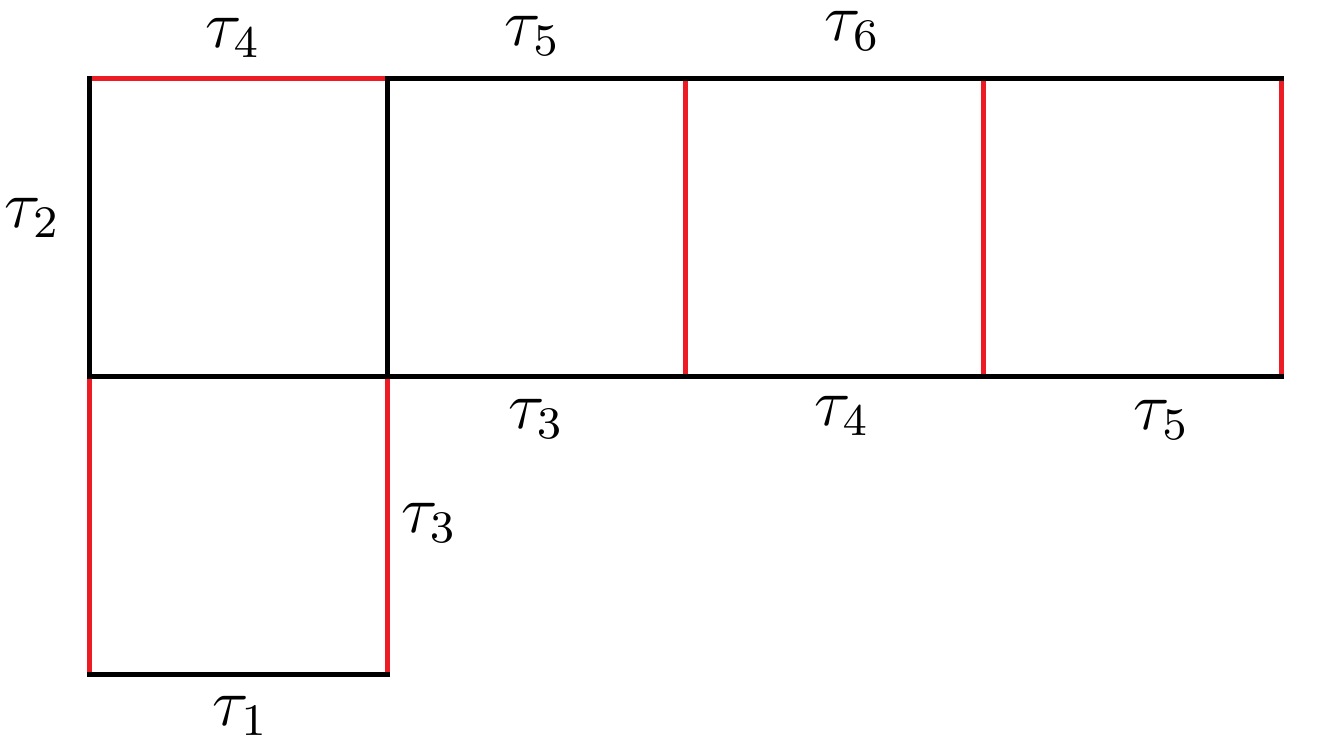}
				$$\mbox{\small$\text{wt}^{\text{nf}}(M)=
					z_{\tau_{3}}z_{\tau_{4}}$}$$
				$$\mbox{\small $w^M_{\text{nf}}=$\tiny$(0,0,1,1,0,0)$}$$
			\end{minipage}
			\begin{minipage}{0.3\linewidth}
				\includegraphics[width=\linewidth]{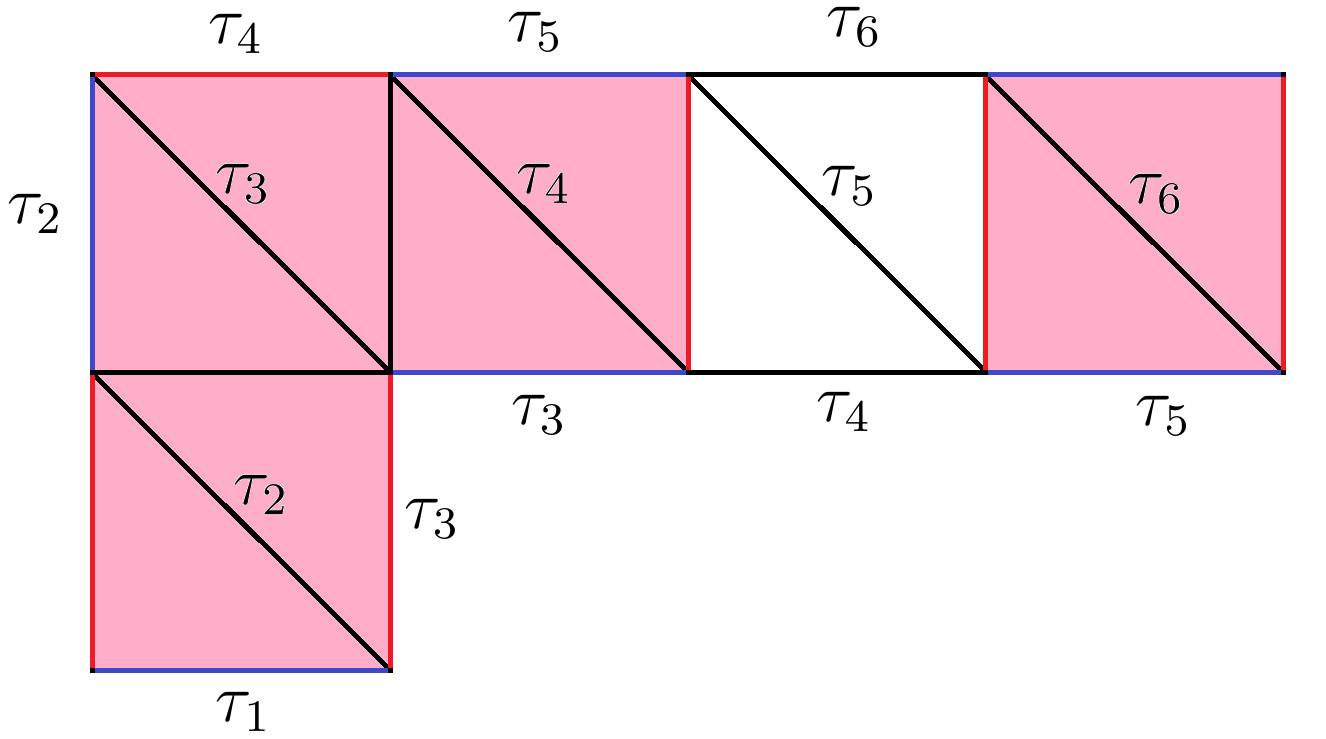}
				$$\mbox{\small$\text{wt}^{\text{pc}}(M)=
					w_{\tau_{3}}w_{\tau_{4}}y_{\tau_{2}}y_{\tau_{3}}y_{\tau_{4}}y_{\tau_{6}}$}$$
				$$\mbox{\small$w^M_{\text{pc}}=$\tiny$(0,0,1,1,0,0;0,1,1,1,0,1)$}$$
			\end{minipage}
			\caption{A matching $M$ (in red) of the snake graph from Figure \ref{fig:snake-graph-example} on the left, and the weights and weight vectors for the three choices of frozen variables. On the far right, the bottom matching is in blue, and the squares in $C(M)$ are highlighted in pink.}
			\label{fig:matching-ex}
		\end{figure}
		
		We will need the following notion in our proofs.
		
		\begin{defn} \label{def:corner}
			Let $G$ be a snake graph. A \emph{corner square} of $G$ is a square (not the first or last) whose neighboring squares share a vertex with each other. All other squares are \emph{non-corner squares}.
		\end{defn}
		
		\subsection{Expansion formulas}
		
		In this section, we give the formulas of \cite{MSW} for $L_{T, \gamma}$ using matchings of snake graphs. First, we note that it suffices to provide $L_{T, \gamma}$ for tagged triangulations $T$ with no notched radii.
		
		\begin{proposition}\label{prop:flip}
			\cite[Proposition 3.16]{MSW}  Let $T=(\tau_1,...,\tau_n)$ be a tagged triangulation of $\dpoly$ or $\apoly$ and $\gamma$ a tagged arc. Let $\gamma^p$ be the arc obtained from $\gamma$ by changing the tagging if $\gamma^\circ$ is a radius; otherwise, set $\gamma^p=\gamma$. Let $T^p:=(\tau_1^p,...,\tau_n^p)$. Then 
			\begin{equation}\label{eq:flip}
				L^{\text{bd}}_{T, \gamma}= L^{\text{bd}}_{T^p,\gamma^p}|_{x_{\tau^p}\mapsto x_\tau}, \quad
				L^{\text{nf}}_{T, \gamma}= L^{\text{nf}}_{T^p,\gamma^p}|_{z_{\tau^p}\mapsto z_\tau}, \quad
				L^{\text{pc}}_{T, \gamma}= L^{\text{pc}}_{T^p,\gamma^p}|_{w_{\tau^p}\mapsto w_\tau,y_{\tau^p}\mapsto y_\tau}
			\end{equation}
			
		\end{proposition}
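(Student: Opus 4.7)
The plan is to prove this as a consequence of a symmetry of the cluster algebra induced by swapping the two possible taggings at the puncture. In type A, there are no radii, so $\phi: \gamma \mapsto \gamma^p$ is the identity and the proposition is trivial; the substance of the claim is in type D. I would first verify that $\phi$ is an involution on the set of tagged arcs of $\dpoly$ that preserves compatibility: because $\phi$ fixes the underlying ordinary arc $\gamma^\circ$ and flips taggings uniformly on all radii, two tagged arcs $\alpha, \beta$ are compatible iff $\alpha^p, \beta^p$ are (the conditions on $\alpha^\circ, \beta^\circ$ are immediate, and if $\alpha^\circ = \beta^\circ$ is a radius the tagging-agreement condition is preserved by uniform flipping). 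Hence $\phi$ descends to an involution on tagged triangulations sending $T$ to $T^p$.

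The key step is to produce an isomorphism $\Phi: \mca(\Sigma_T) \xrightarrow{\sim} \mca(\Sigma_{T^p})$ sending $x_\tau \mapsto x_{\tau^p}$ for each $\tau \in T$, together with the appropriate action on frozen variables ($x_\zeta \mapsto x_\zeta$ for boundary frozens; $y_\tau \mapsto y_{\tau^p}$ for principal coefficients). A cluster algebra is determined up to isomorphism by the extended exchange matrix of any one of its seeds, so this reduces to showing that the extended exchange matrix of $\Sigma_T$ matches that of $\Sigma_{T^p}$ after relabeling $\tau \leftrightarrow \tau^p$. Using the explicit description of exchange matrices for cluster algebras from surfaces in \cite[Definition 3.10]{FST}, where each matrix entry is a sum over ideal triangles of local adjacency contributions, I would verify that replacing each radius tagging in $T$ with its opposite does not alter the resulting matrix, since the contribution attached to each self-folded triangle depends only on the underlying ordinary arcs once one tracks the label swap at the enclosed radius. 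This produces the desired $\Phi$.

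Finally, by Theorem~\ref{thm:triangulationsToClusters} the bijection between tagged arcs and cluster variables is canonical and intertwines $\phi$, so $\Phi$ must send $x_\gamma$ to $x_{\gamma^p}$. Applying $\Phi$ to the identity
\[x_\gamma = L^{\text{bd}}_{T, \gamma}(x_{\tau_1}, \dots, x_{\tau_n}, x_{\zeta_1}, \dots, x_{\zeta_r})\]
yields $x_{\gamma^p} = L^{\text{bd}}_{T, \gamma}(x_{\tau_1^p}, \dots, x_{\tau_n^p}, x_{\zeta_1}, \dots, x_{\zeta_r})$, and comparing with the definition of $L^{\text{bd}}_{T^p, \gamma^p}$ gives equation \eqref{eq:flip}. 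The no frozen variable and principal coefficient cases are entirely analogous once the action of $\Phi$ on the corresponding frozen variables is taken into account.

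The main obstacle I anticipate is the verification that the extended exchange matrices match after relabeling, especially near self-folded triangles: the \cite{FST} convention treats the enclosed radius subtly (it ``sees'' the loop on the other side of the self-folded triangle when computing adjacencies), and one must carefully check that this local rule is compatible with swapping plain and notched taggings on all radii simultaneously.
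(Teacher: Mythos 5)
This proposition is a cited result from \cite[Proposition 3.16]{MSW}; the present paper offers no proof of its own, so there is no internal argument to compare against. Your approach---constructing an isomorphism $\Phi$ induced by the global tagging-swap $\phi:\gamma\mapsto\gamma^p$ and checking exchange-matrix invariance---is the standard route and is correct in outline. One remark on the ``main obstacle'' you flag at the end: the careful local analysis near self-folded triangles is actually unnecessary. Since $\dpoly$ has a single puncture, a tagged triangulation $T$ either has all radii with the same tagging or contains both $\rho$ and $\rho^{\bowtie}$. In the latter case $T^p=T$ and the proposition is trivial. In the former case, $T$ has all radii plain and $T^p$ all notched (or vice versa), and then the representing ideal triangulations $T^\circ$ and $(T^p)^\circ$ coincide on the nose: neither contains a self-folded triangle (those occur only when both $\rho$ and $\rho^{\bowtie}$ appear), and passing from all-notched to ordinary arcs at the puncture simply forgets the tagging. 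So the two extended exchange matrices are literally equal under the relabeling $\tau_i\leftrightarrow\tau_i^p$, with no delicate bookkeeping near the puncture required. Beyond that, your deduction that $\Phi$ intertwines $\phi$ on cluster variables (via Theorem~\ref{thm:triangulationsToClusters}) and hence yields the substitution in \eqref{eq:flip} is sound, and the adjustments for the no-frozen and principal-coefficient cases are as routine as you indicate (the principal-coefficient frozen block is the identity in both seeds).
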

		
		Because of Proposition~\ref{prop:flip}, we will restrict our attention to tagged triangulations $T$ of the following kind.
		
		\begin{definition}\label{def:idealize}
			Let $T^\circ$ be an ideal triangulation of $\dpoly$ or $\apoly$. We define the \emph{corresponding tagged triangulation} $T$ as follows. If $T^\circ$ has no loops, let $T$ be the tagged triangulation with the same arcs, all tagged plain. If $T^\circ$ has a loop $\lambda$ enclosing a radius $\rho$, then let $T$ be the tagged triangulation replacing $\lambda$ with $\rho^{\bowtie}$ and all other arcs the same as in $T^{\circ}$.
			We say that tagged triangulations arising in this way \emph{correspond to an ideal triangulation}.
		\end{definition}
		
		\begin{rmk} \label{rmk:idealOnly}
			If $T$ is a tagged triangulation of $\dpoly$ that does not correspond to an ideal triangulation, then $T$ has all radii notched. This means $T^p$ does correspond to an ideal triangulation. By Proposition~\ref{prop:flip}, each $L_{T, \gamma}$ can be obtained from $L_{\gamma^p, T^p}$ by a simple change of variables. Thus, we need only provide formulas for $L_{T, \gamma}$ where $T$ corresponds to an ideal triangulation.
		\end{rmk}
		
		Note that all tagged triangulations of $\apoly$ correspond to ideal triangulations.
		
		\subsubsection{Expansion formulas for plain arcs}
		
		We now have all of the ingredients to give $L_{T,\gamma}$ for $\gamma$ a plain tagged arc and $T$ a tagged triangulation corresponding to an ideal triangulation. 
		
		In what follows, if $\lambda$ is a loop enclosing a radius $\rho$, we set $x_\lambda:=x_\rho x_{\rho^{\bowtie}}$. We also set $L_{T,\lambda}:=L_{T,\rho}L_{T,\rho^{\bowtie}}$.

		\begin{thm}\label{thm:unnotchedExpansion}\cite[Theorem 4.10]{MSW} 
			Let $(S, M)$ be a polygon or punctured polygon with boundary segments $\{\zeta_1, \dots, \zeta_r\}$, and let $T^\circ=\{\tau_1, \dots, \tau_n\}$ be an ideal triangulation with corresponding tagged triangulation $T$. Consider an oriented ordinary arc $\gamma$ (which may be a loop), and let $\tau_{i_1},...,\tau_{i_d}$ be the sequence of arcs $\gamma$ intersects in $T^\circ$. Then
			
			\begin{equation} \label{eq:unnotchedExpansion}
				L^{\text{bd}}_{T, \gamma}=\frac{1}{x_{\tau_{i_1}}...x_{\tau_{i_d}}}\sum_{M} \wt^{\text{bd}}(M),\quad
				L^{\text{pc}}_{T, \gamma}=\frac{1}{w_{\tau_{i_1}}...w_{\tau_{i_d}}}\sum_{M} \wt^{\text{pc}}(M),\quad
				L^{\text{nf}}_{T, \gamma}=\frac{1}{z_{\tau_{i_1}}...z_{\tau_{i_d}}}\sum_{M} \wt^{\text{nf}}(M)
			\end{equation}
			
			where the sum is over perfect matchings $M$ of the snake graph $G_{T^\circ, \gamma}$.
		\end{thm}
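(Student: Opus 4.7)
The plan is to prove the three formulas simultaneously by induction on the number of crossings $d$ of $\gamma$ with $T^\circ$. A useful preliminary observation is that $L^{\text{nf}}$ is obtained from $L^{\text{bd}}$ by specializing the boundary variables $x_{\zeta_i}\mapsto 1$, so that formula comes along for free; the pc case requires additionally tracking the set $C(M)$ of tiles enclosed by $M\ominus M_0$ and the corresponding $y$-weights, which I would incorporate as supplementary bookkeeping throughout the induction. The base case $d=1$ is direct: $G_{T^\circ,\gamma}$ is a single tile with exactly two perfect matchings, and their weights reproduce the two terms in the mutation/exchange relation that defines $x_\gamma$ in the seed $\Sigma_T$; the labels on the four boundary edges of the tile are exactly the four sides of the quadrilateral surrounding $\tau_{i_1}$ in $T^\circ$, so the check amounts to matching labels.

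For the inductive step, I would fix a crossing of $\gamma$ with some $\tau_{i_j}$ and resolve it in both possible ways, producing pairs of arcs $(\gamma_1,\gamma_2)$ and $(\gamma_3,\gamma_4)$, each with strictly fewer crossings with $T^\circ$ than $\gamma$. By a skein-type identity in $\mca$---derivable from exchange relations along a short sequence of flips taking $\tau_{i_j}$ to $\gamma$---one has a three-term identity of the form $x_\gamma x_{\tau_{i_j}} = x_{\gamma_1}x_{\gamma_2} + x_{\gamma_3}x_{\gamma_4}$ (up to appropriate boundary/coefficient factors). On the combinatorial side, I would partition the perfect matchings of $G_{T^\circ,\gamma}$ according to the state of the edges at the tile corresponding to the chosen crossing, and construct weight-preserving bijections identifying the two parts with the matchings of the disjoint unions $G_{T^\circ,\gamma_1}\sqcup G_{T^\circ,\gamma_2}$ and $G_{T^\circ,\gamma_3}\sqcup G_{T^\circ,\gamma_4}$, respectively. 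Invoking the inductive hypothesis on each $\gamma_k$ then reproduces the desired formula for $\gamma$.

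The main obstacle will be the inductive step on the punctured polygon $\dpoly$ when $\gamma$ crosses a self-folded triangle. There three consecutive tiles $G_\lambda, G_\rho, G_\lambda$ are glued in the irregular pattern of Figure~\ref{fig:selfFoldedTile}, and both the skein identity and the bijection on matchings require a separate case analysis: the standard resolution at a crossing interacts nontrivially with the loop--radius pair. I would handle this case by exploiting the conventions $x_\lambda:=x_\rho x_{\rho^{\bowtie}}$ and $L_{T,\lambda}:=L_{T,\rho}L_{T,\rho^{\bowtie}}$, which reduce the analysis at a self-folded triangle to a product of two notched-radius computations that fit back into the same skein induction. A concurrent verification is needed that the exponent of each boundary variable $x_{\zeta_i}$ produced by the matching formula matches the prediction from the exchange relations, but this is essentially a bookkeeping check once the skein bijection is set up, and together with Proposition~\ref{prop:flip} it extends the formula to all tagged triangulations.
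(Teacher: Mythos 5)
The paper does not prove this statement; it is quoted verbatim from \cite[Theorem 4.10]{MSW}, so there is no in-paper argument to compare against. Your sketch therefore has to be judged against the source. The actual MSW proof does not proceed by resolving crossings of $\gamma$: it builds on Schiffler's $T$-path expansions for unpunctured surfaces and an earlier Musiker--Schiffler matching formula (proved by induction on a sequence of flips of the \emph{triangulation} carrying $\tau_{i_1}$ toward $\gamma$), then handles punctures by passing to a cover of the surface and carefully tracking coefficients. Your skein-theoretic induction on the arc is a genuinely different route, closer in spirit to the later snake-graph calculus of \c{C}anak\c{c}{\i}--Schiffler and the skein relations of Musiker--Williams.

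The concern with your route is a circularity issue. The three-term relation $x_\gamma x_{\tau_{i_j}} = \pm\, x_{\gamma_1}x_{\gamma_2} \pm x_{\gamma_3}x_{\gamma_4}$ (with the correct frozen and $y$-variable prefactors) is a Ptolemy/skein relation in $\mca(\Sigma)$, and in the literature it is typically \emph{derived from} the perfect-matching expansion (or from hyperbolic $\lambda$-length identities plus the Fomin--Thurston/MSW expansion), not established independently ``from exchange relations along a short sequence of flips.'' Unless you prove the skein identity directly by mutation --- which is nontrivial, since $\gamma$ and $\tau_{i_j}$ need not be exchange-adjacent in any seed --- the inductive step rests on a result at the same depth as the theorem you are proving. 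The second substantial gap is the ``weight-preserving bijection'' between matchings of $G_{T^\circ,\gamma}$ and those of $G_{T^\circ,\gamma_1}\sqcup G_{T^\circ,\gamma_2}$ and $G_{T^\circ,\gamma_3}\sqcup G_{T^\circ,\gamma_4}$: this is precisely the grafting/resolution calculus for snake graphs, which requires checking that the two resolutions at a tile split the matchings cleanly and that the $y$-monomial bookkeeping via $C(M)$ is compatible on both sides; calling this ``supplementary bookkeeping'' understates a substantial technical argument, particularly at self-folded triangles where the $G_\lambda,G_\rho,G_\lambda$ gluing of Figure~\ref{fig:selfFoldedTile} breaks the uniform pattern. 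Your observation that the nf case follows from the bd case by setting $x_{\zeta_i}\mapsto 1$ is correct and matches Definition~\ref{def:matching}, and the base case $d=1$ is fine.
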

		
		Note that Theorem~\ref{thm:unnotchedExpansion} gives expansion formulas for all cluster variables in a type $A$ cluster algebra.

		\begin{rmk} \label{rmk:easyNotchedExpansions}
			There are two situations in which Theorem~\ref{thm:unnotchedExpansion} gives $L_{T,\rho^{\bowtie}}$ for a notched radius $\rho$. The first: if $T=T^p$, then $L_{T,\rho^{\bowtie}}$ for any radius $\rho$ can be obtained from $L_{T,\rho}$ using Proposition~\ref{prop:flip}. The second: if $\rho \in T$, then let $\lambda$ be the loop enclosing $\rho$. Then $L_{T,\lambda}=L_{T,\rho}L_{T,\rho^{\bowtie}}$, and since $L_{T,\rho}=x_\rho$ (resp. $w_\rho$ or $z_\rho$) is a cluster variable by assumption we have $L_{T,\rho^{\bowtie}}= (1/x_\rho) L_{T,\lambda}$ (resp. $w_\rho$ or $z_\rho$).
		\end{rmk}
		
		\begin{example}
			For $\gamma$ and $T$ with $G_{T, \gamma}$ as in Figure \ref{fig:expansion-ex}, we have the following expansion formulas (note how each term corresponds to a matching):
			$$L_{T,\gamma}^{\text{bd}}=\frac{x_{\tau_{1}}x_{\tau_{2}}x_{\delta_{1}}x_{\delta_{5}}+
				x_{\tau_{1}}x_{\tau_{2}}x_{\tau_{3}}x_{\tau_{5}}+
				x_{\tau_{1}}x_{\tau_{4}}x_{\delta_{1}}x_{\delta_{6}}+
				x_{\tau_{3}}x_{\tau_{4}}x_{\delta_{1}}x_{\delta_{7}}}{x_{\tau_{2}}x_{\tau_{3}}x_{\tau_{4}}}$$
			$$L_{T,\gamma}^{\text{nf}}=\frac{z_{\tau_{1}}z_{\tau_{2}}+
				z_{\tau_{1}}z_{\tau_{2}}z_{\tau_{3}}z_{\tau_{5}}+
				z_{\tau_{1}}z_{\tau_{4}}+
				z_{\tau_{3}}z_{\tau_{4}}}{z_{\tau_{2}}z_{\tau_{3}}z_{\tau_{4}}}$$
			$$L_{T,\gamma}^{\text{pc}}=\frac{w_{\tau_{1}}w_{\tau_{2}}y_{\tau_{4}}+
				w_{\tau_{1}}w_{\tau_{2}}w_{\tau_{3}}x_{\tau_{5}}+
				w_{\tau_{1}}w_{\tau_{4}}y_{\tau_{3}}y_{\tau_{4}}+
				w_{\tau_{3}}w_{\tau_{4}}y_{\tau_{2}}y_{\tau_{3}}y_{\tau_{4}}}{w_{\tau_{2}}w_{\tau_{3}}w_{\tau_{4}}}.$$
		\end{example}
		\begin{figure}
			\centering
			\includegraphics[width=0.9\linewidth]{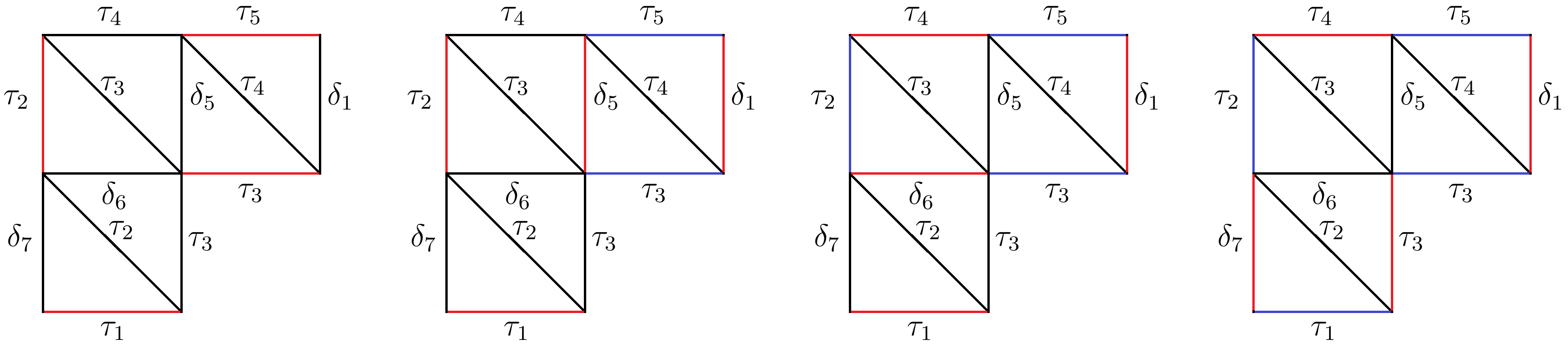}
			\caption{All matchings of a snake graph $G_{T,\gamma}$ (red), with edges of the bottom matching in blue.}
			\label{fig:expansion-ex}
		\end{figure}
		
		\subsubsection{Expansion formulas for notched arcs} \label{subscn:notchedArcExpansion}
		Now we consider expansion formulas for $x_{\rho^{\bowtie}}$ (resp. $w_{\rho^{\bowtie}}$ or $z_{\rho^{\bowtie}}$), for $\rho$ a radius. They are of a similar flavor, but involve $\rho$-symmetric matchings of snake graphs.
		
		Fix $T^\circ=\{\tau_1, \dots, \tau_n\}$ an ideal triangulation of $\dpoly$ and let $T$ be the corresponding tagged triangulation.  We may assume that $\rho \notin T$ and that $T \neq T^p$ (see Remark~\ref{rmk:easyNotchedExpansions}), so in fact $T=T^\circ$. Let $\lambda$ be the loop enclosing $\rho$.
		
		\begin{defn}
			The snake graph $G_{T, \lambda}$ contains two disjoint subgraphs isomorphic to $G_{T, \rho}$ as labeled graphs, one on each end. We denote these graphs by $G_{T,\rho,1}$ and $G_{T,\rho,2}$. Let $H_{T,\rho,i}$ be the subgraph of $G_{T,\rho,i}$ obtained by deleting edges labeled by radii. (See Figure~\ref{fig:endsEx})
		\end{defn}
		
		\begin{figure}
			\centering
			\includegraphics[width=0.9\linewidth]{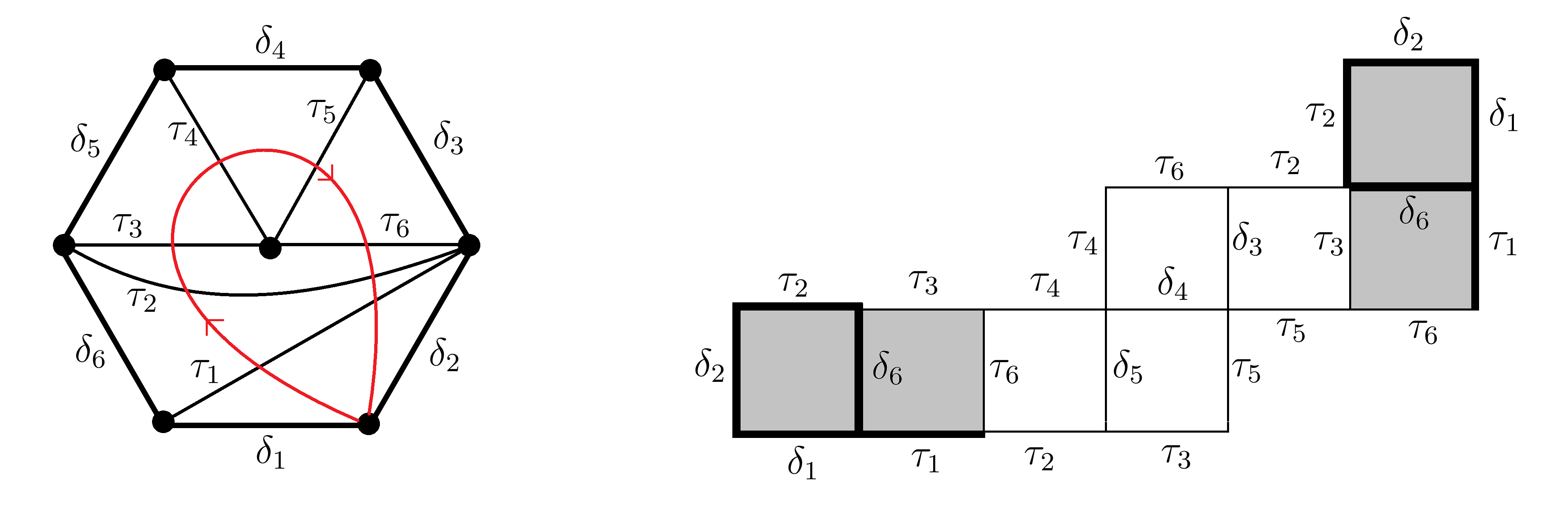}
			\caption{\label{fig:endsEx} An example of $G_{T, \lambda}$ for $\lambda$ a loop (shown in red). The squares of the subgraphs $G_{T,\rho,i}$ are shaded; the edges of $H_{T,\rho,i}$ are in bold.}
		\end{figure}
		
		\begin{defn}
			A perfect matching $M$ of $G_{T, \lambda}$ is \emph{$\rho$-symmetric} if $M|_{H_{T,\rho,1}} \cong M|_{H_{T,\rho,2}}$. The \emph{weight} of a $\rho$-symmetric matching $M$ is given by
			\[\owt(M):= \frac{\wt(M)}{\wt(M|_{G_{T, \rho, i}})}\]
			where $i$ is chosen so that $M|_{G_{T, \rho, i}}$ is a perfect matching of $G_{T, \rho, i}$ (such an $i$ exists by \cite[Lemma 12.4]{MSW}).
			
			The weight vector of a $\rho$-symmetric matching is the exponent vector of its weight.
		\end{defn}
		\begin{figure}
			\begin{minipage}{0.5\linewidth}
				\centering
				\includegraphics[width=0.7\linewidth]{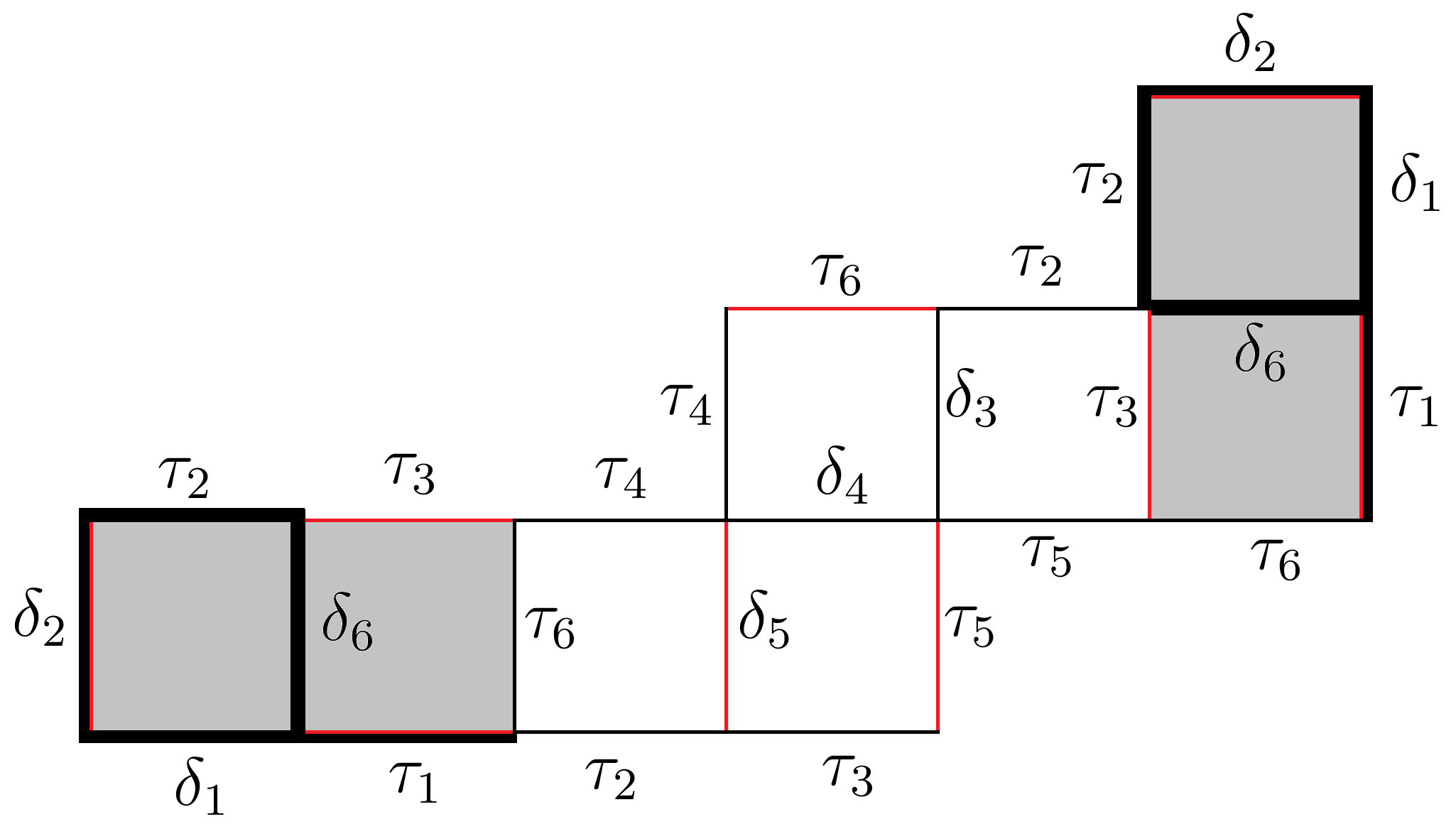}
			\end{minipage}
			\begin{minipage}{0.48\linewidth}
				\centering
				\includegraphics[width=0.7\linewidth]{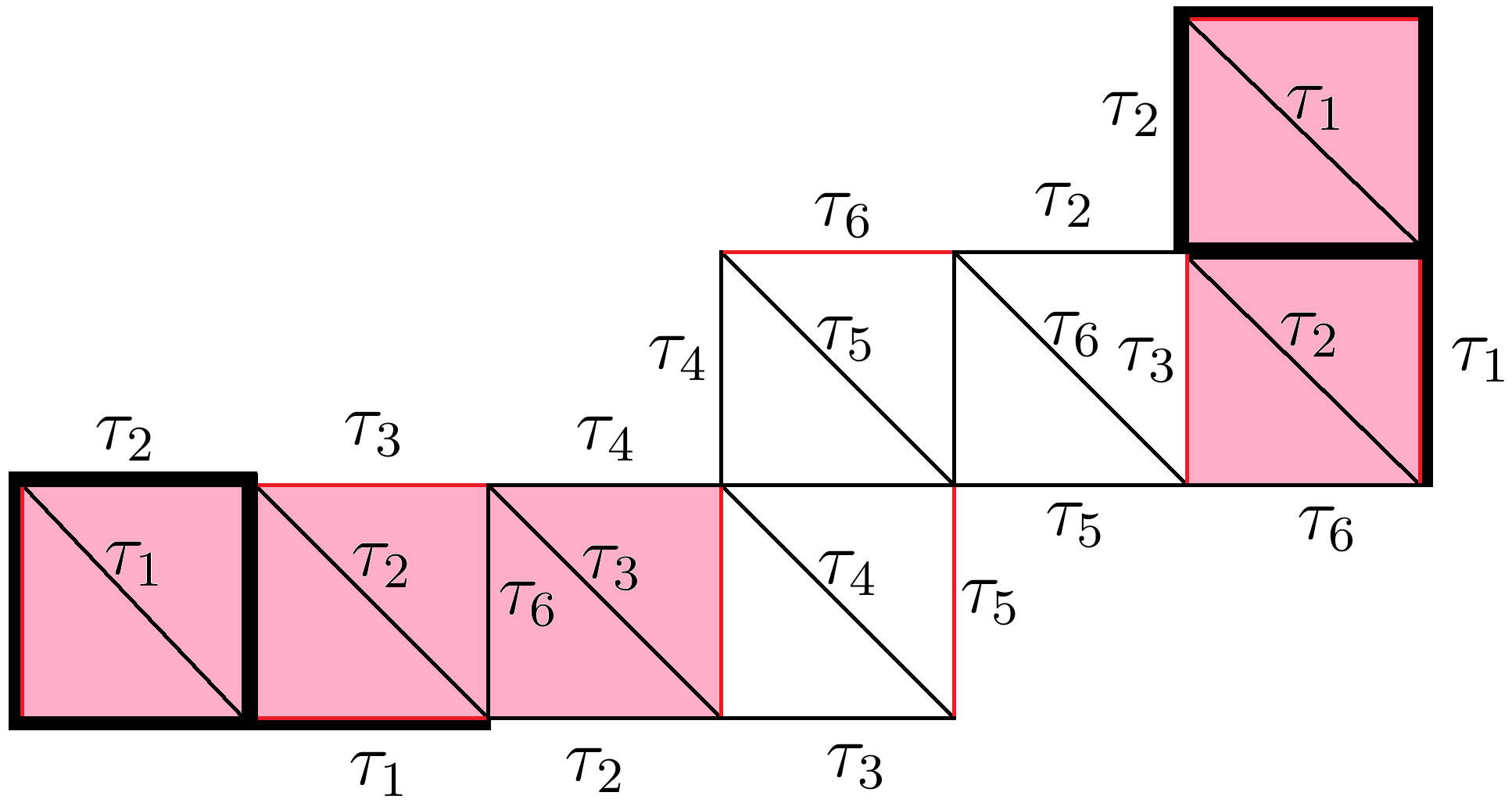}
			\end{minipage}
			\caption{A $\rho$-symmetric matching $M$ (red) of snake graph $G_{T,\lambda}$ from Figure \ref{fig:endsEx} with boundary frozen variables, and of the corresponding snake graph with principal coefficients. The pink squares correspond to $C(M)$.}
			\label{fig:wt-bar-ex}
		\end{figure}
		\begin{example}
			For the matching $M$ in Figure \ref{fig:wt-bar-ex}, we have the weights:
			$$\overline{\text{wt}}^{\text{bd}}(M)=
			\frac{x_{\tau_{1}}^2x_{\tau_{3}}^2x_{\tau_{5}}x_{\tau_{6}}x_{\delta_{2}}^2x_{\delta_{5}}}{x_{\tau_{1}}x_{\tau_{3}}x_{\delta_{2}}}=
			x_{\tau_{1}}x_{\tau_{3}}x_{\tau_{5}}x_{\tau_{6}}x_{\delta_{2}}x_{\delta_{5}}$$
			$$\overline{\text{wt}}^{\text{pc}}(M)=
			\frac{w_{\tau_{1}}^2w_{\tau_{3}}^2w_{\tau_{5}}w_{\tau_{6}}y_{\tau_{1}}^2y_{\tau_{2}}^2}{w_{\tau_{1}}w_{\tau_{3}}y_{\tau_{1}}y_{\tau_{2}}}=
			w_{\tau_{1}}w_{\tau_{3}}w_{\tau_{5}}w_{\tau_{6}}y_{\tau_{1}}y_{\tau_{2}}$$
		\end{example}
		
		\begin{thm}\label{thm:notchedExpansion}\cite[Theorem 4.17]{MSW} 
			Let $T=\{\tau_1, \dots, \tau_n\}$ be a tagged triangulation of $\dpoly$ which is also an ideal triangulation. Suppose $\rho \notin T$ is a radius, and let $\lambda$ be the loop enclosing $\rho$. Let $\tau_{i_1},...,\tau_{i_k}$ be the sequence of arcs of $T$ that $\lambda$ intersects, and suppose $\rho$ intersects  $\tau_{i_1},...,\tau_{i_d}$. Then
			\begin{itemize}
				\item $L^{\text{bd}}_{T, \rho^{\bowtie}}= \frac{1}{x_{\tau_{i_{d+1}}}\cdots x_{\tau_{i_k}}} \sum_M \owt^{\text{bd}}(M)$
				\item $L^{\text{pc}}_{T, \rho^{\bowtie}}= \frac{1}{w_{\tau_{i_{d+1}}}\cdots w_{\tau_{i_k}}} \sum_M \owt^{\text{pc}}(M)$
				\item $L^{\text{nf}}_{T, \rho^{\bowtie}}= \frac{1}{z_{\tau_{i_{d+1}}}\cdots z_{\tau_{i_k}}} \sum_M \owt^{\text{nf}}(M)$
			\end{itemize} 
			where the sum is over $\rho$-symmetric matchings of $G_{T, \lambda}$.
		\end{thm}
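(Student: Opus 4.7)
The plan is to derive this formula from the identity $L_{T,\lambda}=L_{T,\rho}\cdot L_{T,\rho^{\bowtie}}$, which is simply the Laurent-polynomial incarnation of the cluster-algebra convention $x_\lambda := x_\rho\, x_{\rho^{\bowtie}}$ adopted immediately before Theorem~\ref{thm:unnotchedExpansion}. Once that identity is in hand, the notched formula is reduced to a combinatorial manipulation of the unnotched expansion for the loop $\lambda$.

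Concretely, I would first apply Theorem~\ref{thm:unnotchedExpansion} to the loop $\lambda$, whose crossing sequence with $T$ is $\tau_{i_1},\dots,\tau_{i_k}$, to write
\[
L_{T,\lambda}=\frac{1}{x_{\tau_{i_1}}\cdots x_{\tau_{i_k}}}\sum_{M}\wt(M),
\]
where the sum is over all perfect matchings of $G_{T,\lambda}$. Next, apply the same theorem to the ordinary arc $\rho$, which crosses $\tau_{i_1},\dots,\tau_{i_d}$, to obtain
\[
L_{T,\rho}=\frac{1}{x_{\tau_{i_1}}\cdots x_{\tau_{i_d}}}\sum_{M'}\wt(M'),
\]
over matchings $M'$ of $G_{T,\rho}\cong G_{T,\rho,1}$. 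Taking the quotient and using $L_{T,\rho^{\bowtie}}=L_{T,\lambda}/L_{T,\rho}$, the theorem is reduced to the purely combinatorial identity
\[
\sum_{M}\wt(M)=\Bigl(\sum_{M'}\wt(M')\Bigr)\Bigl(\sum_{\tilde M}\owt(\tilde M)\Bigr),
\]
where $\tilde M$ ranges over $\rho$-symmetric matchings of $G_{T,\lambda}$.

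To establish this identity, I would build an explicit bijection between perfect matchings $M$ of $G_{T,\lambda}$ and pairs $(M',\tilde M)$. Citing \cite[Lemma 12.4]{MSW} (quoted in the definition of $\owt$), for every $M$ there is an index $i\in\{1,2\}$ such that $M|_{G_{T,\rho,i}}$ is a perfect matching of $G_{T,\rho,i}$; fix the smallest such $i$ to break ties. Transfer $M|_{G_{T,\rho,i}}$ across the canonical isomorphism $G_{T,\rho,i}\cong G_{T,\rho,1}$ to obtain $M'$, and form $\tilde M$ by replacing $M$ on $G_{T,\rho,3-i}$ with the reflection of $M|_{G_{T,\rho,i}}$, leaving $M$ unchanged on the middle region. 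By construction $\tilde M$ is $\rho$-symmetric, and the weight identity $\wt(M)=\wt(M')\cdot\owt(\tilde M)$ follows immediately from $\owt(\tilde M)=\wt(\tilde M)/\wt(\tilde M|_{G_{T,\rho,i}})$ together with the fact that $\tilde M$ and $M$ agree outside the reflected end.

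The hard part is showing this map is a well-defined bijection and, for $\mca^{\text{pc}}(T)$, that it is weight-preserving when the contribution of $C(M)$ is taken into account. The delicate points are (i) the tile-gluing for the self-folded triangle illustrated in Figure~\ref{fig:selfFoldedTile}, which governs how the two ends of $G_{T,\lambda}$ meet; (ii) boundary vertices of $G_{T,\rho,i}$ that are matched to interior vertices of the middle region, which must be handled so that the reflection operation is well-defined; and (iii) the principal-coefficient case, where one must verify that the enclosed region $C(M)$ splits as the disjoint union of $C(M')\subset G_{T,\rho,1}$ and $C(\tilde M)$, so that the $y$-factors in $\wt^{\text{pc}}(M)$ distribute correctly between the two factors. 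Tracking the symmetric difference $M\ominus M_0$ across the gluing region, combined with the explicit description of $H_{T,\rho,i}$ in the definition of $\rho$-symmetry, should suffice to verify all three points tile by tile.
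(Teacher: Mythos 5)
This statement is quoted verbatim from \cite[Theorem 4.17]{MSW}; the paper offers no proof of it, so there is nothing internal to compare your argument against. Your opening reduction is sound and is in fact the strategy of \cite{MSW}: within this paper's conventions, $L_{T,\lambda}:=L_{T,\rho}L_{T,\rho^{\bowtie}}$ is a definition, Theorem~\ref{thm:unnotchedExpansion} explicitly covers loops, so dividing the loop expansion by the radius expansion reduces the theorem to the factorization $\sum_{M}\wt(M)=\bigl(\sum_{M'}\wt(M')\bigr)\bigl(\sum_{\tilde M}\owt(\tilde M)\bigr)$, and the monomial prefactors match.

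The gap is that this factorization identity \emph{is} the theorem, and the bijection you propose to prove it does not work as stated. If $M|_{G_{T,\rho,3-i}}$ fails to be a perfect matching of $G_{T,\rho,3-i}$, it is precisely because some vertices of $G_{T,\rho,3-i}$ at its junction with the middle of $G_{T,\lambda}$ are covered by middle edges of $M$; replacing $M|_{G_{T,\rho,3-i}}$ by the reflection of $M|_{G_{T,\rho,i}}$ (which covers \emph{every} vertex of $G_{T,\rho,3-i}$) while "leaving $M$ unchanged on the middle region" then double-covers those junction vertices, so your $\tilde M$ is not a perfect matching. This is exactly the case where \cite[Lemma 12.4]{MSW} has nontrivial content, so it cannot be dismissed as a boundary technicality. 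Relatedly, $\rho$-symmetry is defined via the subgraphs $H_{T,\rho,i}$ with radius-labelled edges deleted, not via literal agreement of the restrictions to $G_{T,\rho,i}$, so the correct "symmetrization" must be organized around those edge classes rather than a wholesale reflection. Finally, you assert but do not argue that the map is a bijection: given an arbitrary pair $(M',\tilde M)$ you must produce exactly one $M$, which requires deciding at which end to splice $M'$ into $\tilde M$ and verifying the splice is again a perfect matching. These points are not verifications to be "handled tile by tile" after the fact; they constitute the proof in \cite[Section 12]{MSW}, and your sketch defers all of them.
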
	
	\section{Newton polytopes and perfect matching polytopes}
			In this section, we define perfect matching polytopes and give their relation to the Newton polytopes $N(L_{T,\gamma})\subset \rr^T$. 
		
		To simplify notation, set $N(T, \gamma):=N(L_{T,\gamma})$, with $N^{\text{bd}}(T, \gamma)$, $N^{\text{pc}}(T, \gamma)$, and $N^{\text{nf}}(T, \gamma)$ defined accordingly.
		
		\begin{rmk} 
			Proposition~\ref{prop:flip}, particularly the substitution in \eqref{eq:flip}, implies $N(T, \gamma)$ and $N(T^p, \gamma^p)$ differ only by renaming coordinates. Thus, when proving the saturation of $N(T, \gamma)$, we may assume the tagged triangulation $T$ corresponds to an ideal triangulation. We will make this assumption for the remainder of the paper.
		\end{rmk}
		
		Recall the definition of weight vector $w^M$ of a perfect matching $M$ (see Definition~\ref{def:matching}).
		
		\begin{defn}
			Given a snake graph $G_{T,\gamma}$, the \emph{perfect matching polytope} $P(G)$ is the convex hull of $\{w^M: M \text{ is a perfect matching of }G\}$. We say that the perfect matching polytope $P(G)$ is \emph{saturated} if every lattice point in $P(G)$ is the weight vector of a matching, and we say it is \emph{empty} if every lattice point in $P(G)$ is a vertex.  
		\end{defn}
		Note that $P^{\text{bd}}(G)$, $P^{\text{pc}}(G)$, and $P^{\text{nf}}(G)$ are defined using $w_{\text{bd}}^{M}$, $w_{\text{pc}}^{M}$, and $w_{\text{nf}}^{M}$ respectively. Each polytope lies in the vector space containing the corresponding weight vectors, as given in Definition \ref{def:matching}.
		\begin{defn}
			Let $G_{T,\gamma}$ be a snake graph, and let $G'$ be a snake graph identical to $G$ but with edges given unique labels. The \emph{lifted perfect matching polytope} $\overline{P}(G)$ is the convex hull of the indicator vectors $\chi_M\in\rr^{E(G)}$, where $M$ ranges over all perfect matchings of $G'$. 
			
			Similarly, the \emph{lifted principal perfect matching polytope} $\overline{P}^{pc}(G)$ is the convex hull of the indicator vectors $(\chi_M,\chi_{C(M)})\in\rr^{E(G)}\times\rr^{S(G)}$, where $S(G)$ is the set of squares of $G'$ and $M$ ranges over all perfect matchings of $G'$. 
		\end{defn}
		The polytope $\overline{P}(G)$ is frequently called the perfect matching polytope; we depart from this because of our interest in graphs with non-distinct edge labels. 
		
		The following lemma explains our interest in perfect matching polytopes.
		
		\begin{lemma} \label{lem:saturationEquiv}
			Let $T^\circ=\{\tau_1, \dots, \tau_n\}$ be an ideal triangulation of $\apoly$ or $\dpoly$, and let $T$ be the corresponding tagged triangulation. Let $\gamma$ be an ordinary arc (including possibly a loop). Then $N(T, \gamma)$ is saturated (resp. empty) if and only if $P(G_{T^\circ, \gamma})$ is saturated (resp. empty).
		\end{lemma}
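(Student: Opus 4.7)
The plan is to show that $N(T,\gamma)$ is an integer translate of $P(G_{T^\circ,\gamma})$, from which the equivalences follow immediately since translation by a lattice vector bijects lattice points to lattice points, preserves which points are vertices, and maps the image of $M \mapsto w^M$ to the support of $L_{T,\gamma}$.

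The first step is to unpack Theorem~\ref{thm:unnotchedExpansion}. In each of the three coefficient regimes, one has an identity of the form
\[L_{T,\gamma} \;=\; \mu^{-1}\sum_M \wt(M),\]
where $\mu$ is the denominator monomial $\prod_j v_{\tau_{i_j}}$ indexed by the arcs $\gamma$ crosses (with $v \in \{x,w,z\}$ according to the regime) and the sum is over perfect matchings of $G_{T^\circ,\gamma}$. Let $e$ denote the exponent vector of $\mu$ in the appropriate ambient lattice; note that in the principal coefficients case the $y$-components of $e$ are all zero. The second step is the observation that $\sum_M \wt(M)$ is a sum of monomials with positive integer coefficients, so combining like terms never produces a zero coefficient. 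Consequently, even though the map $M \mapsto w^M$ may fail to be injective when $G_{T^\circ,\gamma}$ has edges with repeated labels, the support of $L_{T,\gamma}$ is precisely
\[\mathrm{supp}(L_{T,\gamma}) \;=\; \{\,w^M - e \,:\, M \text{ a perfect matching of } G_{T^\circ,\gamma}\,\}.\]
Taking convex hulls gives $N(T,\gamma) = P(G_{T^\circ,\gamma}) - e$.

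The last step is immediate: since $e$ is a lattice vector, translation by $-e$ identifies the lattice of $P(G_{T^\circ,\gamma})$ with the lattice of $N(T,\gamma)$, carries vertices to vertices, and carries weight vectors $w^M$ to support vectors $w^M - e$. Thus $N(T,\gamma)$ is saturated (resp.\ empty) exactly when $P(G_{T^\circ,\gamma})$ is. No substantial obstacle arises; the argument is essentially bookkeeping against the MSW formula, with only mild care needed to run the three coefficient regimes $\mathrm{bd}$, $\mathrm{pc}$, $\mathrm{nf}$ in parallel and to confirm in each case that the denominator $\mu$, the ambient lattice, and the weight vector $w^M$ of Definition~\ref{def:matching} match up compatibly.
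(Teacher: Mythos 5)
Your argument is correct and essentially identical to the paper's in the case $T^\circ = T$ (in particular for all of type $A$), and the explicit observation that positivity of coefficients in $\sum_M \wt(M)$ prevents cancellation is a good point worth making explicit. However, you have missed the case $T^\circ \neq T$, which the paper treats separately.

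When $T^\circ$ contains a loop $\lambda$ enclosing a radius $\rho$ (this occurs for $\dpoly$ whenever the tagged triangulation $T$ contains both $\rho$ and $\rho^{\bowtie}$), the weight vectors $w^M$ live in a coordinate space with a coordinate indexed by $\lambda$, whereas the support of $L_{T,\gamma}$ lives in $\zz^T$ with coordinates indexed by $\rho$ and $\rho^{\bowtie}$ instead. These are not the same ambient lattice, so the relationship cannot be a translation. The paper handles this via the convention $x_\lambda := x_\rho x_{\rho^{\bowtie}}$, which induces the linear map $\xi:\rr^{T^\circ}\to\rr^T$ fixing the coordinates in $T^\circ\cap T$ and sending $\xi(v)_\rho = v_\lambda + v_\rho$, $\xi(v)_{\rho^{\bowtie}} = v_\lambda$; this $\xi$ is unimodular, so it (composed with a translation) preserves lattice points, saturation, and emptiness. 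Your closing remark that "the ambient lattice \ldots match[es] up compatibly" is precisely the point that requires an argument, and in this case the identification is genuinely not a translation, so your claim $N(T,\gamma) = P(G_{T^\circ,\gamma}) - e$ fails. You need to supply the map $\xi$ (or something equivalent) and check that it is bijective on lattice points before the conclusion goes through.
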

		
		\begin{proof}
			We use notation for $\mca^{\text{bd}}$, as the proofs for the other two are identical. 
			
			There are two cases. If $T^\circ=T$ (i.e. $T^\circ$ has no loops), then by Theorem~\ref{thm:unnotchedExpansion}, the set of weight vectors $W:=\{w^M: M \text{ is a perfect matching of }G_{T^\circ, \gamma}\}$ differs from the support of $L_{T, \gamma}$ by an element of $\zz^T$. So $P(G_{T^\circ, \gamma})$ is an integer translate of $N(T, \gamma)$. 
			
			If $T^\circ \neq T$, then $T^\circ$ contains a loop $\lambda$ enclosing a radius $\rho$. The tagged triangulation $T$ contains $\rho^{\bowtie}$ and not $\lambda$, and in \eqref{eq:unnotchedExpansion}, we set $x_\lambda=x_\rho x_{\rho^{\bowtie}}$ to obtain a term of $L_{T, \gamma}$ from the weight of a matching of $G_{T^\circ, \gamma}$.  
			
			Consider the map $\xi: \rr^{T^\circ} \mapsto \rr^T$, which fixes coordinates indexed by $T^\circ \cap T$ and sends the others to
			\begin{align*}
				\xi(v)_\rho&= v_\lambda + v_\rho\\
				\xi(v)_{\rho^{\bowtie}}&=v_\lambda.
			\end{align*}
			Theorem~\ref{thm:unnotchedExpansion} implies that there exists $\eta\in \zz^T$ such that $\xi(W)+\eta$ is the support of $L_{T, \gamma}$.
			In particular, $\xi(P(G_{T^\circ, \gamma}))+\eta$ is equal to $N(T, \gamma)$. Since $\xi$ and translation by $\eta$ are both bijective on lattice points, the statement follows.
			
		\end{proof}
		
		The lifted (principal) perfect matching polytopes are our main tool to prove saturation of perfect matching polytopes. The lifted matching polytopes are combinatorially easy to understand. We then carefully analyze the effects of projecting the lifted matching polytopes to the matching polytopes, using the following natural projection.
		
		Let $G=G_{T,\gamma}$ and let $Z$ denote as usual the boundary arcs of the surface $(S, M)$. We have projection maps
		\begin{align}
			\pi^{\text{bd}}:\mathbb{R}^{E(G)} &\to \mathbb{R}^T \times \mathbb{R}^Z\\
			\pi^{\text{nf}}:	\mathbb{R}^{E(G)} &\to \mathbb{R}^T\\
			\pi^{\text{pc}}:\mathbb{R}^{E(G) \times S(G)}& \to \mathbb{R}^T \times \mathbb{R}^T
		\end{align}
		given by summing coordinates according to the labels of the indexing edge or square. For example, the $\tau_j$th coordinate of $\pi^{\text{bd}}(v)$ is 
		\[\sum_{\substack{{e \in E(G):}\\{\ell(e)=\tau_j}}} v_e.\]
		
		We will simply write $\pi$ if we do not want to specify the vector spaces, or if it is clear from context. 
		
		Note that $\pi$ is a surjection from incidence vectors of $\overline{P}(G)$ (resp. $\overline{P}^{\text{pc}}(G)$) to weight vectors of $P^{\text{bd}}(G)$ and $P^{\text{nf}}(G)$ (resp. $P^{\text{pc}}(G)$).
		
		The lifted perfect matching polytopes have a straightforward description.
		
		\begin{prop}(\cite{matchingPoly}, see also \cite[Theorem 7.3.4]{matchingPoly2})\label{PM(G)}
			Let $G$ be a snake graph. Then
			\[\overline{P}(G)= \{w \in\rr^{E(G)}: w_e \geq 0, \sum_{e\ni v}w_e=1 \text{ for all }  v \in G\}.\]
		\end{prop}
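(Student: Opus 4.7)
The plan is to recognize this as a classical theorem about perfect matching polytopes of bipartite graphs, and then verify that snake graphs fall into this setting. The statement is exactly the Birkhoff--Egerv\'ary-type description of the perfect matching polytope of a bipartite graph, so strictly speaking it suffices to cite the references as the authors do. However, one can give a short self-contained argument as follows.

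First, I would observe that any snake graph $G$ is bipartite. Indeed, $G$ is (isomorphic to) a subgraph of the standard square lattice $\zz^2$ — each tile is a unit square and tiles are glued along edges — and the square lattice is $2$-colorable by the parity of $x+y$. So $V(G)$ admits a bipartition $V(G)=V_0\sqcup V_1$ with every edge joining $V_0$ to $V_1$.

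Next, let $Q(G)$ denote the polytope on the right-hand side, i.e.\ the set of $w\in\rr^{E(G)}_{\ge 0}$ with $\sum_{e\ni v}w_e=1$ for every $v\in V(G)$. The inclusion $\overline{P}(G)\subseteq Q(G)$ is immediate: every indicator vector $\chi_M$ of a perfect matching $M$ satisfies $\chi_M(e)\in\{0,1\}$ and, by the definition of a perfect matching, exactly one edge incident to each vertex is chosen, so $\sum_{e\ni v}\chi_M(e)=1$. Since $Q(G)$ is convex, it contains the convex hull $\overline{P}(G)$.

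For the reverse inclusion, it suffices to show that every vertex of $Q(G)$ is the indicator vector of a perfect matching of $G$, since then $Q(G)$ equals the convex hull of such vectors, which is $\overline{P}(G)$. The defining system of $Q(G)$ has constraint matrix equal to the (signed) vertex-edge incidence matrix of $G$ together with $-I$ (for $w\ge 0$). The incidence matrix of a bipartite graph is totally unimodular: every square submatrix has determinant in $\{0,\pm 1\}$, as one sees by an induction on size using the bipartition $V_0\sqcup V_1$ (each column has at most one $1$ in the rows indexed by $V_0$ and at most one in those indexed by $V_1$). Consequently every vertex of $Q(G)$ has integer coordinates, and combined with $0\le w_e$ and $\sum_{e\ni v}w_e=1$ this forces $w_e\in\{0,1\}$ with exactly one edge at each vertex selected — that is, $w=\chi_M$ for a perfect matching $M$ of $G$.

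The main potential obstacle is simply invoking total unimodularity cleanly; an alternative, slightly more combinatorial route is to take any $w\in Q(G)$ that is not in $\overline{P}(G)$, consider the support subgraph, and use bipartiteness to find an alternating even cycle on which one can push $w$ in two opposite directions while preserving the constraints, thereby expressing $w$ as a nontrivial convex combination and contradicting vertexhood. Either approach relies only on bipartiteness of $G$, which is the key structural input provided by the snake graph construction.
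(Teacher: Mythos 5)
Your proof is correct: snake graphs are indeed bipartite (they embed in the square grid $\zz^2$ and inherit the two-coloring by parity), and the description of the perfect matching polytope then follows from the total unimodularity of the bipartite incidence matrix, which is the standard argument appearing in the cited references. The paper itself does not reprove this statement but simply cites \cite{matchingPoly} and \cite[Theorem 7.3.4]{matchingPoly2}, so your write-up essentially reproduces the classical proof that the paper defers to; both the total-unimodularity route and the alternating-cycle route you mention are valid and standard, and the only snake-graph-specific observation needed — bipartiteness — you supply correctly.
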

		\begin{cor}\label{PMPC}
			Let $G$ be a snake graph. Then for each $s\in S(G)$, there is some $e_s\in E(G)$, and a map $f_s$, which is either $x\mapsto x$ or $x\mapsto 1-x$, such that
			$$\overline{P}^{\text{pc}}(G)=
			\{(w_1,w_2)\in\rr^{E(G)}\times\rr^{S(G)}: ({w_1})_e \geq 0, ({w_2})_s=f_s(({w_1})_{e_s}),\sum_{e\ni v}w_e=1 \text{ for all }  v \in G\} $$
		\end{cor}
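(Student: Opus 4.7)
The plan is to realize $\overline{P}^{\text{pc}}(G)$ as an affine image of $\overline{P}(G)$, which is already described by Proposition~\ref{PM(G)}. Concretely, I will show that each additional coordinate $(w_2)_s$, indexed by a square $s$ of $G$, is an affine function of a single edge coordinate $(w_1)_{e_s}$, and for this reduces to a combinatorial statement about matchings.

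The key combinatorial lemma is the following: for each tile $s$ of $G$, the $4$-cycle bounding $s$ contains at least one edge that lies on the boundary of $G$ (that is, is not shared with another tile). This holds because each tile shares at most two of its four edges with adjacent tiles in the snake graph, leaving at least two boundary edges per tile. Picking any such boundary edge and calling it $e_s$, I claim that for every perfect matching $M$ of $G$,
\[ s \in C(M) \iff e_s \in M \ominus M_0. \]

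To prove the lemma, I use planar topology. By \cite[Lemma 4.7]{MSW}, $M \ominus M_0$ is a disjoint union of simple cycles in the planar graph $G$, and $C(M)$ consists of the tiles lying in the interior of some such cycle. The crucial observation is that the unbounded exterior of $G$ lies in the exterior of every cycle $C$ of $M \ominus M_0$, because each such $C$ bounds a compact region while the exterior of $G$ contains the point at infinity. Since $e_s$ is a boundary edge of $G$, its two local sides in the embedding are the interior of the tile $s$ and the exterior of $G$. If $s$ lies in the interior of a cycle $C$, then these two sides lie in different components of the plane minus $C$, forcing $e_s \in C \subseteq M \ominus M_0$. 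Conversely, if $e_s$ belongs to some cycle $C$ of $M \ominus M_0$, then since the exterior-of-$G$ side of $e_s$ is in the exterior of $C$, the tile-side of $e_s$, namely $s$, must lie in the interior of $C$, so $s \in C(M)$.

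Given the lemma, set $f_s(x) := x$ if $e_s \notin M_0$ and $f_s(x) := 1 - x$ if $e_s \in M_0$. Then $\chi_{C(M)}(s) = f_s(\chi_M(e_s))$ for every perfect matching $M$. Since $f_s$ is affine and the vertices of $\overline{P}^{\text{pc}}(G)$ are precisely the points $(\chi_M, \chi_{C(M)})$, the relation $(w_2)_s = f_s((w_1)_{e_s})$ extends by convex combinations to all of $\overline{P}^{\text{pc}}(G)$. Combining this with Proposition~\ref{PM(G)} yields the stated description. I expect the main obstacle to be the proof of the lemma: in particular, being sufficiently careful with the planar topology to conclude that a boundary edge of $G$ incident to a tile in $C(M)$ must itself lie on one of the enclosing cycles.
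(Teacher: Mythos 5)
Your proof is correct and follows essentially the same route as the paper: choose a boundary edge $e_s$ of each tile, establish that $s\in C(M)$ if and only if $e_s\in M\ominus M_0$, and extend the resulting affine relation from the generating incidence vectors to the whole convex hull. You additionally supply a careful planar-topology proof of the biconditional that the paper merely asserts, and your realization of $\overline{P}^{\text{pc}}(G)$ as the graph of an affine map over $\overline{P}(G)$ handles the reverse inclusion at least as cleanly as the paper's dimension-count argument.
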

		\begin{proof}
			Each square $s\in S(G)$ has an edge $e_s$ on the boundary of $G$. If $e_s\notin M_0$, then for any matching $M$ we have $e_s\in M$ if and only if $s\in C(M)$; if $e_s\in M_0$, then for any matching $M$ we have $e_s\in M$ if and only if $s\notin C(M)$. In the former case, let $f_s:x\mapsto x$ and in the latter case let $f_s=x\mapsto 1-x$. If $(w_1, w_2)=\chi(M)$ is a principal incidence vector for a matching $M$, then we have $({w_2})_s=f_s(({w_1})_{e_s})$. The same relation must hold in any element of the convex hull of the incidence vectors. The relations given in Lemma \ref{PM(G)} must also hold. And since we have a (coordinate) projection $\overline{P}^{\text{pc}}(G)\to \overline{P}(G)$, the dimension of the former must be at least that of the latter, so there cannot be any additional relations. 
		\end{proof}
		The preceding proof shows that we have an affine map $\alpha: \overline{P}^{\text{pc}}(G)\to \overline{P}(G)$, which is inverse to the natural projection $\overline{P}(G)\to \overline{P}^{\text{pc}}(G)$. Coordinate-wise, $\alpha$ is defined by $(\alpha(v)_1)_e=v_e$ and $(\alpha(v)_2)_s=f_s(v_s)$.
		
		Next, we show that lifted matching polytopes have the nice properties we care about.	
		
		\begin{lemma}\label{lem:G-sat}
			Let $G$ be a snake graph. Then $\overline{P}(G)$ and $\overline{P}^{\text{pc}}(G)$ are empty and, in particular, are saturated.
		\end{lemma}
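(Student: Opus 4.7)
The plan is to show emptiness directly from the inequality description in Proposition~\ref{PM(G)} and Corollary~\ref{PMPC}, and then deduce saturation as a formal consequence.

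First I would handle $\overline{P}(G)$. Take any lattice point $w \in \overline{P}(G) \cap \zz^{E(G)}$. By Proposition~\ref{PM(G)}, $w_e \geq 0$ for every edge $e$, and for each vertex $v$ we have $\sum_{e \ni v} w_e = 1$. A sum of nonnegative integers equaling $1$ forces each summand to be $0$ or $1$, and since every edge is incident to some vertex, every coordinate $w_e$ lies in $\{0,1\}$. Setting $M := \{e : w_e = 1\}$, the equality constraint says each vertex of $G$ is covered by exactly one edge of $M$, so $M$ is a perfect matching and $w = \chi_M$. Thus every lattice point is the incidence vector of a perfect matching and hence a vertex of $\overline{P}(G)$, which is exactly the emptiness statement.

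For $\overline{P}^{\text{pc}}(G)$, I would use the affine bijection $\alpha: \overline{P}(G) \to \overline{P}^{\text{pc}}(G)$ described right after Corollary~\ref{PMPC}, whose inverse is the coordinate projection $(w_1,w_2) \mapsto w_1$. Concretely, given $(w_1,w_2) \in \overline{P}^{\text{pc}}(G) \cap \zz^{E(G) \times S(G)}$, the first block $w_1$ is a lattice point of $\overline{P}(G)$ by the projection, hence by the preceding paragraph $w_1 = \chi_M$ for some perfect matching $M$. Then the identity $(w_2)_s = f_s((w_1)_{e_s})$ from Corollary~\ref{PMPC} determines $w_2$, and since $f_s$ is either $x \mapsto x$ or $x \mapsto 1-x$, the resulting $w_2$ is a $\{0,1\}$-vector: in fact $w_2 = \chi_{C(M)}$. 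So $(w_1,w_2)$ is a vertex of $\overline{P}^{\text{pc}}(G)$, proving emptiness.

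Finally, emptiness immediately implies saturation in both cases: if every lattice point of a polytope is a vertex, in particular every lattice point lies in the spanning set of incidence vectors, so the polytopes are saturated. The only thing to check carefully is that the equality constraints in Proposition~\ref{PM(G)} really do force integrality of each coordinate from integrality of the vector — this is the main (and essentially the only) obstacle, and it is handled by the nonnegativity-plus-unit-sum argument above. No induction on the snake graph structure is needed; the bipartite/totally unimodular nature of the constraint matrix is implicitly doing the work, but the argument can be stated in one line without invoking it.
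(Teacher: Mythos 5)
Your proof is correct and follows essentially the same route as the paper: use the halfspace description of $\overline{P}(G)$ from Proposition~\ref{PM(G)} to force every integer point to be a $0/1$ vector, identify it as the incidence vector of a perfect matching (hence a vertex), and transfer the statement to $\overline{P}^{\text{pc}}(G)$ via the affine isomorphism from Corollary~\ref{PMPC}. No gaps.
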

		\begin{proof}
			By Lemma \ref{PM(G)}, each lattice point $\eta$ in $\overline{P}(G)$ has $\eta_{e}=0$ or $\eta_{e}=1$. Further, for each vertex $v$ of $G$, there is exactly one edge $e$ containing $v$ such that $\eta_e=1$. Thus, each lattice point is the weight vector of a perfect matching for $G$. 
			
			Note that all weight vectors are $0/1$ vectors with $|E(G)|/2$ coordinates equal to $1$. This implies that a nontrivial convex combination of weight vectors is not a weight vector, so all weight vectors are vertices of $\overline{P}(G)$. 
			
			Since the affine isomorphism between $\overline{P}(G)$ and $\overline{P}^{\text{pc}}(G)$ in the proof of Corollary \ref{PMPC} maps lattice points to lattice points, the desired statement holds for $\overline{P}^{\text{pc}}(G)$ as well.
		\end{proof}
		
		\begin{rmk}\label{rmk:preimage}
			By Lemma~\ref{lem:G-sat}, a lattice point $v \in P(G)$ is a weight vector if and only if $\pi^{-1}(v) \subset \overline{P}(G)$ (resp. $\overline{P}^{\text{pc}}(G)$) contains a lattice point. This will be our general strategy for proving saturation of $P(G)$. For emptiness, we will consider collections of weight vectors such that some convex combination of them is equal to a lattice point, and then show that at most one vector in the combination has a positive coefficient. 		
		\end{rmk}
	\section{Type A cluster variable Newton polytopes}
		Here we restrict our attention to cluster variables of cluster algebras of type $A$; that is, to triangulations $T$ and arcs $\gamma$ of a polygon $\apoly$. 
		\subsection{Boundary frozen variables}
		We first show that the projection map $\pi^{\text{bd}}$ is very well-behaved on $\overline{P}(G_{T,\gamma})$. For the rest of this section, let $\pi=\pi^{\text{bd}}$ and $P(G)=P^{\text{bd}}(G)$. 
		
		\begin{lemma}\label{map-pi}
			Let $T$ be a triangulation of $\apoly$ and let $\gamma$ be an arc. For $v$ a lattice point of $P(G_{T,\gamma})$, $\pi^{-1}(v)\subseteq\zz^{E(G_{T,\gamma})}$. 
		\end{lemma}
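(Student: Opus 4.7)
The plan is to propagate integrality from $v$ to $w$ via the vertex constraints of $\overline{P}(G_{T,\gamma})$ given by Proposition~\ref{PM(G)}, together with a careful analysis of which labels can appear on multiple edges of $G = G_{T,\gamma}$.

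Boundary segments in $Z$ each lie on a single triangle of $T$ and so label at most one edge of $G$. For an arc $\tau \in T$, $\tau$ labels more than one edge of $G$ only when both triangles of $T$ adjacent to $\tau$ are traversed by $\gamma$; since the dual graph of an ideal triangulation of $\apoly$ is a tree, this forces $\tau$ to equal some $\tau_{i_j}$. A closer look shows $\tau_{i_j}$ labels exactly two edges when $1 < j < d$ --- one in tile $j-1$ (coming from $\Delta_{j-1,2}$), call it $e_j^-$, and one in tile $j+1$ (coming from $\Delta_{j+1,1}$), call it $e_j^+$ --- and at most one edge otherwise. A parallel argument using the dual tree rules out all other label coincidences (among outer sides, glue edges, and the $\tau_{i_j}$). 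Hence any edge $e$ of $G$ with a unique label has $w_e = v_{\ell(e)} \in \zz$ directly from the hypothesis on $v$, and it only remains to establish $w_{e_j^\pm} \in \zz$ for each $1 < j < d$.

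I will do this by induction on $j$. Since $w_{e_j^+} = v_{\tau_{i_j}} - w_{e_j^-}$, it suffices to produce an integer expression for $w_{e_j^-}$ at each step. The strategy is to select a snake-graph vertex $u_j$ incident to $e_j^-$ whose other incident edges all have $w$-values that are already known to be integer --- either because their labels are unique (so equal to an integer $v$-coordinate) or because they coincide with some $e_{j'}^\pm$ with $j' < j$ (integer by the inductive hypothesis). The vertex constraint $\sum_{f \ni u_j} w_f = 1$ then expresses $w_{e_j^-}$ as $1$ minus a sum of known integers. For the base case $j = 2$, $u_2$ is the common polygon endpoint of $\tau_{i_1}$ and $\tau_{i_2}$; this vertex has degree $2$ in the snake graph (it does not lie on the glue edge between tiles $1$ and $2$), and its other incident edge is a uniquely-labeled non-diagonal side of $\Delta_{1,1}$ (a boundary segment or uncrossed diagonal). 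For the inductive step, $u_j$ is the common polygon endpoint of $\tau_{i_{j-1}}$ and $\tau_{i_j}$ lying on $\Delta_{j-1,2}$; its remaining incident edges in tile $j-1$ (and, if $u_j$ is shared via the glue between tiles $j-2$ and $j-1$, in tile $j-2$) are either glue edges or edges labeled $\tau_{i_{j-1}}$ or $\tau_{i_{j-2}}$, i.e., $e_{j-1}^-$ or $e_{j-2}^+$, known integer by induction (or $\tau_{i_1}$-edges in the small-$j$ edge case, which are uniquely labeled).

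The main obstacle I anticipate is the case analysis arising from the orientation of $\tau_{i_{j-1}}$ within $\Delta_{j-1,2}$, equivalently from whether tile $j-1$ is a corner or non-corner tile in the sense of Definition~\ref{def:corner}. Each case requires verifying that $u_j$ has the claimed incidence pattern and that the vertex constraint can be solved for $w_{e_j^-}$ using only already-known integer quantities. The key structural inputs enabling the argument are that each arc of $T$ bounds exactly two triangles of $T$ and that the dual graph of a triangulated polygon is a tree; together these preclude the pathological label coincidences that would spoil the propagation and control how snake-graph vertices are shared across tiles.
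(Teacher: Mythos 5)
Your proof is correct, and it takes a genuinely different route from the paper's. The paper argues by induction on the number of tiles: it assumes for contradiction a non-lattice preimage $\overline{v}$ of a lattice point, observes that the three outer edge labels of the \emph{last} tile are unique (so those coordinates of $\overline{v}$ are forced to be $0$ or $1$), and then modifies and truncates $\overline{v}$ to produce a non-lattice preimage over the snake graph $G_{T,\gamma'}$ with one fewer tile, contradicting the inductive hypothesis. Your argument instead propagates integrality \emph{forward} along the tile sequence: you classify exactly which labels repeat (the $\tau_{i_j}$ with $1<j<d$, each appearing twice and nowhere else, with glue labels, boundary labels, and the outer-end labels $a_1,b_1,a_d,b_d$ all unique — a fact you correctly derive from the dual graph of the triangulation being a tree), and then you solve for $w_{e_j^-}$ one $j$ at a time via the vertex constraint at the common endpoint $u_j$ of $\tau_{i_{j-1}}$ and $\tau_{i_j}$. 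I checked the case analysis you flag as the main obstacle: when tile $j-1$ is a corner tile, $u_j$ has degree two with second edge $e_{j-2}^+$ (or a uniquely-labeled edge when $j\le 3$); when it is a non-corner tile, $u_j$ has degree three with the extra edges being the glue edge $\sigma_{j-2}$ (unique label) and $e_{j-1}^-$ (known by induction); and $u_j$ is never on $\sigma_{j-1}$, so no further sharing occurs. In all cases the constraint expresses $w_{e_j^-}$ as an integer, so the induction closes. The paper's version is more compact — the contradiction framing lets it sidestep spelling out the repeated-label classification and the corner/non-corner dichotomy — whereas yours is more explicit and makes the mechanism of integrality propagation visible, at the cost of having to carry the corner/non-corner cases through each step.
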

		\begin{proof}

			We will employ induction on the number of squares in $G_{T, \gamma}$. If there is exactly one square, then all edges will have different labels, so $\pi$ is the identity map. 
			
			If $G_{T,\gamma}$ has more than one square, assume the lemma holds for all smaller snake graphs, and suppose for the sake of contradiction that there exists a non-lattice point $\overline{v}\in \overline{P}(G_{T,\gamma})$ such that $\pi(\overline{v})\in P(G_{T,\gamma})$ is a lattice point. Let the boundary segments and arcs of $T$ around one endpoint of $\gamma$ be as shown in Figure \ref{diagrams} on the left, so the final tile of $G_{T, \gamma}$ is as in Figure \ref{diagrams} on the right. Let $\gamma'$ be the arc obtained by changing the endpoint of $\gamma$ to vertex $W$ in Figure \ref{diagrams} on the left, so $G_{T,\gamma'}$ is equal to $G_{T,\gamma}$ with the last square removed. Note that $a,b,c$ each appear only once as edge labels of $G_{T,\gamma}$, so  $\overline{v}_a,\overline{v}_b,\overline{v}_c\in\{0,1\}$. And $\overline{v}_a+\overline{v}_d=1$, so $\overline{v}_d\in\{0,1\}$ too. 
			\begin{figure}
				\centering
				\begin{minipage}{0.55\linewidth}
					\includegraphics[width=0.7\linewidth]{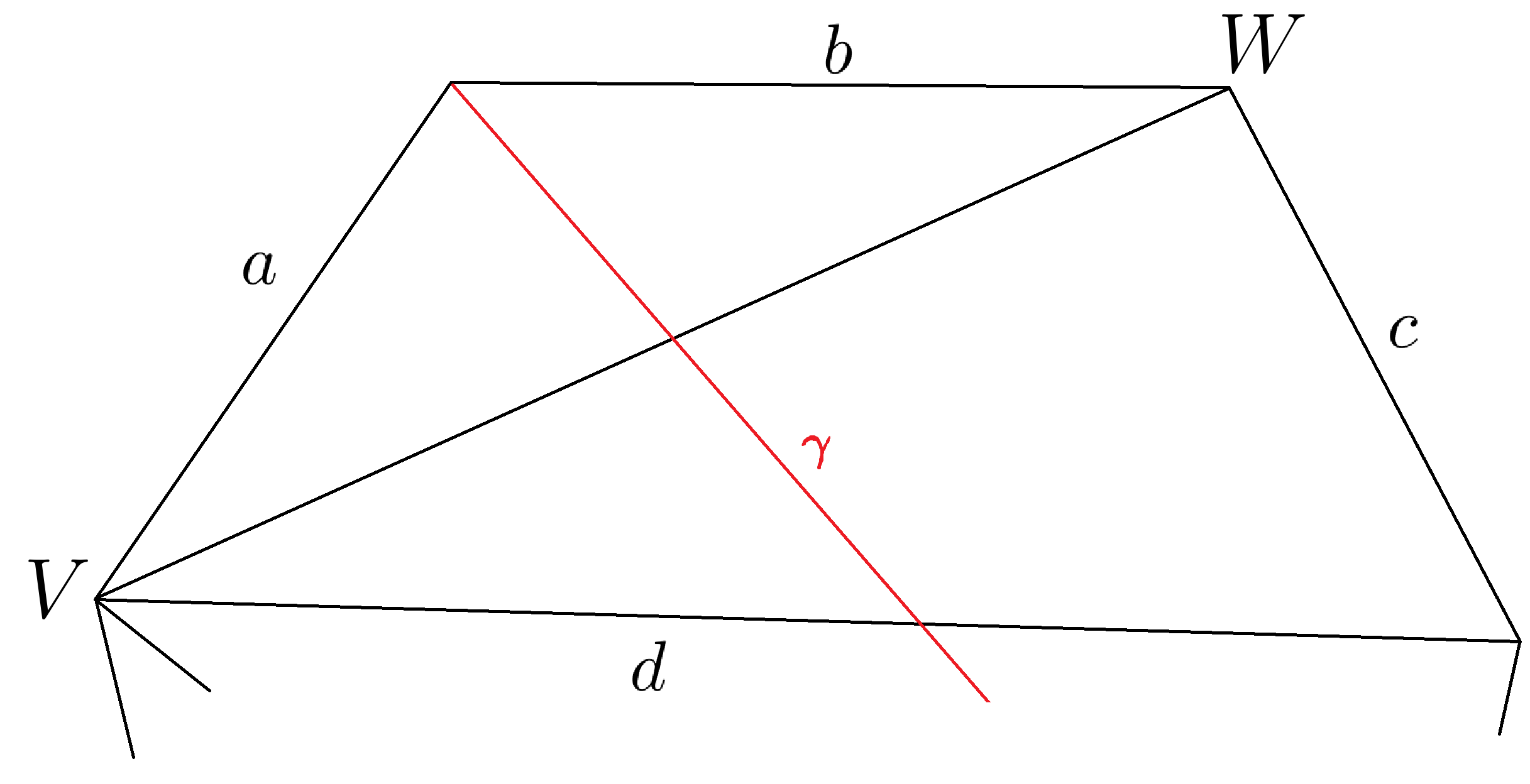}
				\end{minipage}
				\begin{minipage}{0.35\linewidth}
					\includegraphics[width=0.5\linewidth]{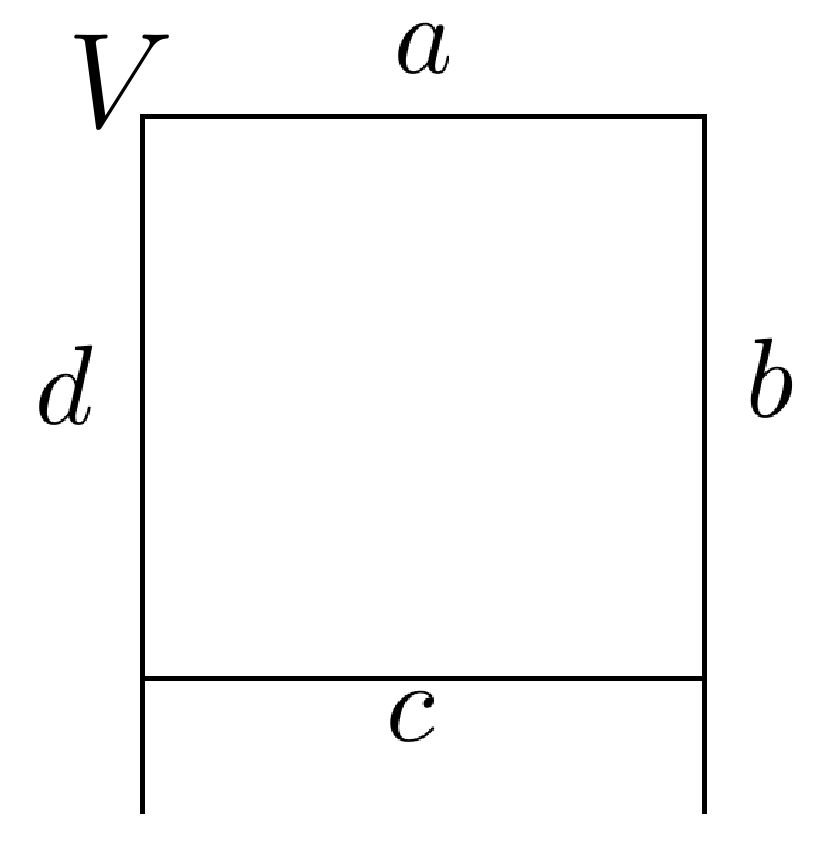}
				\end{minipage}
				\caption{On the left, the final quadrilateral of $T$ crossed by arc $\gamma$. On the right, the final square of $G_{T, \gamma}$.}
				\label{diagrams}
			\end{figure}
			
			If $\overline{v}_a=1$, then $\overline{v}_b=\overline{v}_d=0$ and $\overline{v}_c=1$, so some other coordinate of $\overline{v}$ must be a non-integer. Deleting coordinates $\overline{v}_a,\overline{v}_b,\overline{v}_d$ yields a non-lattice point $\overline{v}'$ in $\overline{P}(G_{T,\gamma'})$ with $\pi(\overline{v}')\in P(G_{T,\gamma'})$ a lattice point, contradicting the inductive hypothesis. If instead $\overline{v}_a=0$, define $\overline{v}'$ by $\overline{v}'_c=\overline{v}'_a=1, \overline{v}'_b=\overline{v}'_d=0$ and all other coordinates equal to those of $\overline{v}$; deleting coordinates $\overline{v}'_a,\overline{v}'_b,\overline{v}'_d$ yields a non-lattice point of $\overline{P}(G_{T,\gamma'})$ similarly contradicting the inductive hypothesis.
		\end{proof}
		Now we can prove our first main result.
		\begin{theorem}\label{AB-saturated} Let $\mca^{\text{bd}}$ be a type $A$ cluster algebra with boundary coefficients. Then the Newton polytope of any cluster variable, written as a Laurent polynomial in an arbitrary seed, is saturated and empty.
		\end{theorem}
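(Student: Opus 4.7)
The plan is to assemble the pieces developed in Section~3 and apply them essentially immediately. By Lemma~\ref{lem:saturationEquiv}, it suffices to show that for every triangulation $T$ of $\apoly$ and every arc $\gamma$, the matching polytope $P(G_{T,\gamma})$ is both saturated and empty. (If $\gamma \in T$, the Newton polytope is a single point and there is nothing to prove, so I may assume $\gamma \notin T$.) Following the strategy outlined in Remark~\ref{rmk:preimage}, I would study lattice points of $P(G_{T,\gamma})$ by pulling them back to the lifted polytope $\overline{P}(G_{T,\gamma})$ under the surjection $\pi = \pi^{\text{bd}}$, exploiting the integrality of these preimages guaranteed by Lemma~\ref{map-pi} together with the emptiness of $\overline{P}(G_{T,\gamma})$ proved in Lemma~\ref{lem:G-sat}.

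For saturation, let $v \in P(G_{T,\gamma})$ be a lattice point. Since $\pi$ restricted to $\overline{P}(G_{T,\gamma})$ surjects onto $P(G_{T,\gamma})$ by definition, there exists some $\overline{v} \in \overline{P}(G_{T,\gamma})$ with $\pi(\overline{v}) = v$. Lemma~\ref{map-pi} asserts exactly that every such preimage lies in $\zz^{E(G_{T,\gamma})}$, so $\overline{v}$ is a lattice point. By Lemma~\ref{lem:G-sat}, the only lattice points of $\overline{P}(G_{T,\gamma})$ are incidence vectors of perfect matchings, so $\overline{v} = \chi_M$ for some matching $M$, giving $v = \pi(\chi_M) = w^M$.

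For emptiness, suppose $v \in P(G_{T,\gamma})$ is a lattice point and $v = \lambda v_1 + (1-\lambda) v_2$ with $v_1, v_2 \in P(G_{T,\gamma})$ and $\lambda \in (0,1)$; I want to force $v_1 = v_2 = v$. Choose lifts $\overline{v}_i \in \overline{P}(G_{T,\gamma})$ of each $v_i$ (for example, as convex combinations of the vertices $\chi_M$ corresponding to matchings realizing $v_i$ as a weighted sum of weight vectors). Then $\overline{v} := \lambda \overline{v}_1 + (1-\lambda)\overline{v}_2$ lies in $\overline{P}(G_{T,\gamma})$ and projects to $v$, so Lemma~\ref{map-pi} forces $\overline{v}$ to be a lattice point, and Lemma~\ref{lem:G-sat} then forces $\overline{v}$ to be a vertex of $\overline{P}(G_{T,\gamma})$. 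A vertex admits no nontrivial convex decomposition, so $\overline{v}_1 = \overline{v}_2 = \overline{v}$, and applying $\pi$ yields $v_1 = v_2 = v$.

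The hard part is already behind us: all the actual combinatorial work is contained in Lemma~\ref{map-pi}, whose inductive proof on the number of tiles uses the fact that the three non-shared edges of the terminal tile of $G_{T,\gamma}$ carry distinct labels (appearing nowhere else in the graph), forcing the corresponding coordinates of any preimage of a lattice point to be integral. Once Lemma~\ref{map-pi} is in hand, the theorem follows from the two-step ``lift, then use that $\overline{P}$ is empty'' argument above, with no additional case analysis required.
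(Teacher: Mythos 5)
Your proposal is correct, and the saturation half is essentially identical to the paper's argument. Where you diverge is in the emptiness step: the paper first shows every lattice point of $P(G_{T,\gamma})$ is $\pi$ of a vertex of $\overline{P}(G_{T,\gamma})$, and then cites the proof of a theorem of Kalman to conclude that $\pi$ carries vertices of $\overline{P}$ to vertices of $P$ (a fact that is false for arbitrary linear projections, as the paper itself notes in a footnote, so the citation carries real content). You instead take a putative nontrivial convex decomposition $v = \lambda v_1 + (1-\lambda)v_2$ in $P$, lift $v_1,v_2$ into $\overline{P}$ (which is possible because $\pi|_{\overline{P}}$ surjects onto $P$), and observe that the resulting lift $\overline{v} = \lambda\overline{v}_1 + (1-\lambda)\overline{v}_2$ is forced to be integral by the \emph{strong} form of Lemma~\ref{map-pi} (every preimage in $\overline{P}$ of a lattice point is integral, not merely some preimage) and hence a vertex of $\overline{P}$ by Lemma~\ref{lem:G-sat}; this collapses the decomposition upstairs and hence downstairs. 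This is a genuinely self-contained alternative to the Kalman citation, and it makes transparent exactly which ingredient does the work --- the ``all preimages are integral'' content of Lemma~\ref{map-pi}, which the saturation argument alone would not require. Both routes are valid; yours is arguably cleaner to verify since it avoids an external reference.
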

		\begin{proof}Let $T$ be a triangulation of $\apoly$ and $\gamma$ an arc.
			By Lemma~\ref{lem:saturationEquiv}, $N^{\text{bd}}(T,\gamma)$ is saturated if and only if $P(G_{T,\gamma})$ is saturated. By Lemma~\ref{map-pi}, every lattice point $v$ in $P(G_{T,\gamma})$ has an integer point $\overline{v} \in \pi^{-1}(w)$. By Remark~\ref{rmk:preimage}, this implies $P(G_{T,\gamma})$ is saturated.
			
			Lemma \ref{map-pi} and Lemma \ref{lem:G-sat} together imply that each lattice point of $P(G_{T,\gamma})$ is the image under $\pi$ of a vertex of $\overline{P}(G_{T,\gamma})$. By \cite[proof of Theorem 4.13]{Kalman}, $\pi$ maps vertices of $\overline{P}(G_{T,\gamma})$ to vertices of $P(G_{T,\gamma})$. \footnote{Note that \cite[Theorem 4.13]{Kalman} does not imply this theorem. Take $P_1$ to be the line segment in $\rr^2$ with endpoints (0, 0) and (0, 2) and $P_2$ the line segment in $\rr$ with endpoints 0 and 2. Projecting onto the 2nd coordinate takes $P_1$ to $P_2$, with vertices going to vertices, but the lattice point $1 \in P_2$ does not have a lattice point in its preimage.}
		\end{proof}
		\begin{remark}
			Note the key properties of triangulations of $\apoly$ we used in Lemma \ref{map-pi}: that each quadrilateral in a triangulation involves four distinct arcs, and that there are strict limits on the number of times each edge label occurs in $G_{T,\gamma}$. The former will fail for punctured surfaces, where we encounter self-folded triangles, and the latter will fail for surfaces with non-finite cluster type, where arcs can intersect arbitrarily many times. 
		\end{remark}
		\subsection{Principal coefficients}
		For the case of type $A$ cluster algebras with principal coefficients, we will use a similar strategy to the case of boundary frozen variables. However, the additional structure of principal coefficients will simplify our work. For this section, let $\pi=\pi^{\text{pc}}$.
		\begin{lemma}\label{options}
			For any arc $\tau_i$ labeling an edge in $G_{T,\gamma}$, one of the following is true for all $\overline{v}\in \overline{P}^{\text{pc}}(G_{T,\gamma})\subset\rr^T\times\rr^T$:
			\begin{itemize}
				\item There is a unique edge $e$ in $G_{T,\gamma}$ with $\ell(e)=\tau_i$.
				\item There is $\sigma\in T$ such that $(\overline{v}_1)_{\tau_i}=(\overline{v}_2)_{\sigma}$.
				\item There is $\sigma\in T$ such that $(\overline{v}_1)_{\tau_i}=1-(\overline{v}_2)_{\sigma}$.
			\end{itemize}
		\end{lemma}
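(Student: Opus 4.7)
I would split on whether $\tau_i$ labels a single edge of $G_{T,\gamma}$ or several. If the labeled edge is unique the first alternative holds and we are done, so assume instead that $\tau_i$ labels $k \geq 2$ edges.

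The key combinatorial step, specific to type $A$, is to show that these $k$ edges are all incident to a single common vertex $v$ of the snake graph. The reasoning: write $\tau_i = v_a v_b$, and note that $\tau_i$ borders exactly two triangles $\Delta, \Delta'$ of the triangulation; an edge of tile $G_{\tau_j}$ carries the label $\tau_i$ only if $\tau_j$ is one of the four other sides of $\Delta, \Delta'$. When $\gamma$ crosses a sequence of such arcs $\tau_j$, they form a fan around one endpoint of $\tau_i$ (say the apex $v_a$), and consecutive tiles in that portion of the snake graph are glued along boundary segments or arcs sharing the opposite endpoint $v_b$. Consequently the $v_b$-vertex is identified across all tiles containing a $\tau_i$-labeled edge, so every $\tau_i$-labeled edge meets this common snake-graph vertex $v$.

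An immediate consequence is that any perfect matching $M$ of $G_{T,\gamma}$ contains at most one $\tau_i$-labeled edge (since $M$ selects a unique edge at $v$), so $(\overline{v}_1)_{\tau_i} \in \{0,1\}$ on weight vectors and thus on all of $\overline{P}^{\text{pc}}(G_{T,\gamma})$. To identify the controlling $\sigma$, I would invoke Corollary~\ref{PMPC}, which pairs each square $s$ with a boundary edge $e_s$ and a function $f_s \in \{x \mapsto x,\, x \mapsto 1-x\}$ such that $(\overline{v}_2)_s = f_s((\overline{v}_1)_{e_s})$. The $\tau_i$-labeled edges are boundary edges of $G_{T,\gamma}$ (they are not consumed by the intermediate gluings, which lie along different labels), so each is some $e_{s_j}$. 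A direct inspection of the local picture at $v$ shows that the squares $s_1, \ldots, s_k$ share a common tile label $\sigma$ and a common form of $f_{s_j}$, the latter because the designated edges $e_{s_j}$ either all lie in $M_0$ or all lie outside $M_0$, depending on which side of the staircase $v$ sits on. Summing the defining relations then yields $(\overline{v}_2)_\sigma = f((\overline{v}_1)_{\tau_i})$, giving case 2 when $f$ is the identity and case 3 when $f$ is $x \mapsto 1-x$.

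The principal obstacle is the shared-vertex claim, which requires a careful case analysis of how consecutive tiles in a fan identify their endpoint vertices under the snake graph construction. Once this is established, recognizing the common label $\sigma$ and the common form of $f_{s_j}$ are routine consequences of inspecting the bottom matching near $v$.
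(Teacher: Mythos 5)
Your proposal has a genuine gap: its central structural claims about type $A$ snake graphs are false, and the statement you are actually trying to prove (with $(\overline{v}_1)_{\tau_i}$ read as the \emph{sum} of the coordinates of all $\tau_i$-labeled edges) is not the one needed downstream. The common-vertex claim fails. The label $\tau_i$ can repeat even when $\gamma$ crosses $\tau_i$ itself: if $\Delta=(\tau_i,a,b)$ and $\Delta'=(\tau_i,c,d)$ are the two triangles adjacent to $\tau_i$ and $\gamma$ crosses $a,\tau_i,c$ consecutively, the two $\tau_i$-labeled edges lie in the tiles $G_a$ and $G_c$, which are separated by $G_{\tau_i}$; the gluing edges of $G_{\tau_i}$ are labeled $b$ and $d$, opposite sides of the quadrilateral $\Delta\cup\Delta'$, so $G_{\tau_i}$ is a non-corner tile and $G_a, G_c$ share no vertex. (Concretely: hexagon $1,\dots,6$ with $T=\{15,25,24\}$, $\gamma=36$, $\tau_i=25$.) Here $a$ and $c$ need not share an endpoint, so there is no fan; moreover there are perfect matchings containing \emph{both} $\tau_i$-edges, so the summed coordinate can equal $2$ and cannot equal $(\overline{v}_2)_\sigma$ or $1-(\overline{v}_2)_\sigma$. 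Two further steps are also incorrect: a functional taking values in $\{0,1\}$ on the vertices of a polytope only takes values in $[0,1]$ on the polytope; and the squares $s_1,\dots,s_k$ containing the $\tau_i$-edges cannot ``share a common tile label $\sigma$,'' since in type $A$ the arc $\gamma$ crosses each arc of $T$ at most once and distinct tiles therefore carry distinct labels. The final summation step then cannot produce the desired relation (and summing $k\ge 2$ relations of the form $x\mapsto 1-x$ would give $k-\sum x$, not $1-\sum x$, in any case).

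The intended content of the lemma --- as its use in Lemma \ref{P-map-pi} and Proposition \ref{lin-inv} makes clear --- is a statement about each individual edge coordinate, and the paper's argument is short. If $\tau_i$ is not a unique label, then every edge $e$ with $\ell(e)=\tau_i$ is a boundary edge of the snake graph and hence lies in exactly one tile $s$. The proof of Corollary \ref{PMPC} shows that for every matching $M$ one has $e\in M$ if and only if $s\in C(M)$ (or the negation, according to whether $e\in M_0$); this affine relation holds on all vertices of $\overline{P}^{\text{pc}}(G_{T,\gamma})$ and therefore on the whole polytope, giving $\overline{v}_e=(\overline{v}_2)_{\ell(s)}$ or $1-(\overline{v}_2)_{\ell(s)}$, with $\sigma=\ell(s)$ allowed to differ from edge to edge. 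No common vertex, no common square label, and no summation are needed; the only geometric input is that each repeated label sits on boundary edges of the snake graph, which is the one ingredient your proposal gets right.
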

		\begin{proof}
			%
			
			If the first condition does not hold, then $\tau_i$ appears exactly twice in $G_{T,\gamma}$ and labels edges on the boundary of $G_{T,\gamma}$. Let $\sigma$ be the label of a square in $G_{T,\gamma}$ with one of its boundary edges labeled by $\tau_i$. Then by Corollary \ref{PMPC} and its proof, either $(\overline{v}_1)_{\tau_i}=(\overline{v}_2)_{\sigma}$ or $(\overline{v}_1)_{\tau_i}=1-(\overline{v}_2)_{\sigma}$.
			
		\end{proof}
		\begin{lemma}\label{P-map-pi}
			$\pi:\overline{P}^{\text{pc}}(G_{T,\gamma})\to P^{\text{pc}}(G_{T,\gamma})$ maps non-lattice points to non-lattice points. 
		\end{lemma}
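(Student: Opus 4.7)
The plan is to prove the contrapositive: if $\pi(\overline{v})$ is a lattice point for some $\overline{v}\in \overline{P}^{\text{pc}}(G_{T,\gamma})$, then every coordinate of $\overline{v}$ is an integer. Writing $\pi(\overline{v})=(v_1,v_2)$, there are two essentially separate pieces to handle: the square coordinates $\overline{v}_s$ and the edge coordinates $\overline{v}_e$.

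For the square coordinates, I would exploit the defining feature of type $A$: since $\gamma$ and each diagonal of $T$ are straight chords of the polygon $\apoly$, they meet at most once, so the tiles of $G_{T,\gamma}$ carry pairwise distinct labels in $T$. The second-component projection therefore simply records $\overline{v}_s$ in the coordinate indexed by the label of $s$ (with a zero if no tile bears that label), so integrality of $v_2$ forces $\overline{v}_s\in\{0,1\}$ for every tile $s$.

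For the edge coordinates I would combine two observations. First, by Lemma~\ref{options}, each arc $\tau\in T$ labels either a single edge of $G_{T,\gamma}$ or exactly two edges, both on the boundary; in particular every interior edge of $G_{T,\gamma}$ has a unique label, and for such an edge $e$ with label $\tau$ the first-component projection satisfies $(v_1)_\tau=\overline{v}_e$, forcing $\overline{v}_e$ integer. Second, by Corollary~\ref{PMPC} and its proof, for any boundary edge $e$ of a tile $s$ that lies on the boundary of $G_{T,\gamma}$, one has a relation $\overline{v}_s=f_e(\overline{v}_e)$ with $f_e\in\{x,\,1-x\}$ (the choice depending on whether $e\in M_0$); since $\overline{v}_s$ is already known to be integer, so is $\overline{v}_e$. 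Interior plus boundary edges cover all of $E(G_{T,\gamma})$, so $\overline{v}$ is a lattice point.

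The main subtlety I expect to confront is confirming that the relation in Corollary~\ref{PMPC} really holds for \emph{every} boundary edge of each tile, not merely the designated edge $e_s$ chosen per tile in the corollary's statement. The argument given in the corollary's proof only uses the equivalence $e\in M \Leftrightarrow s\in C(M)$ (up to the parity $e\in M_0$), a property that hinges on the way a cycle of $M\ominus M_0$ enclosing $s$ in a snake graph must traverse \emph{every} boundary edge of $s$ lying on the boundary of $G$; so the extension is essentially immediate. If one wished to avoid this small extension, an alternative is to deduce integrality at the designated $e_s$ from integrality of $\overline{v}_s$, and then propagate to the remaining boundary edges via the vertex relations $\sum_{e\ni v}\overline{v}_e=1$ supplied by Proposition~\ref{PM(G)}.
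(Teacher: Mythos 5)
Your proof follows the same route as the paper: integrality of the square coordinates $\overline{v}_s$ from the fact that each arc of $T$ labels at most one tile, and integrality of the edge coordinates $\overline{v}_e$ via Lemma~\ref{options} (for uniquely-labeled edges) together with the square relation from Corollary~\ref{PMPC} (for boundary edges). You are also right that the relation $\overline{v}_s = f_e(\overline{v}_e)$ extends to \emph{every} boundary edge $e$ of $s$ that lies on the boundary of $G_{T,\gamma}$: any cycle of $M\ominus M_0$ enclosing $s$ must traverse such an edge, and the paper relies on the same extension when proving Lemma~\ref{options}.

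There is, however, a small gap in the step ``every interior edge of $G_{T,\gamma}$ has a unique label, and for such an edge $e$ with label $\tau$ the first-component projection satisfies $(v_1)_\tau = \overline{v}_e$.'' The first half is fine, but the second half implicitly assumes $\tau\in T$. In type $A$ the interior edge between consecutive tiles $G_{\tau_{i_j}}$ and $G_{\tau_{i_{j+1}}}$ is labeled by the third side of the triangle that $\gamma$ crosses between the two crossings, and that side can be a \emph{boundary segment} of the polygon rather than an arc of $T$. (Concretely, in a pentagon $12345$ with $T=\{13,14\}$ and $\gamma=25$, the interior edge of the two-tile snake graph is labeled by the boundary segment $34$.) The projection $\pi^{\text{pc}}$ has no coordinate for boundary segments --- they are set to $1$ in $\wt^{\text{pc}}$ --- so integrality of $v=\pi(\overline{v})$ says nothing directly about $\overline{v}_e$ for such an $e$. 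The paper's written proof shares this omission, as it quantifies only over $\tau\in T$ before concluding ``all coordinates of $\overline{v}$ are integers.''

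The fix is exactly the alternative you mention at the end: once $\overline{v}_{e}$ is known integral for all boundary edges of $G_{T,\gamma}$ (via the square relation applied to each boundary edge of each tile), the vertex relations $\sum_{e\ni v}\overline{v}_e=1$ from Proposition~\ref{PM(G)} propagate integrality to all interior edges. (Each interior edge meets at least one degree-$3$ vertex whose two other incident edges are boundary edges, or shares such a vertex with a previously-handled interior edge, so one can sweep along the interior edges from one end of the snake graph to the other.) With that observation incorporated into your main argument rather than relegated to an alternative, the proof is complete.
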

		\begin{proof}
			
			Let $\overline{v}$ be a point in $\overline{P}^{\text{pc}}(G_{T,\gamma})$ such that $v=\pi(\overline{v})$ is a lattice point. Since $\gamma$ intersects each arc of $T$ at most once  $(v_2)_{\sigma}=(\overline{v}_2)_{\sigma}\in\{0,1\}$ for each $\sigma\in T$. Next, for each $\tau\in T$, if there is a unique edge labeled $\tau$ then $(v_1)_{\tau}=(\overline{v}_1)_{\tau}\in\{0,1\}$. And by Lemma \ref{options}, if $\tau$ is not a unique label then $(\overline{v}_1)_{\tau}$ is either $(\overline{v}_2)_{\sigma}$ or $1-(\overline{v}_2)_{\sigma}$. Thus, all coordinates of $\overline{v}$ are integers, proving the contrapositive of the lemma.
		\end{proof}
		We can use the reasoning of this proof to show the following relationship between polytopes.
		\begin{prop}\label{lin-inv}
			There is an affine inverse $\pi^{-1}:P^{\text{pc}}(G_{T,\gamma})\to \overline{P}^{\text{pc}}(G_{T,\gamma})$ of $\pi$. In particular, $\pi$ is a combinatorial equivalence between the two polytopes.
		\end{prop}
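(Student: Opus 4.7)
The plan is to construct $\pi^{-1}$ coordinate-by-coordinate using the structural analysis from Lemma~\ref{options} and Corollary~\ref{PMPC}, show it is an affine bijection inverse to $\pi$, and then deduce the combinatorial equivalence of the two polytopes as a formal consequence.

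For the construction, I define a candidate map $\psi: P^{\text{pc}}(G_{T,\gamma}) \to \rr^{E(G_{T,\gamma})} \times \rr^{S(G_{T,\gamma})}$ as follows. Since $\gamma$ crosses each arc $\sigma \in T$ at most once, each $\sigma$ labels at most one square of $G_{T,\gamma}$, so the formula $(\overline{v}_2)_s := (v_2)_{\ell(s)}$ unambiguously defines $\overline{v}_2$. For each edge $e$ of $G_{T,\gamma}$, Lemma~\ref{options} provides one of three alternatives: if $\ell(e)$ labels a unique edge, set $(\overline{v}_1)_e := (v_1)_{\ell(e)}$; otherwise $(\overline{v}_1)_e$ is forced to equal either $(\overline{v}_2)_{s_e}$ or $1 - (\overline{v}_2)_{s_e}$ for the adjacent square $s_e$ singled out by Corollary~\ref{PMPC}, so use the corresponding affine formula (after substituting the definition of $\overline{v}_2$). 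By construction $\psi$ is affine.

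To verify $\psi(P^{\text{pc}}(G_{T,\gamma})) \subseteq \overline{P}^{\text{pc}}(G_{T,\gamma})$, I check that $\psi$ sends each vertex of $P^{\text{pc}}(G_{T,\gamma})$—that is, the weight vector of a perfect matching $M$—to the principal incidence vector $(\chi_M, \chi_{C(M)})$. This is essentially tautological, since the three cases of Lemma~\ref{options} were derived precisely from how matching indicators behave. Because $\psi$ is affine and $P^{\text{pc}}(G_{T,\gamma})$ is the convex hull of its weight vectors, this extends to containment of the whole polytope. The identity $\pi \circ \psi = \mathrm{id}$ is immediate: summing the defined $(\overline{v}_1)_e$'s over edges sharing a label gives back $(v_1)_\tau$, and likewise for $\overline{v}_2$. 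Conversely, $\psi \circ \pi = \mathrm{id}$ is precisely the uniqueness statement established in the proof of Lemma~\ref{P-map-pi}, since each coordinate of any preimage is forced by $v$ via the formulas defining $\psi$.

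The main obstacle is well-definedness of $\psi$: an edge $e$ with non-unique label can be adjacent to two squares, and Lemma~\ref{options} might then produce two distinct affine formulas for $(\overline{v}_1)_e$. I reconcile these either by verifying consistency on each matching indicator (and invoking affineness) or by observing that the assignment $s \mapsto e_s$ in Corollary~\ref{PMPC} pairs each non-uniquely-labeled boundary edge with a unique adjacent square, giving a globally consistent prescription. Once $\psi$ is a well-defined affine bijection with inverse $\pi$, the two polytopes are affinely isomorphic; in particular their face lattices are in bijection, and $\pi$ is a combinatorial equivalence.
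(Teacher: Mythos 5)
Your construction is the same as the paper's: recover $\overline{v}_2$ from $v_2$ using uniqueness of square labels (since $\gamma$ crosses each arc at most once in type $A$), then recover each $(\overline{v}_1)_e$ either directly from $v_1$ (for uniquely-labeled edges) or via the affine relation to $(\overline{v}_2)_s$ from Lemma~\ref{options} and Corollary~\ref{PMPC}. The well-definedness worry you raise is actually moot in type $A$---a non-uniquely-labeled edge of $G_{T,\gamma}$ is necessarily a boundary edge and hence lies on exactly one square---but your resolution correctly identifies this, so the argument goes through just as in the paper.
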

		\begin{proof}
			As we saw in the proof of Lemma \ref{P-map-pi}, given $v=\pi(\overline{v})$, we can recover the coordinates of $\overline{v}$ by Lemma \ref{options}. We saw that each coordinate of $\overline{v}$ is either equal to a coordinate of $v$ or equal to $1$ minus a coordinate of $v$. Performing these coordinate-wise operations yields the desired map $\pi^{-1}$.
		\end{proof}
		\begin{theorem}\label{AP-saturated} Let $\mca^{\text{pc}}$ be a type $A$ cluster algebra with principal coefficients at a seed $\Sigma$. Then the Newton polytope of any cluster variable, written as a Laurent polynomial in $\Sigma$, is saturated and empty.
		\end{theorem}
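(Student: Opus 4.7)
The plan is to combine the lemmas already developed in this section to reduce the claim about $N^{\text{pc}}(T,\gamma)$ to a statement about the lifted polytope $\overline{P}^{\text{pc}}(G_{T,\gamma})$, which we already know is empty and saturated by Lemma~\ref{lem:G-sat}. First, by Lemma~\ref{lem:saturationEquiv} (together with the remark at the start of Section~3 allowing us to assume $T$ corresponds to an ideal triangulation — automatic in type $A$), it suffices to show that the matching polytope $P^{\text{pc}}(G_{T,\gamma})$ is saturated and empty.

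Next I would invoke Proposition~\ref{lin-inv}, which provides an affine bijection $\pi:\overline{P}^{\text{pc}}(G_{T,\gamma})\to P^{\text{pc}}(G_{T,\gamma})$. The key point here is that $\pi$ and $\pi^{-1}$ both send lattice points to lattice points: for $\pi$ this is immediate because it is coordinate-wise integer summation, while for $\pi^{-1}$ it follows from Lemma~\ref{P-map-pi} (or, more concretely, from the explicit description in the proof of Proposition~\ref{lin-inv}, where each coordinate of $\pi^{-1}(v)$ is either a coordinate of $v$ or $1$ minus such a coordinate). Therefore $\pi$ restricts to a bijection between the lattice points of the two polytopes.

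Saturation of $P^{\text{pc}}(G_{T,\gamma})$ then follows by Remark~\ref{rmk:preimage}: given a lattice point $v \in P^{\text{pc}}(G_{T,\gamma})$, its preimage $\pi^{-1}(v)$ is a lattice point of $\overline{P}^{\text{pc}}(G_{T,\gamma})$, and by Lemma~\ref{lem:G-sat} it is the principal incidence vector of a perfect matching $M$, so $v = \pi(\pi^{-1}(v))$ is the weight vector $w_{\text{pc}}^M$. For emptiness, since $\pi$ is an affine isomorphism it maps vertices to vertices; combined with the lattice-point bijection and the emptiness of $\overline{P}^{\text{pc}}(G_{T,\gamma})$ from Lemma~\ref{lem:G-sat}, every lattice point of $P^{\text{pc}}(G_{T,\gamma})$ is a vertex.

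I do not expect a genuine obstacle, since all the substantive work is already contained in Lemmas~\ref{options}, \ref{P-map-pi}, and Proposition~\ref{lin-inv}; the remaining argument is essentially a bookkeeping assembly of those facts together with Lemmas~\ref{lem:saturationEquiv} and~\ref{lem:G-sat}. The only subtle point to articulate carefully is that an affine bijection of polytopes which induces a bijection on lattice points transports the property ``every lattice point is a vertex'' in both directions — a one-line observation, but worth stating explicitly so that the conclusion about emptiness (as opposed to mere saturation) is visibly justified.
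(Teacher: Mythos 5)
Your argument is correct and follows essentially the same route as the paper: reduce via Lemma~\ref{lem:saturationEquiv} to the matching polytope, use Lemma~\ref{P-map-pi} together with Lemma~\ref{lem:G-sat} for saturation, and invoke Proposition~\ref{lin-inv} (the affine/combinatorial equivalence with $\overline{P}^{\text{pc}}$) to transport emptiness. The only difference is expository — you spell out the lattice-point bijection and the observation that an affine isomorphism sending lattice points to lattice points in both directions transports emptiness, which the paper leaves implicit.
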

		\begin{proof} Let $T$ be the triangulation corresponding to $\Sigma$ and choose an arc $\gamma$ of $\apoly$.
			As before, it suffices to prove saturation and emptiness of $P^{\text{pc}}(G_{T,\gamma})$. By Lemma \ref{P-map-pi}, the preimage under $\pi$ of any lattice point of $P^{\text{pc}}(G_{T,\gamma})$ is a lattice point of $\overline{P}^{\text{pc}}(G_{T,\gamma})$, and by Lemma \ref{lem:G-sat}, each lattice point of $\overline{P}^{\text{pc}}(G_{T,\gamma})$ is a vertex. This shows that each lattice point of $P^\text{pc}(G_{T,\gamma})$ is the weight vector of a matching, proving saturation. And since $\pi$ is a combinatorial equivalence by Proposition \ref{lin-inv}, it must map vertices to vertices, proving emptiness. 
		\end{proof}
		\subsection{No frozen variables}
		The case of Type $A$ cluster algebras with no frozen variables is considerably different from the previous cases. The added difficulty arises from the fact that the natural projection $\overline{P}(G)\to P^{\text{nf}}(G)$ is \emph{not} a combinatorial isomorphism.
		
		The following lemma restricts when non-vertex lattice points may arise.
		\begin{lemma}\label{sym-dif}
			Let $T$ be a trinagulation of $\apoly$ and $\gamma$ an arc. For $G=G_{T,\gamma}$, let $S$ be a set of matchings such that $\text{Conv}_{M\in S}\{w_{nf}^{M}\}$ contains a non-vertex lattice point. Then for any $M_1,M_2\in S$, the symmetric difference $M_1\ominus M_2$ is the union of all edges of a set of pairwise-disjoint squares of $G$.
		\end{lemma}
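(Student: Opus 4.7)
The plan is to proceed by contrapositive: suppose some pair $M_1, M_2 \in S$ has $M_1 \ominus M_2$ which is \emph{not} the union of all edges of a set of pairwise-disjoint squares, and show that $\operatorname{Conv}_{M \in S}\{w_{\text{nf}}^M\}$ then contains no non-vertex lattice point.

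First I would reduce the form of the failure. By \cite[Lemma 4.7]{MSW} applied to both $M_1$ and $M_2$, one has $M_1 \ominus M_2 = \partial(C(M_1) \ominus C(M_2))$, so the cycles in $M_1 \ominus M_2$ are the boundaries of the connected components of the tile-set $C(M_1) \ominus C(M_2)$. A degree count at any vertex of $G$ rules out two tiles sharing only a vertex (``corner-adjacent'') from both lying in $C(M_1) \ominus C(M_2)$: otherwise four edges at the shared vertex would belong to $M_1 \ominus M_2$, exceeding the two permitted by the matching property. Hence the failure forces the existence of a cycle $C \subseteq M_1 \ominus M_2$ enclosing a chain $s_1, \ldots, s_k$ of $k \geq 2$ edge-adjacent tiles.

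Next I would pass to the principal-coefficient polytope. Assume for contradiction that $v = \sum_i \lambda_i w_{\text{nf}}^{M_i}$ is a non-vertex lattice point, with $M_i \in S$, $\lambda_i > 0$, $\sum \lambda_i = 1$, and at least two nonzero $\lambda_i$'s. Form the lift $V := \sum_i \lambda_i w_{\text{pc}}^{M_i} \in P^{\text{pc}}(G)$; its $w$-block equals $v$. By Theorem~\ref{AP-saturated} together with Lemma~\ref{lem:saturationEquiv}, $P^{\text{pc}}(G)$ is empty, so any lattice point of $P^{\text{pc}}(G)$ is a vertex, hence the weight vector of a unique matching. If $V$ were a lattice point, it would equal $w_{\text{pc}}^{M}$ for that unique $M$, collapsing the combination and contradicting that $v$ is non-vertex. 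So the $y$-block of $V$ must have a non-integer coordinate.

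The crux of the proof, and where I expect the main obstacle to lie, is to leverage the big cycle $C$ for the fixed pair $M_1, M_2$ to contradict the non-integrality of the $y$-block of $V$. My plan is to exploit Corollary~\ref{PMPC} and Lemma~\ref{options}: each $y$-coordinate of $\overline{P}^{\text{pc}}(G)$ is an affine function of a specific edge coordinate of the lift, and each repeated-label $w$-coordinate is similarly identified with a $y$-coordinate (up to a sign flip $x \mapsto 1-x$). The existence of $C$ forces label repetitions between opposite sides of the chain $s_1, \ldots, s_k$, which in turn generate a chain of such identifications coupling the $w$- and $y$-coordinates at these tiles. Carefully propagating integrality along this chain from the outer boundary of $C$ inward, while tracking whether each identification is ``identity'' or ``sign-flipped'', should force the $y$-coordinates at $s_1, \ldots, s_k$ to be integer whenever the $w$-block is, yielding the desired contradiction. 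The bookkeeping of the identifications along the tiles of $C$ is the key technical challenge.
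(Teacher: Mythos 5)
Your proposal does not close the argument; the decisive step is explicitly deferred, and the contradiction you aim for would not follow even if that step were carried out. Your Step 2 correctly concludes (via emptiness of $P^{\text{pc}}(G_{T,\gamma})$) that the lifted point $V=\sum_i\lambda_i w_{\text{pc}}^{M_i}$ has a non-integer coordinate in its $y$-block. But your Step 3 proposes to show that the $y$-coordinates \emph{at the tiles $s_1,\dots,s_k$ of the bad chain} are integers. These two facts are not in tension: the matchings in $S$ may also differ by legitimate disjoint single-square flips elsewhere in $G$, and it is precisely at such squares that the $y$-coordinates of $V$ are non-integer (this is exactly the situation of Lemma~\ref{alternating}, where $P^{\text{nf}}$ has a non-vertex lattice point while $P^{\text{pc}}$ does not). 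So even a successful "bookkeeping" of the identifications along $C$ would leave the proof open. To get a genuine contradiction you must extract, from the cycle enclosing $k\ge 2$ tiles, the non-integrality of some coordinate of the $w$-block itself (or, equivalently, a violation of a constraint that integrality of $v$ imposes) -- and that extraction is the entire content of the lemma, which you acknowledge you have not done. There is also a technical wrinkle in your plan: the relations of Corollary~\ref{PMPC} live in the lifted polytope $\overline{P}^{\text{pc}}$, whereas $V$ lives downstairs; the quantities $\sum_i\lambda_i[e\in M_i]$ you would need to control are exactly the unknown lift coordinates, so "propagating integrality" is not a formal consequence of those relations.

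For comparison, the paper's proof avoids the $P^{\text{pc}}$ detour entirely. It first records the key dichotomy: since each coordinate $(w_{\text{nf}}^M)_\tau$ lies in $\{0,1,2\}$ (each non-boundary arc labels at most two edges of a type $A$ snake graph), a convex combination with all-positive coefficients equal to an integer forces each coordinate to be either constant across $S$ or to attain both $0$ and $2$. It then takes the \emph{first} square $\alpha$ on which some $M_1\ominus M_2$ restricts to two or three edges, determines the forced local configuration on $\alpha$ and the next square $\beta$ by a short induction, and exhibits an edge $e$ on which $M_1,M_2$ disagree whose label occurs again only in a controlled position; consequently $(w_{\text{nf}}^M)_{\ell(e)}$ is confined to $\{0,1\}$ or to $\{1,2\}$ for all $M\in S$ while being non-constant, contradicting the dichotomy. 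Your Step 1 reduction (the failure produces a cycle enclosing $k\ge2$ edge-adjacent tiles, with the corner-adjacency case excluded by a degree count) is correct and is a reasonable reformulation of the paper's starting point, but the heart of the proof is missing.
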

		\begin{proof}

			Without loss of generality assume there is a lattice point $\eta=\sum_{M\in S}c_Mw_{\text{nf}}^{M}$ with $\sum_{M\in S}c_M=1$ and $0<c_M<1$ for each $c_M$. For each coordinate $i$, either $(w_{\text{nf}}^{M})_i=(w_{\text{nf}}^{M'})_i$ for all $M,M'\in S$, or there exist $M,M'\in S$ such that $(w_{\text{nf}}^{M})_i=0$, $(w_{\text{nf}}^{M'})_i=2$. 
			
			Assume for the sake of contradiction that there exist $M_1,M_2\in S$ that violate the statement of the lemma. Let $\alpha$ be the first square in $G$ such that there exist $M_1,M_2\in S$ with $(M_1\ominus M_2)|_{\alpha}$ consisting of two or three edges. Noting that $\alpha$ cannot be the last square of $G$, let $\beta$ be the square following $\alpha$ in $G$. 
			
			We see by induction on $|G|$ that $M_1$ and $M_2$ restricted to $\alpha$ and $\beta$ must contain the edges shown in Figure \ref{fig:contradiction}: this is true if $\alpha$ is the first square in $G$, and otherwise $M_1$ and $M_2$ restricted to the first square both contain either the same edge or a pair of opposite edges; removing this edge or these edges we obtain a pair of perfect matchings for a smaller diagram. 
			
			In Figure \ref{fig:contradiction}, the edge $e$ appears in one matching but not the other. And $\ell(e)$ cannot be a boundary segment unless there are exactly two squares in the diagram (in which case the lemma is vacuously true), nor does $\ell(e)$ appear again to the northwest of $\beta$. Thus, either $(w_{\text{nf}})_{\ell(e)}\in\{0,1\}$ for all $M\in S$, or $(w_{\text{nf}})_{\ell(e)}\in\{1,2\}$ for all $M\in S$, contradicting the hypothesis and proving the lemma.
		\end{proof}
		\begin{figure}
			\centering
			\includegraphics[width=0.5\linewidth]{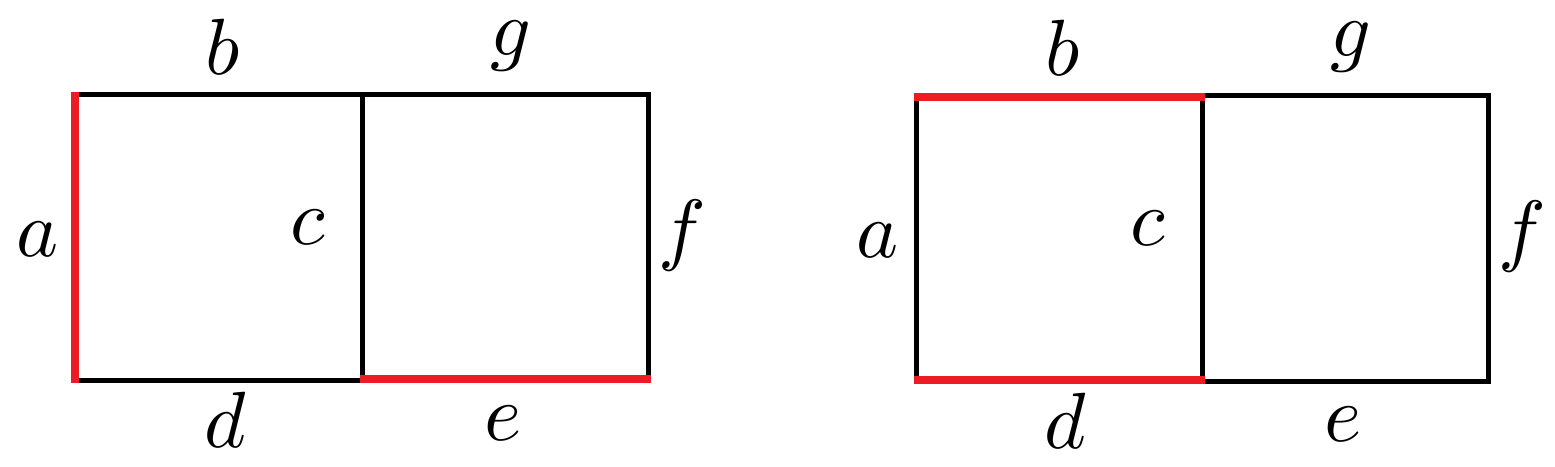}
			\caption{Necessary configuration for proof by contradiction in Lemma \ref{sym-dif}}
			\label{fig:contradiction}
		\end{figure}
		In fact, we can sharpen this lemma.
		\begin{lemma}\label{alternating}
			Assuming the same hypothesis as Lemma \ref{sym-dif}, every matching in $S$ must consist of a pair of opposite sides from every other square of the snake graph, starting with the first and ending with the last. Each of these pairs will be present in at least one matching of $S$.
		\end{lemma}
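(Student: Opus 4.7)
My plan is to refine the symmetric-difference analysis of Lemma~\ref{sym-dif} to control the local structure of each matching $M|_{G_j}$, rather than just pairwise differences. Write the non-vertex lattice point as $\eta = \sum_{M \in S} c_M w_{\text{nf}}^M$ with each $c_M > 0$, and define $F \subseteq \{1, \ldots, k\}$ to be the set of tile indices $j$ for which $\partial G_j$ appears in some pairwise symmetric difference among matchings in $S$.

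First I would show that for every $j \in F$ and every $M \in S$, the restriction $M|_{G_j}$ is one of the two opposite-sides pairs of the 4-cycle $G_j$. By definition of $F$, there exist $M_1, M_2 \in S$ with $M_1|_{G_j}, M_2|_{G_j}$ equal to the two opposite pairs; this immediately establishes the ``each pair present'' claim of the lemma on $F$. For any third matching $M_3 \in S$, if $M_3|_{G_j}$ were not an opposite pair (i.e.\ a single edge or the empty set), enumerating the possibilities and comparing to $M_1|_{G_j}, M_2|_{G_j}$ shows that at least one of $M_3 \ominus M_1$ or $M_3 \ominus M_2$ restricted to $G_j$ contains a non-shared edge of $G_j$. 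Such a non-shared edge lies in exactly one tile's boundary, so it cannot appear in any disjoint union $\bigsqcup_{l} \partial G_l$ for $l \ne j$, contradicting Lemma~\ref{sym-dif}.

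Next I would establish $F = \{1, 3, \ldots, k\}$ with $k$ odd. The first tile $G_1$ admits only three possible local configurations (two opposite pairs, plus one single-edge configuration on its non-shared opposite). If $M|_{G_1}$ were constant as a single edge, the shared-edge constraint would force $M|_{G_2}$ to be constant, then $M|_{G_3}$, cascading to a fixed $M$ across all of $S$ and precluding any variation of $\eta$; and if $M|_{G_1}$ varied between a single edge and an opposite pair, the symmetric difference on $G_1$ would be a single shared edge whose only arc label contribution is odd, preventing midpoint-type lattice points in the convex hull because type A polygon triangulations give four distinct labels on each tile. Hence $1 \in F$, and symmetrically $k \in F$. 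For the gap structure between consecutive $F$-indices $j$ and $j+g$, Lemma~\ref{sym-dif} forces $g \ge 2$; a gap $g \ge 3$ would require the $g-1$ intermediate tiles (none in $F$) to carry matchings whose propagation from $G_j$ to $G_{j+g}$ coherently transmits the flip states at both endpoints. A case analysis using the allowed non-$F$ configurations (single shared edges or empty) shows the only possibility is $g = 2$, so $F = \{1, 3, \ldots, k\}$ and $k$ must be odd.

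The main obstacle will be this gap-two propagation, i.e.\ ruling out gaps of $3$ or more between consecutive elements of $F$. The crux is that each non-$F$ intermediate tile has a restricted set of allowable configurations (determined by its neighbors' shared edges), and a gap of length $g \ge 3$ introduces at least one intermediate tile whose required restriction cannot simultaneously match the constraints imposed by the flips at the two surrounding $F$-tiles; this contradiction is obtained by a ``first tile of failure'' argument analogous to the induction in the proof of Lemma~\ref{sym-dif}.
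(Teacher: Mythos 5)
Your skeleton matches the paper's: use Lemma~\ref{sym-dif} to force opposite pairs on every ``flip'' square, then pin down the set $F$ of flip squares as $\{1,3,\dots,k\}$. The first part of your argument (any $M_3\in S$ must restrict to an opposite pair on each $j\in F$, since otherwise some non-shared edge of $G_j$ lands in a symmetric difference that cannot contain all of $\partial G_j$) is sound and is essentially what the paper leaves implicit. The problem is the step you yourself flag as the crux: ruling out gaps $g\ge 3$ between consecutive elements of $F$, and showing $1,k\in F$.

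Your proposed mechanism for the gap --- that intermediate non-$F$ tiles cannot ``coherently transmit the flip states at both endpoints'' --- is not a real obstruction. Perfect matchings of a snake graph can flip any collection of pairwise non-adjacent squares independently; there is nothing in the matching combinatorics preventing $S$ from flipping $G_j$ and $G_{j+4}$ while every matching agrees on the tiles in between. What actually forces $g=2$ is integrality of $\eta$ in the coordinates indexed by arc labels: in a type $A$ snake graph the two non-boundary-labeled edges $e_1,e_2$ of a flip square $\alpha$ have labels that occur exactly twice in $G_{T,\gamma}$, with the second occurrence on the squares two before and two after $\alpha$. Since the matchings in $S$ disagree on $e_1$, the coordinate $\eta_{\ell(e_1)}$ can only be an integer if the matchings also disagree on the partner edge $e_1'$, which by Lemma~\ref{sym-dif} forces the square two away to flip as well. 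This label-multiplicity argument is entirely absent from your proposal. The same issue affects your endpoint claims: the reason $G_2$ (and $G_{k-1}$) can never flip is that the second square carries an outer edge whose non-boundary label is \emph{unique} in the graph, so flipping it would make that coordinate of $\eta$ strictly between $0$ and $1$; your ``cascading constancy'' argument is false as stated (agreement of all matchings on $G_1$ does not propagate to $G_2,G_3,\dots$), and ``four distinct labels on each tile'' is a within-tile statement that says nothing about where a label recurs across tiles. To repair the proof you need to replace the configuration-propagation heuristic with bookkeeping of where each arc label appears in $G_{T,\gamma}$ (once near the ends, twice at tile-distance two in the interior) and derive the alternating structure from integrality of the corresponding coordinates.
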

		\begin{proof}

			Let $\alpha$ be any square where two matchings $M_1,M_2\in S$ differ. By Lemma \ref{sym-dif}, every matching in $S$ must contain opposite sides of $\alpha$. Then $\alpha$ cannot be the second (or second to last) square of $G$, or else the edge $e\in\alpha$ with $\ell(e)$ non-boundary and unique in $G$ would have $\eta_{\ell(e)}$ between $0$ and $1$. 
			
			Next, assume $\alpha$ is not one of the first two or last squares of $G$, and let edges $e_1,e_2\in\alpha$ have $\ell(e_1),\ell(e_2)$ non-boundary segments. Let $\alpha_1$ be the square two before $\alpha$ and let $\alpha_2$ be the square two after $\alpha$. Let edges $e_1'\in\alpha_1$ and $e_2'\in\alpha_2$ have $\ell(e_1)=\ell(e_1')$ and $\ell(e_2)=\ell(e_2')$. Without loss of generality, $e_1,e_2\in M_1$ and $e_1,e_2\notin M_2$, so we must have $\{(w_{\text{nf}}^{M})_{\ell(e_1)}\}_{M\in S}=\{(w_{\text{nf}}^{M})_{\ell(e_2)}\}_{M\in S}=\{0,1,2\}$. Thus, some elements of $S$ must differ on whether they include $e_1'$ and $e_2'$, so every matching in $S$ must contain opposite sides of $\alpha_1$ and $\alpha_2$. Continuing recursively yields the lemma.
			
		\end{proof}
		Now we can show that all non-vertex lattice points are weight vectors of matchings.
		\begin{theorem}\label{NFASat} Let $\mca^{\text{nf}}$ be a cluster algebra of type $A$ with no frozen variables. Then the Newton polytope of any cluster variable, written as a Laurent polynomial in an arbitrary seed, is saturated.
		\end{theorem}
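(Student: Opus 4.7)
My plan is to use Lemma~\ref{lem:saturationEquiv} to reduce to proving saturation of $P^{\text{nf}}(G)$, where $G = G_{T,\gamma}$. I take a lattice point $\eta \in P^{\text{nf}}(G)$; if $\eta$ is a vertex then $\eta = w_{\text{nf}}^M$ for some perfect matching $M$ and we are done, so assume $\eta = \sum_{M \in S} c_M w_{\text{nf}}^M$ is a non-trivial convex combination with $|S| \geq 2$ and $c_M > 0$ for all $M \in S$. The goal is then to exhibit a single matching $M^*$ with $w_{\text{nf}}^{M^*} = \eta$.

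The next step is to invoke Lemma~\ref{alternating}: every $M \in S$ restricts to a pair of opposite sides on every odd-indexed square of $G$ (so $G$ must have an odd number of squares), and each such possible pair is realized by at least one matching in $S$. Intuitively, the matchings in $S$ are parameterized by independent binary choices of pair at each odd square, with the even squares forced by their odd neighbors.

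To construct $M^*$, I would decide, one odd square at a time, which pair of opposite sides to include. Two features of type $A$ make this locally tractable: first, each tile of $G$ corresponds to a triangulation quadrilateral whose four bounding arcs are distinct, so the four edges of any odd square carry four distinct labels; and second, the realizability claim in Lemma~\ref{alternating} guarantees that any choice at a given odd square is consistent with some matching in $S$, so any overall assignment of binary choices produces a valid perfect matching. The correct choice at each odd square should then be dictated by the coordinate of $\eta$ at one of these four local labels.

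The main obstacle is that an arc label may appear on edges in several odd squares of $G$, so $\eta_\tau$ aggregates contributions from several binary choices at once and the reading-off step is not immediate. I expect to resolve this by inducting on the number of odd squares of $G$, peeling off the last odd square (together with its adjacent even square): at the end of the snake graph, one can always find edge labels appearing on only a bounded number of edges, a phenomenon exploited in the proof of Lemma~\ref{map-pi}, and these pin down the final binary choice unambiguously. The inductive hypothesis then supplies a matching of the truncated snake graph, and gluing yields $M^*$; a final coordinate-wise comparison confirms $w_{\text{nf}}^{M^*} = \eta$.
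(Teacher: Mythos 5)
Your opening moves — reducing to $P^{\text{nf}}(G_{T,\gamma})$ via Lemma~\ref{lem:saturationEquiv} and invoking Lemma~\ref{alternating} to constrain the matchings in $S$ — match the paper's proof exactly, and your observation that the choices at the odd squares are what need to be made is also on target. The gap is in how you propose to determine those choices.

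You hope to pin down the pair of opposite sides at the last odd square by finding edge labels near the end that appear only a bounded number of times, as in Lemma~\ref{map-pi}, and then to induct. This does not go through in the no-frozen-variable setting. First, a non-vertex lattice point can only arise when $\gamma$ crosses every arc of $T$ (this is the content of the first condition in Corollary~\ref{NFAEmpty}), which forces the triangle at each endpoint of $\gamma$ to have two boundary sides; the uniquely-labeled edges of the last tile are therefore labeled by boundary segments, and those coordinates are simply absent from $w^M_{\text{nf}}$. Second, the one non-boundary outer edge of the last odd square is labeled by $\tau_{n-1}$, and that same label also appears on an edge of the odd square two positions earlier. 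So $\eta_{\tau_{n-1}}$ only tells you that the choices at those two odd squares must differ — it does not determine either choice individually. There is no "unambiguous" base step for the induction, and a local read-off of $\eta$ cannot work in principle, because every non-boundary label that appears on an odd square appears on exactly two of them.

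The paper resolves this more directly. Since both opposite-side pairs at each alternating square occur in $S$ with positive coefficient (Lemma~\ref{alternating}), for any non-boundary label $\tau$ on an edge of an alternating square the coordinate $\eta_\tau$ is a nontrivial convex combination that is forced to equal $1$; every other coordinate is $0$. Hence $\eta$ is completely determined by the labeled structure of $G$ alone, independent of $S$. One then exhibits a matching with weight vector $\eta$ by a single consistent chain of pair-choices propagating from the first alternating square (the first choice is free, precisely because nothing pins it down). I would recommend replacing your square-by-square reconstruction of $M^*$ from $\eta$ with this identification of the unique candidate $\eta$; it removes the need for the problematic base case entirely.
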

		\begin{proof}

			Let $T$ be a triangulation of $\apoly$ and $\gamma$ an arc. Again recall that by Lemma~\ref{lem:saturationEquiv}, $N^{\text{nf}}(T,\gamma)$ is saturated if and only if $P^{\text{nf}}(G_{T,\gamma})$ is saturated. Let $E$ be the set of edges $e$ appearing in the alternating squares described in Lemma \ref{alternating} such that $\ell(e)$ is not a boundary segment. By that lemma, the only possible non-vertex lattice point of $P^{\text{nf}}(G)$ is $\eta$ with $\eta_{\tau}=1$ if there exists $e\in E$ with $\ell(e)=\tau$ and $\eta_{\tau}=0$ otherwise. We can always construct $M$ with $w_{\text{nf}}^M=\eta$ by picking a pair of opposite sides from each of the alternating squares.
			
		\end{proof}
		Emptiness no longer holds in general, but Lemma \ref{alternating} tells us exactly what non-vertex lattice points must look like.
		\begin{cor}\label{NFAEmpty}
			Let $\mca^{\text{nf}}$ be a cluster algebra of type $A$ with no frozen variables. Let $T$ be a triangulation of $\apoly$ and $\gamma$ an arc. Then the Newton polytope of the cluster variable $L^{\text{nf}}_{T, \gamma}$ is empty if and only if at least one of the following conditions does not hold.
			\begin{itemize}
				\item $\gamma$ intersects all arcs in $T$.
				\item $n$ is odd and at least $3$.
				\item If $\gamma$ intersects $\tau_1,...,\tau_n$ in that order, $\tau_{i-1}$ and $\tau_{i+1}$ do not share an endpoint for even $i$.
			\end{itemize}
		\end{cor}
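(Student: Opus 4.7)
The plan is to combine Theorem~\ref{NFASat} (saturation) with Lemma~\ref{alternating}. By saturation, $P^{\text{nf}}(G_{T,\gamma})$ is empty precisely when every weight vector is a vertex of the polytope, and by Lemma~\ref{alternating}, any non-vertex lattice point must arise from a set $S$ of alternating matchings — matchings picking a pair of opposite sides in each of squares $1,3,\ldots,n$ of $G_{T,\gamma}$, with both pair choices appearing in $S$ for each odd square. So non-emptiness reduces to determining when distinct alternating matchings admit a convex combination that is an integer lattice point strictly between them.

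For the direction that failure of any listed condition implies emptiness, I would proceed by cases. If condition~2 fails (either $n$ even or $n \le 1$), the alternating pattern of Lemma~\ref{alternating} — which must start at the first square and end at the last — is impossible, so no non-vertex lattice points exist. If condition~3 fails at some even $i$, then the shared edges of square $i$ with its two neighbors meet at a common vertex $v$ of the snake graph; in any alternating matching, the opposite-pair choices in squares $i-1$ and $i+1$ each cover $v$, violating the matching property, so no alternating matching exists. If condition~1 fails, then some third side $\sigma$ of a triangle traversed by $\gamma$ is itself an arc of $T$. Letting $\Delta_k \in \mathbb{Z}^T$ denote the change in weight vector when flipping the pair choice at odd square $2k+1$, I would show this $\sigma$ adds a $\pm e_\sigma$ contribution to a single $\Delta_k$, because the third sides of distinct traversed triangles are distinct arcs. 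The new $e_\sigma$-coordinate then forces any integer convex combination $\sum p_k \Delta_k$ to have $p_k \in \mathbb{Z}$, so only the vertex weight vectors are lattice points.

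For the converse direction, I would compute explicitly under all three conditions that the odd-square difference vectors satisfy $\Delta_0 = \pm e_{\tau_2}$, $\Delta_{(n-1)/2} = \pm e_{\tau_{n-1}}$, and $\Delta_k = \pm(e_{\tau_{2k+2}} - e_{\tau_{2k}})$ for $1 \le k \le (n-3)/2$. With consistent sign choices these telescope to $\sum_k \Delta_k = 2 e_{\tau_{n-1}}$. Letting $M_0$ and $M_1$ be the two alternating matchings with opposite pair choices at every odd square, their midpoint is the lattice point $w^{M_0} + e_{\tau_{n-1}}$, which lies on the open segment from $w^{M_0}$ to $w^{M_1}$ inside the polytope, hence is not a vertex. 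The main obstacle is the condition~1 argument: carefully classifying each arc of $T$ that appears as an edge label in $G_{T,\gamma}$ (either as some $\sigma_j$ or as an ``end edge'' incident to $\gamma$'s start or finish), tracing each into the correct $\Delta_k$, and verifying the resulting extra coordinates are all distinct and unmatched elsewhere — the distinctness resting on the geometric rigidity of polygon triangulations, where each arc of $T$ is bounded by at most two triangles with distinct third sides.
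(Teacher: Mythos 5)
Your overall strategy — reduce to alternating matchings via Lemma~\ref{alternating}, then analyze which convex combinations of their weight vectors can produce lattice points — is the same as the paper's (quite terse) proof. Your analyses of the failures of conditions~2 and~3 are correct. However, the converse direction contains a genuine error. For a middle odd square $2k+1$ (where $1\le k\le (n-3)/2$), its four boundary edges carry labels $\tau_{2k},\sigma_{2k},\tau_{2k+2},\sigma_{2k+1}$, and since $\tau_{2k},\sigma_{2k}$ come from one triangle and $\tau_{2k+2},\sigma_{2k+1}$ from the other, the edges $\tau_{2k}$ and $\tau_{2k+2}$ are \emph{opposite} in the square exactly when they do \emph{not} share an endpoint of $\tau_{2k+1}$. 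In that (non-corner) case the two opposite pairs are $\{\tau_{2k},\tau_{2k+2}\}$ and $\{\sigma_{2k},\sigma_{2k+1}\}$, so $\Delta_k = \pm(e_{\tau_{2k}}+e_{\tau_{2k+2}})$ with the \emph{same} sign on both coordinates, not $\pm(e_{\tau_{2k+2}}-e_{\tau_{2k}})$. Your difference formula only holds when $2k+1$ is a corner square; conditions 1--3 constrain even squares, not odd ones, so both cases occur (e.g.\ the octagon with $T=\{2\text{-}8,\,2\text{-}7,\,3\text{-}7,\,3\text{-}6,\,4\text{-}6\}$ and $\gamma=1\text{-}5$ gives a non-corner square 3). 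Consequently the telescoping to $2e_{\tau_{n-1}}$ fails in general; the correct observation is simply that each $\tau_{2j}$ appears in exactly two of the $\Delta_k$'s with contribution $\pm1$ each, so $\sum_k\Delta_k$ has all even coordinates regardless. More seriously, you never rule out $\sum_k\Delta_k=0$, i.e.\ $w^{M_0}=w^{M_1}$, in which case your "midpoint'' is trivially a vertex. This can in fact occur for one choice of $M_0$; one must note that, among the $2^{(n+1)/2-1}\ge 2$ pairs $(M,\overline M)$ opposite at every odd square, at most one has $w^M=w^{\overline M}$, so a valid pair exists.

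In the forward (emptiness) direction, your condition~1 argument proves that one specific $p_k$ must be an integer, but the stated conclusion "so only the vertex weight vectors are lattice points'' does not follow from that alone — the other $p_j$'s are still unconstrained. The missing step is that Lemma~\ref{alternating} guarantees both pair choices at square $2k+1$ appear with positive coefficient, forcing $p_k\in(0,1)$; combined with the integrality you derived from the uniquely-labeled edge, this yields a contradiction, so no such $S$ exists (this is exactly what Lemma~\ref{unique01} packages). Finally, your reduction "if condition~1 fails, some third side $\sigma$ is an arc of $T$'' needs the small geometric lemma (which you allude to but don't prove) that if the region swept by $\gamma$ is bounded only by boundary segments then it is the whole polygon, so some boundary of the swept region must be an arc; and that arc labels a uniquely-labeled snake-graph edge lying in an odd square or an odd-even interface.
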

		\begin{proof}
			%
			%
			%
			%
			
			Each of these conditions is necessary for the existence of $M_1,M_2\in S$ as described in Lemma \ref{alternating}: the first ensures that interior edges of $G$ are labeled by non-boundary segments and the latter two ensure that the alternate squares do not share a corner. And by the final construction in the proof of Lemma \ref{alternating}, if all the stated conditions hold we can construct matchings $M_1$ and $M_2$ with $w_{\text{nf}}^{M_1}\neq w_{\text{nf}}^{M_2}$ but $\frac{1}{2}w_{\text{nf}}^{M_1}+\frac{1}{2}w_{\text{nf}}^{M_2}\in\mathbb{Z}^{n}$.
		\end{proof}
		\begin{remark}
			We observe that the results for type $A$ cluster algebras with no frozen variables are considerably less elegant than for our other two choices of frozen variables. Therefore, we focus on the latter two cases for type $D$ cluster algebras.
		\end{remark}
	\section{Type D cluster variable Newton polytopes}
		Now we turn to cluster variables in cluster algebras of type $D$; that is, to tagged arcs $\gamma$ and triangulations $T$ of $\dpoly$. Recall that we assume $T$ corresponds to an ideal triangulation (see Remark~\ref{rmk:idealOnly}). 
		\subsection{Boundary frozen variables}
		We will show that $N^{\text{bd}}(T,\gamma)$ is always saturated (Theorem \ref{DB-always}) considering the case of plain arcs (Proposition \ref{DB-saturated}) and notched arcs (Lemma \ref{DB-notched}) separately. In type $D$, Lemma \ref{map-pi} no longer holds. As a result, our proofs in this section are somewhat more involved. 
		
		For this section, let $\pi=\pi^{\text{bd}}$, $P(G)=P^{\text{bd}}(G)$, and $w^M=w_{\text{bd}}^M$.
		
		Our proof strategy is strong induction. We use the following observation in the inductive step. Recall the definition of a non-corner square of a snake graph (Definition~\ref{def:corner}).
		
		\begin{observation}\label{reduction}
			Let $G$ be a snake graph and let $H$ be an ``initial" or ``final" sub-snake graph of $G$ which is connected to the rest of $G$ by a non-corner square with outer edges $e_1$ and $e_2$. The graph $K:= G \setminus (H \cup \{e_1, e_2\})$ is also a snake graph, and the union of any matchings of $H$ and $K$ is a matching of $G$. If $P(K)$ is saturated, then to show a lattice point $v \in P(G)$ is the weight vector of a matching, it suffices to find $\overline{v}\in\pi^{-1}(v)$ where $\overline{v}|_{E(H)}$ gives a matching of $H$.
		\end{observation}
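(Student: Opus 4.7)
The plan is to verify the three separate claims: (i) $K$ is a snake graph, (ii) the disjoint union of matchings of $H$ and $K$ is a matching of $G$, and (iii) the stated sufficient condition for a lattice point $v\in P(G)$ to be the weight vector of some matching of $G$. For concreteness and by the initial/final symmetry, I will assume $H$ is initial: label the consecutive squares of $G$ as $s_1,\dots,s_n$, so that $H$ consists of $s_1,\dots,s_{k-1}$ and the bridge non-corner square is $s=s_k$. Let $f_1$ (resp.\ $f_2$) denote the edge of $s$ shared with $s_{k-1}$ (resp.\ $s_{k+1}$), so that the two remaining edges of $s$ are $e_1,e_2$.

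Claims (i) and (ii) reduce to the observation that since $s$ is non-corner, the squares $s_{k-1}$ and $s_{k+1}$ lie on opposite sides of $s$ and share no vertex. Then $K$ is precisely the sub-snake-graph on $s_{k+1},\dots,s_n$, which inherits a snake-graph structure from $G$. Moreover, $V(H)\cap V(K)=\emptyset$ while $V(H)\cup V(K)=V(G)$: the four vertices of $s$ split as two belonging to $f_1$ (in $V(H)$) and two belonging to $f_2$ (in $V(K)$). Any perfect matching of $H$ covers $V(H)$ and any perfect matching of $K$ covers $V(K)$, so their disjoint union is a perfect matching of $G$.

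For claim (iii), the heart of the argument is to show that any $\overline{v}\in\pi^{-1}(v)\cap\overline{P}(G)$ whose restriction $\overline{v}|_{E(H)}$ is the indicator vector of a matching $M_H$ of $H$ must satisfy $\overline{v}_{e_1}=\overline{v}_{e_2}=0$. For a vertex $u$ of $f_1$, the three edges of $G$ incident to $u$ are one edge of $s_{k-1}$ not equal to $f_1$, the shared edge $f_1$ itself, and exactly one of $e_1,e_2$ (say $e_1$ for one vertex of $f_1$ and $e_2$ for the other). Since $M_H$ covers $u$ by exactly one of the two $E(H)$-edges incident to $u$, the corresponding two coordinates of $\overline{v}$ sum to $1$, so the vertex equation $\sum_{e\ni u}\overline{v}_e=1$ from Proposition~\ref{PM(G)} forces $\overline{v}_{e_1}=0$; the same argument at the other vertex of $f_1$ yields $\overline{v}_{e_2}=0$. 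With this in hand, $\overline{v}|_{E(K)}$ is nonnegative and satisfies every vertex equation defining $\overline{P}(K)$: at vertices of $K$ lying on $s$, the removed edges $e_1,e_2$ contribute zero to the corresponding $G$-equation, so the $K$-equation also holds. Hence $\overline{v}|_{E(K)}\in\overline{P}(K)$, and its $\pi$-projection is the integer point $v-w^{M_H}\in P(K)$, which by the saturation hypothesis on $P(K)$ is the weight vector of a matching $M_K$. Then $M_H\cup M_K$ is a matching of $G$ of weight vector $v$ by claim (ii), as required.

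The main obstacle is the combinatorial bookkeeping in claim (iii): identifying precisely which edges at each vertex of the bridge square $s$ come from $E(H)$, $E(K)$, or $\{e_1,e_2\}$, so that the three-term vertex equation from Proposition~\ref{PM(G)} pins down $\overline{v}_{e_1}$ and $\overline{v}_{e_2}$. The non-corner hypothesis is essential here (and in claim (ii)), since a corner bridge would cause $V(H)$ and $V(K)$ to share a vertex and alter the local edge count at $u$. Once the edge/vertex bookkeeping is set up carefully, the remaining algebra is routine.
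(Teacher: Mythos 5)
The paper states Observation~\ref{reduction} as an unproved remark, so there is no written proof in the paper to compare against; your proposal supplies a proof, and the argument is essentially correct. You correctly identify the key point: given $\overline{v}\in\overline{P}(G)\cap\pi^{-1}(v)$ whose restriction to $E(H)$ is the indicator of a matching $M_H$, the vertex equations of $\overline{P}(G)$ at the two vertices of $f_1$ force $\overline{v}_{e_1}=\overline{v}_{e_2}=0$, whence $\overline{v}|_{E(K)}\in\overline{P}(K)$ projects to the lattice point $v-w^{M_H}\in P(K)$, and saturation of $P(K)$ then produces $M_K$ with $w^{M_H\cup M_K}=v$.

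Two small inaccuracies are worth flagging, though neither breaks the argument. First, the claim that a vertex $u$ of $f_1$ is incident to exactly three edges of $G$, two of them in $E(H)$, is not always true: if $s_{k-1}$ is a corner square, the vertex of $f_1$ shared with $s_{k-2}$ has degree $4$, with three incident edges in $E(H)$. The conclusion you need, $\sum_{e\ni u,\,e\in E(H)}\overline{v}_e=1$, still holds because $M_H$ is a perfect matching of $H$ covering $u$ exactly once, so you should phrase the count this way rather than asserting there are exactly two $E(H)$-edges at $u$. Second, your setup $H=s_1\cup\dots\cup s_{k-1}$ silently assumes $k\geq 2$, but the paper explicitly invokes the observation with the bridging non-corner square equal to the first tile of $G$ (first branch of the proof of Lemma~\ref{chip1}), in which case $H$ degenerates to the single boundary edge $f_1$ opposite $f_2$. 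The argument goes through verbatim once one defines $f_1$ as the edge of $s$ opposite $f_2$ rather than as an edge shared with a previous tile, but this case should be noted since otherwise $K=G\setminus\{e_1,e_2\}$ would retain a dangling edge and would fail to be a snake graph.
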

		
		We begin with some lemmas giving sufficient conditions for saturation of $P(G)$ in terms of saturation of $P(G')$, for $G'$ a sub-snake graph.
		
		\begin{figure}
			\centering
			\includegraphics[width=0.7\linewidth]{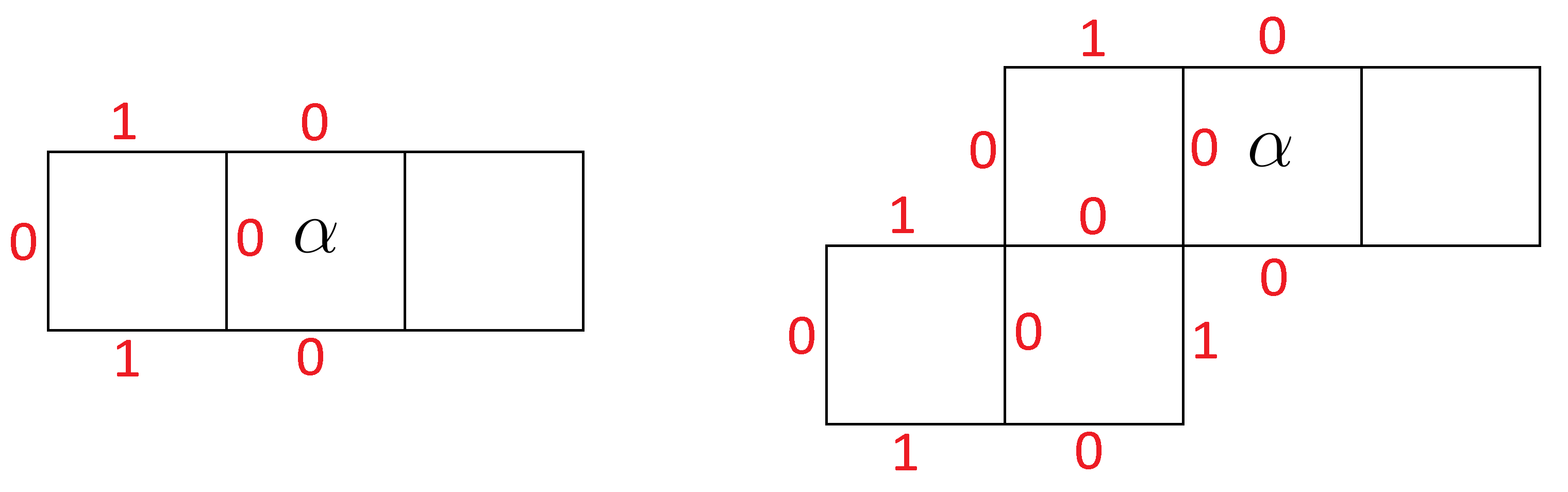}
			\caption{Two examples of the case of $\overline{v}_{e_2}=\overline{v}_{e_4}=1$ in Lemma \ref{chip1}. The numbers in red are coordinates of $\overline{v}$. Note that up to the second non-corner square, all coordinates are determined by the fact that the sum around each vertex must be $1$. }
			\label{fig:horizontal-chip}
		\end{figure}

		\begin{lemma}\label{chip1}
			For a snake graph $G$, suppose that the first tile of $G$ has edges $e_1,e_2,e_3,e_4$ (clockwise) such that $e_1$ connects the first tile to another tile in $G$ and the label of any one of $e_2,e_3,e_4$ is unique in the diagram. Let $G'$ be a snake graph obtained by deleting some boxes from that end of $G$, inheriting the edge labels of $G$. Then $P(G)$ will be saturated if $P(G')$ is saturated for all such $G'$.
		\end{lemma}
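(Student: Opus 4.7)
By Remark~\ref{rmk:preimage}, it suffices to show that every lattice point $v\in P(G)$ is the weight vector of a perfect matching of $G$, given that $P(G')$ is saturated for every snake graph $G'$ obtained from $G$ by deleting boxes from the relevant end. By the symmetry among $e_2,e_3,e_4$, I will assume the unique-labeled edge is $e_3$; the arguments for $e_2$ and $e_4$ are analogous case splits. Because $\ell(e_3)$ labels only $e_3$ in $G$, any preimage $\overline{v}\in\pi^{-1}(v)$ satisfies $\overline{v}_{e_3}=v_{\ell(e_3)}\in\{0,1\}$, and this value equals the indicator of $e_3$ in every matching of $G$ whose weight vector is $v$.

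The plan is to split on this value. In Case~1, where $v_{\ell(e_3)}=1$, every matching $M$ representing $v$ contains $e_3$ and hence excludes $e_2$ and $e_4$. I would take $G'$ to be $G$ with its first tile removed, observe that every matching of $G$ representing $v$ restricts (after removing $e_3$) to a matching of $G'$, so that $v':=v-\mathbf{1}_{\ell(e_3)}$ lies in $P(G')$ as a lattice point, invoke the hypothesis to obtain a matching $M'$ of $G'$ with weight vector $v'$, and then set $M:=M'\cup\{e_3\}$. This is a perfect matching of $G$ with weight vector $v$.

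Case~2, where $v_{\ell(e_3)}=0$, is the delicate one. Every matching $M$ representing $v$ excludes $e_3$, which forces $e_2,e_4\in M$ (to cover the outer vertices $C,D$ of the first tile) and hence $e_1\notin M$ (since $A,B$ are already matched). These exclusions propagate into tile~2: the edges of tile~2 incident to $A$ and $B$ are forced out of $M$, which in turn determines further edges down the snake graph. I would walk through the tiles of $G$ tracking this forced matching until either all of $G$ is matched (producing $M$ directly) or we reach the first non-corner square $T_k$ at which the constraints cease to uniquely determine the matching. In the latter case I would invoke Observation~\ref{reduction} with $H$ the initial portion of $G$ up to $T_k$, writing $G$ as the combination of $H$ with a smaller snake graph $K$ such that matchings of $H$ and $K$ combine to matchings of $G$. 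The forced partial matching $M_H$ has a well-defined weight vector $w^{M_H}$; I would then check that $v_K:=v-w^{M_H}$ is a lattice point of $P(K)$ (because every matching of $G$ representing $v$ agrees with $M_H$ on $H$), apply the hypothesis to obtain $M_K$ with weight vector $v_K$, and assemble $M:=M_H\cup M_K$.

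The principal obstacle will be the propagation analysis in Case~2: verifying that the forcing really does terminate at a non-corner square (or exhausts $G$) so that Observation~\ref{reduction} is applicable, and confirming that $M_H\cup M_K$ is a bona fide perfect matching of $G$ with weight vector exactly $v$. This will require careful casework on whether each intermediate tile is a corner or non-corner square, and on the two local orientations in each case.
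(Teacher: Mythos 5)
Your proposal is correct and follows essentially the same strategy as the paper's proof: use the unique label on the first tile to pin down its matching (forcing the dichotomy $e_3\in M$ versus $e_2,e_4\in M$), handle the first case by removing the first tile, and handle the second case by propagating the forced matching forward to the next non-corner square and then invoking Observation~\ref{reduction}. The paper phrases the argument in terms of the preimage $\overline{v}\in\overline{P}(G)$ and the vertex-sum equations of Proposition~\ref{PM(G)} rather than matchings in a representing convex combination, but the two viewpoints are interchangeable here, and the casework you flag as remaining is exactly what the paper handles (by reference to Figure~\ref{fig:horizontal-chip}).
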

		\begin{proof}	

			Assume saturation of all such $P(G')$, let $v\in P(G)$ be a lattice point, and let $\overline{v}\in \overline{P}(G)$ lie in $\pi^{-1}(v)$. By the given uniqueness of a label, at least one of $\overline{v}_{e_2},\overline{v}_{e_3},\overline{v}_{e_4}$ is $0$ or $1$, and so either: $\overline{v}_{e_2}=\overline{v}_{e_4}=0$ and $\overline{v}_{e_3}=1$, or $\overline{v}_{e_2}=\overline{v}_{e_4}=1$ and $\overline{v}_{e_3}=0$. 
			
			If $\overline{v}_{e_2}=\overline{v}_{e_4}=0$, we are in the scenario of Observation \ref{reduction}, where $H$ is $e_3$ and $K$ is the graph obtained from $G$ by deleting $e_2, e_3, e_4$ ($P(K)$ is saturated by assumption). So using Observation \ref{reduction}, we can find a matching $M$ of $G$ with weight vector $v$.
			
			Next, consider the latter case of $\overline{v}_{e_2}=\overline{v}_{e_4}=1$. Let $H$ be the sub-snake graph of $G$ which is the union of tiles from the first tile up to (but not including) the next non-corner tile, which we call $\alpha$. Each of the coordinates of $\overline{v}$ corresponding to edges of $H$ must be $0$ or $1$ (see Figure \ref{fig:horizontal-chip}). Further, the coordinates indexed by the border edges of $\alpha$ must be $0$. Again, we are in the scenario of Observation~\ref{reduction}, and so can find a matching with weight vector $v$.
		\end{proof}
		
		\begin{figure}
			\centering
			\includegraphics[width=0.7\linewidth]{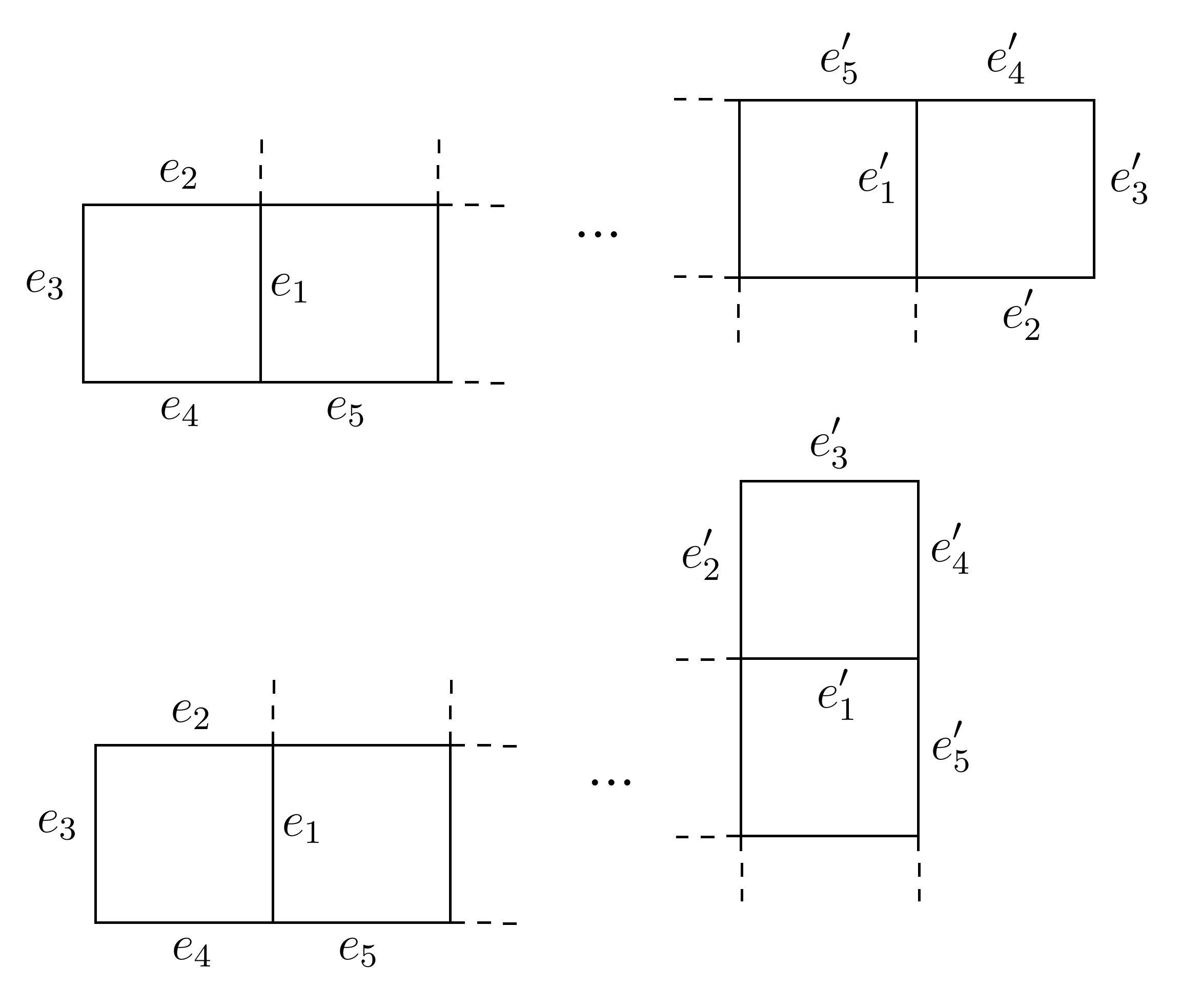}
			\caption{Ends of a snake graph satisfying the symmetry condition in the hypothesis of Lemma \ref{chip2}.}
			\label{fig:symmetric-ends}
		\end{figure}

		\begin{lemma}\label{chip2}
			For a snake graph $G$, suppose the first two boxes at either end of $G$ are as shown in Figure \ref{fig:symmetric-ends}, where $\ell(e_i)=\ell(e'_i)$. Suppose further that $\ell (e_5) $ and one of $\ell(e_2), \ell(e_3), \ell(e_4)$ occur exactly twice in the diagram. Let $G'$ be a snake graph obtained by deleting one or more boxes from either end of $G$. Then $P(G)$ is saturated if $P(G')$ is saturated for all such $G'$.
		\end{lemma}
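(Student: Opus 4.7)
The plan is to adapt the inductive strategy of Lemma \ref{chip1}, now exploiting the symmetric structure at the two ends of $G$ rather than the uniqueness of a single edge label. Let $v \in P(G)$ be a lattice point and pick a lift $\overline{v} \in \pi^{-1}(v) \subseteq \overline{P}(G)$. Write $\tau$ for the label among $\ell(e_2), \ell(e_3), \ell(e_4)$ that occurs exactly twice, so that $\tau = \ell(e_i) = \ell(e'_i)$ for some $i$; and $\sigma = \ell(e_5) = \ell(e'_5)$ likewise occurs exactly twice. Since these labels occur only at the two ends, we immediately obtain
\[
\overline{v}_{e_i} + \overline{v}_{e'_i} = v_\tau \in \mathbb{Z}, \qquad \overline{v}_{e_5} + \overline{v}_{e'_5} = v_\sigma \in \mathbb{Z},
\]
so each sum lies in $\{0,1,2\}$.

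Next I will exploit the fact that the degree-$2$ vertices of the first tile impose the relations from Proposition \ref{PM(G)}, which reduce the lifted coordinates on the first tile to a single free parameter $a := \overline{v}_{e_i} \in [0,1]$; analogously on the last tile with parameter $b := \overline{v}_{e'_i} \in [0,1]$, with $a+b = v_\tau$. A similar analysis for the second tile at each end, using the vertex sums there and the edge $e_5$ (resp.\ $e'_5$), shows that the parameters $a$ and $c := \overline{v}_{e_5}$ are linked by an affine relation determined by the shape (corner vs.\ non-corner) of the second tile, and symmetrically for $b$ and $d := \overline{v}_{e'_5}$, with $c+d = v_\sigma$.

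Now case-split on the integer values $v_\tau, v_\sigma \in \{0,1,2\}$. When $v_\tau \in \{0,2\}$, the constraint $a+b = v_\tau$ with $a,b \in [0,1]$ forces $a,b \in \{0,1\}$ immediately, giving integer coordinates on both end tiles; similarly if $v_\sigma \in \{0,2\}$, then $c, d \in \{0,1\}$. When $v_\tau = 1$ (or $v_\sigma = 1$), the values $a$ and $b$ need not be integers individually, but I can rebalance: replacing $(a,b)$ by $(0,1)$ or $(1,0)$ alters $\overline{v}$ only along edges whose labels are $\tau$ (resp.\ $\sigma$), so the projection $\pi(\overline{v})$ is unchanged. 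The affine links between $a$ and $c$ (and $b$ and $d$) allow me to propagate this rebalancing through the second tiles at both ends consistently, producing a new lift $\overline{v}' \in \overline{P}(G)$ with $\pi(\overline{v}') = v$ and integer coordinates on the first two and last two tiles. At that point, the restrictions of $\overline{v}'$ to the first tile and to the last tile each give a partial matching, so Observation \ref{reduction} applies with $H$ the union of these end tiles (and the appropriate outer edges removed) and $K$ the smaller snake graph $G'$, whose perfect matching polytope is saturated by hypothesis, completing the proof.

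The main obstacle is the case analysis when $v_\tau = v_\sigma = 1$ and the second tile is a corner: here the affine relation between $a$ and $c$ (and between $b$ and $d$) may be more constraining, and I must verify that the rebalancing at the two ends is simultaneously compatible and stays inside $\overline{P}(G)$. This requires a careful local check for each possible configuration of the first two tiles at either end, but in every configuration the rebalancing respects all vertex-sum relations by construction, since any adjustment of a $\tau$- or $\sigma$-labeled coordinate is compensated symmetrically at the opposite end.
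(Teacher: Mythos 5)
Your proof opens with the same moves as the paper: exploiting that $\ell(e_i)$ and $\ell(e_5)$ occur exactly twice, so $\overline{v}_{e_i}+\overline{v}_{e'_i}$ and $\overline{v}_{e_5}+\overline{v}_{e'_5}$ are integers in $\{0,1,2\}$, case-splitting on these sums, and closing via Observation~\ref{reduction} once integer coordinates on the end tiles have been arranged. The cases where either sum is $0$ or $2$ are handled correctly and essentially match the paper. But the case you single out as ``the main obstacle,'' $v_\tau=v_\sigma=1$, is not actually resolved, and this is precisely where the paper's proof does something that a local rebalancing cannot.

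When $v_\tau=v_\sigma=1$, the vertex constraints force $\overline{v}_{e_1}=\overline{v}_{e'_1}=0$ and the fractional parameter $x=\overline{v}_{e_i}$ propagates inward: the edge $g$ of the second tile shared with the third tile satisfies $\overline{v}_g+\overline{v}_h=x$ for an edge $h$ of the third tile, and analogous relations continue through the middle of $G$. Replacing $x$ by $0$ or $1$ therefore changes coordinates arbitrarily deep in the snake graph, not ``only along edges whose labels are $\tau$ (resp.\ $\sigma$),'' and there is no local guarantee the result remains in $\overline{P}(G)$; your appeal to the rebalancing respecting ``all vertex-sum relations by construction'' asserts exactly what needs to be shown. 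The paper handles this subcase by a genuinely different mechanism: it deletes only the first and last tile to form $G'$, modifies $\overline{v}$ to a point $\overline{v}'\in\overline{P}(G')$ by setting $\overline{v}'_{e_1}=x$ and $\overline{v}'_{e'_1}=1-x$ (and checks $\pi(\overline{v}')$ is integral), invokes the saturation hypothesis for $P(G')$ to replace $\overline{v}'$ by a genuine lattice point $\overline{q}'\in\overline{P}(G')$ with the same projection, and then carefully extends $\overline{q}'$ to a lattice point $\overline{q}\in\overline{P}(G)$ with $\pi(\overline{q})=v$ by choosing the correct values on $e_1,\dots,e_4,e'_1,\dots,e'_4$. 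This inductive ``cleanup'' of the middle of the graph, rather than a local rebalance of the ends, is the missing ingredient in your argument.
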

		
		\begin{proof}

			Assume saturation of all such $P(G')$, let $v\in P(G)$ be a lattice point, and let $\overline{v}\in \overline{P}(G)$ lie in $\pi^{-1}(v)$. Let $e_{i}\in\{e_2,e_3,e_4\}$ have the edge label that occurs exactly twice, so $\overline{v}_{e_i}+\overline{v}_{e'_i}\in\{0,1,2\}$. If this sum is $0$ or $2$ then $\overline{v}_{e_i}$ is $0$ or $1$ respectively, and the remainder of the proof is exactly like the proof of Lemma~\ref{chip1}. Otherwise,  $\overline{v}_{e_j}=1-\overline{v}_{e'_j}$ for $j\in\{2,3,4\}$. Let $x:=\overline{v}_{e_2}$ and note $x=\overline{v}_{e_4}=1-\overline{v}_{e_3}$.  
			
			Since the label of $e_{5}$ occurs exactly twice, we similarly have $\overline{v}_{e_5}+\overline{v}_{e'_5}\in \{0,1,2\}$. If it is $2$, then $\overline{v}_{e_5}=1$, so $x=0$, and we can again proceed by Observation \ref{reduction}. If it $\overline{v}_{e_5}+\overline{v}_{e'_5}=0$, then $\overline{v}_{e_5}=\overline{v}_{e'_5}=0$, so $\overline{v}_{e_1}=1-x=1-\overline{v}_{e'_1}$. Define $\overline{q}\in \pi^{-1}(v)$ by $\overline{q}_{e_1}=\overline{q}_{e_3}=\overline{q}_{e'_2}=\overline{q}_{e'_4}=1$, $\overline{q}_{e'_1}=\overline{q}_{e'_3}=\overline{q}_{e_2}=\overline{q}_{e_4}=0$, and $\overline{q}_{e_i}=\overline{v}_{e_i}$ for all other coordinates. Since $\overline{q}_{e_2}=\overline{q}_{e_4}=0$ we can proceed by Observation \ref{reduction}, where $H$ is $e_3$.
			
			Finally, suppose $\overline{v}_{e_5}+\overline{v}_{e'_5}=1$ and let $\overline{v}_{e_5}=y$. We see that $x+y=1$ so $\overline{v}_{e_1}=\overline{v}_{e'_1}=0$ and $\overline{v}_{e_5}=y=1-x$. Now, let $G'$ be $G$ with the first and last box removed. Define $\overline{v}'\in \overline{P}(G')$ by $\overline{v}'_{e_1}=x$, $\overline{v}'_{e'_1}=1-x$, and $\overline{v}'_{e_i}=\overline{v}_{e_i}$ for all other coordinates. Since $P(G')$ is saturated by assumption, there is some lattice point $\overline{q}'\in \overline{P}(G')$ such that $\pi(\overline{v}')=\pi(\overline{q}')$. Without loss of generality let $\overline{q}'_{e_5}=0$ and $\overline{q}'_{e'_5}=1$, so $\overline{q}'_{e_1}=1$ and $\overline{q}'_{e'_1}=0$. Define the lattice point $\overline{q}\in \overline{P}(G)$ by $\overline{q}_{e_1}=\overline{q}_{e_3}=\overline{q}_{e'_2}=\overline{q}_{e'_4}=0$ and $\overline{q}_{e'_3}=\overline{q}_{e_2}=\overline{q}_{e_4}=1$ and $\overline{q}_{e_i}=\overline{q}'_{e_i}$ for all other coordinates. Then, $\pi(\overline{q})=\pi(\overline{v})=v$, proving saturation of $G$. 
		\end{proof}
		
		Now, in order to make use of the previous lemmas, we must verify which snake graphs $G_{T, \gamma}$ satisfy the hypotheses. First, we analyze the structure of ideal triangulations of $\dpoly$. 
		
		Fixing a triangulation $T$, let $\rho_1,...,\rho_k$ be the radii, ordered counterclockwise, and let $b_i$ be the basepoint of $\rho_i$. Note $T$ contains an edge $\varepsilon_i$ between each $b_i$ and $b_{i+1}$, or else the space between $\rho_i$ and $\rho_{i+1}$ would not be triangulated. Note that $\varepsilon_1,...,\varepsilon_k$ form the boundary of a punctured $k$-gon triangulated by radii. Let $B_i$ be the set of boundary vertices between $b_i$ and $b_{i+1}$ inclusive. Since no edge of $T$ can cross one of the radii, two boundary vertices can be connected by an edge of $T$ only if they are both in $B_i$. For $k>1$, let $S_i$ be the surface enclosed by $\varepsilon_i$ and boundary edges between $b_i$ and $b_{i+1}$. Note that $S_i$ is isotopic to an un-punctured convex polygon and $T|_{S_i}$ is a triangulation. Thus, for $k>1$, $T$ is combinatorially equivalent to a punctured $k$-gon triangulated by $k$ radii, with a triangulated polygon glued to each boundary segment. See Figure \ref{fig:triangulation} for examples.
		\begin{figure}
			\centering
			\includegraphics[width=0.7\linewidth]{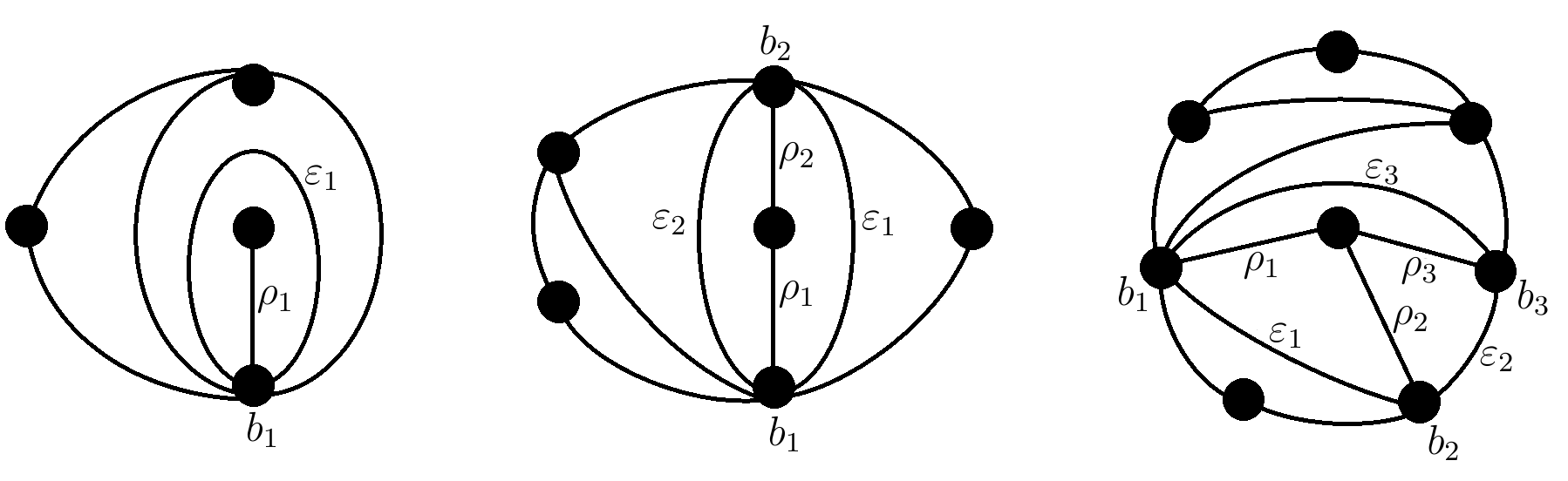}
			\caption{Examples of our notation for triangulations of punctured polygons with $k=1$, $k=2$, and $k=3$ radii.}
			\label{fig:triangulation}
		\end{figure}
		
		Let $\gamma$ be an oriented arc not in $T$. We will describe its intersections with the edges of $T$. If $\gamma$ is a radius oriented from base point $V$ to the puncture, it will only intersect edges in some $S_i$ followed by $\varepsilon_i$; denote the ordered set of these edges by $I_V$. If the endpoints of $\gamma$ are boundary vertices $W,W'$ with $W\in B_i,W'\in B_j$ (possibly the same), then either $\gamma$ will be a diagonal of a polygon, or it will intersect $T$ at $I_{W}$, followed by the radii $\rho_{i}$ through $\rho_{j-1}$,\footnote{Note that this is slightly imprecise if $W$ or $W'$ is in $\{b_i\}_{i=1}^{k}$ since $i$ and $j$ can be either of two values.} followed by $I_{W'}$ in reverse order. Which of these two cases applies depends on which side of the puncture $\gamma$ lies.
		
		If $k>1$ and $\gamma$ intersects some $\varepsilon_i$ twice, the first and last radius in $T$ that $\gamma$ intersects will both correspond to non-corner squares of $G_{T, \gamma}$. This is true since $\gamma$ will consecutively intersect $\varepsilon_i,\rho_i,\rho_{i-1}$ which do not share a common endpoint, and similarly on the other side. Furthermore, all other radii in $T$ which $\gamma$ intersects will correspond to corner squares, since those arcs share a common endpoint at the puncture.  
		
		Next, we use our analysis above to develop some results about edges with unique (or almost unique) labels.
		\begin{prop}\label{unique}
			Let $T$ be an ideal triangulation of $\dpoly$, and let $\gamma$ be an arc that is either not a loop or is a loop enclosing a radius in $T$. Then there is at least one edge on the boundary of the initial or final box of $G_{T,\gamma}$ whose label is unique in $G_{T,\gamma}$.
		\end{prop}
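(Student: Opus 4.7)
The plan is to show that at least one endpoint of $\gamma$ yields a side of the corresponding end triangle whose label is unique in $G_{T,\gamma}$. The key observation is that a label $\mu$ appears on an edge of tile $G_j$ if and only if $\mu$ is a side of one of the two triangles of $T$ adjacent to $\tau_{i_j}$; in particular, if $\mu$ is an arc of $T$ or a boundary segment sharing an endpoint with $\gamma$, then $\gamma$ does not cross $\mu$, limiting how often $\mu$ can appear in the snake graph. Recall that the boundary edges of the initial tile carry the labels $s_1, s_2$ of the two sides of the starting triangle $\Delta_0$ meeting at $V_{\text{start}}$, together with $\tau_{i_2}$, with analogous labels $s_1', s_2', \tau_{i_{k-1}}$ for the final tile. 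I would first note the standard fact that a non-loop $\gamma$ crosses each arc of $T$ at most once in the surfaces under consideration.

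Next I would case-analyze using the structural description of $T$ (radii $\rho_1, \dots, \rho_k$, arcs $\varepsilon_i$, and polygon regions $S_i$) given just before the proposition. Case 1: an endpoint $V$ of $\gamma$ lies in the interior of some $B_i$ (so $V$ is not a basepoint). Then $V$ is incident to two boundary segments of $\dpoly$, and by the analysis of $\gamma$'s path just before the proposition (together with the fan structure at $V$), at least one of the two sides of the end triangle at $V$ is a boundary segment. A boundary segment lies in exactly one triangle of $T$, so its label appears in $G_j$ only when $\tau_{i_j}$ equals the opposite side of that triangle, which occurs exactly once; hence this label is unique. Case 2: $\gamma$ is a non-loop radius ending at the puncture. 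Then the end triangle is a central triangle $(b_j, b_{j+1}, p)$ with sides $\rho_j, \rho_{j+1}, \varepsilon_j$, so $s_1', s_2' = \rho_j, \rho_{j+1}$. Since $\gamma$ is itself a radius sharing the puncture endpoint with $\rho_j$ and $\rho_{j+1}$, it does not cross either. Using that $\gamma$ enters the central region via exactly one edge $\varepsilon_j$ (forcing $\tau_{i_k} = \varepsilon_j$) and crosses no other $\varepsilon_i$ or radius, I would verify that $\rho_j$ occurs in $G_{T,\gamma}$ only in the final tile.

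Case 3: $\gamma$ is a loop enclosing a radius $\rho \in T$. The snake graph $G_{T,\gamma}$ contains subgraphs isomorphic to $G_{T,\rho}$ at either end, and the initial tile is the initial tile of one such copy, determined by the triangle at the basepoint of $\rho$ where $\gamma$ departs; I would identify a unique label there by analyzing the local triangulation around the basepoint, taking care that the same label does not also appear in the symmetric copy or in the middle section of $G_{T,\gamma}$. The main obstacle will be the sub-case of Case 1 in which $V$ is a basepoint $b_j$, so the end triangle at $V$ may have all three sides being arcs of $T$ (radii at $b_j$ or edges of $S_{j-1} \cup S_j$) and the boundary-segment argument fails. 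I expect to handle this by further splitting according to whether $\Delta_0$ lies in $S_{j-1}$, in $S_j$, or in the central region, and in each sub-case combining the non-crossing constraint for arcs sharing endpoint $V$ with a Case 2-type count of triangles containing the candidate sides. Self-folded triangles, which arise when $T$ itself contains a loop enclosing one of its radii (most notably when $k = 1$), will require separate bookkeeping since they alter the triangle-count of the relevant arcs.
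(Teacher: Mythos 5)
Your proposal has two concrete false claims, each of which would sink the argument as written.

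First, the claimed ``standard fact that a non-loop $\gamma$ crosses each arc of $T$ at most once'' fails in $\dpoly$: a non-loop arc with both endpoints in the same $B_i$ can wrap around the puncture and cross $\varepsilon_i$ twice. This is exactly the situation the paper isolates as its most delicate case (Case 4 of its proof, where $W,W' \in B_i$ and $\gamma$ intersects $\varepsilon_i$), and it is the reason the paper organizes the case analysis around the pair $(W,W')$ rather than one endpoint at a time. Your decomposition by endpoint type does not see this distinction, so the argument you give for Case 1 would be applied to wrap-around arcs, where the ``type $A$ near the endpoint'' intuition is precisely what breaks.

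Second, the claim in your Case 1 that when $V$ lies in the interior of some $B_i$, at least one of the two sides of the end triangle at $V$ is a boundary segment, is false. If $V$ has several incident diagonals in the triangulation of $S_i$, the two arcs of $T$ immediately surrounding $\gamma$ at $V$ can both be diagonals. (Concretely: take $S_i$ a pentagon $b_i,v_1,v_2,v_3,b_{i+1}$ triangulated by the diagonals $v_2b_i$ and $v_2b_{i+1}$; an arc $\gamma$ from $v_2$ into the central region has end triangle $b_i v_2 b_{i+1}$, and $s_1=v_2b_i$, $s_2=v_2b_{i+1}$ are both diagonals.) The conclusion that one of $s_1,s_2$ has a unique label can still be salvaged in the non-wrap-around situation, but you need a different argument: each of $s_1,s_2$ lies in exactly two triangles, both meeting $V$, and $\gamma$ crosses only one side of those two triangles; this is what the paper means by ``unique due to the type $A$ structure near the end points,'' and it relies on $\gamma$ not returning near $V$. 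You have neither this argument nor the case split that guards it.

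Beyond these two errors, the hardest remaining configurations --- both endpoints being basepoints (paper's Case 2), the wrap-around case where one must compare the reach of $\gamma$ from $W$ against that from $W'$ to decide which endpoint produces a unique label (paper's Case 4), and the loop case (paper's Case 5) --- are all deferred with qualitative descriptions but no concrete argument. Your Case 2 (radius to the puncture) is sound and matches a sub-case of the paper's Case 1, but on its own it does not carry the proof.
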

		\begin{proof}
			Let the end points of $\gamma$ be $W$ and $W'$, and let $\rho_{V}$ be the radius based at $V$ (not necessarily in $T$). We prove each of five cases separately. 
			\begin{enumerate}
				\item There is no $i$ with $W,W'\in B_i$, and at least one of $W,W'\notin \{b_i\}_{i=1}^k$. 
				
				Assuming $W$ is a boundary vertex, each of the two arcs incident to $W$ surrounding $\gamma$ will label unique edges of $G_{T, \gamma}$ due to the type $A$ structure near the end points. Note that this includes the case where $W'$ is the puncture.
				\item Both $W,W'\in \{b_i\}_{i=1}^k$. 
				
				The subset of $T$ that contributes to $G_{T,\gamma}$ is isotopic to a (un-punctured) triangulated polygon, so the two arcs incident to $W$ (resp. $W'$) and surrounding $\gamma$ will label unique edges.
				
				\item $W,W'\in B_i$ and $\gamma$ does not intersect $\varepsilon_i$.
				
				In this case $\gamma$ is contained in $S_i$, which is equivalent to an arc in an un-punctured polygon, so again the arcs incident to $W, W'$ and surrounding $\gamma$ will label unique edges.
				
				\item $W,W'\in B_i$ and $\gamma$ intersects $\varepsilon_i$. 
				
				Without loss of generality, $W\notin \{b_i\}_{i=1}^k$ so the arcs surrounding $\gamma$ incident to $W$ are between elements of $B_i$. These will not be edge labels of tiles corresponding to radii, so it suffices to consider tiles corresponding to edges in $I_W$ and $I_{W'}$. Let $T'\subset T$ consist of arcs labeling edges of $G_{T,\rho_{W}}$. If $\rho_{W}'$ is contained inside the bounds of $T'$, then the arcs surrounding $\gamma$ containing $W$ are unique labels. Otherwise, the arcs surrounding $\gamma$ containing $W'$ are unique labels. 
				\item $W=W'$
				
				We have $W=W'=b_i$, since $\gamma$ encloses a radius of $T$. Thus, $\gamma$ is contained in the $k$-gon with sides $\{\varepsilon_i\}$ and intersects all radii but $\rho_i$. Both $\varepsilon_i$ and $\varepsilon_{i-1}$ label a boundary edge in an end square of $G_{T,\gamma}$, and these labels will be unique in the diagram.  
			\end{enumerate}
		\end{proof}
		
		Now we address two pathological cases.
		\begin{lemma}\label{pseudo-loop}
			Let $T$ be an ideal triangulation of $\dpoly$, and let $\gamma$ be a loop that intersects exactly one non-radius arc of $T$. Let $G'$ be the snake graph obtained by removing the first and last square from $G_{T,\gamma}$ (as shown in Figure \ref{fig:staircase} when $k=6$). Then $P(G')$ is saturated.
		\end{lemma}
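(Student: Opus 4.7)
The plan is to prove saturation of $P(G')$ by a direct structural analysis tailored to this pathological loop. Since $\gamma$ is a loop based at a boundary vertex $W\in B_i$ whose radius from $W$ crosses only $\varepsilon_i$ among the non-radii, and $\gamma$ encircles the puncture, the snake graph $G_{T,\gamma}$ has $k+2$ squares with first and last square both being $G_{\varepsilon_i}$. The graph $G'$ is therefore the $k$-square snake graph $G_{\rho_i}, G_{\rho_{i-1}},\ldots,G_{\rho_{i+1}}$. All $k-2$ middle squares are corner squares (because consecutive radii share the puncture as a common endpoint), while $G_{\rho_i}$ and $G_{\rho_{i+1}}$ are non-corner. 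A direct computation shows each interior gluing edge of $G'$ is labeled by some $\varepsilon_j$ with $j\neq i$, and that label appears exactly once in $G'$, whereas every boundary label (some $\rho_j$ or $\varepsilon_i$) appears on exactly two boundary edges of $G'$.

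Given a lattice point $v\in P(G')$ and any lift $\overline{v}\in\pi^{-1}(v)\subset\overline{P}(G')$, the coordinates of $\overline{v}$ on the $k-1$ unique-label interior edges equal $v_\tau\in\{0,1\}$ and are therefore automatically integer. For each boundary-label pair with $v_\tau\in\{0,2\}$ the coordinates are also determined and integer. The only potentially non-integer coordinates lie on pairs of boundary edges sharing a label with $v_\tau=1$, so it suffices by Proposition \ref{PM(G)} and Remark \ref{rmk:preimage} to resolve these ambiguities consistently with the vertex-sum-one constraints.

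I would proceed by strong induction on $k$, treating the small cases $k\leq 3$ by direct enumeration. For the inductive step, I would case-split on $v_{\varepsilon_i}\in\{0,1,2\}$. In each case, the values $v_{\varepsilon_i}$, $v_{\rho_{i-1}}$, and $v_{\rho_{i+1}}$ (the three boundary labels adjacent to $G_{\rho_i}$) together with the vertex-sum constraint force a consistent assignment of matching edges on $G_{\rho_i}$. Invoking Observation \ref{reduction}, this reduces the problem to the sub-snake-graph $K$ obtained from $G'$ by removing $G_{\rho_i}$ together with two of its boundary edges. The crucial point is that the new end tile of $K$ is $G_{\rho_{i-1}}$, whose formerly-interior edge labeled $\varepsilon_{i-1}$ is now an exposed boundary edge of $K$, and this label appears only once in $K$ (having appeared only once, as the gluing edge, in $G'$). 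Hence Lemma \ref{chip1} applies to $K$, and iterating (each chip exposes a new unique label) yields saturation of $P(K)$.

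The main obstacle is that no boundary-label at either end of $G'$ itself is unique, so Lemma \ref{chip1} and Lemma \ref{chip2} do not apply to $G'$ directly; this is precisely why the lemma is stated separately. The workaround exploits the fact that $\varepsilon_i$ appears on boundary edges of both end squares, so its value simultaneously constrains both ends and can be combined with the adjacent $\rho$-values to pin down a compatible end assignment; once this end is peeled off, the exposed gluing-edge labels become unique on the reduction, bringing the remainder within range of the existing chip-lemma machinery. Some care is required to verify that the end-assignment dictated by $v$ is actually compatible with some matching of the reduced graph, and this should follow from the fact that $v$ is a genuine lattice point in the convex hull of weight vectors rather than merely satisfying the local vertex constraints.
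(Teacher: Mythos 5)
Your structural analysis of $G'$ agrees with the paper's: it is the $k$-square staircase on the tiles $G_{\rho_j}$, the interior edges carry the unique labels $\varepsilon_j$ ($j\neq i$), and the boundary labels $\rho_1,\ldots,\rho_k,\varepsilon_i$ each appear on exactly two boundary edges. Your observation that peeling off the first tile exposes a formerly interior edge with a unique label, after which Lemma~\ref{chip1} can be iterated all the way down, is also correct and is a sound plan for what remains once the first square is handled.

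However, the step that actually handles the first square is a genuine gap, and it is the crux of the lemma. You assert that the values $v_{\varepsilon_i}$, $v_{\rho_{i-1}}$, $v_{\rho_{i+1}}$ and the vertex-sum constraints ``force a consistent assignment of matching edges on $G_{\rho_i}$.'' But all three of these labels appear \emph{twice} in $G'$, so each $v_\tau$ is a sum of two lift coordinates, one on $G_{\rho_i}$ and one on a distant square; knowing the sum does not pin down the individual summand. Concretely: the two degree-two vertices of the first tile force (in the notation where the outer edge is opposite the interior edge $\varepsilon_{i-1}$) $\overline{v}_{\mathrm{top}}=\overline{v}_{\mathrm{bottom}}=:t$ and $\overline{v}_{\mathrm{outer}}=1-t$, while the interior edge has $\overline{v}_{\varepsilon_{i-1}}=v_{\varepsilon_{i-1}}\in\{0,1\}$ (unique label). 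When $v_{\varepsilon_{i-1}}=1$ the vertex equation forces $t=0$, but when $v_{\varepsilon_{i-1}}=0$ the parameter $t$ is genuinely free, and you cannot read it off from $v_{\varepsilon_i},v_{\rho_{i-1}},v_{\rho_{i+1}}$. Showing that $t$ can be \emph{rounded} to $0$ or $1$ while compensating elsewhere so that $\pi(\overline{w})=v$ still holds is exactly where the work lies, and it quietly uses a global parity fact about the boundary cycle of $G'$; your proposal flags this (``some care is required\ldots'') but does not supply the argument. Your case split on $v_{\varepsilon_i}\in\{0,1,2\}$ also does not align with the degrees of freedom: $v_{\varepsilon_i}=1$ is the hard case and is not resolved by local considerations at $G_{\rho_i}$.

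The paper sidesteps the rounding problem with a direct, non-inductive argument built on the unique interior labels. Since each $\varepsilon_j$ ($j\neq i$) labels a single interior edge, $\overline{v}_{\varepsilon_j}=v_{\varepsilon_j}\in\{0,1\}$ automatically. If some $\varepsilon_j$ equals $1$, the four boundary edges at its endpoints vanish and the degree-two vertex equations propagate integrality across the whole staircase. If all $\varepsilon_j=0$, the boundary edges form a cycle whose values alternate $t,1-t$, and the key structural fact (every degree-four vertex has both boundary edges carrying the same label $\rho_m$, $3\le m\le k$, and more generally every same-labeled pair of boundary edges sits at odd distance in the cycle) forces $v_\tau=1$ for every repeated boundary label; this is the weight vector of either boundary matching. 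So the paper's proof is global and avoids the need for any rounding or induction. If you want to pursue your peeling strategy, you would need to prove a lemma of roughly the same strength as the paper's boundary-parity observation to justify the first peel; at that point the direct argument is both shorter and cleaner.
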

		\begin{proof}
			%
			%
			
			If $k=1$, then $G'$ consists of a single box with two adjacent pairs of edges labeled $\rho_1$ and $\varepsilon_1$ respectively; $P(G')$ is a single point and so is saturated.
			
			For $k\geq2$, the tiles of $G'$ alternate between horizontal and vertical adjacency, as shown in Figure \ref{fig:staircase}.\footnote{For $k=2$ some of the following structure degenerates, but the proof still holds.} The internal edges of $G'$ are labeled by $\varepsilon_2,...,\varepsilon_k$ and these labels are unique. All other labels appear exactly twice. Whenever two boundary edges meet at a vertex that is also incident to two internal edges, those two boundary edges are both labeled by $\rho_i$ for some $i=3,...k$. 
			\begin{figure}
				\begin{minipage}{0.6\linewidth}
					\centering
					\includegraphics[width=0.8\linewidth]{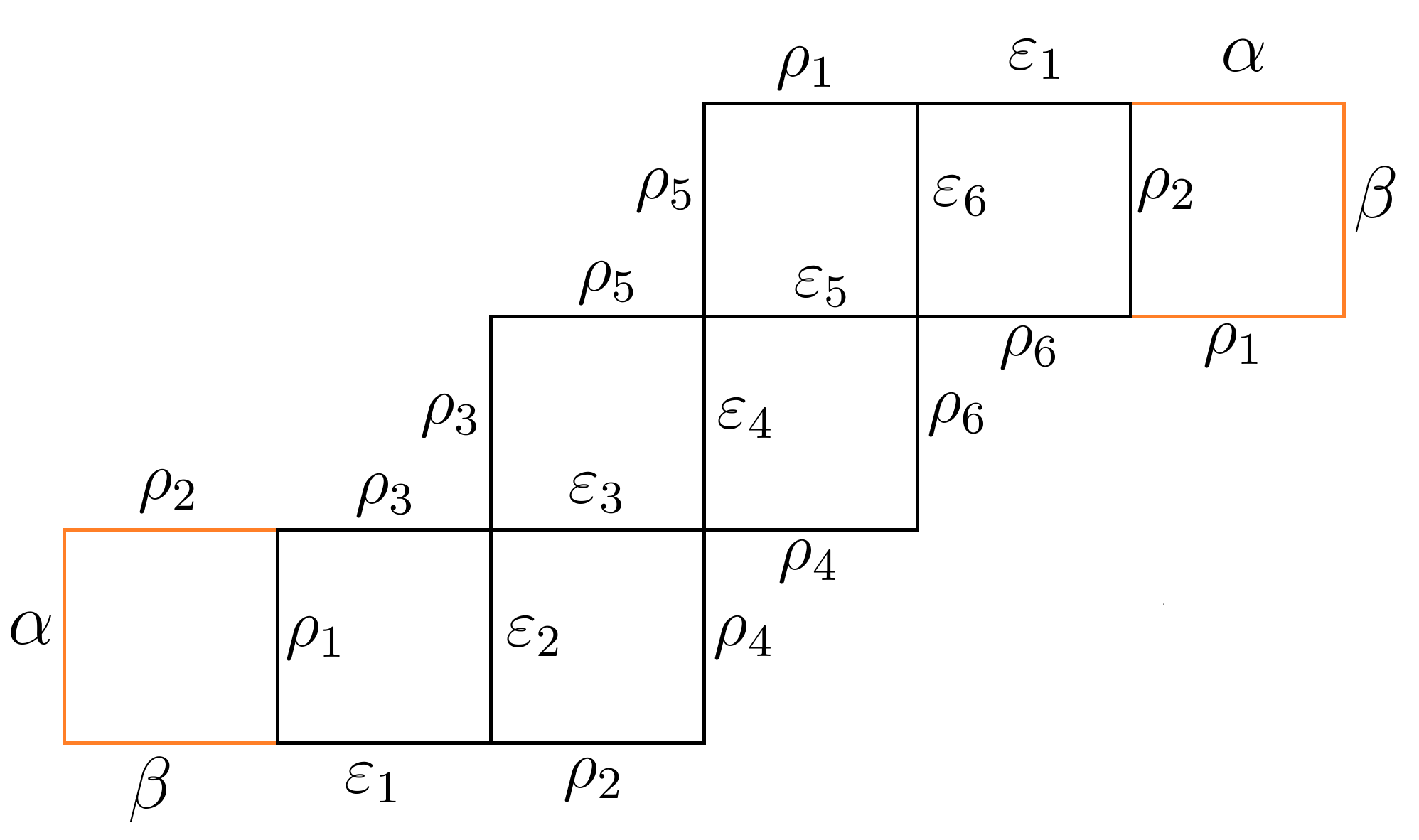}
				\end{minipage}
				\begin{minipage}{0.3\linewidth}
					\centering
					\includegraphics[width=0.7\linewidth]{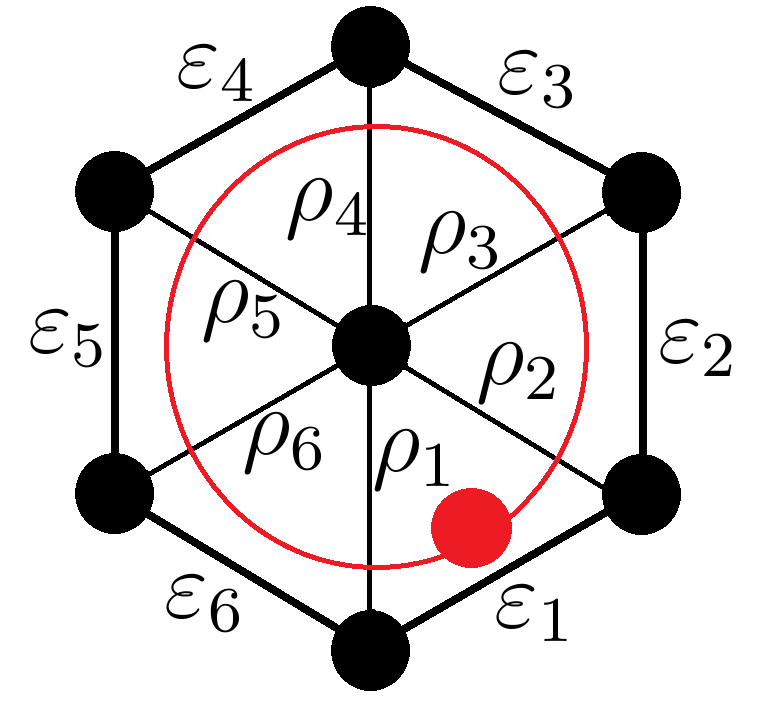}
				\end{minipage}
				\caption{On the left is the middle part of the snake graph of a loop (with base vertex not in $\{b_i\}_{i=1}^k$) in the case $k=6$. The tiles correspond to intersections with all radii; tiles corresponding to non-radii have been removed. On the right is a ``floating arc'' whose intersections with the the radii would correspond to this snake graph.}
				\label{fig:staircase}
			\end{figure}	
			
			Let $\overline{v}\in \overline{P}(G')$ such that $v=\pi(\overline{v})$ is a lattice point in $P(G')$. First, consider the case $\overline{v}_{\varepsilon_i}=0$ for all $i=2,...,k$. Then any pair of coordinates of $\overline{v}$ corresponding to edges with the same label must sum to $1$. Thus, $v_{\tau}=1$ for all $\tau$ appearing more than once in $G'$, and $v$ is the weight vector of either of the two perfect matchings that use only boundary edges of $G'$.  
			
			On the other hand, consider the case $\overline{v}_{\varepsilon_i}=1$ for some $i\in\{2,...,k\}$. Then $\overline{v}_{\tau}=0$ for all edges labeled $\tau$ incident to $\varepsilon_i$. By the equations of $\overline{P}(G')$ corresponding to degree $2$ vertices of $G'$, we see that all coordinates of $\overline{v}$ are either $0$ or $1$, meaning that $\overline{v}$ corresponds to a perfect matching.  
		\end{proof}
		\begin{lemma}\label{loop1}
			Let $T$ be an ideal triangulation of $\dpoly$. If $\gamma\notin T$ is a loop that intersects exactly one non-radius edge of $T$, then $P(G_{T,\gamma})$ is saturated.
		\end{lemma}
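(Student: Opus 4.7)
The strategy is to extend the saturation of $P(G')$ from Lemma \ref{pseudo-loop} to the full snake graph $G_{T,\gamma}$. By construction, $G_{T,\gamma}$ is obtained from the middle snake graph $G'$ by gluing one copy of the tile $G_{\tau^*}$ at each end, where $\tau^*$ is the unique non-radius arc that $\gamma$ crosses (necessarily twice: once going out and once returning). Let $W$ be the base vertex of the loop $\gamma$, and let $\alpha, \beta$ be the other two sides (arcs of $T$ or boundary segments) of the triangle containing $W$ and $\tau^*$. Each of $\alpha, \beta$ labels exactly two edges of $G_{T,\gamma}$, one in each end tile; I denote these edges $e_\alpha, e'_\alpha$ and $e_\beta, e'_\beta$.

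Given a lattice point $v \in P(G_{T,\gamma})$ and a preimage $\overline{v} \in \overline{P}(G_{T,\gamma})$ under $\pi$, the value $v_\alpha = \overline{v}_{e_\alpha} + \overline{v}_{e'_\alpha}$ lies in $\{0, 1, 2\}$, and similarly for $\beta$. Since the shared edge between the first tile $G_{\tau^*}$ and the second tile (a radius tile) is labeled by a radius rather than $\alpha$ or $\beta$, the snake graph corner corresponding to $W$ has degree two, yielding the constraint $\overline{v}_{e_\alpha} + \overline{v}_{e_\beta} = 1$; the analogous constraint holds at the other end. This substantially restricts the possibilities for $(v_\alpha, v_\beta)$. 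When either is $0$ or $2$, every coordinate of $\overline{v}$ on the end tiles is forced to be integer; the restriction of $\overline{v}$ to $G'$ then projects to a lattice point of $P(G')$, and Lemma \ref{pseudo-loop} supplies a matching of $G'$ that glues with the fixed end-tile matchings into a matching of $G_{T,\gamma}$.

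The remaining case is $v_\alpha = v_\beta = 1$, where the end coordinates have the form $(\overline{v}_{e_\alpha}, \overline{v}_{e_\beta}) = (x, 1-x)$ and $(\overline{v}_{e'_\alpha}, \overline{v}_{e'_\beta}) = (1-x, x)$. The plan here is to choose $x \in \{0, 1\}$, flip $\overline{v}$ to integer values on both end tiles, and compensate on $G'$. The main obstacle is that such a flip changes the value on the edge shared between an end tile and $G'$, which may disrupt the induced configuration on $G'$. This is resolved using the explicit description of matchings of $G'$ from the proof of Lemma \ref{pseudo-loop}---either using only boundary edges, or involving some internal $\varepsilon_i$ edge---to verify that at least one of the two flip choices leads to a compatible matching on $G'$. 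A potential alternative is to invoke Lemma \ref{chip2} directly; however, verifying its symmetric-label hypothesis on the second tile at each end is delicate, since these correspond to the first and last radii crossed by $\gamma$ and are typically distinct radii, so a direct case analysis seems more robust.
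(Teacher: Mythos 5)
Your decomposition (two copies of the $G_{\varepsilon_1}$ tile flanking the radius-staircase $G'$ of Lemma~\ref{pseudo-loop}), your identification of $\alpha,\beta$ as twice-occurring labels, and your use of the degree-two corner constraint $\overline{v}_{e_\alpha}+\overline{v}_{e_\beta}=1$ all match the skeleton of the paper's argument. The easy case (some $v_\alpha, v_\beta$ equal to $0$ or $2$) is handled in essentially the same spirit, via the reduction of Observation~\ref{reduction}. However, there is a genuine gap in your handling of the remaining case $v_\alpha=v_\beta=1$: you state a ``flip $x$ to $0$ or $1$ and compensate on $G'$'' plan, correctly identify that the flip perturbs the shared internal edge between an end tile and $G'$, and then assert ``at least one of the two flip choices leads to a compatible matching on $G'$'' without proving it. This is precisely the crux of the lemma and cannot be left as an assertion. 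The paper resolves it differently and more concretely: it uses the fact that $\varepsilon_2,\dots,\varepsilon_k$ are \emph{unique} labels (so their coordinates are forced integers), then splits into two sub-cases --- if some $\overline{v}_{\varepsilon_i}=1$, the degree-two vertex equations of $G'$ propagate integrality out to $\varepsilon_1,\varepsilon_1'$, forcing $\overline{v}$ itself to be a lattice point; if all $\overline{v}_{\varepsilon_i}=0$, the lattice point $v$ is computed to have $v_\alpha=v_\beta=v_{\rho_i}=1$, and an explicit matching (take $\alpha$ and $\beta'$ at the ends plus the bottom matching of $G'$) achieves it. Note also that the paper treats an extra twice-occurring label you do not use: $\varepsilon_1$ itself appears on the boundary of the \emph{second} and second-to-last tiles, and tracking $\overline{v}_{\varepsilon_1},\overline{v}_{\varepsilon_1'}$ (the paper's $y$) is what makes the two sub-cases land cleanly.

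Two smaller omissions: you never address the $k=1$ base case, which is structurally different (the snake graph is only three tiles, $\varepsilon_1$ is a loop, and the paper checks saturation by hand), and your claim that ``every coordinate of $\overline{v}$ on the end tiles is forced to be integer'' when $v_\alpha\in\{0,2\}$ is not literally true for the internal glue edge --- it is only forced once you cut away the end tiles via Observation~\ref{reduction}, so the wording should be tightened. Your observation that directly invoking Lemma~\ref{chip2} is awkward because the second and second-to-last tiles are typically distinct radii is a fair one, and indeed the paper invokes ``the proof of Lemma~\ref{chip2}'' rather than the lemma itself; but the workaround you propose is not carried far enough to close the argument.
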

		\begin{proof}
			Without loss of generality, $\gamma$ intersects $\varepsilon_1$, then all of $\{\rho_i\}_i$, and finally $\varepsilon_1$ again.
			
			If $k=1$, then $\varepsilon_1$ is a loop and $G_{T,\gamma}$ is as shown in Figure \ref{fig:triple-tile}. We see saturation directly, as the only weight vectors (in the coordinates $(\alpha,\beta,\rho_1,\varepsilon_1)$) are $(2,0,1,1)$, $(1,1,1,1)$, $(0,2,1,1)$.  
			
			For $k\geq 2$, the first two and last two squares of $G_{T,\gamma}$ are as shown in Figure \ref{fig:loop-ends}, with edges differing by a prime mark labeled with the same arc. Arcs $\alpha$, $\beta$, and $\varepsilon_1$ do not appear elsewhere in the diagram. 
			\begin{figure}
				\centering
				\includegraphics[width=0.5\linewidth]{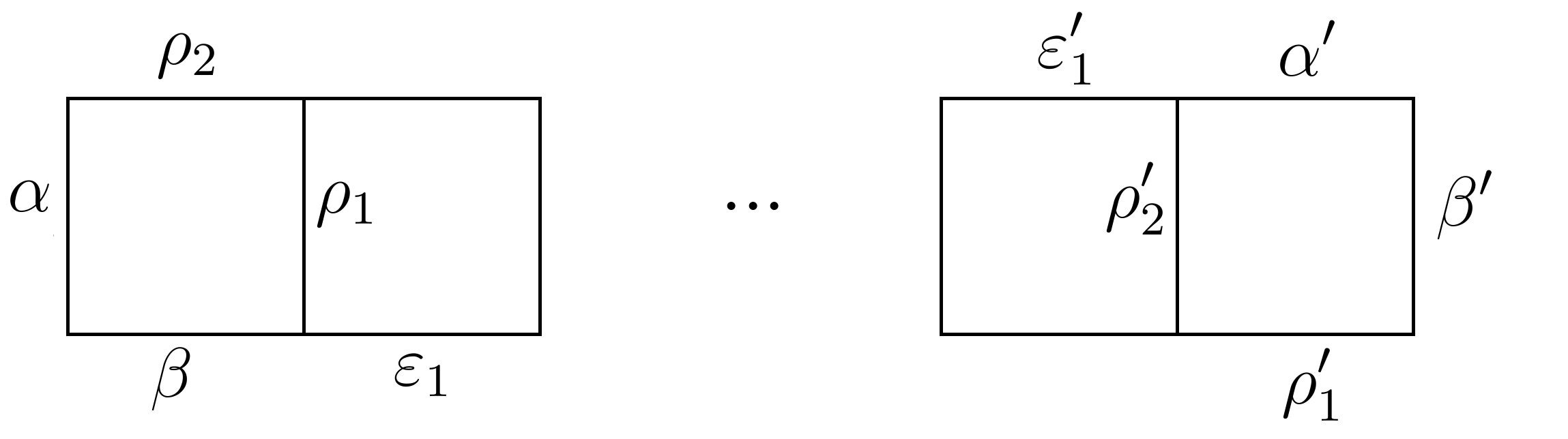}
				\caption{The ends of the snake graph of a loop intersecting only $\varepsilon_1$ and radii, in the case $k\geq 2$. }
				\label{fig:loop-ends}
			\end{figure}
			
			Let $v\in P(G_{T,\gamma})$ be a lattice point and $\overline{v}\in \pi^{-1}(v)$. We have $\overline{v}_{\alpha}+\overline{v}_{\alpha'}\in\{0,1,2\}$ and $\overline{v}_{\varepsilon_1}+\overline{v}_{\varepsilon'_1}\in\{0,1,2\}$, and if any of these four coordinates of $\overline{v}$ are integers we can apply the proof of Lemma \ref{chip2} directly after we show that $P(G')$ is saturated for all $G'\subset G$. If $G'$ is obtained by removing one or more squares from one end of $G_{T,\gamma}$, then $P(G')$ is saturated by Lemma \ref{chip1}. If $G'$ is obtained by removing one square from each end of $G_{T,\gamma}$, then $P(G')$ is saturated by Lemma  \ref{pseudo-loop}. And if $G'$ is obtained by removing any more squares from each end of $G_{T,\gamma}$, then $G'=G_{T,\gamma'}$ where $\gamma'$ is an arc of an un-punctured polygon, so $P(G')$ is saturated by Proposition \ref{AB-saturated}. 
			
			Now, suppose $x=1-\overline{v}_{\alpha}=\overline{v}_{\beta}=\overline{v}_{\rho_2}=\overline{v}_{\alpha'}=1-\overline{v}_{\beta'}=\overline{v}_{\rho'_1}$ and $y=\overline{v}_{\varepsilon_1}=1-\overline{v}_{\varepsilon'_1}$ with $0<x,y<1$. Since the labels $\varepsilon_2,...,\varepsilon_k$ that are unique in the diagram, the corresponding coordinates of $\overline{v}$ must be integers. As in the proof of Lemma \ref{pseudo-loop}, if $\overline{v}_{\varepsilon_i}=1$ for some $i$, then $\overline{v}_{\varepsilon_1}$ and $\overline{v}_{\varepsilon'_1}$ are also integers so $\overline{v}$ is a lattice point as desired. If instead $\overline{v}_{\varepsilon_i}=0$ for $i=2,...,k$, the equations defining $\overline{P}(G_{T,\gamma})$ imply that $v_{\alpha}=v_{\beta}=v_{\rho_i}=1$. And this lattice point $v$ corresponds to the matching that includes $\alpha$, $\beta'$, and the bottom matching of $G'$, where $G'$ is the snake graph obtained by removing the first and last squares of $G_{T,\gamma}$. 
		\end{proof}
		
		\begin{figure}[h]
			\centering
			\includegraphics[width=0.7\linewidth]{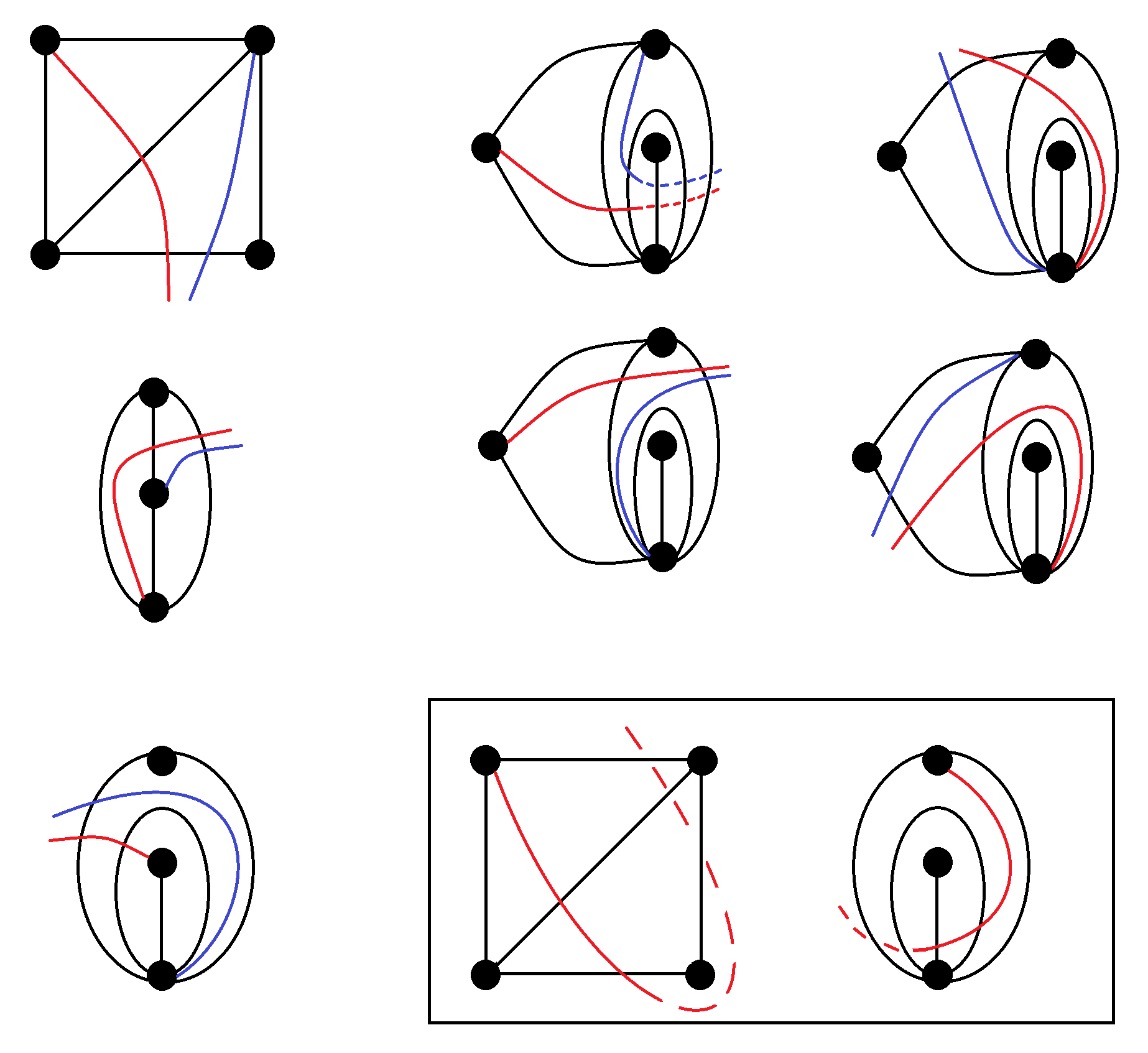}
			\caption{The red curves show all possible ends of arcs $\gamma$, up to symmetry, that intersect at least two edges of an ideal triangulation $T$. In each case, the blue curve is the end of an arc $\gamma'$ such that $P(G_{T,\gamma'})$ is $P(G_{T,\gamma})$ with the last square removed from the given end. Such an arc $\gamma'$ exists in all cases except one (in general as shown in the box on the left, with the case for $k=1$ on the right).}
			\label{fig:shifts}
		\end{figure}
		
		The next lemma will be the key ingredient of our strong induction. 
		\begin{lemma}\label{strong-induction}
			Let $T$ be an ideal triangulation of a punctured polygon and $\gamma$ an arc. Let $G'$ be any snake graph obtained by deleting at least one square from one or both ends of $G_{T,\gamma}$. Then $G'=G_{T,\gamma'}$ for some arc $\gamma'$, or $P(G')$ is saturated.
		\end{lemma}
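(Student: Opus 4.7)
The plan is to proceed by cases based on the local configuration of $\gamma$ at the end of $G_{T,\gamma}$ from which a square is deleted, using the classification (up to symmetry) depicted in Figure \ref{fig:shifts}. Each such configuration is determined by the kind of marked point at which $\gamma$ terminates (a boundary vertex not in $\{b_i\}$, a basepoint $b_i$, or the puncture) together with the type of the first triangle of $T$ that $\gamma$ enters from that end. For each panel of Figure \ref{fig:shifts}, the blue arc $\gamma'$ drawn there is the candidate we want: geometrically, $\gamma'$ is obtained from $\gamma$ by replacing its initial segment with a segment ending at an adjacent marked point of the first triangle $\gamma$ enters, so that $\gamma'$ crosses exactly the same arcs of $T$ as $\gamma$ apart from the very first one.

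I would verify the claim one panel at a time: I need to check that the drawn $\gamma'$ is a genuine arc of $\dpoly$ (not isotopic to a boundary segment or to itself) and that the sequence of arcs it crosses is indeed the sequence for $\gamma$ with the first crossing removed, so that $G_{T,\gamma'}=G'$. This step is entirely combinatorial-geometric: in each panel, the first triangle $\gamma$ crosses has another marked vertex which serves as the new endpoint of $\gamma'$. For deletion from both ends, I iterate the argument: apply the construction at one end to obtain $G_{T,\gamma''}$ for some arc $\gamma''$, and then repeat at the other end of $\gamma''$.

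The only obstruction to this construction is the boxed case of Figure \ref{fig:shifts}, where no valid $\gamma'$ exists. I claim that in this exceptional configuration, $\gamma$ must be a loop whose relevant end has just crossed some $\varepsilon_j$, so that $\gamma$ is exactly of the form treated in Lemma \ref{loop1}. When the exceptional case arises after deleting only one square, the resulting graph $G'$ is the ``floating'' snake graph from the right of Figure \ref{fig:staircase}, for which saturation is supplied directly by Lemma \ref{pseudo-loop}. If more squares are removed, the remaining snake graph either continues to be of the form considered in Lemma \ref{pseudo-loop} or else is isomorphic to the snake graph of an arc in an unpunctured sub-polygon of some $S_i$, which is saturated by Theorem \ref{AB-saturated}. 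The case of deletion from both ends landing in the exceptional configuration on one or both sides is handled analogously by reducing to one of these two saturated situations.

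The main obstacle will be a careful enumeration of the degenerate cases: when $k=1$, when $k=2$, and when the two ends of $\gamma$ interact (for instance, when $\gamma$ is a loop whose two ends share a basepoint, so that ``the other end'' of $\gamma$ is not independent of the end being shortened). Nothing conceptually new happens in these cases, but one must verify panel-by-panel that the candidate arc $\gamma'$ exists and is not accidentally isotopic to something degenerate, and that in the exceptional case the resulting $G'$ really matches the hypothesis of Lemma \ref{pseudo-loop} or Theorem \ref{AB-saturated}. I expect this bookkeeping to constitute the bulk of the formal proof.
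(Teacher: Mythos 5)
Your overall strategy is the same as the paper's: shift the endpoint of $\gamma$ panel-by-panel as in Figure~\ref{fig:shifts} to realize $G'$ as $G_{T,\gamma'}$, iterate at both ends, and treat the boxed exceptional configuration separately via Lemma~\ref{pseudo-loop} and the type $A$ result. The first part of your argument (existence of $\gamma'$ outside the boxed case) matches the paper, which likewise delegates this to the figure. However, there is a genuine gap in your treatment of the exceptional case.

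You claim that in the boxed configuration $\gamma$ must be a loop of the form treated in Lemma~\ref{loop1}, so that deleting one square from the offending end always produces the floating all-radii graph of Lemma~\ref{pseudo-loop}. This is false: the boxed configuration constrains only \emph{one} end of $\gamma$ (the last quadrilateral crossed there has diagonal $\varepsilon_i$ with two radius sides, and the shifted arc would self-intersect), and a non-loop arc can have such an end. For example, an arc starting at the apex of the triangle of $S_i$ adjacent to $\varepsilon_i$ may cross $\varepsilon_i$, wind around the puncture through all the radii, and then either terminate at a basepoint or re-cross $\varepsilon_i$ into $S_i$ and end at a different vertex of $B_i$. (Indeed, if the exceptional case forced $\gamma$ to be a loop, the paper's proof would not need its second bullet, which explicitly terminates at ``$G_{T,\gamma'}$ for a non-loop arc $\gamma'$ intersecting only radii.'') For such arcs, deleting one square from the boxed end leaves a graph that still contains squares labeled by edges of $S_i$ in addition to the radius squares; this is neither the graph of Lemma~\ref{pseudo-loop} nor the snake graph of an arc of an unpunctured polygon, so your dichotomy does not cover it. The paper closes this with a reduction you are missing: after deleting the boxed square, Proposition~\ref{unique} still supplies a uniquely labeled boundary edge at the far end of the resulting graph, and Observation~\ref{reduction} is applied iteratively to peel squares off that end until one reaches either the all-radii floating graph or the snake graph of an arc in an unpunctured polygon. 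Note also that even for the loops of Lemma~\ref{loop1} your claim is imprecise as stated: deleting one square from a \emph{single} end leaves the $\varepsilon_i$ square at the other end, so the Lemma~\ref{pseudo-loop} graph only appears after deleting a square from \emph{each} end, and the single-end deletion again requires the iterative peeling argument.
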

		\begin{proof}

			Figure \ref{fig:shifts} shows how given an end of an arc $\gamma$ we can find another arc $\gamma'$ such that $G_{T,\gamma'}$ the graph obtained by removing one square from the appropriate end of $G_{T,\gamma}$ --- in all cases but those in the box. Those cases occur if and only if the diagonal of the quadrilateral is $\varepsilon_i$ and two of the sides are radii (the radii coincide for the case $k=1$ shown on the right), because $\gamma'$ would have to intersect itself and so wouldn't be a valid arc. For the boxed case on the left, shifting the endpoint of $\gamma$ to the bottom left vertex of the quadrilateral corresponds to removing two squares from one end of $G_{T,\gamma}$ (and likewise for the $k=1$ case by shifting the endpoint of $\gamma$ to the puncture). It remains to show the following:
			\begin{itemize}
				\item If $\gamma$ is a loop with both ends as in the boxed case, removing a square from each end of $G_{T,\gamma}$ yields $G'$ with $P(G')$ saturated.
				\item If $\gamma$ has at least one end as in the boxed case, removing one square of $G_{T,\gamma}$ from that end yields $G''$ with $P(G'')$ saturated.
			\end{itemize}
			In the first case, $G'$ consists only of squares corresponding to radii, and Lemma \ref{pseudo-loop} showed that $P(G')$ is saturated. 
			
			In the second case, the end of $G''$ contains $G_{T,\rho}$ for some radius $\rho$ with base point at the endpoint of $\gamma$. By Proposition~\ref{unique}, at least one boundary edge of the last square of $G''$ is unique in the diagram, so we can use Observation \ref{reduction} to reduce saturation of $G''$ to saturation of a smaller diagram, iterating until the diagram is either $G'$ from the first case or $G_{T,\gamma'}$ for a non-loop arc $\gamma'$ intersecting only radii. In the latter case, $\gamma'$ is an arc of an un-punctured polygon, so $P(G_{T,\gamma'})$ is saturated by Proposition \ref{AB-saturated}.
		\end{proof}
		
		Now, we have all of the base cases and reduction techniques for a proof by induction.
		\begin{proposition}\label{DB-saturated}
			Let $T$ be an ideal triangulation of $\dpoly$, and let $\gamma$ be an ordinary arc. Let $T'$ be the corresponding tagged triangulation. Then $N(T',\gamma)$ is saturated.
		\end{proposition}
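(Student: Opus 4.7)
The plan is to reduce to a statement about perfect matching polytopes via Lemma~\ref{lem:saturationEquiv}, and then prove saturation of $P(G_{T,\gamma})$ by strong induction on the number of tiles of $G_{T,\gamma}$. The base cases are: (i) when $G_{T,\gamma}$ has a single tile, in which case all edge labels are distinct and $\pi$ is a bijection on lattice points, so saturation follows from Lemma~\ref{lem:G-sat}; (ii) when $\gamma$ lies entirely inside one of the un-punctured sub-polygons $S_i$ of $T$, in which case we are effectively in the type A setting of Theorem~\ref{AB-saturated}; and (iii) when $\gamma$ is a loop that intersects exactly one non-radius arc of $T$, which is covered directly by Lemma~\ref{loop1}.

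For the inductive step, fix a lattice point $v \in P(G_{T,\gamma})$ and any lift $\overline{v} \in \pi^{-1}(v) \cap \overline{P}(G_{T,\gamma})$; the goal is to produce a perfect matching $M$ with $w^M = v$. Since the non-base cases all assume that $\gamma$ is either not a loop or encloses a radius of $T$, Proposition~\ref{unique} guarantees that the initial or final tile of $G_{T,\gamma}$ carries a boundary edge with a label unique in the entire snake graph. If that unique label sits in the position demanded by Lemma~\ref{chip1} (i.e.\ the end tile is attached to the rest of $G_{T,\gamma}$ along a single edge and one of its three outer edges is unique), I apply Lemma~\ref{chip1} to reduce saturation of $P(G_{T,\gamma})$ to saturation of $P(G')$ where $G'$ is obtained by removing tiles from that end. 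If instead the end's geometry is symmetric in the sense of Figure~\ref{fig:symmetric-ends}, I apply Lemma~\ref{chip2}; the hypothesis of Lemma~\ref{chip2} that a shared label appears exactly twice is the kind of coincidence forced by radii at symmetric ends of loop-induced snake graphs.

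The induction closes via Lemma~\ref{strong-induction}: any snake graph $G'$ obtained by chopping tiles off an end of $G_{T,\gamma}$ is either of the form $G_{T,\gamma'}$ for some arc $\gamma'$ (which has strictly fewer tiles, so the inductive hypothesis applies), or $P(G')$ is already saturated. Both outputs of Lemma~\ref{strong-induction} are exactly what the hypotheses of Lemmas~\ref{chip1} and \ref{chip2} require to upgrade saturation from the sub-graph to $G_{T,\gamma}$, which produces the sought-after matching $M$ and completes the induction.

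The main obstacle will be making the case analysis exhaustive. Proposition~\ref{unique} only guarantees \emph{some} unique label on \emph{some} end, but Lemmas~\ref{chip1} and~\ref{chip2} require that label to be situated in a rather specific position. The delicate configurations are the loop cases where $\gamma$ encloses both the puncture and several radii of $T$: here the first and last tiles can have very similar labelings, unique labels may be forced into the interior (near $\varepsilon_i$), and the asymmetric lemma~\ref{chip1} may fail on both ends simultaneously. Handling these loops amounts to verifying that the symmetric hypothesis of Lemma~\ref{chip2} does apply, which in turn relies on the analysis in the proof of Lemma~\ref{loop1} and on understanding how $\gamma$ traverses the central punctured $k$-gon with sides $\varepsilon_1,\dots,\varepsilon_k$. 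Once this configuration is isolated and handled, every other case is governed by a clean application of Proposition~\ref{unique}, Lemma~\ref{chip1}, and Lemma~\ref{strong-induction}.
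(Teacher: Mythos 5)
Your proposal is correct and follows essentially the same route as the paper: reduce to $P(G_{T,\gamma})$ via Lemma~\ref{lem:saturationEquiv}, then do strong induction on the number of tiles, using Proposition~\ref{unique} and Lemma~\ref{strong-induction} to feed either Lemma~\ref{chip1} (non-loops, or arcs meeting each arc of $T$ at most once) or Lemma~\ref{chip2} (loops meeting at least two non-radius arcs), with Lemma~\ref{loop1} handling the remaining loop configuration. The only cosmetic difference is that you split off a few additional base cases (single tile, type-$A$-like arcs confined to an $S_i$, the pathological loop) that the paper absorbs into the inductive step, and you flag without fully verifying that Lemma~\ref{chip2}'s ``appears exactly twice'' hypothesis holds for symmetric loop ends—a check the paper also leaves at the level of citing Proposition~\ref{unique} and Lemma~\ref{strong-induction}.
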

		
		\begin{proof}

			By Lemma~\ref{lem:saturationEquiv}, it suffices to show that $P(T, \gamma)$ is saturated. We proceed by induction on the number of boxes in the snake diagram $G_{T,\gamma}$. For the base case where $\gamma$ intersects a single arc of $T$, $G_{T,\gamma}$ is a single tile and has an edge with a unique label, so $P(G_{T,\gamma})$ is saturated by Observation \ref{reduction}. 
			
			For the inductive step, assume that $P(G_{T,\gamma})$ is saturated for all $G_{T,\gamma}$ with at most $t$ boxes. Suppose there is an arc $\tau$ such that $G_{T,\tau}$ has $t+1$ boxes. If $\tau$ is not a loop, or intersects each arc of $T$ at most once, then Proposition~\ref{unique}, Lemma \ref{strong-induction}, and the inductive hypothesis ensure that the hypothesis of Lemma \ref{chip1} is satisfied, so $P(G_{T,\tau})$ is saturated. If $\tau$ is a loop that intersects exactly one non-radius arc, then $P(G_{T,\tau})$ is saturated by Lemma \ref{loop1}. If $\tau$ intersects at least two non-radius arcs, then by Proposition~\ref{unique}, Lemma \ref{strong-induction}, and the inductive hypothesis we can apply Lemma \ref{chip2} to conclude that $P(G_{T,\tau})$ is saturated.
		\end{proof}
		
		\begin{figure}
			\centering
			\includegraphics[width=0.5\linewidth]{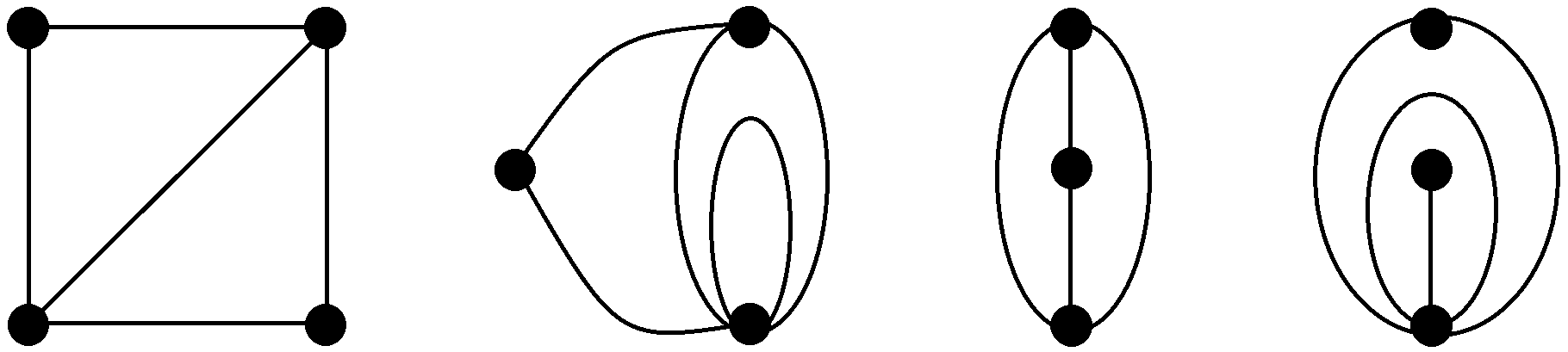}
			\caption{Types of quadrilaterals appearing in ideal triangulations of punctured polygons.}
			\label{fig:tiles}
		\end{figure}
		We turn to notched arcs, recalling the corresponding expansions of cluster variables are given in terms of $\rho$-symmetric matchings of snake graphs (see Section~\ref{subscn:notchedArcExpansion}). 
		
		\begin{proposition}\label{DB-notched}
			Let $T$ be an ideal triangulation of $\dpoly$, and let $T'$ be the corresponding tagged triangulation. Let $\rho^{\bowtie}$ be a notched arc. Then $N(T',\rho^{\bowtie})$ is saturated. 
		\end{proposition}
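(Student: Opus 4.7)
My plan is as follows. First, by Remark~\ref{rmk:easyNotchedExpansions}, I reduce to the case $\rho \notin T$: if $\rho \in T$, then $L^{\text{bd}}_{T,\rho^{\bowtie}} = L^{\text{bd}}_{T,\lambda}/x_\rho$ is just a translate of $N^{\text{bd}}(T',\lambda)$, which is saturated by Proposition~\ref{DB-saturated}. In the reduced case, the enclosing loop $\lambda$ also satisfies $\lambda \notin T$. By Theorem~\ref{thm:notchedExpansion}, $N^{\text{bd}}(T',\rho^{\bowtie})$ is an integer translate of
\[
Q := \mathrm{Conv}\bigl\{\overline{w}^{M}_{\text{bd}} : M \text{ a } \rho\text{-symmetric perfect matching of } G_{T,\lambda}\bigr\},
\]
where $\overline{w}^{M}_{\text{bd}}$ is the exponent vector of $\overline{\wt}^{\text{bd}}(M)$. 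So it suffices to show that $Q$ is saturated.

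The central tool will be the factorization $L^{\text{bd}}_{T,\lambda} = L^{\text{bd}}_{T,\rho} \cdot L^{\text{bd}}_{T,\rho^{\bowtie}}$, which is built into the paper's definition of $L_{T,\lambda}$. By positivity of cluster expansions, no cancellation occurs in this product, yielding the sumset identity $\mathrm{supp}(L^{\text{bd}}_{T,\lambda}) = \mathrm{supp}(L^{\text{bd}}_{T,\rho}) + \mathrm{supp}(L^{\text{bd}}_{T,\rho^{\bowtie}})$ and, at the polytope level, $N^{\text{bd}}(T,\lambda) = N^{\text{bd}}(T,\rho) + Q$ as a Minkowski sum (up to integer translation). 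Both $N^{\text{bd}}(T,\lambda)$ and $N^{\text{bd}}(T,\rho)$ are saturated by Proposition~\ref{DB-saturated}, since $\lambda$ and $\rho$ are ordinary arcs. Given a lattice point $v \in Q$ and any lattice point $u \in \mathrm{supp}(L^{\text{bd}}_{T,\rho})$, the sum $u+v$ is a lattice point of $N^{\text{bd}}(T,\lambda)$ and therefore lies in $\mathrm{supp}(L^{\text{bd}}_{T,\lambda})$ by saturation, producing a decomposition $u+v = u' + v'$ with $u' \in \mathrm{supp}(L^{\text{bd}}_{T,\rho})$ and $v' \in \mathrm{supp}(L^{\text{bd}}_{T,\rho^{\bowtie}})$. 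I then aim to choose $u$ so that this decomposition is forced to satisfy $u' = u$, yielding $v = v' \in \mathrm{supp}(L^{\text{bd}}_{T,\rho^{\bowtie}})$.

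When $v$ is a vertex of $Q$, an extremality argument succeeds: pick a linear functional $\ell$ in the relative interior of the normal cone of $v$ in $Q$, perturbed generically so that $\ell$ has a unique maximizer $u$ on $N^{\text{bd}}(T,\rho)$. Then $u+v$ uniquely maximizes $\ell$ on the Minkowski sum $N^{\text{bd}}(T,\lambda)$, forcing $u' = u$ and $v' = v$. The main obstacle will be non-vertex lattice points of $Q$, for which no such $\ell$ exists. For these I plan to mirror the strategy of Proposition~\ref{DB-saturated}: develop analogues of Observation~\ref{reduction} and Lemmas~\ref{chip1}--\ref{loop1} that preserve the $\rho$-symmetry constraint, using that a $\rho$-symmetric matching of $G_{T,\lambda}$ decomposes into a middle matching glued to matchings of the end copies $G_{T,\rho,1}, G_{T,\rho,2}$ of $G_{T,\rho}$ whose restrictions to the non-radius subgraphs $H_{T,\rho,i}$ agree. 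Peeling off symmetric pairs of end squares should preserve $\rho$-symmetry and inductively reduce to smaller snake graphs, eventually to base cases handled by Proposition~\ref{DB-saturated} or checked directly.
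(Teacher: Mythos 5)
The Minkowski-sum portion of your argument is correct but ends up proving something trivial. The polytope $Q$ is by definition the convex hull of the finite set of weight vectors $\{\owt^{\text{bd}}(M)\}$, so every \emph{vertex} of $Q$ is automatically one of these weight vectors and hence already lies in $\mathrm{supp}(L^{\text{bd}}_{T,\rho^{\bowtie}})$ — no extremality argument, and no appeal to saturation of $N^{\text{bd}}(T,\lambda)$ or $N^{\text{bd}}(T,\rho)$, is needed. The entire content of the proposition is the non-vertex lattice points of $Q$, and for those the Minkowski-sum factorization $N^{\text{bd}}(T,\lambda)=N^{\text{bd}}(T,\rho)+Q$ gives no leverage: given a non-vertex lattice point $v\in Q$, you cannot in general choose $u$ so that every decomposition $u+v=u'+v'$ with $u'\in\mathrm{supp}(L_{T,\rho})$, $v'\in\mathrm{supp}(L_{T,\rho^{\bowtie}})$ forces $u'=u$. (Indeed, saturation of $P$ and $A$ does not imply saturation of $B$ in a Minkowski sum $P=A+B$ — already for segments in $\mathbb{R}^1$ one has counterexamples such as $A=[0,2]$, $B=\mathrm{conv}\{0,2\}$ with support $\{0,2\}$.) So the first half of your proposal does not reduce the difficulty.

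For the hard case you explicitly defer to a plan that "mirrors the strategy of Proposition~\ref{DB-saturated}," peeling off symmetric pairs of end squares while preserving the $\rho$-symmetry condition, and you correctly identify the decomposition of a $\rho$-symmetric matching into a middle part and two matchings of $G_{T,\rho,1},G_{T,\rho,2}$ agreeing on $H_{T,\rho,i}$. This is in fact the paper's argument: Proposition~\ref{unique} produces a boundary edge label $\tau$ in the end squares occurring exactly twice in $G_{T,\lambda}$, any $\rho$-symmetric matching uses both or neither $\tau$-edge, so the $\tau$-coordinate is $0$ or $1$ and integrality of the target forces all $M_i$ in the convex combination to agree near both ends; one then removes tiles symmetrically (as in Lemma~\ref{chip1}, justified inductively via Lemma~\ref{strong-induction}) until the $\rho$-symmetry condition becomes vacuous and Proposition~\ref{DB-saturated} applies. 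But you have not verified any of these details — for instance, that agreement on the $\tau$-edge actually propagates through the entire end segment up to the next non-corner square, that removing those tiles still leaves a snake graph to which Lemma~\ref{strong-induction} applies, or that the iteration really terminates with $G_{T,\rho,1},G_{T,\rho,2}$ entirely removed — so as written the proposal leaves the substantive part of the proof as a gap. You also omit one case of the initial reduction: if $T\neq T'$ (i.e.\ $T$ contains a loop), then $T'=(T')^p$ and Remark~\ref{rmk:easyNotchedExpansions} reduces $N(T',\rho^{\bowtie})$ to $N(T',\rho)$ by renaming coordinates; your reduction only addresses $\rho\in T$.
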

		\begin{proof}

			Let $\lambda$ be the loop enclosing the unnotched radius $\rho$. If the unnotched radius $\rho$ is in $T$, then $N(T', \rho^{\bowtie})$ is an integer translate of $N(T, \lambda)$ (see Remark~\ref{rmk:easyNotchedExpansions}), which is saturated by Proposition~\ref{DB-saturated}. If $T \neq T'$ (that is, if $T$ contains a loop and $T'=(T')^p$), then $N(T', \rho^{\bowtie})$ is, up to renaming coordinates, equal to $N(T', \rho)$ (see Remark~\ref{rmk:easyNotchedExpansions}). So we may assume $T=T'$ and $\rho \notin T$. 
			
			Let $\{M_i\}_i$ be a set of $\rho$-symmetric matchings of $G_{T,\lambda}$ such that the lattice point $v=\sum_{i}c_iw^{M_i}$ is a convex combination of weight vectors. By Proposition~\ref{unique} there is an edge label $\tau$ on the boundary in the first and last square of $G_{T,\lambda}$ that occurs exactly twice in $G_{T,\lambda}$. Any $\rho$-symmetric matching must either include or not include both edges labeled $\tau$, so $(w^{M_i})_{\tau}$ is $0$ or $1$ for all $M_i$ (recall the definition of $\owt(M)$). Since $v$ is an integer, either all $M_i$ include both edges labeled $\tau$ or none of them do.
			
			Arguing as in the proof of Lemma~\ref{chip1}, the edges in $M_i$ are determined from the first square to the next non-corner square on both ends of $G_{T, \lambda}$. Removing those squares from $G_{T,\lambda}$ yields a strictly smaller subgraph $G'$ with $\rho$-symmetric matchings $\{M_i'\}_i$. Note that adding the same set of edges to each $M_i'$ yields $\{M_i\}_i$, and $\sum_{i}c_iw^{M_i'}$ is a convex combination of weight vectors of $G'$ equal to a lattice point. By Lemma \ref{strong-induction}, we can repeat this argument to remove tiles from both ends of $G_{T,\lambda}$ until we have $G'$ containing no tiles from $G_{T,\rho,1}$ and $G_{T,\rho,2}$. Then the $\rho$-symmetry condition on $G'$ is vacuous, so $P(G')$ is saturated by Lemma \ref{strong-induction} and Proposition \ref{DB-saturated}. Thus, $\sum_{i}c_iw^{M_i'}=w^{M_j'}$ for one of the matchings $M_j$. And since $\{M_i'\}_i$ determines $\{M_i\}_i$ at each stage, we have $\sum_{i}c_iw^{M_i}=w^{M_j}$ as desired. 
		\end{proof}
		\begin{figure}
			\centering
			\includegraphics[width=0.2\linewidth]{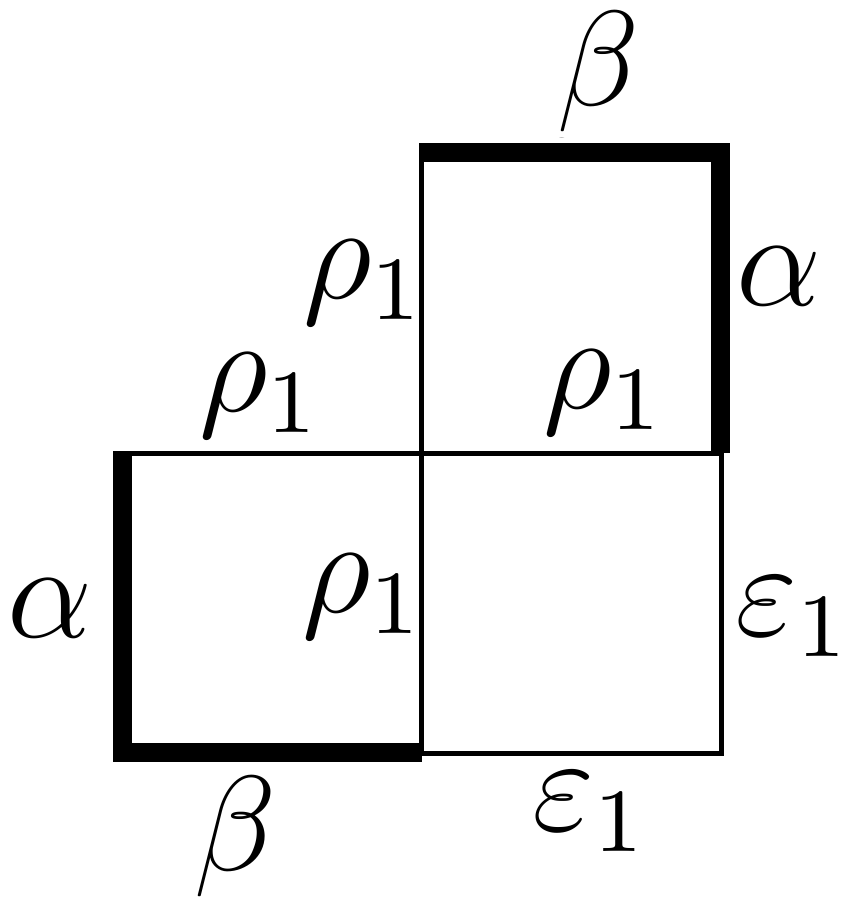}
			\caption{Middle part of $G_{T,\gamma}$ where $\gamma$ is a loop (with base point not in $\{b_i\}$) and $k=1$, with edges of $G_1$ and $G_2$ in bold.}
			\label{fig:triple-tile}
		\end{figure}
		Proposition \ref{DB-saturated} and Proposition \ref{DB-notched}, together with Remarks~\ref{rmk:idealOnly}, give the following result.
		\begin{theorem}\label{DB-always}
			Let $\mca^{\text{bd}}$ be a type $D$ cluster algebra with boundary coefficients. Then the Newton polytope of any cluster variable, written as a Laurent polynomial in an arbitrary seed, is saturated and empty.
		\end{theorem}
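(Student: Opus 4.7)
The plan is to obtain Theorem \ref{DB-always} as a direct consequence of Proposition \ref{DB-saturated} and Proposition \ref{DB-notched}, together with the reduction provided by Remark \ref{rmk:idealOnly}. First I would dispose of tagged triangulations $T$ that do not come from an ideal triangulation: in that case $T^p$ does correspond to an ideal triangulation, and Proposition \ref{prop:flip} expresses $L^{\text{bd}}_{T,\gamma}$ as $L^{\text{bd}}_{T^p,\gamma^p}$ after the coordinate-relabeling substitution $x_{\tau^p}\mapsto x_\tau$. Since this substitution is a bijective renaming of coordinates, it preserves both saturation and emptiness of the Newton polytope, so from here on I may assume $T$ corresponds to an ideal triangulation.

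With this reduction in place, Theorem \ref{thm:triangulationsToClusters} identifies every cluster variable of $\mca^{\text{bd}}(\Sigma_T)$ with a tagged arc $\gamma$ of $\dpoly$, and each such $\gamma$ is either a plain tagged arc (possibly a loop) or a notched radius $\rho^{\bowtie}$. Proposition \ref{DB-saturated} handles the plain case, giving saturation of $N^{\text{bd}}(T,\gamma)$, while Proposition \ref{DB-notched} handles the notched case, giving saturation of $N^{\text{bd}}(T,\rho^{\bowtie})$. Assembled together, these cover every cluster variable, immediately yielding the saturation half of the theorem.

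The main obstacle is the emptiness claim, which is not directly contained in either cited proposition. To obtain it, I would leverage Lemma \ref{lem:G-sat}: the lifted matching polytope $\overline{P}(G_{T,\gamma})$ is empty, so its lattice points are precisely the indicator vectors of perfect matchings, each of which is a vertex. The task then becomes to show that each lattice point $v\in P^{\text{bd}}(G_{T,\gamma})$ is itself a vertex, i.e.\ not a nontrivial convex combination of distinct weight vectors. The inductive arguments already developed in Lemma \ref{chip1}, Lemma \ref{chip2}, Lemma \ref{pseudo-loop}, and Lemma \ref{loop1} force the coordinates of any lift $\overline{v}$ of $v$ step-by-step from the outermost tile(s) inward; re-reading them, one sees that at every step the $0/1$-value of each coordinate of a weight vector on a unique or symmetrically paired edge-label is determined by $v$. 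This would imply any two distinct weight vectors differ on at least one coordinate that cannot be averaged to an integer, so $v$ cannot lie on an open segment between them. Transferring back to $N^{\text{bd}}(T,\gamma)$ via Lemma \ref{lem:saturationEquiv} then gives emptiness. The principal technical burden is verifying this vertex-characterization carefully in the pathological ``loop'' cases (Lemma \ref{pseudo-loop}, Lemma \ref{loop1}) and in the symmetric-ends configuration of Lemma \ref{chip2}, where several matchings can share a weight vector and one must check that weight vectors of genuinely distinct matchings never have a lattice midpoint.
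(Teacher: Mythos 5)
Your saturation argument is exactly the paper's: use Remark~\ref{rmk:idealOnly} and Proposition~\ref{prop:flip} to reduce to tagged triangulations corresponding to ideal triangulations, then apply Proposition~\ref{DB-saturated} to plain arcs and Proposition~\ref{DB-notched} to notched radii. That covers every cluster variable and gives saturation.

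The emptiness claim, though, is not a technical burden to be discharged by re-reading Lemma~\ref{chip1}, Lemma~\ref{chip2}, Lemma~\ref{pseudo-loop}, and Lemma~\ref{loop1} --- it is false in general. Your own worry is exactly where things break: there \emph{are} distinct perfect matchings whose weight vectors have a lattice midpoint. Lemma~\ref{DB-nEmpty} constructs them explicitly whenever $\gamma$ is a plain arc intersecting some arc of $T$ more than once: take a pair of opposite sides in each of the two squares labeled $\varepsilon_1$; of the resulting four matchings, three distinct weight vectors appear and one is the midpoint of the other two. Theorem~\ref{DB-Emptiness} records the full characterization of emptiness, and it is not universal for plain arcs. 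The ``and empty'' phrase in the statement of Theorem~\ref{DB-always} is an error in the paper: the abstract, Theorem~\ref{sat-results}(4), and the paper's one-sentence derivation of Theorem~\ref{DB-always} all claim only saturation, with emptiness deferred to Theorem~\ref{DB-Emptiness}. So the second half of your proposal is attempting to prove a false statement, and the step you flagged as the ``principal technical burden'' --- verifying that distinct weight vectors never share a lattice midpoint --- is precisely the step that cannot be made to work.
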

		
		Next, we classify when $N^{\text{bd}}(T,\gamma)$ is empty. The following simple lemma will be very handy.
		\begin{lemma}\label{unique01}
			Given $G_{T,\gamma}$, suppose there is an edge $e$ with $\ell(e)$ unique. Then if there is a set of matchings $S$ and a lattice point $\eta=\sum_{M\in S}c_Mw^M$ with $\sum_{M\in S}c_M=1$ and $c_M>0$ for all $M\in S$, the coordinate $(w^M)_{\ell(e)}$ must be the same for all $M\in S$. 
		\end{lemma}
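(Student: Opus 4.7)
The plan is to observe that the uniqueness hypothesis forces the relevant coordinate of each weight vector to be binary, and then use the fact that a convex combination of $0$/$1$ values can only be an integer when all the values agree.

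More precisely, since $\ell(e)$ appears as a label on exactly one edge of $G_{T,\gamma}$, for any perfect matching $M$ the coordinate $(w^M)_{\ell(e)}$ equals $1$ if $e \in M$ and $0$ otherwise (this holds regardless of which flavor of weight vector $w^M$ we are using, since frozen variables do not enter the coordinate indexed by $\ell(e)$ when $\ell(e)$ is an interior label; and if $\ell(e)$ is a boundary label the statement is still binary from the single occurrence). In particular $(w^M)_{\ell(e)} \in \{0,1\}$ for every $M \in S$.

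Now I compute the coordinate $\eta_{\ell(e)} = \sum_{M \in S} c_M (w^M)_{\ell(e)}$. This is a convex combination of numbers in $\{0,1\}$ with strictly positive weights $c_M$ summing to $1$, so $\eta_{\ell(e)} \in [0,1]$. Since $\eta$ is a lattice point, $\eta_{\ell(e)} \in \{0,1\}$. If $\eta_{\ell(e)} = 0$, then $\sum_{M \in S} c_M (w^M)_{\ell(e)} = 0$ with $c_M > 0$ forces $(w^M)_{\ell(e)} = 0$ for all $M \in S$; if $\eta_{\ell(e)} = 1$, then $\sum_{M \in S} c_M (1 - (w^M)_{\ell(e)}) = 0$ analogously forces $(w^M)_{\ell(e)} = 1$ for all $M \in S$. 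In either case the coordinate $(w^M)_{\ell(e)}$ takes the same value across every $M \in S$, as required.

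There is no real obstacle here; the statement is essentially a convexity observation combined with the uniqueness hypothesis. The only thing to be slightly careful about is noting that the argument applies uniformly across the three choices of weights (boundary, principal, no frozens), and that when $M$ is a $\rho$-symmetric matching (as in the intended application to notched arcs) the same binary conclusion on $(w^M)_{\ell(e)}$ holds because a unique label $\ell(e)$ cannot be cancelled in the definition of $\owt$.
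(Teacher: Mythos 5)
Your proof is correct and takes essentially the same approach as the paper's: since $\ell(e)$ labels only the single edge $e$, the coordinate $(w^M)_{\ell(e)}$ is $0$ or $1$ according to whether $e\in M$, and a strictly positive convex combination of $0$s and $1$s can only be an integer if all the summands agree. The extra commentary about interior versus boundary labels, the three weight flavors, and $\rho$-symmetric matchings is harmless but unnecessary for the lemma as stated, since in this section $w^M = w^M_{\text{bd}}$ and that coordinate simply counts edges of $M$ labeled $\ell(e)$.
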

		\begin{proof}
			Each $w^M_{\ell(e)}$ is either $0$ or $1$, depending on whether or not $e$ was included in the perfect matching. Restricting the convex combination to the coordinate $\ell(e)$ yields a convex combination of $0$s and $1$s that is equal to an integer. Since each $c_M>0$, this is only possible if it is a convex combination of only $0$s or only $1$s. 
		\end{proof}
		Now, we give three lemmas on emptiness and non-emptiness, which cover all Newton polytopes $N(T, \gamma)$.
		\begin{lemma}\label{DBH-arc}
			Let $T$ be an ideal triangulation of $\dpoly$. If $\gamma$ is an arc that intersects each arc of $T$ at most once, then $P(G_{T,\gamma})$ is empty. 
		\end{lemma}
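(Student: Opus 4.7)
The plan is to prove emptiness of $P(G_{T,\gamma})$ by strong induction on the number of tiles of $G_{T,\gamma}$, re-using the toolkit built for saturation in Proposition~\ref{DB-saturated}: Proposition~\ref{unique} supplies a unique-labeled boundary edge at one end of the snake graph, Lemma~\ref{unique01} forces agreement on such edges across matchings in a convex combination giving a lattice point, Observation~\ref{reduction} does the peeling, and Lemma~\ref{strong-induction} ensures the peeled subgraph is still the snake graph of an arc. The crucial feature of the present hypothesis is that moving an endpoint of $\gamma$ to a nearby vertex produces an arc $\gamma'$ that still intersects each arc of $T$ at most once, so the hypothesis of the lemma is preserved under each peeling step.

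For the base case, $G_{T,\gamma}$ has a single tile, $\gamma$ crosses exactly one arc $\tau \in T$, and the tile is a $4$-cycle whose four edge labels are the sides of the two triangles sharing $\tau$. Under the hypothesis $\gamma$ is neither a loop nor can it interact with a self-folded configuration in a degenerate way, so these four labels are distinct. The two perfect matchings give two distinct weight vectors, $P(G_{T,\gamma})$ is a line segment, and its only lattice points are its endpoints.

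For the inductive step I would take a lattice point $v \in P(G_{T,\gamma})$ and write $v = \sum_i c_i w^{M_i}$ as a convex combination with every $c_i > 0$. By Proposition~\ref{unique} there is a boundary edge $e$ of the first or last tile whose label is unique in the whole snake graph; say $e$ is in the last tile. By Lemma~\ref{unique01} all $M_i$ agree on whether $e \in M_i$. Splitting on these two cases and imitating the local analyses in Lemma~\ref{chip1} (and Lemma~\ref{chip2} when symmetry forces us to peel two tiles together), I would determine which additional edges of the last tile(s) lie in every $M_i$, then apply Observation~\ref{reduction} to peel those tiles off. By Lemma~\ref{strong-induction} the resulting strictly smaller graph $G'$ is of the form $G_{T,\gamma'}$, and as noted above $\gamma'$ inherits the property of crossing each arc of $T$ at most once. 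The inductive hypothesis then gives $M_i|_{G'} = M_j|_{G'}$ for all $i,j$, which combined with the forced agreement on the peeled tiles yields $M_i = M_j$ globally, so $v$ is a vertex.

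The main obstacle is the case where the unique-labeled edge $e$ does not by itself determine the matching of the last tile, so one must peel two tiles at once in the style of Lemma~\ref{chip2}; this requires verifying that the symmetric end configuration in that lemma is available here, and that a second reduction step is valid. A secondary subtlety appears when the last tile of $G_{T,\gamma}$ corresponds to a radius of $T$: the adjacent tiles can correspond to arcs sharing the puncture as endpoint, altering the local corner/non-corner structure. I would handle this by checking, as in the proof of Proposition~\ref{unique}, that in every such configuration either a further uniquely-labeled edge is available at the endpoint, or else the peeling produces a subgraph $G'$ whose arc $\gamma'$ lives in one of the un-punctured pieces $S_i$, at which point Theorem~\ref{AB-saturated} closes the induction directly.
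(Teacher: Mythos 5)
Your overall strategy matches the paper's: induct on the number of tiles, locate a uniquely labelled boundary edge on an end tile via Proposition~\ref{unique}, force agreement across all matchings in the convex combination via Lemma~\ref{unique01}, and peel that tile to reduce to a strictly smaller snake graph $G_{T,\gamma'}$. The key observation you make — that the hypothesis ``each arc of $T$ crossed at most once'' is preserved by moving an endpoint, so the pathological boxed case of Lemma~\ref{strong-induction} never arises — is exactly what the paper uses to justify the peeling.

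However, the two ``obstacles'' you flag are not actually present, and the paper's proof is correspondingly leaner. First, a Lemma~\ref{chip2}-style two-tile peel is never needed: once you know whether a single boundary edge of the end tile $\alpha$ lies in $M$, the remaining boundary edges of $\alpha$ are completely determined, since the vertex conditions of a perfect matching force $a\in M$ if and only if $b,d\notin M$ (in the notation of Figure~\ref{diagrams}). Lemma~\ref{chip2} is needed in the saturation argument only because a real point $\overline v$ can have non-integer coordinates balanced symmetrically across two ends; for emptiness you start with honest matchings, so that difficulty disappears. Second, Observation~\ref{reduction} is a saturation tool (it is about finding a lattice preimage under $\pi$); the emptiness argument does not route through $\overline P(G)$ at all. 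Instead the paper directly defines, for each $M\in S$, a matching $M'$ of $G_{T,\gamma'}$ by $M'=M\setminus\{a\}$ or $M'=M\cup\{c\}\setminus\{b,d\}$ (the choice being uniform over $S$ by Lemma~\ref{unique01}), notes that $\sum_{M}c_Mw^{M'}$ is again a lattice point, and applies the inductive hypothesis to conclude. So your worry about the ``radius end tile'' altering the corner structure is also moot: since no appeal to $K=G\setminus(H\cup\{e_1,e_2\})$ is made, there is no non-corner condition to verify. With these simplifications your argument becomes the paper's.
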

		\begin{proof}

			We will prove this by induction on the number of square in $G_{T,\gamma}$. For the base case where there is a single square, there is some edge $e$ with $\ell(e)$ unique. Given a set of matchings $S$ as in Lemma~\ref{unique01}, that lemma implies that either $e\in M$ or $e\notin M$ for all $M\in S$. So $|S|=1$, implying emptiness. 
			
			Next, suppose $P(G_{T,\gamma})$ is empty for all $G_{T,\gamma}$ with at most $t$ boxes. Suppose there is an arc $\tau$ such that $G_{T,\tau}$ has $t+1$ boxes, and suppose there exists a set of matchings $S$ as in Lemma \ref{unique01}. By Lemma \ref{unique}, there is a square $\alpha$ at an end of $G_{T,\tau}$ and a boundary edge $e\in\alpha$ with $\ell(\alpha)$ unique in $G_{T,\tau}$. By Lemma \ref{unique01}, $M|_{\alpha}$ is the same for all $M\in S$. Let $\tau'$ be an arc such that $G_{T,\tau'}$ is equal to $G_{T,\tau}$ with $\alpha$ removed (the pathological boxed case of \ref{fig:shifts} is excluded by the hypothesis of this lemma). Letting $a,b,d$ be the three boundary edges of $\alpha$ as in Figure \ref{diagrams}, for each $M\in S$ define the matching $M'$ of $G_{T,\tau'}$ as follows: if $a\in M$ for all $M\in S$, then $M'=M\setminus\{a\}$; otherwise $M'=M\cup\{c\}\setminus\{b,d\}$. Since $\sum_{M\in S} c_Mw^{M'}$ is a lattice point, the inductive hypothesis implies $|S|=1$, so $P(G_{T,\tau})$ is empty.
		\end{proof}
		\begin{lemma}\label{DBH-notched}
			Let $T$ be an ideal triangulation of $\dpoly$. If $\gamma$ is a notched arc, then $P(G_{T,\gamma})$ is empty.
		\end{lemma}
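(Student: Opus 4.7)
The plan is to parallel the structure of Proposition~\ref{DB-notched}, but replacing the inductive appeal to saturation of the plain-arc case (Proposition~\ref{DB-saturated}) with an appeal to the emptiness of the plain-arc case (Lemma~\ref{DBH-arc}). First, using Remark~\ref{rmk:easyNotchedExpansions}, I would reduce to the case where $T$ corresponds to an ideal triangulation with $\rho \notin T$ and $T \neq T^p$: in the two excluded situations, $N(T, \rho^{\bowtie})$ is either an integer translate of $N(T, \lambda)$ or differs from $N(T, \rho)$ only by renaming coordinates, so emptiness of $P(G_{T,\gamma})$ follows immediately from Lemma~\ref{DBH-arc} applied to a plain arc. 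In the remaining case, $P(G_{T,\gamma})$ is the convex hull of the weight vectors of $\rho$-symmetric matchings of $G_{T,\lambda}$, where $\lambda$ is the loop enclosing $\rho$.

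Suppose $S$ is a set of $\rho$-symmetric matchings for which $\eta = \sum_{M \in S} c_M w^M$ is a lattice point with all $c_M > 0$. The ends of $G_{T,\lambda}$ contain the two isomorphic copies $G_{T,\rho,1}$ and $G_{T,\rho,2}$ of $G_{T,\rho}$, so any label appearing on the boundary of the first (resp.\ last) square of $G_{T,\lambda}$ appears at least twice. By applying Proposition~\ref{unique} to the radius $\rho$, I can locate a boundary edge label at these end squares whose \emph{only} two occurrences in $G_{T,\lambda}$ are this symmetric pair. Under the $\rho$-symmetry condition, any matching $M \in S$ either contains both edges with this label or neither, so the corresponding coordinate of each $w^M$ lies in $\{0,1\}$. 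Then the analogue of Lemma~\ref{unique01} for $\rho$-symmetric matchings forces every matching in $S$ to agree on this pair of edges.

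Now I would iterate this peeling step, mirroring the reduction strategy of Proposition~\ref{DB-notched}: using Lemma~\ref{strong-induction} I successively remove symmetric pairs of end squares on which all $M \in S$ must agree, producing strictly smaller sub-snake-graphs of $G_{T,\lambda}$ together with restricted sets $S'$ whose convex combination of weight vectors is still a lattice point and whose cardinality equals $|S|$. The reduction terminates when the remaining subgraph $G'$ contains no tiles from $G_{T,\rho,1}$ or $G_{T,\rho,2}$, at which point the $\rho$-symmetry condition becomes vacuous and $G'$ is the snake graph of an ordinary arc $\gamma'$ in $\dpoly$ intersecting each arc of $T$ at most once. Lemma~\ref{DBH-arc} then implies $P(G')$ is empty, forcing $|S'| = 1$ and hence $|S| = 1$, which gives the desired emptiness.

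The main obstacle is the geometric verification that the reduced middle graph $G'$ genuinely corresponds to an arc intersecting each $T$-arc at most once; this requires a case analysis of how a loop $\lambda$ enclosing $\rho$ interacts with the radii and the boundary arcs $\varepsilon_i$ described in the paragraphs preceding Proposition~\ref{unique}, along the lines of the analysis carried out in Lemma~\ref{loop1} and Lemma~\ref{pseudo-loop}. Small-$k$ and degenerate configurations (for instance, when $\lambda$ crosses only radii, or when one endpoint of $\rho$ is at a basepoint $b_i$) will likely need to be handled separately, again paralleling the base cases in the saturation argument.
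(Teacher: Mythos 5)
Your overall strategy closely mirrors the paper's own proof: reduce via Remark~\ref{rmk:easyNotchedExpansions} to the genuinely notched case, then for $\rho$-symmetric matchings of $G_{T,\lambda}$ use Proposition~\ref{unique} applied to the radius $\rho$ to locate a label whose two occurrences sit in the symmetric ends $G_{T,\rho,1}$ and $G_{T,\rho,2}$, argue via a $\rho$-symmetric analogue of Lemma~\ref{unique01} that every matching in $S$ must agree on that pair, and peel. So the architecture is right.

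However, there is a concrete error in your terminal step that you flag as ``the main obstacle'' but do not actually resolve. You claim that once all tiles of $G_{T,\rho,1}$ and $G_{T,\rho,2}$ have been removed, the remaining middle graph $G'$ ``is the snake graph of an ordinary arc $\gamma'$ in $\dpoly$ intersecting each arc of $T$ at most once,'' so that Lemma~\ref{DBH-arc} finishes the argument. This is false: once the peeling exhausts the two copies of $G_{T,\rho}$, what remains is precisely the ``staircase'' made of the $k$ tiles corresponding to the radii $\rho_1,\ldots,\rho_k$ (the boxed pathological case of Figure~\ref{fig:shifts}, treated in Lemma~\ref{pseudo-loop}). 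As the paper observes with its ``floating arc'' picture, this graph is not of the form $G_{T,\gamma'}$ for any tagged arc $\gamma'$, so Lemma~\ref{DBH-arc} simply does not apply to it. Citing Lemma~\ref{strong-induction} here also does not help for emptiness: that lemma's conclusion in the non-arc branch is that $P(G')$ is \emph{saturated}, which is weaker than empty and hence insufficient. What is actually needed at the bottom of the recursion is a direct emptiness argument for the staircase: each interior label $\varepsilon_2,\ldots,\varepsilon_k$ is unique in $G'$, so by Lemma~\ref{unique01} all $M\in S$ agree on whether each $\varepsilon_j$ is used; if any is used, the matching is forced, and if none is, the two remaining boundary matchings have identical weight vectors, so $|S|=1$. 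This short direct argument is the missing piece; inserting it turns your sketch into essentially the paper's proof.
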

		\begin{proof}

			Consider $T$ as an ideal triangulation. If $k=1$, by Remark~\ref{rmk:easyNotchedExpansions} it suffices to show that $P(G_{T,\gamma^{0}})$ is empty, which follows from Lemma~\ref{DBH-arc}.
			
			If $k>1$, let $\lambda$ be the loop enclosing $\gamma$ and let $\rho$ be the plain radius. If $\lambda$ intersects each arc of $T$ at most once, the $\gamma$-symmetry condition is vacuous and $P(G_{T,\gamma})$ is empty by Lemma \ref{DBH-arc}. Thus, we assume that $\lambda$ intersects at least one arc more than once, without loss of generality including $\varepsilon_1$. 
			
			Let $\alpha$ and $\alpha'$ be the isomorphic squares on the ends of $G_{T,\rho,1}$ and $G_{T,\rho,2}$ respectively. By Lemma \ref{unique} we have boundary edges with $e\in\alpha$ and $e'\in\alpha'$ such that $\ell(e)=\ell(e')$ does not label any other edges in $G_{T,\gamma}$. Either $e,e'\in M$ or $e,e'\notin M$ for any $\gamma$-symmetric matching $M$. Letting $G'$ be the snake graph obtained by removing $\alpha,\alpha'$ from $G_{T,\gamma}$, we can use the method of Lemma \ref{DBH-arc} simultaneously on both ends to reduce any set of matchings of $G_{T,\gamma}$ satisfying the hypothesis of Lemma \ref{unique01} to a set of matchings on $G'$ satisfying the same hypothesis.  
			
			If $\lambda$ intersects at least $2$ non-radius arcs of $T$, then $G'=G_{T,\lambda'}$ where $\lambda'$ is a loop with both endpoints moved as shown in Figure \ref{fig:shifts}, and $\lambda'$ intersects one fewer arc of $T$. However, if $\lambda$ intersects only $\varepsilon_1$ and radii of $T$, then we find ourselves in the pathological boxed case; the $\gamma$-symmetry condition is vacuous for $G'$, so we directly show $P(G')$ is empty for such $G'$. Let $\sum_{M\in S} c_M\overline{w}^{M}$ satisfy the hypothesis of Lemma \ref{unique01}. Then each of $\varepsilon_2,..,\varepsilon_k$ is contained in all of or none of $M\in S$, since they each label a unique edge. If at least one of these is included, then each $M\in S$ is uniquely determined; if none of the interior edges of $G'$ are included, then there are two possible matchings $M\in S$ but they have the same matching vector. Thus, $|S|=1$, proving that $P(G')$ is empty.
		\end{proof}
		\begin{lemma}\label{DB-nEmpty}
			Let $T$ be an ideal triangulation of $\dpoly$. If $\gamma\notin T$ is a plain arc intersecting some arc of $T$ more than once, then $P(G_{T,\gamma})$ is not empty.
		\end{lemma}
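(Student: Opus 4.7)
The plan is to exhibit, for each eligible $\gamma$, two distinct perfect matchings $M_1, M_2$ of $G_{T,\gamma}$ whose weight vectors differ coordinatewise by even integers. Then $\tfrac12(w^{M_1}+w^{M_2})$ is a lattice point of $P(G_{T,\gamma})$ lying strictly between $w^{M_1}$ and $w^{M_2}$, hence a non-vertex lattice point, so $P(G_{T,\gamma})$ (and therefore $N(T,\gamma)$, by Lemma \ref{lem:saturationEquiv}) is not empty.

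First, I would use the classification of plain-arc intersections with $T$ (the paragraph following the definition of $S_i, B_i, I_W$). A plain arc $\gamma\notin T$ crossing some arc of $T$ more than once must wrap around the puncture, and in this case $\gamma$ crosses exactly one of the arcs $\varepsilon_i$ (WLOG $\varepsilon_1$) twice, while crossing each radius exactly once in between and each other $\varepsilon_j$ at most once. Let $\alpha_1$ and $\alpha_2$ be the two tiles of $G_{T,\gamma}$ labeled $\varepsilon_1$; the tiles strictly between them are labeled by the radii, and by the paragraph after Figure \ref{fig:triangulation} both $\alpha_1$ and $\alpha_2$ are non-corner tiles of $G_{T,\gamma}$.

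Next I would mimic the weight-vector computation from Lemma \ref{loop1}. In the $k=1$ case, the configuration in Figure \ref{fig:triple-tile} already exhibits distinct weight vectors $(2,0,1,1), (1,1,1,1), (0,2,1,1)$ in coordinates $(\alpha,\beta,\rho_1,\varepsilon_1)$, so the middle vector is a non-vertex lattice point. For $k\ge 2$, I would produce the analogous trio from the end structure shown in Figure \ref{fig:loop-ends}: there are edges $\alpha,\alpha',\beta,\beta'$ at the two ends of the ``$\alpha_1$--radii--$\alpha_2$'' subgraph with $\ell(\alpha)=\ell(\alpha')$ and $\ell(\beta)=\ell(\beta')$ occurring only there, and by choosing $(\alpha,\alpha')$, $(\alpha,\beta')$ or $(\beta,\beta')$ together with a single fixed matching of the intermediate radius staircase, I obtain three matchings whose weight vectors agree outside the $\alpha,\beta$ coordinates and read $(2,0)$, $(1,1)$, $(0,2)$ respectively in those two coordinates.

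Finally I would extend these three local matchings to the full $G_{T,\gamma}$. The portions of $G_{T,\gamma}$ outside $\{\alpha_1,\alpha_2\}$ correspond to edges of $I_W$ and $I_{W'}$ and are type-$A$-like subgraphs; by Proposition \ref{unique} and the logic of Lemma \ref{DBH-arc}, the matching of each such end is forced by the choice of outer boundary edge at $\alpha_1$ or $\alpha_2$. Because the three local matchings of the middle can be arranged to agree on the ``outer'' boundary edges of $\alpha_1$ and $\alpha_2$ (those facing the type-$A$ ends), a single fixed matching of each end portion extends all three middle matchings simultaneously, producing $M_1, M_2, M_3$ in $G_{T,\gamma}$ whose weight vectors satisfy the required midpoint relation. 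The main obstacle is confirming this compatibility of outer boundary edges across the three local matchings for every configuration (different $k$, loop versus non-loop $\gamma$, endpoints at or away from the basepoints $b_i$); this amounts to a careful case check of the adjacencies at $\alpha_1,\alpha_2$ using the structural description of ideal triangulations of $\dpoly$ given before Proposition \ref{unique}.
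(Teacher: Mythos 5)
Your high-level strategy matches the paper's: exhibit four perfect matchings that differ only locally so their weight vectors collapse to three points with a midpoint relation, yielding a non-vertex lattice point. However, the execution has problems.

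First, a factual error: the paragraph after Figure~\ref{fig:triangulation} establishes that the \emph{first and last radius tiles} ($\rho_1$ and $\rho_k$ in your notation) are non-corner squares, not the $\varepsilon_1$-tiles $\alpha_1,\alpha_2$. Whether $\alpha_1,\alpha_2$ are corners depends on how $\gamma$ enters $S_1$; e.g.\ if the arc of $T$ that $\gamma$ crosses just before $\varepsilon_1$ shares the endpoint $b_1$ with $\rho_1$, then $\alpha_1$ is a corner. Nothing in the paper supports your claim, and you do not seem to need it anyway, so it should simply be removed.

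Second, your $k\ge 2$ construction borrows the $\alpha,\alpha',\beta,\beta'$ structure from Figure~\ref{fig:loop-ends}, but that figure is specific to the case where $\gamma$ is a \emph{loop} and the $\varepsilon_1$-tiles are literal ends of the snake graph. For a general plain arc, $\alpha_1,\alpha_2$ sit in the interior of $G_{T,\gamma}$ with type-$A$ tails of tiles on both sides, and the ``outer'' boundary edges of $\alpha_1,\alpha_2$ are not the $\alpha,\beta$ of Figure~\ref{fig:loop-ends}. Because you vary which boundary edge of $\alpha_1$ (resp.\ $\alpha_2$) is used, the three local matchings need \emph{not} agree on the outer boundary of those tiles, so a fixed tail matching may not extend all three --- which is exactly the gap you flag at the end. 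The paper avoids this entirely: it keeps the whole matching fixed \emph{outside} the two $\varepsilon_1$-tiles and only toggles which pair of \emph{opposite} edges is chosen inside each $\varepsilon_1$-tile. Since a pair of opposite edges covers all four vertices of the tile, swapping pairs never disturbs any neighboring tile, so all four matchings coexist with the same fixed complement; the three resulting weight vectors automatically have the midpoint relation because the two $\varepsilon_1$-tiles are isomorphic as edge-labeled graphs. The fact that $\rho_1,\rho_k$ are non-corner is what guarantees a matching with opposite edges in each $\varepsilon_1$-tile exists in the first place. Your argument, as written, has a real hole where the paper's does not; replacing your end-edge variation with the paper's opposite-pair variation would close it.
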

		\begin{proof}
			%
			
			Without loss of generality, $\gamma$ intersects $\varepsilon_1$ twice. If $k>1$, then the squares corresponding to $\rho_1$ and $\rho_k$ are non-corner squares, so there is a matching that includes a pair of opposite edges of each of the two squares labeled $\varepsilon_1$. There are four choices of which pair is chosen from each square, leaving the rest of the matching the same. These four matchings yield three matching vectors, one of which is the midpoint of the other two, so $P(G_{T,\gamma})$ is not empty.
			
			If $k=1$, then there are four matchings of the three tiles labeled $\rho_1$, $\varepsilon_1$, $\rho_1$ (see Figure \ref{fig:triple-tile}) again with three distinct matching vectors with one the midpoint of the other two, and these four matchings can be extended identically to the rest of the diagram to show that $P(G_{T,\gamma})$ is not empty.
		\end{proof}
		Combining these lemmas yields the desired classification.
		\begin{theorem}\label{DB-Emptiness}
			Let $\mca^{bd}$ be a type $D$ cluster algebra with boundary frozens at seed $\Sigma$, corresponding to a tagged triangulation $T$. Let $\gamma$ be a tagged arc and let $N^{bd}(T, \gamma)$ be the Newton polytope of $x_\gamma$, written as a Laurent poynomial in $\Sigma$.
			\begin{itemize}	
				\item If $\gamma$ is plain and $T \neq T^p$, then $N^{\text{bd}}(T,\gamma)$ is empty if and only if $\gamma$ intersects no arc of $T$ more than once. 
				\item If $\gamma$ is plain and $T=T^p$, then $N^{\text{bd}}(T,\gamma)$ is empty if and only if $\gamma$ intersects no arc of $T^\circ$ more than once.
				\item If $\gamma$ is notched, then $N^{\text{bd}}(T,\gamma)$ is empty. 
			\end{itemize} 
		\end{theorem}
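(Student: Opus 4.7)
The plan is to deduce the theorem by combining the three preceding lemmas (Lemma \ref{DBH-arc}, Lemma \ref{DBH-notched}, and Lemma \ref{DB-nEmpty}) with Lemma \ref{lem:saturationEquiv}, which reduces emptiness of $N^{\text{bd}}(T,\gamma)$ to emptiness of $P(G_{T^\circ,\gamma})$, where $T^\circ$ is the ideal triangulation corresponding to $T$. So the whole proof amounts to a careful case analysis to translate the combinatorial conditions in the lemmas (phrased in terms of the ideal triangulation $T^\circ$) into the conditions in the theorem statement (phrased in terms of $T$ in the first bullet and $T^\circ$ in the second).

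For the third bullet, when $\gamma$ is a notched arc $\rho^{\bowtie}$, Lemma \ref{DBH-notched} gives emptiness of the relevant ($\rho$-symmetric) matching polytope, and Lemma \ref{lem:saturationEquiv} (together with Remark \ref{rmk:easyNotchedExpansions} to handle the degenerate subcases where $\rho \in T$ or $T=T^p$) then gives emptiness of $N^{\text{bd}}(T,\rho^{\bowtie})$. For the two bullets on plain arcs, the key observation is that $T = T^p$ holds precisely when $T^\circ$ contains a self-folded triangle (in which case $T$ replaces the enclosing loop by a notched radius), while $T \neq T^p$ forces $T = T^\circ$. Thus, in the first bullet the arcs of $T$ and $T^\circ$ coincide, and in the second bullet the theorem's use of $T^\circ$ is exactly what Lemma \ref{lem:saturationEquiv} feeds into the lemmas.

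With this translation in hand, both directions follow immediately. If $\gamma$ is plain and intersects no arc of $T^\circ$ more than once (including the trivial case $\gamma \in T^\circ$, where $L^{\text{bd}}_{T,\gamma}$ is a single monomial and $N^{\text{bd}}(T,\gamma)$ is a point), Lemma \ref{DBH-arc} shows $P(G_{T^\circ,\gamma})$ is empty, so $N^{\text{bd}}(T,\gamma)$ is empty. Conversely, if $\gamma$ is plain and intersects some arc of $T^\circ$ more than once, then $\gamma \notin T^\circ$, and Lemma \ref{DB-nEmpty} produces three weight vectors of matchings (differing only in the two tiles coming from the double intersection) such that one is the midpoint of the other two; this non-vertex lattice point pulls back under $\pi$ and the isomorphism of Lemma \ref{lem:saturationEquiv} to witness non-emptiness of $N^{\text{bd}}(T,\gamma)$.

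There is no substantive obstacle here since all of the work has been done in the preceding lemmas; the only mild subtlety is bookkeeping between the tagged and ideal pictures, particularly in the self-folded case where $T$ and $T^\circ$ disagree on which arcs appear and the expansion formula collapses $x_\lambda = x_\rho x_{\rho^{\bowtie}}$. We simply invoke Lemma \ref{lem:saturationEquiv} to perform this translation and read off the stated criterion.
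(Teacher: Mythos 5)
Your approach is essentially the same as the paper's: reduce emptiness of the Newton polytope to emptiness of the appropriate matching polytope via Lemma~\ref{lem:saturationEquiv}, then appeal to Lemmas~\ref{DBH-arc}, \ref{DBH-notched}, and \ref{DB-nEmpty} to characterize emptiness.

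There is one gap in your case analysis. You assert that ``$T \neq T^p$ forces $T = T^\circ$,'' but this is false when $T$ has \emph{all radii notched}: then $T \neq T^p$, yet $T$ does not correspond to an ideal triangulation at all (and $T^p$ does). The theorem as stated does not restrict to $T$ corresponding to an ideal triangulation, and the paper's proof explicitly handles this third possibility, using Proposition~\ref{prop:flip} (or Remark~\ref{rmk:easyNotchedExpansions}) to pass to $(T^p, \gamma^p)$ by a coordinate-renaming substitution before applying Lemma~\ref{lem:saturationEquiv}. In the all-radii-notched subcase, when $\gamma$ is a plain radius, $\gamma^p$ is notched, so the paper invokes the notched-arc lemma to conclude emptiness unconditionally, even though the first bullet might naively suggest the intersection criterion applies. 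Your write-up would be fixed by a short paragraph splitting the plain-arc argument into ``all radii plain'' ($T = T^\circ$, argue directly) versus ``all radii notched'' (reduce to $T^p$ via Proposition~\ref{prop:flip}, noting that arcs of $T$ and of $T^p$ have the same underlying ordinary arcs so the intersection criterion is unchanged, and handling the radius subcase separately). Other than that omission, the reductions you describe and the invocations of the three lemmas are correct.
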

		\begin{proof}
			If $T$ has all radii plain, then $T\neq T^{p}$ and $T$ corresponds to an ideal triangulation $T'$. As before, we use Lemma \ref{lem:saturationEquiv} to note that emptiness of $N^{\text{bd}}(T,\gamma)$ is equivalent to emptiness of $P^{\text{bd}}(G_{T',\gamma})$. By Lemma \ref{DBH-arc}, Lemma \ref{DBH-notched}, and Lemma \ref{DB-nEmpty}, the stated condition is a precise characterization of emptiness of $P(G_{T',\gamma})$. 
			
			If $T$ has all radii notched, then $T\neq T^{p}$ and $T^p$ corresponds to an ideal triangulation $T'$. If $\gamma$ is not a radius, then by Remark~\ref{rmk:easyNotchedExpansions} emptiness of $N^{\text{bd}}(T,\gamma)$ is equivalent to emptiness of $P^{\text{bd}}(G_{T',\gamma})$, and we can proceed as above. If $\gamma$ is a radius, then emptiness of $N^{\text{bd}}(T,\gamma)$ is equivalent to emptiness of $P^{\text{bd}}(G_{T',\gamma^p})$, and the latter is always empty. 
			
			Finally, if $T$ has a notched and un-notched radius $\rho,\rho^{\bowtie}$, then $T=T^p$ and $T$ corresponds to an ideal triangulation $T'=T^{\circ}$ replacing $\rho^{\bowtie}$ with a loop $\lambda$. Emptiness of $N^{\text{bd}}(T,\gamma)$ is equivalent to emptiness of $P^{\text{bd}}(T^{\circ},\gamma)$ by Lemma~\ref{lem:saturationEquiv}, and the latter is only non-empty if $\gamma$ intersects some arc of $T^{\circ}$ more than once. 
		\end{proof}
		\subsection{Principal coefficients}
		
		As with Type $A$, the results for Type $D$ with principal coefficients are very similar to those with boundary frozen variables. In this section, let $\pi=\pi^{\text{pc}}$.
		
		Having developed the machinery of the principal perfect matching polytope, the proofs are almost identical to the boundary coefficients case. But first we need one lemma about the structure of the principal coefficient variables in Type $D$:
		\begin{lemma}\label{bottom-symmetric}
			For $T$ an ideal triangulation of $\dpoly$ and $\gamma$ a loop, the bottom matching of $G_{T,\gamma}$ is $\gamma$-symmetric. 
		\end{lemma}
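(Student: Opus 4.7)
The strategy is to exploit the palindromic structure of $G_{T,\gamma}$ when $\gamma$ is a loop, and then to verify that the bottom matching respects this palindrome once we restrict attention to $H_{T,\rho,i}$. Since $\gamma$ is a loop based at some vertex $V$ enclosing the radius $\rho$, the sequence of arcs $\tau_{i_1},\dots,\tau_{i_k}$ crossed by $\gamma$ is palindromic in the first and last $d$ entries (where $d$ is the number of arcs $\rho$ crosses): the loop traverses the same triangles of $T$ on its way to the puncture-region and on its way back. Consequently, the pattern of east-/north-gluings between consecutive tiles of $G_{T,\gamma}$ is mirror-symmetric at the two ends, so that the labeled-graph isomorphism $\phi\colon G_{T,\rho,1}\to G_{T,\rho,2}$ given by the definition extends to a reflection of $G_{T,\gamma}$ that fixes labels of edges of $H_{T,\rho,1}\cup H_{T,\rho,2}$.

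The next step is to describe $M_0$ explicitly enough to verify the symmetry. The bottom matching is the unique perfect matching of $G_{T,\gamma}$ consisting only of boundary edges and containing the south edge of the first tile; a simple local analysis (as appears in the proofs of Lemmas~\ref{chip1} and~\ref{chip2}) shows that the edges of $M_0$ incident to each tile depend only on the local gluing data (whether that tile is a corner or non-corner, and the gluing direction to the previous tile). Applying this local description inside $G_{T,\rho,1}$ and inside $G_{T,\rho,2}$ and using the palindromic matching of gluing directions established in the first step, I would conclude that $\phi$ sends the set of boundary edges of $G_{T,\gamma}$ lying in $G_{T,\rho,1}$ to those lying in $G_{T,\rho,2}$, and that the local rule selects corresponding edges on each side.

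The main obstacle is that the ``south edge of the first tile'' convention is globally asymmetric: $M_0$ itself need not be invariant under the reflection of $G_{T,\gamma}$, because a parity flip in the matching can propagate from the first tile all the way to the last. The key point I would make is that any such discrepancy between the two ends is confined to the edges labeled by radii of $T$, since those are exactly the edges of $G_{T,\rho,i}$ that bound the ``internal'' side of these subgraphs in $G_{T,\gamma}$ (i.e., the edges through which $G_{T,\rho,i}$ meets the middle portion of $G_{T,\gamma}$). Once these radius-labeled edges are removed to form $H_{T,\rho,i}$, the restriction of $M_0$ is determined entirely by the local boundary structure, and $\phi$ carries $M_0|_{H_{T,\rho,1}}$ bijectively onto $M_0|_{H_{T,\rho,2}}$. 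Handling the degenerate cases $k=1$ and $k=2$, where the middle portion is small or absent (cf.\ Lemma~\ref{pseudo-loop} and Figure~\ref{fig:triple-tile}), is a short direct check.
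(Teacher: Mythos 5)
Your proposal takes a genuinely different route from the paper. The paper's proof is a short, figure-driven case analysis: if $\gamma$ crosses only radii the condition is vacuous; if $\gamma$ crosses exactly one non-radius arc the snake graph is the staircase of Figure~\ref{fig:staircase} (with one tile attached at each end) and one inspects $M_0$ directly, observing that the two end tiles carry two versus one edges of $M_0$ but these restrict isomorphically to $H_{T,\rho,1}$ and $H_{T,\rho,2}$; and if $\gamma$ crosses more non-radius arcs, the ends are extended by label-isomorphic squares and the property is preserved by an easy induction. You instead set up a global reflection $\phi$ of $G_{T,\gamma}$ from the palindromic crossing sequence and try to argue that $M_0$ is carried to itself once radius-labeled edges are ignored.

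The gap is in the step you yourself flag as the key point: the claim that any failure of $M_0$ to be $\phi$-invariant is ``confined to the edges labeled by radii of $T$.'' The heuristic you give (that radius-labeled edges are exactly those bounding the internal side of $G_{T,\rho,i}$) does not by itself rule out the problematic scenario. The bottom matching of a snake graph is one of the two boundary-only matchings, and these two differ on \emph{every} boundary edge; so if the staircase of radius tiles shifted $M_0$ to the ``other'' boundary matching by the time it reached $G_{T,\rho,2}$, the restriction to $H_{T,\rho,2}$ would disagree with that to $H_{T,\rho,1}$ on all the non-radius edges as well, not just the radius ones. What actually makes the lemma true is a specific feature of the staircase (all corner squares, with the asymmetry of $M_0$ between the two end tiles landing precisely on a radius-labeled edge), which is exactly what the paper reads off of Figure~\ref{fig:staircase}. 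Your plan needs to prove, not assert, that the staircase has this property; without it, the reflection argument does not close. The remaining scaffolding (palindromic crossing sequence, reduction to $H_{T,\rho,i}$, degenerate $k=1,2$ checks) is fine and compatible with the paper's framework.
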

		\begin{proof}
			%
			
			If $\gamma$ only intersects radii, then the $\gamma$-symmetry condition holds vacuously. If $\gamma$ intersects a single non-radius arc of $T$, then $G_{T,\gamma}$ looks like Figure \ref{fig:staircase}. The bottom matching includes two boundary edges from one end square and one boundary edge from the other, which is $\gamma$-symmetric. If $\gamma$ intersects more non-radius arcs of $T$, then $G_{T,\gamma}$ will look like Figure \ref{fig:staircase} extended on both ends with isomorphic squares, and we see inductively that the bottom matching is always $\gamma$-symmetric. 
		\end{proof}
		We will now state the results and note how the proofs differ from the boundary coefficients case.
		\begin{theorem}\label{DP-saturated}
			Let $\mca^{\text{pc}}$ be a type $D$ cluster algebra with principal coefficients at a seed $\Sigma$. Then the Newton polytope of any cluster variable, written as a Laurent polynomial in $\Sigma$, is saturated.
		\end{theorem}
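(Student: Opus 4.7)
The plan is to mimic the structure of the proof of Theorem \ref{DB-always}, adapting each ingredient to track principal coefficients. By Lemma \ref{lem:saturationEquiv}, it suffices to prove that $P^{\text{pc}}(G_{T,\gamma})$ is saturated for every ideal triangulation $T$ of $\dpoly$ and every tagged arc $\gamma$. Following the template of Proposition \ref{DB-saturated} and Proposition \ref{DB-notched}, I would split the argument into the plain arc case and the notched arc case, in each case reducing to showing that a lattice point $v \in P^{\text{pc}}(G_{T,\gamma})$ has a lattice preimage $\overline{v} \in \overline{P}^{\text{pc}}(G_{T,\gamma})$ under $\pi^{\text{pc}}$ (Remark \ref{rmk:preimage}).

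For plain arcs, my strategy is to re-derive the reduction lemmas (Lemma \ref{chip1}, Lemma \ref{chip2}, Lemma \ref{pseudo-loop}, Lemma \ref{loop1}, and Lemma \ref{strong-induction}) with $\overline{P}(G)$ replaced by $\overline{P}^{\text{pc}}(G)$ and $\pi^{\text{bd}}$ replaced by $\pi^{\text{pc}}$. The crucial observation enabling this transfer is Corollary \ref{PMPC}: the affine isomorphism $\alpha$ between $\overline{P}^{\text{pc}}(G)$ and $\overline{P}(G)$ preserves lattice points, so the structural properties of the lifted polytope (emptiness, vertices corresponding to perfect matchings) carry over verbatim. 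Given a putative lattice point $v \in P^{\text{pc}}(G_{T,\gamma})$ and some $\overline{v} \in \pi^{-1}(v)$, I would apply Proposition \ref{unique} to locate an edge on a terminal tile of $G_{T,\gamma}$ whose label is unique, observe that the corresponding principal coordinates of $\overline{v}$ are forced to be integers, and then invoke Observation \ref{reduction} and the inductive hypothesis to peel off tiles. The strong induction of Proposition \ref{DB-saturated} would then run through exactly as before, with pathological loop configurations (intersecting only radii and possibly one $\varepsilon_i$) handled by principal analogues of Lemma \ref{pseudo-loop} and Lemma \ref{loop1}.

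For notched arcs $\rho^{\bowtie}$ with $\rho \notin T$, the approach mirrors Proposition \ref{DB-notched}. The key step is that $\rho$-symmetric matchings of $G_{T,\lambda}$ can be simplified iteratively by locating a shared unique boundary label at both ends via Proposition \ref{unique}, using Lemma \ref{unique01} to conclude that all matchings in a supporting set make the same choice on those edges, and peeling the symmetric pair of end tiles while preserving $\rho$-symmetry. Lemma \ref{bottom-symmetric} enters here to guarantee that the relevant bottom matching used in the definition of $\overline{\wt}^{\text{pc}}$ itself respects $\rho$-symmetry, so that the weight bookkeeping is compatible with the principal coefficient structure. After finitely many reductions the $\rho$-symmetry condition becomes vacuous, and saturation of the resulting smaller polytope reduces to the plain arc case.

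The main obstacle will be bookkeeping for the principal coefficient component of the weight vector. Unlike $\pi^{\text{bd}}$, the projection $\pi^{\text{pc}}$ sums \emph{both} edge coordinates and square coordinates according to shared arc labels, and in Type $D$ a single arc of $T$ (notably a radius or an $\varepsilon_i$) can label many squares of $G_{T,\gamma}$ because $\gamma$ may cross it several times. I would need to verify that each snake graph reduction step not only produces the correct edge-label weight vector but also the correct cumulative $C(M)$ contribution across all such repeated square labels. Fortunately, the map $\alpha$ from Corollary \ref{PMPC} gives, for each square $s$, an explicit formula $(\overline{v}_2)_s = f_s((\overline{v}_1)_{e_s})$ tying its coordinate to a boundary-edge coordinate of the same tile, so once the edge coordinates of $\overline{v}$ are shown to be integral the square coordinates follow automatically; this is what reduces the principal case to the essentially combinatorial boundary case.
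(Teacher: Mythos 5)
Your proposal is correct and takes essentially the same approach as the paper: re-run the boundary-frozen Type $D$ argument (the principal analogues of Lemmas \ref{chip1}, \ref{chip2}, \ref{pseudo-loop}, \ref{loop1}, and \ref{strong-induction}), using Corollary \ref{PMPC} to tie each square coordinate of $\overline{v}$ to a boundary-edge coordinate of the same tile via $f_s$ so that integrality transfers between them, and handle notched arcs via $\rho$-symmetry together with Lemma \ref{bottom-symmetric}. One small clarification: in $\dpoly$ any arc $\gamma$ crosses an arc of $T$ at most twice, so an arc label appears on at most two squares of $G_{T,\gamma}$, which is what keeps the square-coordinate bookkeeping you flag tractable.
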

		\begin{proof}
			As usual, we may assume that $\Sigma=\Sigma_{T'}$ where $T'$ is a tagged triangulation corresponding to an ideal triangulation $T$. 
			
			To prove Lemma \ref{chip1} for principal coefficients, note that if the first square $\alpha$ of $G_{T,\gamma}$ has an edge with a unique label, then no other square of $G_{T,\gamma}$ is labeled $\ell(\alpha)$. Therefore, all $\overline{v}=(\overline{v}_1, \overline{v}_2)\in\overline{P}^{\text{pc}}(G_{T,\gamma})$ satisfy $(\overline{v}_2)_{\ell(\alpha)}=(\pi(\overline{v})_2)_{\ell(\alpha)}$. Then for any lattice point $\eta\in P^{\text{pc}}(G_{T,\gamma})$, each $\overline{\eta}\in\pi^{-1}(\eta)$ has $(\overline{\eta}_2)_{\ell(\alpha)}\in\mathbb{Z}$. By Corollary \ref{PMPC}, each $\overline{v}\in \overline{P}^{\text{pc}}(G_{T,\gamma})$ has $(\overline{v}_2)_{\ell(\alpha)}=(\overline{v}_1)_{\ell(e)}$ for some edge $e$ of $\alpha$. Therefore, $(\overline{v}_1)_{\ell(e)}$ is an integer, and we can proceed exactly as in Lemma \ref{chip1} to extend a matching of a smaller saturated snake graph to a matching with weight vector $\pi(\overline{\eta})$. 
			
			Likewise we can prove the analogue of Lemma \ref{chip2}. If two isomorphic squares $\alpha,\alpha'$ at the ends of $G_{T,\gamma}$ have corresponding edges $e\in\alpha,e'\in\alpha'$ with $\ell(e)=\ell(e')$ labeling no other edges, then there are no other squares labeled by $\ell(\alpha)=\ell(\alpha')$. Exactly as above, we can convert equations in the coordinates of $\overline{v}_2$ into equations involving coordinates of $\overline{v}_1$, and then apply the proof of Lemma \ref{chip2} to construct a matching with weight vector $\pi(\overline{v})$.
			
			For Lemma \ref{pseudo-loop}, since the squares of $G'$ are distinct, the coordinates of $\overline{v}_2$ must be $0$ or $1$, so $(\overline{v}_1)_{e}$ must be an integer for all boundary edges $e$ of $G'$. Then the same must be true for interior edges $e$ of $G'$, showing that $\pi(\overline{v})$ is the weight vector of a matching. 
			
			For Lemma \ref{loop1}, for $k>1$ we apply precisely the same reasoning to show that if $\alpha$ is a square of $G_{T,\gamma}$ with $\ell(\rho)$ a radius and $e$ is an edge of $\alpha$ on the boundary of $G_{T,\gamma}$, then $(\overline{v}_1)_{\ell(e)}$ is $0$ or $1$ for all $\overline{v}$. Noting that the squares corresponding to $\rho_1$ and $\rho_k$ are non-corner squares of $G_{T,\gamma}$, Corollary \ref{PMPC} implies that $(\overline{v}_1)_{\varepsilon_2},...,(\overline{v}_1)_{\varepsilon_k}$ are each $0$ or $1$. Then casework shows that both $(\overline{v}_2)_{\varepsilon_1}$ and $(\overline{v}_2)_{\varepsilon_1'}$ must be integers too, so $\pi(\overline{v})$ corresponds to a matching. If $k=1$, then $G_{T,\gamma}$ consists of three tiles, and we can check directly that no nontrivial convex combination of matching vectors yields a lattice point.
			
			Lemma \ref{unique} and Lemma \ref{strong-induction} can be applied directly to the principal coefficients case (using our work above for the latter), so we can put all of this together to replicate the proof of Lemma \ref{DB-saturated}. 
			
			For Lemma \ref{DB-notched}, note that by Lemma \ref{bottom-symmetric} the coordinates $(v_2)_{\ell(\alpha)}$ corresponding to squares $\alpha$ in $G_{T,\gamma,1}$ and $G_{T,\gamma,2}$ determine each other by the $\gamma$-symmetry condition. Therefore, we can adapt the above arguments to $G_{T,\gamma,1}$ and $G_{T,\gamma,2}$ to induct simultaneously on both ends of $G_{T,\gamma}$, proceeding as in the previous case.
		\end{proof}
		
		Next, we characterize when Newton polytopes for type $D$ cluster variables with principal coefficients are empty. As with the boundary frozens case, not all Newton polytopes are empty. Recall that if $T$ is a tagged triangulation of $\dpoly$, $T^p$ denotes the triangulation obtained by switching the tagging of each radius. Further, $T=T^p$ if and only if $T$ corresponds to an ideal triangulation with a loop.
		
		\begin{theorem}\label{DP-emptiness}
			Let $\mca^{pc}$ be a type $D$ cluster algebra with principal coefficients at a seed $\Sigma$, corresponding to a tagged triangulation $T$. Let $\gamma$ be a tagged arc and let $N^{pc}(T, \gamma)$ be the Newton polytope of $x_\gamma$, written as a Laurent poynomial in $\Sigma$.
			\begin{itemize}
				\item If $\gamma$ is plain and $T \neq T^p$, then $N^{\text{pc}}(T,\gamma)$ is empty if and only if $\gamma$ intersects no arc of $T$ more than once. 
				\item If $\gamma$ is plain and $T=T^p$, then $N^{\text{pc}}(T,\gamma)$ is empty if and only if $\gamma$ intersects at most one arc of $T^\circ$ more than once.
				\item If $\gamma$ is notched, then $N^{\text{pc}}(T,\gamma)$ is empty. 
			\end{itemize} 
		\end{theorem}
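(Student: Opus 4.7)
The plan is to mirror the three-lemma proof of Theorem~\ref{DB-Emptiness}, adapting each step to principal coefficients via the rigidity techniques developed for Theorem~\ref{DP-saturated}. After reducing (as in the boundary case) to tagged triangulations $T$ corresponding to an ideal triangulation $T^\circ$, the aim is to characterize emptiness of $P^{\text{pc}}(G_{T^\circ,\gamma})$ and pull back to $N^{\text{pc}}(T,\gamma)$ via Lemma~\ref{lem:saturationEquiv}. I would first prove principal analogues of Lemmas~\ref{DBH-arc} and~\ref{DBH-notched}, covering the emptiness direction for plain arcs intersecting each arc of $T^\circ$ at most once and for notched arcs respectively. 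The inductive arguments from the boundary case go through because Corollary~\ref{PMPC} forces the $y$-coordinate at any tile with a uniquely labeled boundary edge to be integer-valued, yielding the principal version of Lemma~\ref{unique01}. For notched arcs, Lemma~\ref{bottom-symmetric} guarantees the $\owt^{\text{pc}}$ normalization is well-defined, enabling the simultaneous reduction on $G_{T,\rho,1}$ and $G_{T,\rho,2}$.

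For the non-emptiness direction in the $T\neq T^p$ case, I would adapt Lemma~\ref{DB-nEmpty}: when $\gamma$ crosses some arc $\tau$ twice, the four matchings that differ by swaps at the two $\tau$-tiles still satisfy a midpoint identity among their weight vectors in principal coordinates. This is because, without a loop in $T^\circ$, both repeated tiles contribute symmetrically to $y_\tau$ across the swaps, so the midpoint relation from the boundary case survives the addition of $y$-coordinates and yields a non-vertex lattice point.

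The main obstacle is the $T=T^p$ case, where $T^\circ$ contains a loop $\lambda$ enclosing a radius $\rho$ and the theorem allows emptiness even when $\gamma$ crosses one arc of $T^\circ$ more than once. Two sub-cases should be treated. If $\gamma$ intersects exactly one arc of $T^\circ$ more than once, the extra information carried by principal $y$-coordinates --- together with the substitutions $w_\lambda = w_\rho w_{\rho^\bowtie}$ and the analogous rule for $y_\lambda$ implicit in the principal version of the map $\xi$ from Lemma~\ref{lem:saturationEquiv} --- distinguishes the four swap matchings by a $y$-component asymmetry, breaking the would-be midpoint identity and forcing any lattice point in the convex hull to be a vertex. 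If instead $\gamma$ intersects two or more arcs of $T^\circ$ more than once, combining independent swaps at two different repeated labels produces a genuine non-vertex lattice point, recovering non-emptiness. The delicate part is the first sub-case: verifying that a single repeated label is insufficient to force a midpoint in principal coordinates requires careful tracking of how swaps at one pair of tiles interact with the self-folded gluing structure of Figure~\ref{fig:selfFoldedTile} and with the resulting $y$-coordinate shifts under the principal analogue of $\xi$.
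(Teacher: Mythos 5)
Your overall strategy matches the paper's: reduce to an ideal triangulation, establish principal analogues of the boundary-case lemmas via the rigidity coming from Corollary~\ref{PMPC}, use Lemma~\ref{bottom-symmetric} to handle notched arcs, and adapt the swap construction of Lemma~\ref{DB-nEmpty} for non-emptiness. You also correctly flag the $T=T^p$, one-repeated-arc case as the place where the boundary argument breaks and principal coefficients come to the rescue. However, you leave exactly that case unresolved: you identify it as ``the delicate part,'' say it ``requires careful tracking,'' and then stop without doing the tracking. The paper closes this gap concretely: it applies the principal analogue of the Lemma~\ref{unique01} reduction (which you already invoke) repeatedly, peeling off tiles with uniquely-labeled edges until only the three tiles $G_\lambda, G_\rho, G_\lambda$ remain, and then observes that for this three-tile graph no nontrivial convex combination of principal weight vectors is a lattice point --- a finite direct check already recorded in the proof of Theorem~\ref{DP-saturated}. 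You have the machinery to do this reduction, so the missing step is to notice it terminates at a fixed small graph rather than analyzing the $\xi$-map and $y_\lambda$ substitution abstractly; your detour through the principal analogue of Lemma~\ref{lem:saturationEquiv}'s $\xi$ is a harder road to the same place, since the paper works entirely in $P^{\text{pc}}(G_{T^\circ,\gamma})$ and only invokes Lemma~\ref{lem:saturationEquiv} at the very end.

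A second, smaller issue is the $T=T^p$, two-or-more-repeated-arcs non-emptiness direction. You propose ``combining independent swaps at two different repeated labels,'' which is both vaguer and more complicated than what is needed: the paper simply performs the four-matching swap construction of Lemma~\ref{DB-nEmpty} at a single non-loop repeated arc $\tau$ (chosen as the arc $\gamma$ meets just before/after the loop). Swapping at $\lambda$ itself fails precisely because of the self-folded gluing, but swapping at $\tau$ behaves exactly like the unpunctured case, and there is no need to combine swaps across two labels. Your phrasing suggests you may be trying to engineer a midpoint from interacting swap families, which would require its own verification; the paper avoids this entirely.
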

		\begin{proof}

			First, consider the case that $\gamma$ is plain. Note that Lemma \ref{unique01} also applies to squares $\alpha$ of $G_{T,\gamma}$ with $\ell(s)$ unique. 
			
			As in our proof of Theorem \ref{DP-saturated}, if the first square $\alpha$ of $G_{T,\gamma}$ has an edge $e$ on the boundary of $G_{T,\gamma}$ with $\ell(e)$ unique, then $\ell(\alpha)$ is also unique. Then given a set of matchings $S$ satisfying the hypothesis of Lemma \ref{unique01}, each of the three edges of $\alpha$ on the boundary of $G_{T,\gamma}$ are either included or excluded in all $e\in M$. So as in the proof of Lemma \ref{DBH-arc} we can reduce each $M\in S$ to a matching $M'$ on a smaller snake graph, with $\sum_{M\in S}c_M\overline{w}_{\text{pc}}^{M'}$ equal to a lattice point.
			
			If $\gamma$ intersects each arc of $T$ at most once, then we can use this method to reduce the snake graph until we have removed all squares, implying emptiness by induction. If $T$ contains a loop that $\gamma$ intersects twice, then we can reduce the snake graph to the three tiles corresponding to $\varepsilon_1$, $\rho_1$, $\varepsilon_1$. We saw in the proof of Theorem \ref{DP-saturated} that for matchings $M$ of this graph, $\{\overline{w}_{\text{pc}}^M\}_{M}$ have no convex combination equal to a lattice point, proving emptiness. 
			
			Conversely, the construction of Lemma \ref{DB-nEmpty} also violates emptiness for principal coefficients if $T$ has no loop and $\gamma$ intersects some arc more than once. If $T$ has a loop and $\gamma$ intersects at least two arcs more than once, let $\tau$ be the arc intersected before/after the loop $\varepsilon_1$. Then we can adapt the construction of Lemma \ref{DB-nEmpty} with $\tau$ instead of $\varepsilon_1$, i.e., constructing the four matchings using pairs of opposite sides from each of the two squares corresponding to $\tau$. These can all be extended identically to the rest of the snake graph, and one of the resulting principal matching vectors is the midpoint of two others. 
			
			Now, let $\gamma$ be notched. By Lemma \ref{bottom-symmetric}, we can use the $\gamma$-symmetry condition to apply our work above to $G_{T,\gamma,1}$ and $G_{T,\gamma,2}$. Letting $G'$ be $G_{T,\gamma}$ with isomorphic subgraphs removed from each end, we can reduce $\sum_{M\in S}c_M\overline{w}_{\text{pc}}^{M}$ with $M\in S$ matchings of $G_{T,\gamma}$ to $\sum_{M\in S}c_M\overline{w}_{\text{pc}}^{M'}$ with each $M'$ a matching of $G'$. To finish proving this analogue of Lemma \ref{DBH-notched}, letting $G'$ consist only of squares labeled by radii, note that each square has a unique label. So if $S$ satisfies the hypothesis of Lemma \ref{unique01}, then all matchings in $S$ agree on the boundary edges of $G'$. And as we saw in the proof of Theorem \ref{DP-saturated}, the boundary edges in a matching of $G'$ determine the matching, proving emptiness. 
			
		\end{proof}
		
		\begin{remark}
			It is possible to construct a pair of inverse affine maps $\overline{P}^{\text{pc}}(G_{T,\gamma})\longleftrightarrow \overline{P}(G_{T,\gamma})$, proving that the polytopes are combinatorially equivalent. Furthermore, both maps take lattice points to lattice points. However, they do not descend to well-defined maps $P^{\text{pc}}(G_{T,\gamma})\longleftrightarrow P(G_{T,\gamma})$, since affine maps do not commute with adding coordinates. Nevertheless, perhaps investigating these maps could be a fruitful avenue for future work.
		\end{remark}
 	\section{Cluster variable Newton polytopes for other surfaces}
		Now, we examine cluster algebras from surfaces other than $\apoly$ and $\dpoly$. We will not define them here, but similar snake graph expansion formulas hold for cluster variables, see \cite{MSW}.
		
		Types $A$ and $D$ are the only cluster algebras from surfaces that are finite type, and are in general unusually well-behaved from a combinatorial perspective. This holds for Newton polytopes of cluster variables as well. Many of the results in the previous sections fail for arbitrary cluster algebras from surfaces.
		
		\subsection{Counterexamples}
		We give some examples demonstrating that saturation does not hold in general for cluster algebras from surfaces with boundary frozen variables. 
		
		\begin{example}[Annulus]
			\begin{figure}
				\begin{minipage}{0.45\linewidth}
					\centering
					\includegraphics[width=0.7\linewidth]{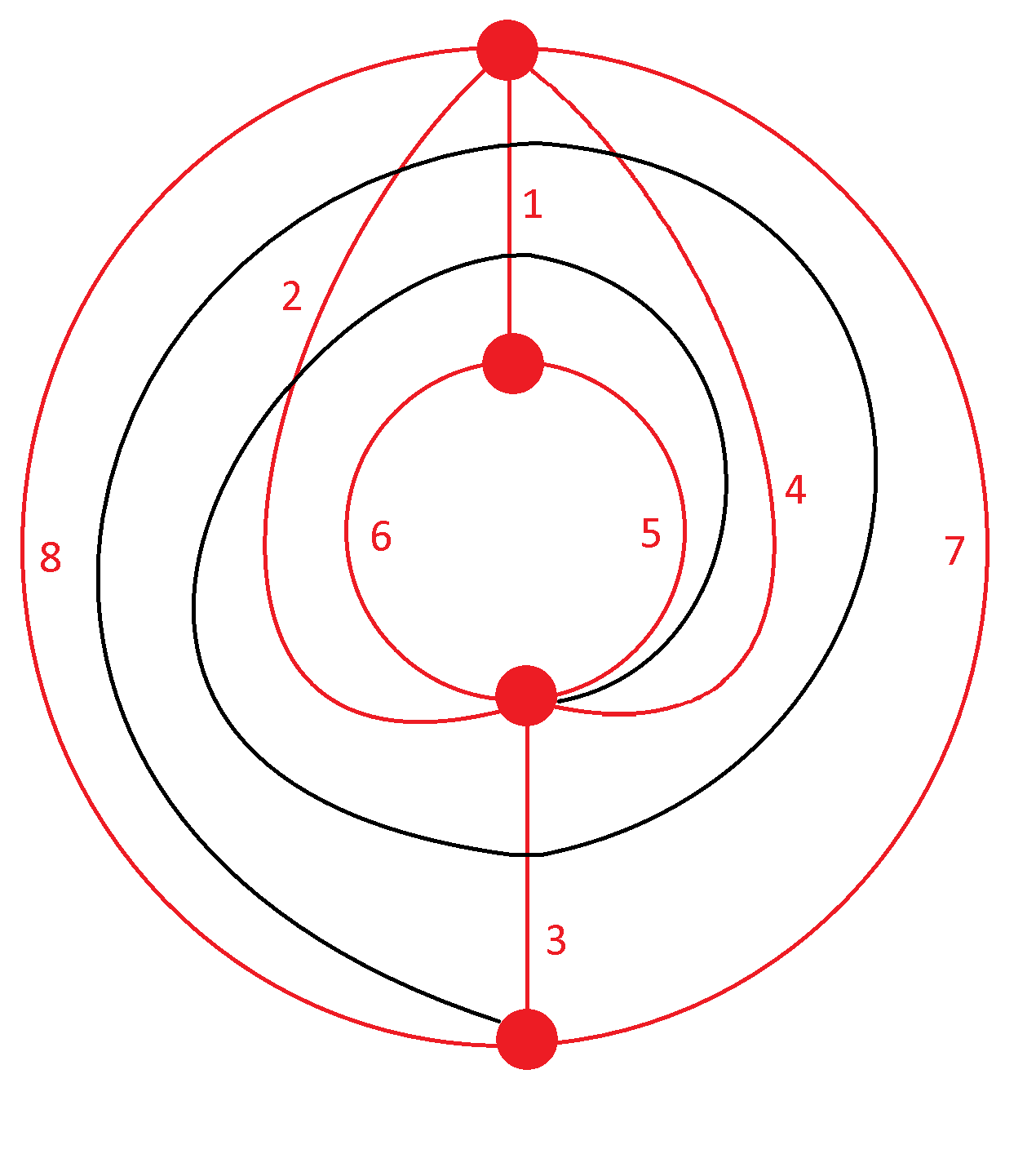}
				\end{minipage}
				\begin{minipage}{0.45\linewidth}
					\centering
					\includegraphics[width=\linewidth]{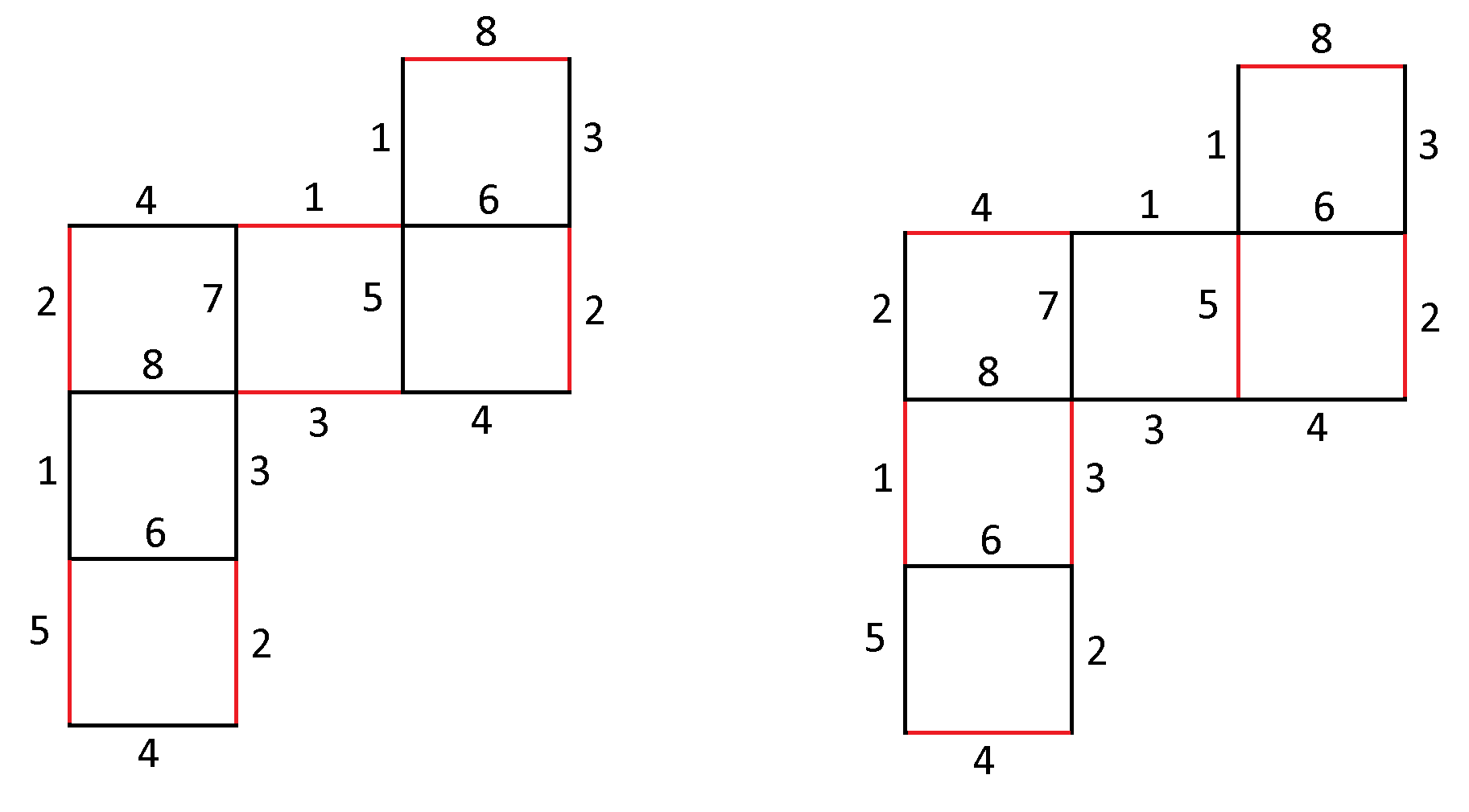}
				\end{minipage}
				
				\caption{On the left is an arc $\tau$ (black) and a triangulation $T$ of an annulus. On the right are two matchings of the snake graph $G_{T,\gamma}$. The midpoint of the weight vectors of these matchings does not correspond to a matching.}
				\label{fig:annulus-triangulation}
			\end{figure}
			For an annulus with two marked points on each boundary component, let triangulation $T$ and arc $\gamma$ be as shown in Figure \ref{fig:annulus-triangulation} on the left. The matchings of $G_{T,\gamma}$ on the right yield weight vectors $(1,1,1,2,1,0,0,1)$ and $(1,3,1,0,1,0,0,1)$. Their midpoint, $(1,2,1,1,1,0,0,1)$, does not correspond to a matching. Thus, $P(G_{T,\gamma})$ is not saturated.
		\end{example}
		\begin{example}[Punctured Torus]
			For a torus with one marked point in the interior, let triangulation $T$ and arc $\gamma$ be as shown in Figure \ref{fig:punctured-torus}. For $G_{T,\gamma}$, shown on the right, one can check that each matching will include an even number of edges with each label. But there are pairs of weight vectors whose midpoints have odd coordinates (e.g., let one matching include two opposite sides of one square, and let the other matching be the same but with the other two sides of that square), so these midpoints cannot correspond to matchings. Thus, $P(G_{T,\gamma})$ is not saturated.
			
			\begin{figure}
				\begin{minipage}{0.4\linewidth}
					\centering
					\includegraphics[width=0.7\linewidth]{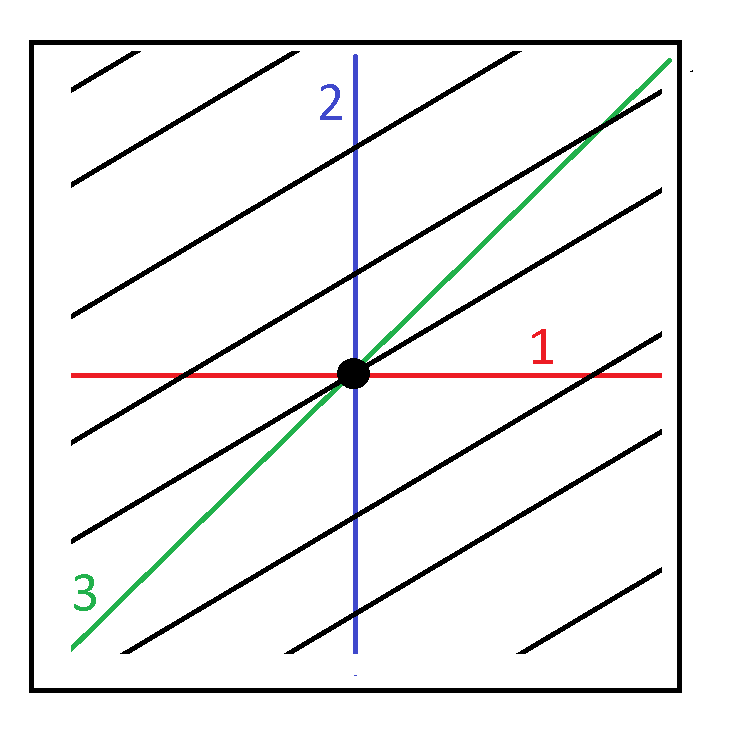}
				\end{minipage}
				\begin{minipage}{0.4\linewidth}
					\centering 
					\includegraphics[width=0.7\linewidth]{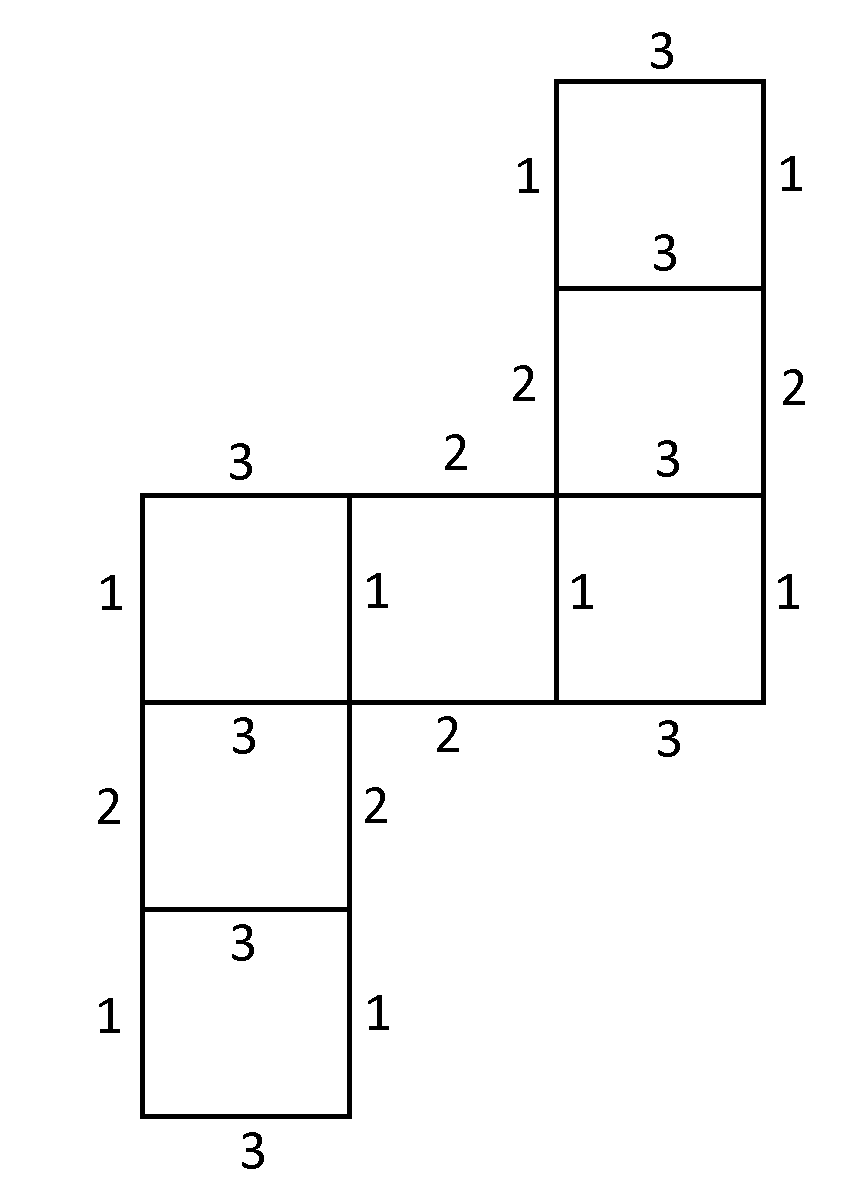}
				\end{minipage}
				\caption{On the left is an arc $\gamma$ (black) and a triangulation $T$ of a punctured torus. On the right is the snake graph $G_{T,\gamma}$.}
				\label{fig:punctured-torus}
			\end{figure}
		\end{example}
		\begin{example}[Twice Punctured Torus]
			For a torus with two marked points in the interior, let triangulation $T$ and arc $\gamma$ be as shown in Figure \ref{fig:twice-punctured-torus-triangulation} on the left. The matchings of $G_{T,\gamma}$ on the right yield weight vectors $(2,0,1,1,1,2)$ and $(0,2,1,1,1,2)$. Their midpoint, $(1,1,1,1,1,2)$, does not correspond to a matching. Thus, $P(G_{T,\gamma})$ is not saturated.
			\begin{figure}
				\begin{minipage}{0.4\linewidth}
					\centering
					\includegraphics[width=0.7\linewidth]{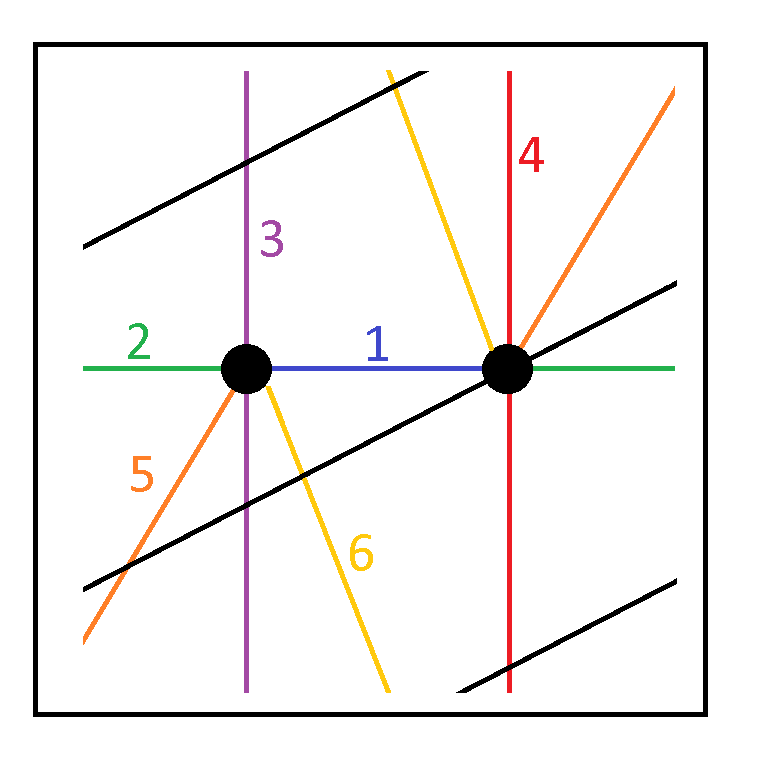}
				\end{minipage}
				\begin{minipage}{0.55\linewidth}
					\centering
					\includegraphics[width=\linewidth]{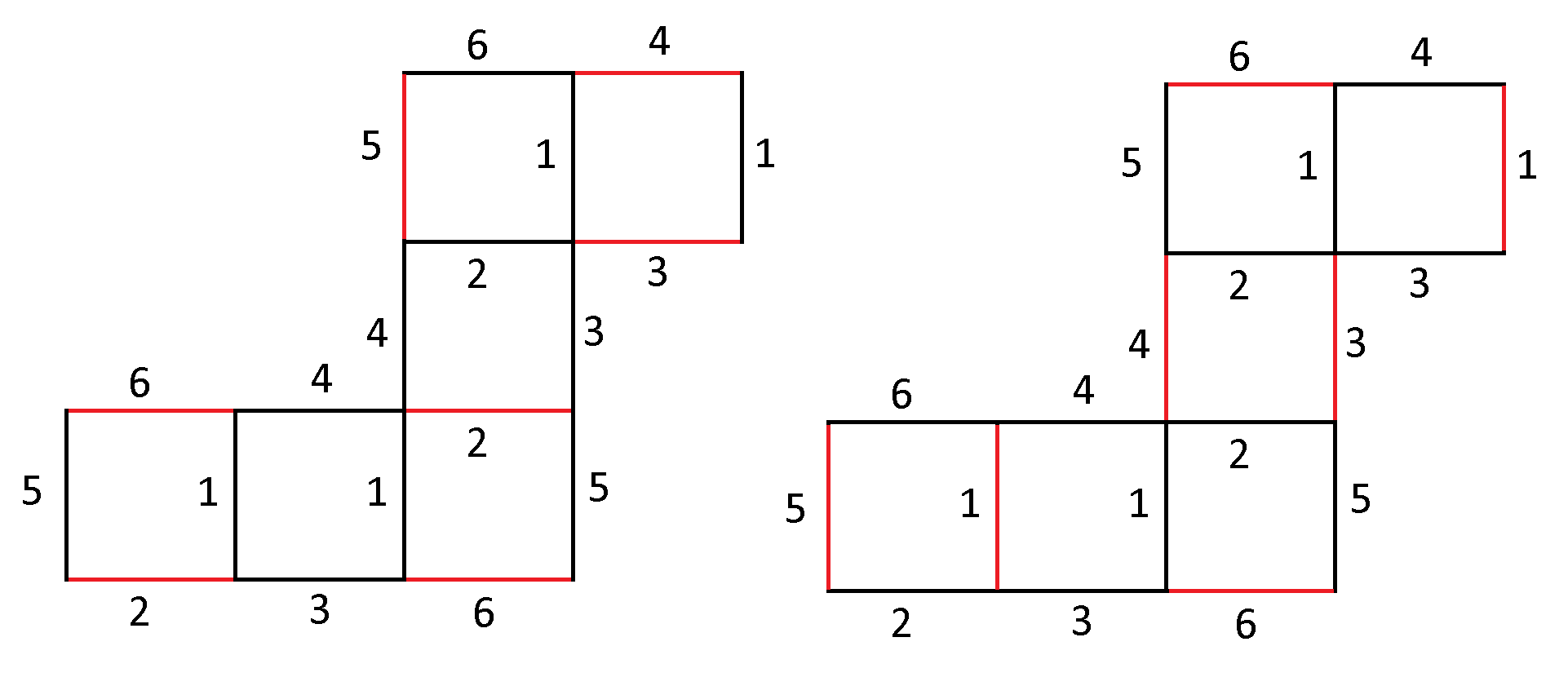}
				\end{minipage}
				\caption{On the left is an arc $\gamma$ (black) and a triangulation $T$ of a twice-punctured torus. On the right are two matchings of the snake graph $G_{T,\gamma}$. The midpoint of the weight vectors of these matchings does not correspond to a matching.}
				\label{fig:twice-punctured-torus-triangulation}	
			\end{figure}
		\end{example}
		These examples were computed using a combination of original Python code (available upon request) and the online Polymake software.
		\subsection{Conjectures}
		Given these counterexamples and the exceptionalness of finite type cluster algebras, we conjecture the following.
		\begin{conj}\label{special}
			Let $\mca^{\text{bd}}$ be a cluster algebra from a surface $(S,M)$ with boundary frozen variables. Then $N^{\text{bd}}(T,\gamma)$ is saturated for all $T,\gamma$ only if $(S,M)$ is $\apoly$ or $\dpoly$.
		\end{conj}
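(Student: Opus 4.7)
The plan is to show that any marked surface $(S,M)\notin\{\apoly,\dpoly\}$ admits a triangulation $T$ and arc $\gamma$ such that $P^{\text{bd}}(G_{T,\gamma})$ is not saturated, by reducing to the explicit counterexamples in Examples from the previous subsection together with one additional case.

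First I would classify $(S,M)$ topologically by $(g,b,p)$, the genus, number of boundary components, and number of punctures. Since $\apoly$ corresponds to $(0,1,0)$ and $\dpoly$ to $(0,1,1)$, any other $(S,M)$ satisfies at least one of: $g\geq 1$, $b\geq 2$, or $p\geq 2$. The proof then has two ingredients: a monotonicity lemma that lets a counterexample on a subsurface propagate, plus a finite list of explicit base cases.

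The core step is a \emph{subsurface monotonicity lemma}: suppose $(S',M')\hookrightarrow (S,M)$ is an embedded marked subsurface (with $S'$ a closed subsurface of $S$, $M'\subseteq M$ containing all marked points of $(S,M)$ that lie on $\partial S'$). If $T'$ is a triangulation of $(S',M')$ and $\gamma'$ an arc with $P^{\text{bd}}(G_{T',\gamma'})$ not saturated, then I would extend $T'$ to a triangulation $T$ of $(S,M)$ by triangulating $S\setminus S'$ arbitrarily, and take $\gamma:=\gamma'$. Since $\gamma$ is disjoint from every arc of $T\setminus T'$, the labeled snake graph $G_{T,\gamma}$ is identical to $G_{T',\gamma'}$, all matchings and their weight vectors agree (at worst up to extending by zero in the new ambient coordinates indexed by arcs/boundary segments of $(S,M)$), and so the non-saturation witness for $(S',M')$ pulls back directly to $(S,M)$.

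Granting the lemma, the classification reduces to a handful of minimal base cases. If $b\geq 2$, then $(S,M)$ contains an annulus with two marked points on each boundary component as a subsurface, and the annulus example gives non-saturation. If $g\geq 1$ and there is at least one puncture, then a punctured torus embeds into $(S,M)$ and the punctured-torus example applies; if $g\geq 1$ with no puncture, I would verify that one can cut a disk from the boundary to introduce a marked point playing the role of a puncture, or else give a direct analog of the punctured-torus computation on an unpunctured genus-$1$ surface. The remaining situation is $g=0$, $b=1$, $p\geq 2$ (a multiply-punctured polygon), which is not covered by any of the three given examples, so I would construct a new counterexample here: on a twice-punctured polygon with a triangulation containing two self-folded triangles, I would exhibit an arc $\gamma$ passing through both self-folded triangles and locate two matchings whose weight-vector midpoint is a lattice point not realized by any matching, verifying non-saturation by hand or via the Polymake computation used in Section 6.

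The main obstacle is twofold. First, making the subsurface monotonicity lemma fully rigorous requires checking that the snake-graph construction truly depends only on the intersection pattern of $\gamma$ with $T$ on $(S',M')$ — in particular, that the ``orientation'' gluing conventions in Figure~\ref{fig:gluing-tiles} are not affected by the ambient surface — and that no degenerate cases arise near $\partial S'$ (for instance, if the completion introduces a self-folded triangle adjacent to $(S',M')$). Second, the twice-punctured polygon case must be supplied explicitly; the natural candidate is an arc resembling the one used in Kalman's construction for $\dpoly$, rerouted to exploit the second puncture, but verifying non-saturation rather than mere non-emptiness requires a careful polytope computation.
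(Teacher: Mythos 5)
This statement is a \emph{conjecture} in the paper, not a theorem: the authors offer no proof, only the three counterexamples of Section 6.1 (annulus, once-punctured torus, twice-punctured torus) as supporting evidence. So there is no argument in the paper to compare yours against; you are proposing a route toward an open problem, which is a different exercise.

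That said, your plan is reasonable and the core ``subsurface monotonicity'' idea is sound for boundary frozen variables. If $(S',M')$ is cut out of $(S,M)$ along arcs of a triangulation $T$ with $T|_{S'}=T'$ and $\gamma\subset S'$, then $G_{T,\gamma}=G_{T',\gamma}$ as labeled graphs, and in the $\mathrm{bd}$ weight convention the promotion of a boundary segment of $S'$ to an interior arc of $S$ only relabels a coordinate; it does not change the lattice geometry of $P^{\mathrm{bd}}(G_{T,\gamma})$. Non-saturation therefore does propagate. The gaps are in whether and how the required subsurfaces can actually be realized, and in the missing base cases. (i) Embedding an annulus, punctured torus, or twice-punctured disk into $(S,M)$ must be done by cutting along arcs of a triangulation, so $M'$ must come from $M$; for closed surfaces with punctures only (e.g.\ a $4$-punctured sphere), or for unpunctured positive-genus surfaces with a single boundary component, it is not immediate that the cut produces a subsurface of the exact form covered by the paper's examples, and you would need an Euler-characteristic/marked-point bookkeeping argument in each topological type. (ii) You acknowledge that the twice-punctured disk and the unpunctured positive-genus case are not covered by the paper's examples and would require new explicit computations; until those are supplied the base-case list is incomplete. (iii) The once-punctured torus example has \emph{no} boundary, so it cannot be a bordered subsurface of anything, and conversely an annulus cannot be a subsurface of a boundary-less $(S,M)$; your classification step needs to distinguish the $b=0$ and $b\geq 1$ worlds more carefully than ``$g\geq 1$, $b\geq 2$, or $p\geq 2$'' suggests. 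In short, the skeleton (monotonicity lemma plus minimal base cases) is the right shape, but as written the argument is a program rather than a proof, with at least two genuinely unverified base cases and a realizability step that needs to be made precise.
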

		Noting that our proofs in types $A$ and $D$ made use of certain labels appearing at most once or twice in $G_{T,\gamma}$, perhaps saturation fails when snake graphs get too big and labels repeat arbitrarily many times --- which can always occur in non-finite type cluster algebras from surfaces. Thus, we have a stronger version of Conjecture \ref{special}.
		\begin{conj}\label{stronger}
			Let $\mca^{\text{bd}}$ be a cluster algebra from a surface $(S,M)$ with boundary frozen variables. There exists $B\in\mathbb{N}$ such that if $G_{T,\gamma}$ has more than $B$ squares, then $N^{\text{bd}}(T,\gamma)$ is not saturated.
		\end{conj}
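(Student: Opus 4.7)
The plan is to exhibit, inside any sufficiently long snake graph $G_{T,\gamma}$, a non-saturating sub-configuration built from counterexamples of the kind given in Section 6. I would proceed in three steps.

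First, for each surface $(S,M)$ not isomorphic to $\apoly$ or $\dpoly$, identify a small ``bad'' sub-snake graph $H_0$, arising from some local triangulation, together with two perfect matchings $M_1, M_2$ of $H_0$ whose weight vector midpoint is a lattice point not realized by any matching of $H_0$. Section 6 supplies such $H_0$ for the annulus, the punctured torus, and the twice-punctured torus; analogous local blocks should exist for every remaining non-finite-type surface with boundary, probably via a case analysis organized by the topological type (genus, number of punctures, number of boundary components).

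Second, argue topologically that any sufficiently long arc $\gamma$ in such a surface must contain a subarc $\gamma'$ whose associated sub-snake graph (a contiguous run of tiles inside $G_{T,\gamma}$) is isomorphic, as a labeled graph, to $H_0$. The intuition is that long arcs are forced to wrap around non-trivial cycles or revisit the same region, reproducing the local configuration of $H_0$. For the annulus this is concrete: any arc crossing enough triangulation edges must wind at least once around the core cycle. For higher-genus or multiply-punctured surfaces, one would invoke pigeonhole on crossings with a fixed finite set of arcs to force a repeated local pattern consistent with $H_0$.

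Third, lift the bad lattice point from $P(H_0)$ to $P(G_{T,\gamma})$. Using the decomposition idea of Observation \ref{reduction}, find a perfect matching $M^*$ of the complement of $H$ so that $M^* \cup M_1$ and $M^* \cup M_2$ are perfect matchings of $G_{T,\gamma}$; their weight vector midpoint is then a lattice point of $P(G_{T,\gamma})$. To ensure this midpoint is not the weight vector of any matching of $G_{T,\gamma}$, one arranges that each edge label appearing in $H$ occurs nowhere in the complement, so any preimage must factor through matchings of $H$ and its complement separately, contradicting the choice of $H_0$.

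The main obstacle is the uniformity demanded in step two. Producing one or even infinitely many non-saturated Newton polytopes per surface is considerably easier than bounding their size from below; one must rule out arbitrarily long arcs that stay in ``trivial'' regions and so produce saturated snake graphs. A second subtlety is the label-isolation requirement in step three: meeting it may force the choice of $H_0$ to depend on the ambient triangulation $T$, rather than being a single universal block, so the topological and combinatorial steps are intertwined and must be engineered together. A natural first milestone would be to prove the annulus case in full, where the topology is simple enough that both the wrap-around argument and the label isolation can be made quantitative, and then use that as a template for the higher-genus and punctured cases.
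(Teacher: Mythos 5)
This statement is Conjecture~\ref{stronger}: the paper itself offers no proof, only the counterexamples of Section~6 as motivation, so there is no ``paper's own proof'' to compare against. What you have written is a research plan, and you are candid that it is incomplete; the question is whether the plan could be executed. As it stands, I think two of the steps hide genuine obstructions, not just technicalities.

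The sharpest problem is in step three. You want to isolate the bad block $H$ inside $G_{T,\gamma}$ so that no edge label appearing on $H$ appears in the complement of $H$. But the triangulation $T$ has only finitely many arcs and boundary segments, so a long arc $\gamma$ necessarily crosses the same arcs of $T$ many times, and every label that occurs on $H$ will typically recur elsewhere in $G_{T,\gamma}$. Your projection $\pi^{\text{bd}}$ then sums over all those occurrences, and the non-saturation certificate you extracted from $H_0$ can be ``repaired'' by compensating changes in the complement: a lattice point that is unreachable by matchings of $H$ alone may perfectly well be reachable by a matching of the whole snake graph that moves weight between the copies of a label. In other words, step three is asserting exactly the kind of factorization that fails for long snake graphs, and it fails precisely because the graphs are long --- which is the hypothesis of the conjecture. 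You acknowledge this as a ``subtlety,'' but it is the crux: without a replacement for the label-isolation idea, the lifting argument does not go through. Separately, step one quietly asserts that every non-$\apoly$, non-$\dpoly$ surface admits a bad block; Section~6 exhibits three instances, but e.g.\ twice- or thrice-punctured polygons, higher-genus surfaces with boundary, etc., are not covered, and there is no reason given that a uniform local $H_0$ exists for all of them. Finally, the conjecture quantifies over all triangulations $T$ as well as all arcs $\gamma$, so the bound $B$ must be uniform in $T$; your step two only discusses forcing a bad sub-configuration once $T$ is fixed, and a per-$T$ bound is a weaker statement. A reasonable first milestone, as you say, is the annulus; there the label-isolation obstruction is already present (long arcs wind, so labels repeat), and resolving it there would show whether this strategy is salvageable at all.
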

		Note that Conjecture \ref{stronger} is vacuously true if $(S,M)$ is $\apoly$ or $\dpoly$ since the number of squares in $G_{T,\gamma}$ is bounded in these finite type cases.
		
		We note that we have not found a counterexample to Fei's conjecture: that cluster variable Newton polytopes in cluster algebras with principal coefficients are saturated.
	\nocite{*}
	\bibliography{bibliography}{}
	\bibliographystyle{plain}
\end{document}